\newcommand{\uC}{\underline{C}}
\newcommand{\Z}{{\mathbb Z}}
\newcommand{\Q}{{\mathbb Q}}
\newcommand{\I}{\mathbb{I}}
\newcommand{\Fpm}{\mathrm{(F}_m'\mathrm{)}}
\newcommand{\R}{{\mathbb R}}
\newcommand{\Br}{\mathrm{Br}}
\newcommand{\fX}{\mathfrak{X}}
\newcommand{\Ga}{\mathrm{Gal}}
\newtheorem{thm}{Theorem}[section]
\newtheorem{lemma}[thm]{Lemma}
\newtheorem{prop}[thm]{Proposition}
\newtheorem{cor}[thm]{Corollary}
\newcommand{\gen}{\mathbf{gen}}
\newcommand{\Pic}{\mathrm{Pic}}
\newcommand{\uG}{\underline{G}}
\font\brus=wncyr10.240pk scaled 1200 .240pk
\begin{document}
\title[Spinor groups]{Spinor groups with good reduction}

\author[V.~Chernousov]{Vladimir I. Chernousov}
\author[A.~Rapinchuk]{Andrei S. Rapinchuk}
\author[I.~Rapinchuk]{Igor A. Rapinchuk}

\address{Department of Mathematics, University of Alberta, Edmonton, Alberta T6G 2G1, Canada}

\email{vladimir@ualberta.ca}

\address{Department of Mathematics, University of Virginia,
Charlottesville, VA 22904-4137, USA}

\email{asr3x@virginia.edu}

\address{Department of Mathematics, Michigan State University, East Lansing, MI
48824, USA}

\email{rapinchu@msu.edu}

\thanks{2010 {\it Mathematics Subject Classification.} Primary 11E72, Secondary 11R34.} 

\thanks{{\it Keywords:} Algebraic groups, Galois cohomology, good reduction, Hasse principles, unramified cohomology.}

\begin{abstract}
Let $K$ be a 2-dimensional global field of characteristic $\neq 2$, and let $V$ be a divisorial set of places of $K$. We show that for a given $n \geqslant 5$, the set of $K$-isomorphism classes of spinor groups $G = \mathrm{Spin}_n(q)$ of nondegenerate $n$-dimensional quadratic forms over $K$ that have good reduction at all $v \in V$, is finite. This result yields some other finiteness properties, such as the finiteness of the genus $\gen_K(G)$ and the properness of the global-to-local map in Galois cohomology. The proof relies on the finiteness of the unramified cohomology groups $H^i(K , \mu_2)_V$ for $i \geqslant 1$ established in the paper. The results for spinor groups are then extended to some unitary groups and to groups of type $\textsf{G}_2$.

\end{abstract}

\dedicatory{{To G\"unter Harder on his 80th birthday}}

\maketitle

\section{Introduction}\label{S:Intro}

Let $K$ be a field. The purpose of this paper is to present new results in the framework of  the following general problem:

\vskip3mm

\noindent (*) \parbox[t]{16cm}{\it (When) can one equip $K$ with a natural set $V$ of discrete valuations such that for a given absolutely almost simple simply connected algebraic $K$-group $G$, the set of $K$-isomorphism classes of (inner) $K$-forms of $G$ having good reduction at all $v \in V$ (resp., at all $v \in V \setminus S$, where $S \subset V$ is an arbitrary finite subset) is {\bf finite}?}

\vskip3mm

\noindent (We refer to the subsection {\it Notations and conventions} below for the definition of good reduction.)

\vskip1mm

While the analysis of abelian varieties defined over a global field and having good reduction at a given set of places of the field has been one of the central topics in arithmetic geometry for a long time, particularly since the work of G.~Faltings \cite{Fal}, similar questions in various situations involving linear algebraic groups have received less attention so far. So, before formulating our results, we would like to include a brief account of the previous work in this direction and discuss connections between (*) and several other topics of active current research, which should  provide the reader with some context for (*) and its variations.

Historically, the consideration of forms of linear algebraic groups with good reduction can be traced back to the work of G.~Harder \cite{Harder}, J.-L.~Colliot-Th\'el\`ene and J.-J.~Sansuc \cite{CTS}, and B.~Gross \cite{Gross}. We note that in \cite{Harder}, the focus was primarily on a number field $K$. In this case, combining well-known results about the properness of the global-to-local map for the Galois cohomology of algebraic groups \cite[Ch. III, 4.6]{Serre-GC} with the fact that an absolutely almost simple algebraic group with good reduction over a $p$-adic field is necessarily quasi-split, one can see that a set $V$ consisting of almost all nonarchimedean places of $K$ satisfies (*) --- see \cite{Gross}, \cite{Conrad} for more precise results over $K = \Q$, and also \cite{JL}.

The case where $K = k(x)$ is the field of rational functions in one variable over a field $k$ and $V$ consists of the discrete valuations $v_{p(x)}$ corresponding to {\it all} irreducible polynomials $p(x) \in k[x]$ was considered by Raghunathan and Ramanathan \cite{RagRam} (see also \cite[Theorem 2.1]{CGP}): their result implies that if $G_0$ is a (connected) semi-simple group over $k$ and $G$ is obtained from $G_0$ by the base change $K/k$, then any $K$-form $G$ that splits over $\bar{k}(x)$ (where $\bar{k}$ is a separable closure of $k$) and has good reduction at all $v \in V$ is obtained by base change from a certain $k$-form of $G_0$.  In the same notations, a description of the $K$-forms of $G$ that (split over $\bar{k}(x)$ and) have good reduction at all $v \in V \setminus \{ v_x \}$ was obtained by Chernousov, Gille and Pianzola \cite{CGP}, which played a crucial role in the proof of the conjugacy of the analogues of Cartan subalgebras in certain infinite-dimensional Lie algebras \cite{CNPY}. (We note that if $k$ has characteristic zero then every semi-simple $K$-group $G$ becomes quasi-split over $\bar{k}(x)$, which implies that those $G$ that have good reduction at all $v \in V \setminus \{ v_x \}$  automatically split over $\bar{k}(x)$.)

In all these instances, $K$ was the fraction field of a certain Dedekind ring and $V$ was the set of discrete valuations associated with the nonzero prime ideals of this ring, making the situation ``1-dimensional." While the finiteness questions have not been fully answered yet even in the 1-dimensional case (cf. Theorem \ref{T:F3} below and subsequent remarks), the current work on the following problems necessitate the analysis of (*) in a more general (higher-dimensional) setting:

\vskip2mm

\noindent $\bullet$ 
{{\it The genus problem.} Given an absolutely almost simple simply connected algebraic $K$-group $G$, its genus $\gen_K(G)$ is defined to be the set of $K$-isomorphism classes of $K$-forms $G'$ of $G$ that have the same isomorphism classes of maximal $K$-tori as $G$ (the latter means that every maximal $K$-torus $T$ of $G$ is $K$-isomorphic to some maximal $K$-torus $T'$ of $G'$, and vice versa). One expects $\gen_K(G)$ to be finite for any $G$ over an arbitrary finitely generated field $K$. This has been proven for groups of all types when $K$ is a number field \cite[Theorem 7.5]{PrRap-WC}, and for inner forms of type $\textsf{A}_{\ell}$ over arbitrary finitely generated fields -- cf. \cite{CRR1} \cite{CRR3}, \cite{CRR5}. The connection between the genus and good reduction described in \cite[Theorem 5]{CRR4}, \cite{CRR6} implies that (under minor additional technical assumptions) the fact that $K$ possesses a set $V$ of discrete valuations such that (*) holds for a given $G$ implies that $\gen_K(G)$ is finite. So, (*) provides a natural approach to proving the finiteness of the genus.}

\vskip4mm

\noindent $\bullet$ 
{{\it The Hasse principle.} For a given algebraic $K$-group $G$ and a given set $V$ of valuations of $K$,
one considers the global-to-local map in Galois cohomology:
$$
\theta_{G , V} \colon H^1(K , G) \longrightarrow \prod_{v \in V} H^1(K_v , G),
$$
(where $K_v$ is the completion of $K$ with respect to $v$), and one says that the Hasse principle holds for $G$ with respect to $V$ if $\theta_{G , V}$ is injective (cf. \cite[4.7]{Serre-GC}). While the Hasse principle does hold in many important situations over a number field $K$ (in which case, one takes for $V$
the set of all valuations of $K$, including archimedean ones) - see \cite[Ch. VI]{Pl-R}, it may fail for general semi-simple groups \cite[4.7]{Serre-GC}, but for {\it any} algebraic group $G$ over a number field $K$, the map $\theta_{G , V}$ is {\it proper} (in other words, the deviation from the Hasse principle is always {\it finite}). It is important to point out that the affirmative answer to (*) would enable one to significantly extend this result, viz. one shows that if (*) holds for a given $K$, $V$ and $G$, then for the corresponding adjoint group $\overline{G}$ the map $\theta_{\overline{G} , V}$ is proper (cf. \S\ref{S:5}.2). See Conjecture B and its discussion below for more concrete statements/results.}

\vskip4mm

\noindent $\bullet$ 
{{\it Eigenvalue rigidity.} Let $G_1$ and $G_2$ be two semi-simple algebraic groups over a field $F$ of characteristic zero, and let $\Gamma_i \subset G_i(F)$ be a Zariski-dense subgroup for $i = 1, 2$. In \cite{PrRap-WC}, the notion of {\it weak commensurability} of $\Gamma_1$ and $\Gamma_2$, based on the consideration of eigenvalues of semi-simple elements of those subgroups, was introduced and developed. A detailed analysis of weakly commensurable arithmetic groups was used in {\it loc. cit.} to derive geometric consequences. It is currently expected that certain results allowing one to (almost) recover some characteristics of an arithmetic group from the information about the eigenvalues of its elements should be valid for arbitrary finitely generated Zariski-dense subgroups -- this phenomenon is called {\it eigenvalue rigidity} \cite{R-ICM}. Central to it is the following {\it Finiteness conjecture} \cite{PrRap-Action}, \cite{R-ICM}: Let $G_1$ and $G_2$ be absolutely simple (adjoint) algebraic groups over a field $F$ of characteristic zero, and let $\Gamma_1 \subset G_1(F)$ be a finitely generated Zariski-dense subgroup with  trace field\footnotemark \ $K = K_{\Gamma_1}$. Then there are only finitely many $F/K$-forms $\mathscr{G}_2^{(1)}, \ldots , \mathscr{G}_2^{(r)}$ of $G_2$ such that any finitely generated Zariski-dense subgroup $\Gamma_2 \subset G_2(F)$ which is weakly commensurable to $\Gamma_1$ is $G_2(F)$-conjugate to a subgroup of $\mathscr{G}_2^{(j)}(K) \subset G_2(F)$ for some $j = 1, \ldots , r$. Note that the field $K$ is finitely generated hence admits
a {\it divisorial} set of places $V$ (see below). Then if (*) holds for $K$, $V$ and an absolutely almost simple simply connected group $G$
of the same type as $G_2$, the above finiteness conjecture is valid, cf. \cite{CRR6}. }

\footnotetext{Defined to be the subfield of $F$ generated by the traces $\mathrm{Tr}\: \mathrm{Ad}_{G_1}(\gamma)$ of all elements $\gamma \in \Gamma_1$ in the adjoint representation.}

\vskip2mm

While various forms of the Hasse principle (including the cohomological form cited above) have been studied extensively for a long time, the genus problem and eigenvalue rigidity emerged relatively recently as an extension of the research carried out in \cite{PrRap-WC} in connection with some geometric problems for isospectral and length-commensurable locally symmetric spaces. So, the fact that (*) has strong consequences for all three problems, combined with the previous work done in the Dedekind case, makes the analysis of (*) in the general case worthwhile.

In order to make (*) more tractable, one can specialize it to a suitable class of fields $K$ or restrict the type of an absolutely almost simple algebraic $K$-group $G$ in its statement (or do both). It is most interesting to investigate (*) for finitely generated fields $K$. In this case, $K$ possesses natural sets of discrete valuations called {\it divisorial}. More precisely, let $\mathfrak{X}$ be a normal irreducible scheme of finite type over $\Z$ (if $\mathrm{char}\: K = 0$) or over a finite field (if $\mathrm{char}\: K > 0$) such that $K$ is the field of rational functions on $\mathfrak{X}$ (we will refer to $\mathfrak{X}$ as a {\it model} of $K$). It is well-known that to every prime divisor $\mathfrak{Z}$ of $\mathfrak{X}$ there corresponds a discrete valuation $v_{\mathfrak{Z}}$ on $K$ (cf. \cite[12.3]{Cut}, \cite[Ch. II, \S 6]{Hart}). Then
$$
V(\mathfrak{X}) = \{ v_{\mathfrak{Z}} \ \vert \ \mathfrak{Z} \ \ \text{prime divisor of} \ \ \mathfrak{X} \}
$$
is called the divisorial set of places of $K$ corresponding to the model $\mathfrak{X}$. Any set of places $V$ of $K$ of this form (for some model $\mathfrak{X}$) will be simply called {\it divisorial}. Note that any two divisorial sets $V_1$ and $V_2$ are commensurable, i.e. $V_i \setminus (V_1 \cap V_2)$ is finite for $i = 1, 2$, and for any finite subset $S$ of a divisorial set $V$ the set $V \setminus S$ contains a divisorial set. We then can formulate the following more precise version of (*).

\vskip2mm

\noindent {\bf Conjecture A.} {\it Let $K$ be a finitely generated field, and $V$ be a divisorial set of places. Then for a given absolutely almost simple algebraic $K$-group $G$, the set of $K$-isomorphism classes of (inner) $K$-forms of $G$ that have good reduction at all $v \in V$ is finite.}

\vskip2mm

M.~Rapoport suggested that a finiteness statement of such nature is likely to be true for arbitrary reductive $K$-groups, viz. the number of $K$-isomorphisms classes of reductive $K$-groups of bounded dimension (or rank) that have good reduction at all $v \in V$ should be finite.
At this point, Conjecture~A is known over arbitrary finitely generated only for inner forms of type $\textsf{A}_{\ell+1}$ with $(\ell + 1)$ prime to $\mathrm{char}\: K$ (cf. \cite{CRR3}, \cite{CRR4}). In fact, before the current paper Conjecture A was not known for any other type over fields other than global.

One of the goals of the paper is to prove Conjecture A for the spinor groups of quadratic forms, and also some unitary groups and groups of type $\textsf{G}_2$, over {\it 2-dimensional global fields} of characteristic $\neq 2$. Following Kato \cite{Kato}, by a 2-dimenional global field, we mean either the function field $K = k(C)$ of a smooth geometrically integral curve $C$ over a number field $k$, or the function field $K = k(S)$ of a smooth geometrically integral surface $S$ over a finite field $k$.
\begin{thm}\label{T:F1}
Let $K$ be a 2-dimensional global field of characteristic $\neq 2$, and let $V$ be a divisorial set of places of $K$. Fix an integer $n \geqslant 5$. Then the set of $K$-isomorphism classes of spinor groups $G = \mathrm{Spin}_n(q)$  of nondegenerate quadratic forms in $n$ variables over $K$ that have good reduction at all $v \in V$ is finite.
\end{thm}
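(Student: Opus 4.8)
The plan is to translate the question about spinor groups into one about quadratic forms and then to invoke the finiteness of the unramified cohomology groups $H^i(K,\mu_2)_V$. The first ingredient is the standard dictionary: for $n \geqslant 5$, it is well known that two groups $\mathrm{Spin}_n(q_1)$ and $\mathrm{Spin}_n(q_2)$ are isomorphic as algebraic $K$-groups if and only if the forms $q_1$ and $q_2$ are similar over $K$ --- with the single caveat that for $n = 8$ one must also allow the (finitely many) triality twists of $q_2$, which does not affect the finiteness assertion. The low-dimensional cases $n = 5, 6$ (types $\textsf{C}_2$, $\textsf{A}_3$) are checked to be compatible with this. Thus it suffices to bound, for fixed $n$, the number of similarity classes of nondegenerate $n$-dimensional quadratic forms $q$ over $K$ such that $\mathrm{Spin}_n(q)$ has good reduction at every $v \in V$. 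Replacing the model $\mathfrak{X}$ of $K$ by $\mathfrak{X}[1/2]$ --- which only shrinks $V$ and hence only weakens the hypothesis --- we may and do assume that every $v \in V$ has residue characteristic $\neq 2$.

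The second ingredient is a local description of good reduction: for $v \in V$, the group $\mathrm{Spin}_n(q)$ has good reduction at $v$ if and only if $q$ becomes, over the completion $K_v$, similar to a form with coefficients in $\mathcal{O}_v^{\times}$. One direction is clear, since such a form is a unimodular quadratic lattice over $\mathcal{O}_v$ and yields a reductive $\mathcal{O}_v$-model of its spinor group; for the converse one takes a reductive $\mathcal{O}_v$-model $\mathcal{G}$ of $\mathrm{Spin}_n(q)$, recognizes its special fibre as a spinor group over the residue field, lifts the corresponding form to $\mathcal{O}_v$, and uses again that the spinor group determines $q$ up to similarity.

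The core of the proof is then to bound the number of similarity classes satisfying this local condition at all $v \in V$, and here I would use the cohomological invariants of quadratic forms. When $n$ is odd, every form is similar to one of trivial signed discriminant, and for such a representative the local condition forces $q$ to be genuinely unramified over $K_v$; hence all of its invariants $e_i(q) \in H^i(K,\mu_2)$ have trivial residue at every $v \in V$, that is, lie in the unramified subgroup $H^i(K,\mu_2)_V$. When $n$ is even one instead tracks the signed discriminant $e_1(q)$ together with the even Clifford algebra $C_0(q)$ --- a genuine similarity invariant, unramified at $v$ whenever $q$ has good reduction there --- and the higher Arason-type invariants obtained after subtracting a fixed model form of the same discriminant and Clifford class; again these all land in the unramified subgroups. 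Since $n$ is fixed, the Arason--Pfister Hauptsatz limits how far the filtration by powers of the fundamental ideal $I$ of the Witt ring can descend for an $n$-dimensional form, so only boundedly many of the $e_i$ are relevant, and the Milnor conjecture on quadratic forms (Voevodsky, Orlov--Vishik--Voevodsky) shows that this finite collection of invariants determines the similarity class. Because each $H^i(K,\mu_2)_V$ with $i \geqslant 1$ is finite by the cohomological results of the paper, there are only finitely many possibilities for this collection, and the theorem follows.

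I expect the main obstacle to be the even-dimensional case of the third step: matching the condition that $q$ be similar over $K_v$ to an unramified form against the unramifiedness of the cohomological data requires working exclusively with invariants that are stable under scaling, and then recovering the similarity class from them, which is more delicate than in the odd case. A secondary difficulty is the presence of real places when $K = k(C)$ with $k$ having real embeddings: then $\mathrm{cd}_2(K) = \infty$, so the reduction to finitely many invariants must be organized using the virtual cohomological dimension or by isolating the contribution of the orderings of $K$ --- but this is already accounted for in the finiteness of $H^i(K,\mu_2)_V$.
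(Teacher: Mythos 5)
Your overall route is essentially the paper's: translate good reduction at $v$ into the statement that $q$ becomes $K_v$-similar to a form with unit coefficients, then run an induction along the filtration of the Witt ring by powers of the fundamental ideal, using the Milnor-conjecture isomorphisms $I(K)^d/I(K)^{d+1}\simeq H^d(K,\mu_2)$, the finiteness of the unramified groups $H^i(K,\mu_2)_V$, and the Arason--Pfister Hauptsatz to terminate after $\ell=[\log_2 n]+1$ steps (this is Theorem \ref{T:F2} and its proof in \S\ref{S:F2}; note also that only the easy direction of your ``dictionary'' is needed --- similar forms give isomorphic spinor groups and every isomorphism class arises from some form --- so the $n=8$ triality caveat is irrelevant). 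For $n$ odd your discriminant normalization is correct and is in fact a small simplification of the paper's argument: if the signed discriminant of $q$ is trivial and $\lambda_v q$ is unramified, then comparing discriminants shows $\lambda_v$ is a unit times a square, so $[q]$ itself lies in $W_0(K_v)$ for all $v\in V$ and the induction on successive differences proceeds with no scaling issues, via Lemmas \ref{L:easy}--\ref{L:unram111}.

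In the even-dimensional case, however, there is a genuine gap exactly at the point you flag and do not resolve. Matching $e_1(q)$ and the even Clifford algebra only fixes similarity-invariant data; to run the induction you must control actual Witt classes, and the class $[q]-[q']$ of two forms with the same discriminant and Clifford data need \emph{not} have unramified $e_3$ at a place of good reduction: one only knows $[\lambda_v q],[\mu_v q']\in W_0(K_v)$, and the local scalars $\lambda_v,\mu_v$ vary with $v$ and may differ by an element of odd valuation. For instance, if $q$ is unramified at $v$ while $q'$ is $K_v$-equivalent to $\pi_v$ times an unramified form $q_2$, then the residue of $e_3([q]-[q'])$ at $v$ is $e_2$ of the residue form $\bar q_2$, which is in general a nontrivial class in $H^2(K^{(v)},\mu_2)$ even though discriminants and Clifford data agree; so your assertion that the higher Arason-type invariants ``again all land in the unramified subgroups'' is unjustified. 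This is precisely why Theorem \ref{T:F2} carries the extra hypothesis that $\Pic(V)/2\cdot\Pic(V)$ is finite: the paper assembles the local scalars into an idele, uses the identification $\I(K,V)/\I(K,V)^2\,\I_0(K,V)K^{\times}\simeq\Pic(V)/2\cdot\Pic(V)$ (Lemma \ref{L:Id}) together with the finiteness of this quotient (Proposition \ref{P:Pic}) to pass to a subfamily and rescale the forms by \emph{global} scalars so that all remaining local factors are units times squares up to a common idele, and only then does Lemma \ref{L:unram111} apply to the successive differences. Your proposal never introduces this Picard-group/idele ingredient, and without it (or an equivalent device) the even case is not proved.
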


We will now indicate some consequences of this result for two of the three problems discussed above. First, we have the following finiteness result for the genus of spinor groups over 2-dimensional global fields.
\begin{thm}\label{T:genus}
Let $K$ be a 2-dimensional global field of characteristic $\neq 2$, and let $G = \mathrm{Spin}_n(q)$, where $q$ is a $n$-dimensional nondegenerate quadratic form over $K$. If either $n \geqslant 5$ is odd or $n \geqslant 10$ is even and $q$ is $K$-isotropic, then $\gen_K(G)$ is finite.
\end{thm}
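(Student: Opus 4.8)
The plan is to deduce the finiteness of $\gen_K(G)$ from Theorem \ref{T:F1} via the general connection between the genus and good reduction recalled in the introduction (following \cite[Theorem 5]{CRR4}, \cite{CRR6}). Recall that if $G' = \mathrm{Spin}_n(q')$ lies in $\gen_K(G)$, then $G$ and $G'$ share the same isomorphism classes of maximal $K$-tori; one then wants to conclude that $G$ and $G'$ have good reduction at the same places, up to a controlled finite set. Concretely, fix a divisorial set $V$; after discarding a finite subset we may assume $G$ has good reduction at all $v \in V$ and, for each $v \in V$, that the residue characteristic is $\neq 2$ and that $v$ is "nice" in the sense needed below. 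The first step is therefore: \emph{show that every $G' \in \gen_K(G)$ has good reduction at all $v \in V$} (after shrinking $V$ by a finite set independent of $G'$). Granting this, Theorem \ref{T:F1} immediately gives that the set of $K$-isomorphism classes arising as $\mathrm{Spin}_n(q')$ for $G' \in \gen_K(G)$ is finite, and since $\gen_K(G)$ consists precisely of such groups, we are done.

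To carry out the first step, I would argue place-by-place. Over the completion $K_v$, having good reduction for a spinor group translates (via \cite{CRR4}, \cite{CRR6}, or directly through the theory of quadratic forms over complete discretely valued fields with residue characteristic $\neq 2$) into a condition on the quadratic form $q$: namely that $q$ becomes, over $\mathcal{O}_v$, a form that is split/unramified in the appropriate sense — equivalently, the two residue forms in the second residue homomorphism are as expected, or $q_{K_v}$ is isomorphic to a form defined over $\mathcal{O}_v$ whose reduction is nondegenerate. The key input is the local genus result: over $K_v$, if $\mathrm{Spin}_n(q)$ and $\mathrm{Spin}_n(q')$ have the same maximal tori and one of them has good reduction, then so does the other. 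This is where the hypotheses "$n \geqslant 5$ odd" or "$n \geqslant 10$ even and $q$ isotropic" enter: they guarantee that the local-global correspondence between tori in the spinor group and splitting fields of quadratic subforms is rigid enough — for odd $n$ the center is trivial and the type $\textsf{B}_{(n-1)/2}$ behaves uniformly, while for even $n$ of type $\textsf{D}_{n/2}$ one must avoid the triality phenomenon ($n/2 = 4$) and the small-dimensional degeneracies, and the isotropy assumption ensures the relevant splitting fields are controlled. One then checks that the maximal $K$-tori of $\mathrm{Spin}_n(q)$ "see" the local behavior of $q$ at each $v$, so that the genus condition forces $q$ and $q'$ to have the same ramification data at all but finitely many $v \in V$.

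The main obstacle I anticipate is precisely the even-dimensional case of type $\textsf{D}_{\ell}$: the correspondence between maximal tori and \'etale subalgebras (or equivalently, decompositions of the quadratic form) is less transparent than in the type $\textsf{A}$ situation handled in \cite{CRR1}, \cite{CRR3}, and the outer automorphisms of $\mathrm{Spin}_{2\ell}$ mean that "same maximal tori" does not instantly translate into "isomorphic as forms up to similarity." This is why one restricts to $q$ isotropic and $n \geqslant 10$: isotropy gives a hyperbolic plane to split off, reducing to a lower-dimensional anisotropic kernel whose tori are easier to analyze, and $n \geqslant 10$ keeps us away from $\textsf{D}_4$ (triality) and the exceptional isogenies in small rank. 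By contrast, the odd-dimensional case should be comparatively routine: type $\textsf{B}_{\ell}$ has no outer automorphisms and the center of $\mathrm{Spin}_n$ is $\mu_2$, so the bookkeeping is cleaner and no isotropy hypothesis is needed. Once the "same good reduction locus" claim is established in both cases, the theorem follows from Theorem \ref{T:F1} with no further work.
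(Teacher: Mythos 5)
Your first step---transferring good reduction from $G$ to every $G' \in \gen_K(G)$ and then invoking Theorem \ref{T:F1}---is exactly how the paper proves its Theorem \ref{T:genus1}, except that the paper does not argue place-by-place through quadratic form theory: it simply cites \cite[Theorem 5]{CRR4} (Theorem \ref{T:Genus-smooth}), which says that if $G$ has good reduction at $v$ (with finitely generated residue field) then so does every member of $\gen_K(G)$. Note that this part of the argument works for \emph{every} $n \geqslant 5$, with no parity or isotropy hypothesis: the conclusion is that the number of spinor groups $\mathrm{Spin}_n(q')$ lying in $\gen_K(G)$ is always finite. So your claim that the hypotheses ``$n$ odd'' or ``$n \geqslant 10$ even and $q$ isotropic'' are needed to make the good-reduction transfer work is a misattribution; they play no role there.

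The genuine gap is the sentence ``since $\gen_K(G)$ consists precisely of such groups, we are done.'' For odd $n$ this is automatic (every simply connected $K$-form of type $\textsf{B}_{\ell}$ is a spinor group of an odd-dimensional form), but for even $n$ the $K$-forms of type $\textsf{D}_{n/2}$ also include the groups $H = \widetilde{\mathrm{SU}}_m(D , h)$ for central division algebras $D$ over $K$ of degree $d > 1$ with an orthogonal involution, and Theorem \ref{T:F1} says nothing about these. The entire content of the hypotheses ``$n \geqslant 10$ even and $q$ isotropic'' is to exclude such $H$ from $\gen_K(G)$, and this requires a real argument, which your sketch does not supply: the paper's Proposition \ref{P:reduction}(a) uses the isotropy of $q$ to split off a hyperbolic plane and build a maximal $K$-torus $T = T_1 T_2$ of $\mathrm{Spin}_n(q)$ with $T_1 = \mathbb{G}_m$ and $T_2$ a generic torus of $\mathrm{Spin}_{n-2}(q_2)$, and then shows (Proposition \ref{P:reduction1}, via an analysis of centralizers of split tori in $\mathrm{SU}_m(D,h)$ and a comparison of Galois-group orders against Weyl-group orders, where the inequality $2^{n/2-2} \leqslant n/2$ fails precisely for $n \geqslant 10$) that no group $\mathrm{SU}_m(D,h)$ with $d > 1$ contains a torus of this shape. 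Your appeal to ``rigidity of the local-global correspondence between tori and splitting fields'' and to avoiding triality does not substitute for this step, and without it the even-dimensional case of the theorem is not proved.
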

(In fact, we prove that the number of $K$-isomorphism classes of spinor groups $G' = \mathrm{Spin}_n(q')$ of nondegenerate $n$-dimensional quadratic forms $q'$ over $K$ that have the same isomorphism classes of maximal $K$-tori as $G$ is finite for any $n \geqslant 5$ --- see Theorem \ref{T:genus1}.)

\vskip1mm

Another application deals with the global-to-local map in Galois cohomology. Namely, the techniques developed to prove Theorem \ref{T:F1} also yield the following.
\begin{thm}\label{T:HP}
Notations as in Theorem \ref{T:F1}, for $G = \mathrm{SO}_n(q)$  the map $$\theta_{G , V} \colon H^1(K , G) \longrightarrow \prod_{v \in V} H^1(K_v , G)$$ is proper, i.e. the pre-image of a finite set is finite.
\end{thm}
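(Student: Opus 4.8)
The plan is to deduce the properness of $\theta_{G,V}$ from a finiteness statement about quadratic forms which is controlled, via their cohomological invariants, by the finiteness of the unramified groups $H^i(K,\mu_2)_V$ --- the same mechanism that drives the proof of Theorem~\ref{T:F1}. Since $\theta_{G,V}$ has discrete target and a finite set is a finite union of points, it suffices to show that $\theta_{G,V}^{-1}(t)$ is finite for every $t \in \prod_{v\in V} H^1(K_v, G)$. Fix such a $t$; if the pre-image is empty there is nothing to prove, so choose $\xi_0 \in \theta_{G,V}^{-1}(t)$. For $\xi \in H^1(K, \mathrm{SO}_n(q))$ write $q_\xi$ for the $n$-dimensional form corresponding to the image of $\xi$ under $H^1(K,\mathrm{SO}_n(q)) \to H^1(K,\mathrm{O}_n(q))$. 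Since $\mathrm{SO}_n(q) = \ker(\det)$, the form $q_\xi$ has the same dimension and discriminant as $q$, and since $\mathrm{O}_n(q)(L) \to \mu_2(L)$ is surjective for every field $L$, the map $\xi \mapsto q_\xi$ is injective. Moreover, if $\xi, \eta \in \theta_{G,V}^{-1}(t)$ then $\xi$ and $\eta$ have the same image in $H^1(K_v,G)$, hence $q_\xi \simeq q_\eta$ over $K_v$, for every $v\in V$. It therefore suffices to bound the set $\{\, q_\xi \mid \xi \in \theta_{G,V}^{-1}(t)\,\}$.

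The key point is that for $\xi, \eta \in \theta_{G,V}^{-1}(t)$ the form $q_\xi \perp (-q_\eta)$ has even dimension $2n$ and trivial discriminant (the dimensions and discriminants of $q_\xi$ and $q_\eta$ agree), hence lies in $I^2(K)$, and it becomes hyperbolic over $K_v$ for every $v\in V$. I would now descend along the filtration $I^2(K) \supseteq I^3(K) \supseteq \cdots$ using the invariants $e_j\colon I^j(K)/I^{j+1}(K) \xrightarrow{\ \sim\ } H^j(K,\mu_2)$ furnished by the Milnor conjecture (Voevodsky, Orlov--Vishik--Voevodsky). The crucial observation is that if $\psi \in I^j(K)$ becomes hyperbolic over $K_v$, then $e_j(\psi)$ is trivial in $H^j(K_v,\mu_2)$, hence has trivial residue at $v$; so for $\psi = q_\xi \perp(-q_\eta)$ the class $e_j(\psi)$ lies in $H^j(K,\mu_2)_V$, which is finite by the paper's main cohomological result. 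Concretely, build elements $\xi_0, \xi_1, \xi_2, \dots$ of $\theta_{G,V}^{-1}(t)$ inductively: having $\xi_k$ with $q_\xi\perp(-q_{\xi_k}) \in I^{k+2}(K)$ for the $\xi$'s under consideration, the class $e_{k+2}(q_\xi \perp (-q_{\xi_k}))$ takes only finitely many values in $H^{k+2}(K,\mu_2)_V$; for each attained value choose a representative $\xi_{k+1}$ in the same fibre, and note that for any $\xi$ realizing that value the difference $q_\xi\perp(-q_{\xi_{k+1}}) = \big(q_\xi\perp(-q_{\xi_k})\big) - \big(q_{\xi_{k+1}}\perp(-q_{\xi_k})\big)$ has trivial $e_{k+2}$, hence lies in $I^{k+3}(K)$, while remaining $2n$-dimensional and hyperbolic over all $K_v$, $v\in V$. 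As soon as $2^{k+2} > 2n$, the Arason--Pfister Hauptsatz forces $q_\xi\perp(-q_{\xi_k})$ to be hyperbolic, i.e. $q_\xi \simeq q_{\xi_k}$. Since the number of steps is at most $\lceil \log_2(2n)\rceil$ and at each step there are only finitely many choices (indexed by the finite groups $H^j(K,\mu_2)_V$), the form $q_\xi$ takes only finitely many values; by injectivity of $\xi \mapsto q_\xi$ this shows that $\theta_{G,V}^{-1}(t)$ is finite.

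Granting the finiteness of the groups $H^i(K,\mu_2)_V$, the main obstacle is to arrange the descent through the $I$-adic filtration so that it terminates. This is precisely the reason for comparing each $\xi$ against another element $\xi_k$ of the \emph{same} fibre $\theta_{G,V}^{-1}(t)$ rather than against an arbitrary form realizing the prescribed invariant: doing so keeps all the relevant forms $q_\xi \perp (-q_{\xi_k})$ of the fixed dimension $2n$, so that the Arason--Pfister Hauptsatz applies after a number of steps depending only on $n$ --- whereas an arbitrary reference form need neither be locally hyperbolic on $V$ nor have controlled dimension, and the induction would not obviously close up. The remaining ingredients --- the computation of the discriminant of $q_\xi$, the injectivity of $H^1(K,\mathrm{SO}_n(q)) \to H^1(K,\mathrm{O}_n(q))$, and the implication ``locally trivial on $V$'' $\Rightarrow$ ``unramified on $V$'' for classes in $H^j(K,\mu_2)$ --- are routine.
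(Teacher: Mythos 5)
Your proposal is correct and follows essentially the same route as the paper: the paper proves Theorem~\ref{T:HP} by combining the finiteness of $\Omega_i = \ker\bigl(H^i(K,\mu_2)\to\prod_{v\in V}H^i(K_v,\mu_2)\bigr)$ (which is contained in $H^i(K,\mu_2)_V$) with the general descent Theorem~\ref{T:F2-1}, whose proof is precisely your inductive climb through the $I$-adic filtration using the injectivity of $H^1(K,\mathrm{SO}_n(q))\to H^1(K,\mathrm{O}_n(q))$, the Milnor isomorphisms $\gamma_{K,d}$, and the Arason--Pfister Hauptsatz. The only (harmless) differences are that you start at $I^2(K)$ by observing agreement of discriminants, saving one step, and you use $H^j(K,\mu_2)_V$ where the paper works with the a priori smaller kernels $\Omega_j$.
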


Moreover, our techniques in fact yield similar results for some groups $G$ of the form $\mathrm{SL}_{1 , D}$ (cf. Theorem \ref{T:HP1}),
some unitary groups and groups of type $\textsf{G}_2$ (cf. \S\S \ref{S:unitary}-\ref{S:G2}), and a partial result for some spinor groups (cf. Proposition \ref{P:HP2}). Based on these results, we would like to propose the following conjecture.

\vskip2mm

\noindent {\bf Conjecture B.} {\it Let $K$ be a 2-dimensional global field, and $V$ be a divisorial set of places of $K$. Then for any absolutely almost simple (or even semi-simple) algebraic group $G$, the natural map $$\theta_{G , V} \colon H^1(K , G) \longrightarrow \prod_{v \in V} H^1(K_v , G)$$ is proper. In particular, $$\text{\brus Sh}(G , V) := \ker \theta_{G , V}$$
is finite.}

\vskip1mm

(We note that the use of twisting shows that proving the finiteness of $\text{\brus Sh}(G , V)$ is the most essential part of Conjecture B.)

\vskip2mm

Of course, it would be quite tempting to extend this conjecture to all finitely generated fields, but for this more evidence needs to be developed.
At this point, the only result that goes beyond 2-dimensional global fields is  that for any finitely generated field $K$, its divisorial set of places $V$, and any $n$ prime to $\mathrm{char}\: K$, the map  $H^1(K , G) \to \prod_{v \in V} H^1(K_v , G)$ is proper for $G = \mathrm{PGL}_n$, which follows from the finiteness of the unramified Brauer group ${}_n\Br(K)_V$, cf. \cite{CRR3}, \cite{CRR4}.

\vskip2mm

Applications to weakly commensurable Zariski-dense will be given in \cite{CRR6}.

\vskip2mm

Theorem \ref{T:F1} will be derived from a more general Theorem \ref{T:F2} (see \S \ref{S:F2}) that (potentially) enables one to answer (*) for spinor groups over arbitrary finitely generated fields. We will also use this result to prove a finiteness statement for spinor groups, as well as some unitary groups and groups of type $\textsf{G}_2$, with good reduction over a class of fields that are {\it not} finitely generated. This class includes the function fields of $p$-adic curves that have received a great deal of attention in recent years (we refer the reader to  \cite{Brussel}, \cite{CTPS}, \cite{Parim}, \cite{Par-Sur} and  references therein for various results involving division algebras, quadratic forms, and algebraic groups over those fields), but is in fact much larger. We will formulate our results using a generalization of Serre's condition $(\mathrm{F})$ (see \cite[Ch. III, \S 4]{Serre-GC}) offered in \cite{IRap}. Let $K$ be a field and $m \geqslant 1$ an integer prime to $\mathrm{char}\: K$. We then introduce the following condition on $K$:

\medskip

$(\mathrm{F}'_m)$ For every finite separable extension $L/K$, the quotient $L^{\times}/{L^{\times}}^m$ is finite.

\medskip

\noindent (Note that if $L^{\times}/{L^{\times}}^m$ is finite for every finite \emph{separable} extension $L/K$, then it is finite for \emph{any} finite extension of $K$ --- see \cite[Lemma 2.8]{IRap}). Combining Theorem \ref{T:F2} with the results on the finiteness of unramified cohomology with $\mu_m$-coefficients over fields satisfying $(\mathrm{F}'_m)$ \cite{IRap}, we obtain the following.
\begin{thm}\label{T:F3}
Let $C$ be a smooth geometrically integral curve over a field $k$ of characteristic $\neq 2$ that satisfies condition $(\mathrm{F}'_2)$, and let $K = k(C)$ be its function field. Denote by $V$ the set of discrete valuations of $K$ corresponding to the closed points of $C$. Then the number of $K$-isomorphism classes of spinor groups $G = \mathrm{Spin}_n(q)$  of nondegenerate quadratic forms $q$ over $K$ in $n \geqslant 5$ variables that have good reduction at all $v \in V$ is finite.
\end{thm}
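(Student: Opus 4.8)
The plan is to deduce the statement from the general Theorem~\ref{T:F2} combined with the finiteness of unramified cohomology over fields satisfying $\Fpm$ established in \cite{IRap}. Concretely, I would apply Theorem~\ref{T:F2} to the field $K = k(C)$ equipped with the set $V$ of discrete valuations attached to the closed points of $C$. The role of Theorem~\ref{T:F2} is to carry out the group-theoretic part of the argument once and for all in an axiomatic setting: it reduces the finiteness of the set of $K$-isomorphism classes of spinor groups $\mathrm{Spin}_n(q)$ with good reduction at all $v \in V$ to the finiteness of the unramified cohomology groups $H^i(K, \mu_2)_V$ in the relevant range of degrees $i \geqslant 1$. (Roughly, these groups control, via the residue exact sequences for quadratic forms and the cohomological invariants $e_i \colon I^i/I^{i+1} \to H^i(K,\mu_2)$, the Witt/similarity classes of nondegenerate forms $q$ of dimension $n$ that are unramified at every $v \in V$, which is what good reduction of $\mathrm{Spin}_n(q)$ translates into.) So the only thing left is to verify this cohomological input for the pair $(K, V)$.

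For that, I would invoke \cite{IRap}. Since $\mathrm{char}\: k \neq 2$, we have that $m = 2$ is prime to $\mathrm{char}\: k$ and we may identify $\mu_2$ with $\Z/2\Z$; moreover every finite module of exponent $2$ over such $k$ is a direct sum of copies of $\mu_2$, so the $\mu_2$-statement suffices for all coefficient modules occurring in Theorem~\ref{T:F2}. The hypothesis that $k$ satisfies $\Fpm$ with $m = 2$ is exactly the input needed in \cite{IRap} to conclude that, for $K = k(C)$ and $V$ as above, each unramified group $H^i(K, \mu_2)_V$ is finite, and that these groups vanish for $i$ exceeding a bound coming from the $2$-cohomological dimension of $k$ (so that only finitely many degrees actually intervene). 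Feeding this finiteness into Theorem~\ref{T:F2} yields the assertion. Two minor points must be dispatched along the way: first, that $k(C)$ --- which in this generality need not be finitely generated, e.g. when $k$ is a $p$-adic field --- together with $V$ still satisfies whatever axioms on the pair $(K, V)$ are imposed in Theorem~\ref{T:F2}; here $V$ plays the part of a divisorial set, every element of $K^{\times}$ is a unit for all but finitely many $v \in V$, and one checks that the residue fields $k(v)$ are of the required kind. Second, one records that the finiteness conclusion of \cite{IRap} is stated for the same $V$ (closed points of $C$) that Theorem~\ref{T:F2} uses, so the two statements genuinely match.

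The main obstacle is concentrated in the two black boxes rather than in their combination: Theorem~\ref{T:F2} does the real algebraic-group-theoretic work of passing from good reduction of $\mathrm{Spin}_n(q)$ at $v$ to unramifiedness of $q$ and of bounding the resulting similarity classes in terms of unramified cohomology, while the analysis in \cite{IRap} handles the subtle question of how condition $\Fpm$ propagates to the unramified cohomology of function fields of curves. Granting those inputs, the proof of the present theorem is a formal matter of checking hypotheses and combining the two results; no new idea beyond this combination is required, which is precisely why it is phrased as being obtained by combining them.
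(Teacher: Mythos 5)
Your overall strategy matches the paper's: apply Theorem~\ref{T:F2} to $K = k(C)$ with $V = V_0$ and use \cite{IRap} (Theorem~\ref{T:IRap}) to supply the finiteness of the unramified groups $H^i(K,\mu_2)_{V}$. However, there is a genuine gap: Theorem~\ref{T:F2} has \emph{two} hypotheses, and you only verify one of them. Besides the finiteness of the $H^i(K,\mu_2)_V$ (condition (2)), it also requires the finiteness of $\Pic(V)/2\cdot\Pic(V)$ (condition (1)), and your proposal nowhere addresses this. Your hypothesis check for the pair $(K,V)$ only covers conditions (A) and (B) (units at almost all $v$, residue characteristic $\neq 2$). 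The Picard condition is not a formality here: since $k$ need not be finitely generated (e.g.\ $k$ a $p$-adic field), Proposition~\ref{P:Pic} does not apply, and the finiteness of $\Pic(V)/2\Pic(V)$ does not follow from the finiteness of unramified cohomology. The paper handles it by a separate argument: the Kummer sequence gives an embedding $\Pic(V_0)/2\cdot\Pic(V_0) \hookrightarrow H^2(C,\mu_2)$ (note: \'etale cohomology of the curve $C$, not the unramified cohomology of $K$), and $H^2(C,\mu_2)$ is then shown to be finite using Proposition~\ref{P:IRap} (finiteness of Galois cohomology over $(\mathrm{F}'_2)$-fields) together with the Hochschild--Serre spectral sequence, or alternatively the fundamental exact sequence (\ref{E:X0}) for an affine curve, whose extreme terms $H^2(k,\mu_2)$ and $H^1(k, H^1(C\otimes_k\bar k,\mu_2))$ are finite by $(\mathrm{F}'_2)$ and the finiteness of $H^1(C\otimes_k\bar k,\mu_2)$. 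Without some such argument your proof is incomplete.

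A secondary inaccuracy: you assert that the groups $H^i(K,\mu_2)_V$ vanish for $i$ above a bound given by the $2$-cohomological dimension of $k$. Condition $(\mathrm{F}'_2)$ does not imply finite $2$-cohomological dimension (e.g.\ $k=\R$ satisfies it), so this claim is unjustified; fortunately it is also unnecessary, since Theorem~\ref{T:F2} only requires finiteness in the finitely many degrees $i = 1,\ldots,[\log_2 n]+1$, all of which are covered by Theorem~\ref{T:IRap}.
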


\smallskip

Theorem \ref{T:F3} is likely to extend to absolutely almost simple simply connected groups of all types over the function fields of curves defined
over a field that satisfies condition $(\mathrm{F})$ --- see Conjecture 6.3. In this regard, we observe that in \S\S \ref{S:unitary}-\ref{S:G2} we extend the above results for spinor groups to the special unitary groups of hermitian forms over quadratic extensions of the base field and to groups of type $\textsf{G}_2$ (for the same fields $K$ and the same sets of valuations $V$).

The paper is organized as follows. In \S\ref{S:Pic}, we develop some formalism involving the Picard group associated with a set of discrete valuations and then use it to formulate a general result (Theorem \ref{T:F2}) that reduces the proof of the finiteness of the set of isomorphism classes of spinor groups having good reduction to the finiteness of certain unramified cohomology groups. We prove Theorem \ref{T:F2} in \S\ref{S:F2}, where we also give an application of our method to the properness of the global-to-local map in Galois cohomology (Theorem \ref{T:F2-1}). In \S\ref{S:ProofTF1}, we combine Theorem \ref{T:F2} with the finiteness results for unramified cohomology
of 2-dimensional global fields to prove Theorem \ref{T:F1}. It should be pointed out that these finiteness results are known in degrees $\leqslant 2$ and easily follow from the theorems of Poitou-Tate in degrees $\geqslant 4$, but have not appeared in the literature in degree 3. We derive the required fact in degree~3 (Corollary \ref{C:mu2}) from the more general Theorem \ref{T:H3-Finite}. For several reasons (see the discussion in \S\ref{S:ProofTF1}.1), we give two proofs of the characteristic zero case of Theorem \ref{T:H3-Finite}: the one given in \S\ref{S:H3} is based on the referee's suggestions, while the one in the Appendix (\S\ref{S:Append}) is our original argument. The positive characteristic case of Theorem \ref{T:H3-Finite} is treated in \S\ref{S:Fm}, along with the finiteness results involving fields of type $(\mathrm{F}'_m)$ and the proof of Theorem \ref{T:F3}. Theorems \ref{T:genus} and \ref{T:HP} are proved in \S\ref{S:5}. Finally, in \S\S\ref{S:unitary}-\ref{S:G2}, we present finiteness results for special unitary groups and groups of type $\mathsf{G}_2$.

\vskip3mm

\noindent {\bf Notations and conventions.} For a field $k$, we will denote by $\bar{k}$ a fixed separable closure. Given a discrete valuation $v$ of $k$, we let $k_v$ and $k^{(v)}$ denote the completion and the residue field of $k$ at $v$, respectively. We recall that a $\Ga(\bar{k}/k)$-module $M$ is said to be {\it unramified} at $v$ if for some (equivalently, any) extension $w$ of $v$ to $\bar{k}$, the inertia subgroup of the decomposition group ${\rm Dec}_w \subset \Ga(\bar{k}/k)$ acts trivially on $M.$ Also, if $G$ is an absolutely almost simple linear algebraic group defined over $k$, we will say that $G$ has {\it good reduction at $v$} if there exists a reductive group scheme $\mathscr{G}$ over the valuation ring $\mathcal{O}_v$ of $k_v$ whose generic fiber $\mathscr{G} \otimes_{\mathcal{O}_v} k_v$ is isomorphic to $G \otimes_k k_v$ (this definition, involving completions, is convenient for applications to the Hasse principle).

We will follow the conventions outlined in \cite[Ch. II, \S7.8]{GMS} regarding Tate twists of Galois modules. Namely, suppose $v$ is a discrete valuation of $k$, $n$ an integer prime to $\mathrm{char}\: k^{(v)}$, and $M$ a finite $\Ga(\bar{k}/k)$-module satisfying $nM = 0.$ For an integer $d$, one defines $M(d)$ to be $\mu_n^{\otimes d} \otimes M$ if $d \geq 0$ and $\mathrm{Hom}(\mu_n^{\otimes(-d)}, M)$ if $d < 0.$ In particular, $M(-1) = \mathrm{Hom}(\mu_n, M).$ In the case where $M$ is torsion (but not necessarily finite) without any elements of order equal to the residue characteristic, one writes $M = \displaystyle{\lim_{\longrightarrow} M'}$, where $M'$ are the finite submodules of $M$, and sets $M(d) = \displaystyle{\lim_{\longrightarrow} M'(d)}$. As usual, we will use $\mu_n^{\otimes d}$ to denote $\Z/n\Z(d)$ for {\it all} $d$.

Finally, we recall that if $n$ is an integer prime to ${\rm char}~k$, then isomorphism $k^{\times}/{k^{\times}}^n \simeq H^1(k, \mu_n)$ from Kummer theory is induced by sending an element $a \in k^{\times}$ to the cohomology class of the 1-cocycle
$$
\chi_{n,a} (\sigma) = \sigma(\sqrt[n]{a})/\sqrt[n]{a} \ \ \ \text{for} \ \sigma \in \Ga(\bar{k}/k).
$$
When $n = 2$, we will denote this cocycle simply by $\chi_a.$

\section{The Picard group associated with a set of discrete valuations}\label{S:Pic}

Suppose that a field $K$ is equipped with a set $V$ of discrete valuations that satisfies the following condition

\medskip

$(\mathrm{A})$  For any $a \in K^{\times}$, the set \: $V(a) := \{ v \in V \ \vert \ v(a) \neq 0 \}$ \: is finite.

\medskip

\noindent (It is worth noting that $(\mathrm{A})$ automatically holds for a divisorial set of valuations $V$ of a finitely generated field $K$.)
We now let $\mathrm{Div}(V)$ denote the free abelian group on the set $V$, the elements of which will be called ``{\it divisors}." The fact that $V$ satisfies $(\mathrm{A})$ enables one to associate to any $a \in K^{\times}$  the corresponding ``{\it principal divisor}"
$$
(a) = \sum_{v \in V} v(a) \cdot v.
$$
 Let $\mathrm{P}(V)$ denote the subgroup of $\mathrm{Div}(V)$ formed by all principal divisors. We call the quotient $\mathrm{Div}(V)/\mathrm{P}(V)$ the {\it Picard group} of $V$ and denote it by $\mathrm{Pic}(V)$.

Next, we recall (cf. \cite[Ch. II]{GMS}) that for a discrete valuation $v$ of a field $K$ and a finite Galois module $M$ which is unramified at $v$ and whose order is prime to $\mathrm{char}\: K^{(v)}$, one has a residue map
$$
\partial^M_{i , v} \colon H^{i}(K , M) \longrightarrow H^{i-1}(K^{(v)} , M(-1)) \ \ \ (i \geqslant 1).
$$
In particular, for every $n$ prime to $\mathrm{char}\: K^{(v)}$ and every $d$ we have the residue map
$$
\partial^{n , d}_{i , v} \colon H^{i}(K , \mu_n^{\otimes d}) \longrightarrow H^{i-1}(K^{(v)} , \mu_n^{\otimes (d-1)}).
$$
An element of $H^{i}(K , M)$ (in particular, of $H^{i}(K , \mu_n^{\otimes d})$) is said to be {\it unramified} if it lies in the kernel of the relevant residue map.


In this section, we will only consider cohomology with coefficients in $\mu_2 = \{ \pm 1 \}$. Then for a discrete valuation $v$ of $K$, the corresponding residue map is defined whenever $\mathrm{char}\: K^{(v)} \neq 2$ and will be denoted by
$$
\partial^i_v \colon H^i(K , \mu_2) \longrightarrow H^{i-1}(K^{(v)}, \mu_2).
$$
We now make the following assumption

\medskip

$(\mathrm{B})$ $\mathrm{char}\: K^{(v)} \neq 2$ for all $v \in V$.

\medskip

\noindent We then define the $i$-th unramified cohomology group of $K$ with respect to $V$ by
$$
H^i(K , \mu_2)_V = \bigcap_{v \in V} \mathrm{Ker}\: \partial^i_v.
$$
With these notations, we have

\begin{thm}\label{T:F2}
Let $K$ be a field equipped with a set $V$ of discrete valuations satisfying conditions $(\mathrm{A})$ and $(\mathrm{B})$, and let $n \geqslant 5$ be an integer. Assume that

\smallskip

{\rm (1)} the quotient $\Pic(V)/2 \cdot \Pic(V)$ is finite; and

\smallskip

{\rm (2)} the unramified cohomology groups $H^i(K , \mu_2)_{V}$ are finite for all $i = 1, \ldots , \ell := [\log_2 n] + 1$.

\smallskip

\noindent Then the number of $K$-isomorphism classes of spinor groups $G = \mathrm{Spin}_n(q)$  of nondegenerate quadratic forms $q$ over $K$ in $n$ variables that have good reduction at all $v \in V$ is  $$\displaystyle \leqslant | \Pic(V)/2 \cdot \Pic(V) | \cdot \prod_{i = 1}^{\ell} | H^i(K , \mu_2)_{V} |$$ (in particular, finite).
\end{thm}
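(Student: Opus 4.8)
\emph{Proof strategy.} The plan is to pass from spinor groups to quadratic forms considered up to similarity, to normalize a form within its similarity class using the Picard formalism of \S\ref{S:Pic}, and then to reconstruct it from finitely many unramified cohomological invariants via the Milnor conjecture (the isomorphisms $I^i(F)/I^{i+1}(F) \simeq H^i(F,\mu_2)$) together with the Arason--Pfister Hauptsatz. To begin, for $n \geqslant 5$ two spinor groups $\mathrm{Spin}_n(q)$ and $\mathrm{Spin}_n(q')$ are $K$-isomorphic if and only if the forms $q$ and $q'$ are similar, i.e.\ $q' \simeq \lambda q$ for some $\lambda \in K^{\times}$ (this follows from the classification of forms of type $\textsf{B}$ and $\textsf{D}$; for $n = 8$ one allows a harmless finite ambiguity coming from triality), so it suffices to bound the number of similarity classes of nondegenerate $n$-dimensional forms $q$ such that $\mathrm{Spin}_n(q)$ has good reduction at all $v \in V$. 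Next I would establish the local criterion: for $v$ with $\mathrm{char}\: K^{(v)} \neq 2$, the group $\mathrm{Spin}_n(q)$ has good reduction at $v$ precisely when $q$ is similar over $K_v$ to a form $\langle u_1, \ldots, u_n \rangle$ with all $u_j \in \mathcal{O}_v^{\times}$ (equivalently, the class of $q$ in $H^1(K_v, \mathrm{PGO}_n)$ is unramified), combining the classification of reductive group schemes over the discrete valuation ring $\mathcal{O}_v$ with the fact that an Azumaya $\mathcal{O}_v$-algebra whose generic fibre is split is itself split, which forces the special fibre of an $\mathcal{O}_v$-model of $\mathrm{Spin}_n(q)$ to be the group attached to an unramified quadratic form over $K^{(v)}$. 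By condition $(\mathrm{A})$, $q$ is then unramified at all but finitely many $v \in V$, and good reduction forces $q$ to be, at each $v \in V$, either unramified or ``purely ramified'' (similar over $K_v$ to a uniformizer times an unramified form).

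Using the scaling freedom I would now normalize. When $n$ is odd, scale $q$ so that its determinant coincides with that of the split form; then ``pure ramification'' at a place $v$ would force $v(\det q)$ to be odd, which is impossible, so after this normalization $q$ is unramified at every $v \in V$. When $n$ is even, the signed discriminant $d_{\pm}(q) \in H^1(K,\mu_2)$ is a similarity invariant, forced by good reduction to lie in $H^1(K,\mu_2)_V$; moreover the finite set $T_q \subseteq V$ of places at which $q$ is purely ramified determines a well-defined class in $\Pic(V)/2\cdot\Pic(V)$ --- this is the point at which hypothesis $(1)$ enters --- and after recording this class and rescaling one may assume the ramification locus of $q$ is fixed (and, when the class is trivial, that $q$ is $V$-unramified). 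Recording $d_{\pm}(q)$ and this Picard class accounts for the factors $|H^1(K,\mu_2)_V|$ and $|\Pic(V)/2\cdot\Pic(V)|$.

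Finally, set $\ell = [\log_2 n] + 1$ and reconstruct the normalized form by descending the filtration $W(K) \supset I(K) \supset \cdots$. Subtracting a fixed form with the prescribed discriminant, unramified and of bounded dimension, puts the result in $I^2(K)$; its Clifford invariant lies in $H^2(K,\mu_2)$ and is forced into $H^2(K,\mu_2)_V$ by the compatibility of the invariants $e_i$ with the residue maps; iterating, one extracts invariants $e_i \in H^i(K,\mu_2)_V$ for $i = 2, \ldots, \ell$, each step subtracting a representative chosen from the finite set $H^i(K,\mu_2)_V$. Two normalized forms of the same dimension $n$ with matching invariants $e_1, \ldots, e_\ell$ differ by a Witt class lying in $I^{\ell+1}(K)$ whose anisotropic part has dimension at most $2n < 2^{\ell+1}$; by the Arason--Pfister Hauptsatz this class vanishes, so the two forms are isometric by Witt cancellation. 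Thus the similarity class of $q$ is determined by a tuple in $\Pic(V)/2\cdot\Pic(V) \times \prod_{i=1}^{\ell} H^i(K,\mu_2)_V$, which yields the asserted bound.

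I expect the main obstacle to be the even-dimensional case of this scheme. There the rescaling $q \mapsto \lambda q$ fixes $d_{\pm}(q)$ but alters the higher invariants by cup products with $d_{\pm}(q)$ --- already $e_2$ changes by $(\lambda) \cup d_{\pm}(q)$ --- so the higher invariants of a \emph{similarity} class are only well defined modulo the subgroup $(K^{\times}) \cup d_{\pm}(q) \subseteq H^2(K,\mu_2)$ and its analogues in higher degree. Controlling these ambiguous invariants requires the finiteness of unramified cohomology (hypothesis $(2)$), applied after passing to the quadratic extension $K(\sqrt{d_{\pm}(q)})$, which is unramified at all $v \in V$, together with a careful choice of the auxiliary ``reference'' forms ensuring they remain $V$-unramified and of controlled dimension so as not to spoil the Arason--Pfister bound. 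By contrast, the odd-dimensional case and the purely formal Picard-group bookkeeping should be comparatively routine.
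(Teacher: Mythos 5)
Your overall strategy---normalize the scalar ambiguity via the divisor/idele formalism behind $\Pic(V)/2\cdot\Pic(V)$, then descend the filtration $I(K)\supset I(K)^2\supset\cdots$ using the Milnor isomorphisms and residue compatibility, and finish with the Arason--Pfister Hauptsatz---is the same circle of ideas as the paper's proof of Theorem \ref{T:F2}. The execution of the cohomological step, however, has a genuine gap. You want to attach to each normalized form $q$ a complete tuple of invariants $e_2,\ldots,e_\ell$ lying in the finite groups $H^i(K,\mu_2)_V$, and to do so you must, at each stage $i\geqslant 3$, subtract an auxiliary form $r$ whose class in $I(K)^{i-1}/I(K)^{i}$ realizes the previously recorded cohomology class, so that what remains lies in $I(K)^{i}$. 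For the next invariant to be unramified you then need the Witt-ring residues of $r$ to vanish at every $v\in V$, i.e. $[r]\in W_0(K_v)$ for all $v$. But knowing only that $e_{i-1}(r)$ is unramified gives merely $\partial_2([r])\in I(K^{(v)})^{i-1}$, not $\partial_2([r])=0$, and then $\partial^{i}_v$ of the $e_i$ of your difference equals $e_{i-1}$ of $-\partial_2([r])$ over the residue field, which has no reason to vanish. You acknowledge needing ``a careful choice of the auxiliary reference forms ensuring they remain $V$-unramified,'' but that is exactly the unproved point: there is no argument that an unramified cohomology class admits a representative form lying in $W_0(K_v)$ for all $v\in V$. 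The paper sidesteps this entirely by never choosing representatives of cohomology classes: it runs a pigeonhole argument on differences of pairs of forms from the given family. After the idele/Picard normalization, both members of such a pair are locally unit forms up to a common scalar, so Lemma \ref{L:unram111} applies directly to (a scalar multiple of) $\tilde q_j\perp(-\tilde q_{j_m})$ at every stage, and the bound $|I|\leqslant d_0d_1\cdots d_\ell$ falls out of the sizes of the successive fibers, without ever defining invariants of a single form.

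A second, related problem is your treatment of the even-dimensional case. The difficulty you flag---that rescaling changes the higher invariants by cup products with the discriminant, so that invariants of a similarity class are ill-defined---simply does not arise in the paper's scheme: once the scaling ideles are compared in $\I(K,V)/\I(K,V)^2\I_0(K,V)K^{\times}\simeq \Pic(V)/2\cdot\Pic(V)$ (Lemma \ref{L:Id}) and the principal parts are used to rescale, all subsequent comparisons are between actual forms, not similarity classes, and odd and even $n$ are treated uniformly. Your proposed remedy of passing to $K(\sqrt{d_{\pm}(q)})$ is both unnecessary and unsupported: the hypotheses concern unramified cohomology of $K$ only, and the finiteness of the corresponding groups over the quadratic extensions that would arise does not follow from them. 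A smaller point: the ``purely ramified locus'' $T_q$ you use to produce a class in $\Pic(V)/2\cdot\Pic(V)$ is not intrinsically well defined (a form that is locally hyperbolic is simultaneously unramified and purely ramified at $v$); the paper instead fixes a choice of local scaling factors $\lambda^{(i)}_v$ and works with the resulting idele, which is all an upper bound requires.
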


\vskip1mm

We will postpone the proof of Theorem \ref{T:F2} until the next section, and recall now the connection between $\Pic(V)$ and the ideles, which is well-known in the classical setting (cf. \cite[Ch. II, \S 17]{ANT}). Given a field $K$ endowed with a set $V$ of discrete valuations that satisfies (A), we define the \emph{group of ideles} $\mathbb{I}(K , V)$ as the restricted direct product of the multiplicative groups $K^{\times}_v$ for $v \in V$ with respect to the groups of units $U_v = \mathcal{O}^{\times}_v$:
$$
\mathbb{I}(K , V) = \{\, (x_v) \in \prod_{v \in V} K^{\times}_v \ \, \vert \ \, x_v \in U_v \ \ \text{for almost all} \ \ v \in V \,\}.
$$
Furthermore, we let
$$
\mathbb{I}_0(K , V) = \prod_{v \in V} U_v
$$
be the \emph{subgroup of integral ideles}. Since $V$ satisfies (A), one can consider the diagonal embedding $K^{\times} \hookrightarrow \mathbb{I}(K , V)$, the image of which will be called the \emph{group of principal ideles} and denoted also by $K^{\times}$.  Then
$$
\nu \colon \mathbb{I}(K , V) \to \mathrm{Div}(V), \ \ (x_v) \mapsto \sum_{v \in V} v(x_v) \cdot v
$$
is a surjective group homomorphism with kernel $\ker \nu = \mathbb{I}_0(K , V)$. Thus, we obtain the following.
\begin{lemma}\label{L:Id}
The map $\nu$ induces a natural identification of the quotient $\mathbb{I}(K , V) / \mathbb{I}_0(K , V) K^{\times}$ with $\mathrm{Pic}(V)$.
\end{lemma}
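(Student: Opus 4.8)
The plan is to observe that $\nu$ is a surjective homomorphism with kernel exactly $\mathbb{I}_0(K , V)$, to identify the image of the diagonally embedded $K^{\times}$ under $\nu$ with the group $\mathrm{P}(V)$ of principal divisors, and then to pass to quotients.

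First I would check that $\nu$ is a well-defined group homomorphism: for an idele $x = (x_v) \in \mathbb{I}(K , V)$ one has $x_v \in U_v$, and hence $v(x_v) = 0$, for almost all $v \in V$, so $\sum_{v} v(x_v) \cdot v$ is a finite formal sum, i.e. lies in $\mathrm{Div}(V)$; additivity is immediate from $v(x_v y_v) = v(x_v) + v(y_v)$. The kernel computation is equally direct: $\nu(x) = 0$ says $v(x_v) = 0$ for every $v \in V$, i.e. $x_v \in U_v$ for all $v$, which is precisely the condition defining $\mathbb{I}_0(K , V)$. Surjectivity follows by the standard construction: given $D = \sum_{v \in S} n_v \cdot v$ with $S \subset V$ finite, pick a uniformizer $\pi_v \in K_v^{\times}$ for each $v \in S$ and set $x_v = \pi_v^{n_v}$ for $v \in S$ and $x_v = 1$ otherwise; then $x = (x_v)$ is an idele (integral outside $S$) with $\nu(x) = D$.

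Next I would record that $\nu$ carries the group of principal ideles onto $\mathrm{P}(V)$: for $a \in K^{\times}$, condition $(\mathrm{A})$ guarantees that the diagonal idele $(a, a, \dots)$ indeed lies in $\mathbb{I}(K , V)$, and $\nu$ sends it to $\sum_{v \in V} v(a) \cdot v = (a)$, which is by definition a principal divisor, and all principal divisors arise this way. Hence $\nu$ induces an isomorphism $\bar{\nu} \colon \mathbb{I}(K , V)/\mathbb{I}_0(K , V) \cong \mathrm{Div}(V)$ under which the image of the subgroup $\mathbb{I}_0(K , V) K^{\times}$ equals $\nu(K^{\times}) = \mathrm{P}(V)$ (using $\nu(\mathbb{I}_0(K , V)) = 0$). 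Passing to quotients by these subgroups yields the asserted identification $\mathbb{I}(K , V)/\mathbb{I}_0(K , V) K^{\times} \cong \mathrm{Div}(V)/\mathrm{P}(V) = \mathrm{Pic}(V)$, and this map is natural since it is induced by $\nu$. I do not expect any genuine obstacle here: the statement is a formal consequence of the definitions of the restricted product and of the principal divisor map, the only points needing (minor) care being that $\nu$ takes values in $\mathrm{Div}(V)$ and is surjective, both of which rest on the restricted-product structure and on $(\mathrm{A})$.
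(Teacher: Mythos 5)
Your proposal is correct and follows exactly the route the paper intends: the paper simply records that $\nu$ is a surjective homomorphism with kernel $\mathbb{I}_0(K,V)$ and that principal ideles map to principal divisors, and then the identification is formal. Your additional checks (well-definedness via the restricted product, surjectivity via uniformizers, use of condition $(\mathrm{A})$ for the diagonal embedding) just spell out the details the paper leaves implicit.
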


\bigskip

\section{Proof of Theorem \ref{T:F2} and its variations}\label{S:F2}

\noindent {\bf 1. Two facts about the Witt ring.} Let $F$ be a field of characteristic $\neq 2$. We let $W(F)$ and $I(F)$ denote the Witt ring of $F$  and its fundamental ideal, respectively. For a nondegenerate quadratic form $q$ over $F$,  $[q]$ denotes the corresponding class in $W(F)$. As usual, the quadratic form $a_1x_1^2 + \cdots + a_nx_n^2$ with $a_i \in F^{\times}$ will be denoted by $\langle a_1, \ldots , a_n \rangle$, while $\langle\!\langle a_1, \ldots a_d   \rangle\!\rangle$ will be used to denote the $d$-fold Pfister form $\langle 1 , -a_1 \rangle \otimes \cdots \otimes \langle 1 , -a_d \rangle$. Clearly, for any $d \geqslant 1$, the $d$-th power $I(F)^d$ is additively generated by the classes of $d$-fold Pfister forms.
\begin{lemma}\label{L:easy}
Let $q$ be a nondegenerate quadratic form over $F$ such that $[q] \in I(F)^d$ for some $d \geqslant 1$. Then for any $\lambda \in F^{\times}$, we have $[\lambda q] \in I(F)^d$ and  $[\lambda q] + I(F)^{d+1} = [q] + I(F)^{d + 1}$.
\end{lemma}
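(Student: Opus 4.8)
The plan is to reduce the statement to the elementary fact that $I(F)^d$ is generated, as an abelian group, by classes of $d$-fold Pfister forms, together with the standard identity expressing a scaled Pfister form as a sum of Pfister forms modulo one degree higher. First, since $[q] \in I(F)^d$, we may write $[q] = \sum_{j} \epsilon_j [\pi_j]$ with $\epsilon_j \in \{\pm 1\}$ and each $\pi_j$ a $d$-fold Pfister form; then $[\lambda q] = \sum_j \epsilon_j [\lambda \pi_j]$ in $W(F)$, so it suffices to treat the case of a single (signed) Pfister class, i.e. to show $[\lambda \pi] \in I(F)^d$ and $[\lambda \pi] \equiv [\pi] \pmod{I(F)^{d+1}}$ for a $d$-fold Pfister form $\pi$.

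The key computation is the identity $[\lambda \pi] - [\pi] = [\langle \lambda \rangle \pi] - [\langle 1 \rangle \pi] = [\langle 1, -\lambda\rangle] \cdot ([\pi] - \text{h})$ manipulated appropriately; more cleanly, one has in $W(F)$ the relation $\langle \lambda \rangle \otimes \pi \;=\; \pi \;-\; \langle\!\langle \lambda \rangle\!\rangle \otimes \pi'$ where $\pi' = \langle -1 \rangle \otimes \pi$ represents $-\pi$ — that is, $[\lambda \pi] = [\pi] - [\langle\!\langle \lambda \rangle\!\rangle \otimes \pi]$ up to sign, using $\langle\!\langle \lambda \rangle\!\rangle = \langle 1, -\lambda \rangle$ and $\langle \lambda, -\lambda \rangle$ being hyperbolic. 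Since $\langle\!\langle \lambda \rangle\!\rangle \otimes \pi$ is (plus or minus the class of) a $(d+1)$-fold Pfister form, its class lies in $I(F)^{d+1}$. This simultaneously shows $[\lambda\pi] \in I(F)^d$ (as $[\pi] \in I(F)^d$ and $I(F)^{d+1} \subset I(F)^d$) and that $[\lambda \pi] + I(F)^{d+1} = [\pi] + I(F)^{d+1}$. Summing over $j$ with the signs $\epsilon_j$ recovers $[\lambda q] \in I(F)^d$ and $[\lambda q] + I(F)^{d+1} = [q] + I(F)^{d+1}$.

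I do not expect any serious obstacle here; the only point requiring a little care is bookkeeping with signs in the Witt ring and the precise form of the Pfister identity $\langle a \rangle \otimes \langle\!\langle b_1, \ldots, b_d \rangle\!\rangle$ in relation to $\langle\!\langle b_1, \ldots, b_d \rangle\!\rangle$ modulo $I(F)^{d+1}$ — all of which is standard (see e.g. Lam's book or \cite{GMS}). An alternative, essentially equivalent route is cohomological: under the identification of $I(F)^d/I(F)^{d+1}$ with $H^d(F, \mu_2)$ (Milnor conjecture/Voevodsky, or just the surjectivity of the symbol map, which is all that is needed), the class of $\langle\!\langle a_1, \ldots, a_d \rangle\!\rangle$ maps to the cup product $(a_1) \cup \cdots \cup (a_d)$, and scaling a quadratic form by $\lambda$ does not change the associated symbol in degree $d$ because the correction term carries an extra factor $(\lambda)$, landing in degree $d+1$. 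I would present the direct Witt-ring argument as the main proof since it is self-contained and avoids invoking deep results.
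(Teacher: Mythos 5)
Your proof is correct and rests on the same key identity as the paper's one-line argument: $[q] - [\lambda q] = [\,q \perp (-\lambda q)\,] = [\langle 1, -\lambda \rangle \otimes q]$, which lies in $I(F)^{d+1}$ simply because $[\langle 1, -\lambda \rangle] \in I(F)$ and $[q] \in I(F)^d$. The preliminary decomposition of $[q]$ into signed $d$-fold Pfister classes is unnecessary overhead --- the paper applies this identity directly to $q$, using only that $I(F)^{d+1} = I(F) \cdot I(F)^d$ --- so your argument is essentially the same proof with an extra reduction step that can be deleted.
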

\begin{proof}
The fact that $[\lambda q] \in I(F)^d$ is obvious. Furthermore, we have
$$
[q] - [\lambda q] = [ q \perp (-\lambda q)] = [<1 , -\lambda> \otimes q] \in I(F)^{d+1},
$$
as required.
\end{proof}

Now, let $F$ be a field complete with respect to a discrete valuation $v$ such that the characteristic of the residue field $F^{(v)}$ is $\neq 2$. We let $U(F)$ denote the group of units in $F$, and fix a uniformizer $\pi$.
Then one can define the first and second residue maps
$$
\partial_i \colon W(F) \to W(F^{(v)}), \ \ i = 1, 2,
$$
which are homomorphisms of additive groups uniquely characterized by the conditions
$$
\partial_1\langle u \rangle = \langle \bar{u} \rangle, \ \ \partial_1\langle\pi u\rangle = 0, \ \ \ \text{and} \ \ \ \partial_2\langle u \rangle = 0, \ \
\partial_2\langle\pi u \rangle = \langle \bar{u} \rangle,
$$
for any $u \in U(F)$ (where $\bar{u}$ denote the image of $u$ in $F^{(v)}$) (see \cite[Ch. VI, \S 1]{Lam} or \cite[\S 5]{Milnor} for the details). Let $W_0(F)$ be the subring of $W(F)$ generated by the classes of $\langle u \rangle$ for $u \in U(F)$. Then $W_0(F) = \mathrm{Ker}\: \partial_2$, while $\partial_1$ yields an isomorphism between $W_0(F)$ and $W(F^{(v)})$. Thus, we have the following split exact sequence
$$
0 \to W_0(F) \longrightarrow W(F) \stackrel{\partial_2}{\longrightarrow} W(F^{(v)}) \to 0.
$$
Let $I_0(F) = W_0(F) \cap I(F)$. Milnor  \cite[\S 5]{Milnor} shows that for any $d \geqslant 1$, the restriction of $\partial_2$ to $I^d(F)$
(the $d$th power of $I(F)$) yields the exact sequence
$$
0 \to I_0^d(F) \longrightarrow I^d(F) \stackrel{\partial_2}{\longrightarrow} I^{d-1}(F^{(v)}) \to 0.
$$
This, in particular, gives the following
\begin{lemma}\label{L:Witt1}
For any $d \geqslant 1$, we have $I(F)^d \cap W_0(F) = I_0(F)^d$.
\end{lemma}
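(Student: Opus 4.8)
The plan is to dispose of the easy inclusion and reduce the other to a short computation with Pfister forms. The reverse inclusion $I_0(F)^d \subseteq I(F)^d \cap W_0(F)$ is immediate: $I_0(F) = I(F) \cap W_0(F)$ is an ideal of the ring $W_0(F)$, so $I_0(F)^d \subseteq W_0(F)$, and $I_0(F) \subseteq I(F)$ gives $I_0(F)^d \subseteq I(F)^d$. For the opposite inclusion $I(F)^d \cap W_0(F) \subseteq I_0(F)^d$ I would use two preliminary observations. (i) Rescaling the diagonal entries of a form by squares shows that, as additive groups, $W(F) = W_0(F) + \langle \pi\rangle W_0(F)$, where $\langle\pi\rangle W_0(F)$ denotes the subgroup of classes of forms all of whose diagonal entries lie in $\pi U(F)$; since $\partial_2$ vanishes on $W_0(F)$ while $\partial_2(\langle\pi\rangle w) = \partial_1(w)$ for $w \in W_0(F)$, and $\partial_1$ is injective on $W_0(F)$, this sum is in fact direct and multiplication by $\langle\pi\rangle$ is injective on $W_0(F)$. (ii) Since $I(F)^d$ is additively generated by classes of $d$-fold Pfister forms, and the entries of a Pfister form may be rescaled by squares while the identity $\langle\langle a, b\rangle\rangle \cong \langle\langle a, -ab\rangle\rangle$ lets one modify a pair of entries, every element of $I(F)^d$ is a $\Z$-linear combination of classes of $d$-fold Pfister forms whose entries all lie in $U(F) \cup \pi U(F)$, with at most one entry in $\pi U(F)$.

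Given $x \in I(F)^d \cap W_0(F)$, I would write $x = \sum_j \epsilon_j [\rho_j]$ with $\epsilon_j = \pm 1$ and the $\rho_j$ normalized as in (ii). If all entries of $\rho_j$ are units, then $[\rho_j]$ is a product of $d$ classes of the form $[\langle 1, -u\rangle]$ with $u \in U(F)$, each lying in $I_0(F)$, so $[\rho_j] \in I_0(F)^d$. If $\rho_j$ has exactly one ramified entry, write $\rho_j = \psi_j \otimes \langle 1, -\pi w\rangle$ with $\psi_j$ a $(d-1)$-fold Pfister form in units and $w \in U(F)$; then the Witt-ring identities $[\langle 1, -\pi w\rangle] = [\langle 1, -w\rangle] + \langle w\rangle \cdot [\langle 1, -\pi\rangle]$ and $[\langle 1, -\pi\rangle] = \langle 1\rangle - \langle\pi\rangle$ give
$$
[\rho_j] \; = \; \tau_j \; + \; \bigl( c_j - \langle\pi\rangle c_j \bigr), \qquad \tau_j := [\psi_j \otimes \langle 1, -w\rangle] \in I_0(F)^d, \quad c_j := [\psi_j]\cdot\langle w\rangle \in W_0(F).
$$
Summing over $j$ yields $x = A + C - \langle\pi\rangle C$, where $A \in I_0(F)^d$ collects the contributions $\epsilon_j[\rho_j]$ from the unit Pfister forms and $\epsilon_j\tau_j$ from the ramified ones, and $C := \sum_j \epsilon_j c_j \in W_0(F)$ runs over the ramified terms. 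As $x$ and $A$ lie in $W_0(F)$, so does $\langle\pi\rangle C = C - (x-A)$; but $\langle\pi\rangle C \in \langle\pi\rangle W_0(F)$ and $W_0(F) \cap \langle\pi\rangle W_0(F) = 0$ by (i), so $\langle\pi\rangle C = 0$, hence $C = 0$ by the injectivity in (i), and therefore $x = A \in I_0(F)^d$.

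The one step calling for some care is the normalization in (ii): reducing to at most one ramified entry by repeatedly applying $\langle\langle a, b\rangle\rangle \cong \langle\langle a, -ab\rangle\rangle$ to two ramified entries $\pi u, \pi v$, which replaces the second by $-(\pi u)(\pi v) = -\pi^2 uv$, a unit up to squares. After that it is the bookkeeping above, and notably no induction on $d$ is needed. Alternatively one could transport the statement through the ring isomorphism $\partial_1 \colon W_0(F) \stackrel{\sim}{\longrightarrow} W(F^{(v)})$, which carries $I_0(F)$ onto $I(F^{(v)})$ and hence $I_0(F)^d$ onto $I(F^{(v)})^d = I^d(F^{(v)})$, reducing the lemma to $\partial_1\bigl(I(F)^d \cap W_0(F)\bigr) = I^d(F^{(v)})$; but the nontrivial inclusion there amounts to the congruence $\partial_1(\xi) \equiv \partial_2(\xi) \pmod{I^d(F^{(v)})}$ for $\xi \in I^d(F)$, checked on the same normalized Pfister generators, so it is no shorter.
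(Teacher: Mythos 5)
Your proof is correct, but it follows a genuinely different route from the paper. The paper disposes of the lemma in one line by quoting Milnor's computation of the second residue map on powers of the fundamental ideal, namely the exact sequence $0 \to I_0^d(F) \to I^d(F) \stackrel{\partial_2}{\to} I^{d-1}(F^{(v)}) \to 0$; since $W_0(F) = \ker \partial_2$, intersecting $I(F)^d$ with $W_0(F)$ picks out exactly $\ker(\partial_2|_{I^d(F)}) = I_0(F)^d$. You instead reprove the hard inclusion $I(F)^d \cap W_0(F) \subseteq I_0(F)^d$ from scratch: the additive decomposition $W(F) = W_0(F) \oplus \langle\pi\rangle W_0(F)$ (with injectivity of multiplication by $\langle\pi\rangle$ on $W_0(F)$, which rests on the same Springer-type fact that $\partial_1$ is an isomorphism $W_0(F) \simeq W(F^{(v)})$), the normalization of $d$-fold Pfister generators to unit entries with at most one entry in $\pi U(F)$ via $\langle\!\langle a , b \rangle\!\rangle \cong \langle\!\langle a , -ab \rangle\!\rangle$, and the identity $[\langle 1 , -\pi w\rangle] = [\langle 1 , -w\rangle] + \langle w\rangle(\langle 1\rangle - \langle\pi\rangle)$, followed by the direct-sum argument forcing the $\langle\pi\rangle$-component to vanish. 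I checked each of these steps and they are sound; in effect you have re-derived the portion of Milnor's theorem (the identification of $\ker(\partial_2|_{I^d})$) that the paper cites. What the paper's argument buys is brevity and a clean reduction to a standard reference; what yours buys is a self-contained, elementary verification with the Pfister-form bookkeeping made explicit, at the cost of redoing known material. Your closing alternative via transport along $\partial_1$ is also viable but, as you note, not shorter.
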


\bigskip

\noindent {\bf 2. The Milnor isomorphism and unramified classes.} Let again $F$ be a field of characteristic $\neq 2$. It is a consequence of Voevodsky's proof of the Milnor conjecture (see \cite{OVV}, \cite{V1}, \cite{V2}) that for $d \geqslant 1$, there are natural isomorphisms of abelian groups
$$
\gamma_{F , d} \colon I(F)^d / I(F)^{d + 1} \longrightarrow H^d(F , \mu_2).
$$
Explicitly, $\gamma_{F,d}$ is defined by sending the class of the Pfister form $\langle\!\langle a_1, \ldots , a_d \rangle\!\rangle$ to the cup-product $\chi_{a_1} \cup \cdots \cup \chi_{a_d}$, where  for $a \in F^{\times}$, we let $\chi_a$ be the corresponding 1-cocycle given by Kummer theory.
\begin{lemma}\label{L:unram111}
Let $F$ be a field complete with respect to a discrete valuation $v$ such that $\mathrm{char}\: F^{(v)} \neq 2$, and let $d \geqslant 1$. If $q$ is a nondegenerate quadratic form over $F$ such that $[\lambda q] \in I(F)^d \cap W_0(F)$ for some $\lambda \in F^{\times}$ (notations as in the previous subsection), then $[q] \in I(F)^d$ and the cohomology class
$\gamma_{F , d}([q]) \in H^d(F , \mu_2)$ is unramified at $v$.
\end{lemma}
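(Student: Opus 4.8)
The plan is to run the hypothesis on $[\lambda q]$ through the Witt ring and then transport it to Galois cohomology via the Milnor isomorphism $\gamma_{F,d}$, using the fact that residue maps kill symbols built out of units.

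First I would dispose of the scalar $\lambda$. Applying Lemma \ref{L:easy} to the form $\lambda q$ and the scalar $\lambda$, together with $\lambda^2 q \simeq q$, shows $[q] \in I(F)^d$; applying it once more to $q$ and $\lambda$ gives $[\lambda q] - [q] \in I(F)^{d+1}$, so $[q]$ and $[\lambda q]$ have the same image in $I(F)^d/I(F)^{d+1}$ and hence $\gamma_{F,d}([q]) = \gamma_{F,d}([\lambda q])$. Thus it suffices to prove that $\gamma_{F,d}([\lambda q])$ is unramified at $v$.

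Next, since $[\lambda q] \in I(F)^d \cap W_0(F)$, Lemma \ref{L:Witt1} gives $[\lambda q] \in I_0(F)^d$. I would then note that $I_0(F)^d$ is additively generated by the classes of the $d$-fold Pfister forms $\langle\!\langle u_1, \ldots, u_d \rangle\!\rangle$ with $u_1, \ldots, u_d \in U(F)$: the first residue map restricts to a ring isomorphism $W_0(F) \xrightarrow{\sim} W(F^{(v)})$ taking $I_0(F)$ onto $I(F^{(v)})$, hence $I_0(F)^d$ isomorphically onto $I(F^{(v)})^d$, and the latter is generated by $d$-fold Pfister forms over $F^{(v)}$, each of which is the image of a unit Pfister form over $F$ (every element of $(F^{(v)})^{\times}$ lifts to a unit of $F$). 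Writing $[\lambda q] \equiv \sum_j \langle\!\langle u_1^{(j)}, \ldots, u_d^{(j)} \rangle\!\rangle \pmod{I(F)^{d+1}}$ with all $u_i^{(j)} \in U(F)$ and applying the group homomorphism $\gamma_{F,d}$ yields $\gamma_{F,d}([\lambda q]) = \sum_j \chi_{u_1^{(j)}} \cup \cdots \cup \chi_{u_d^{(j)}}$.

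Finally, for $u \in U(F)$ the class $\chi_u \in H^1(F, \mu_2)$ is unramified at $v$ (under $H^1(F,\mu_2) \simeq F^{\times}/{F^{\times}}^2$ the residue is $v(\,\cdot\,)$ mod $2$), and a cup product of classes each unramified at $v$ is again unramified at $v$, by the standard compatibility of residue maps with cup products. Hence every summand $\chi_{u_1^{(j)}} \cup \cdots \cup \chi_{u_d^{(j)}}$, and therefore $\gamma_{F,d}([\lambda q]) = \gamma_{F,d}([q])$, lies in $\ker \partial^d_v$, as required. I do not expect a serious obstacle here: beyond Lemmas \ref{L:easy} and \ref{L:Witt1} the argument uses only these two standard facts about residues plus the description of $I_0(F)^d$ via $\partial_1$; alternatively one can phrase it as the commutativity of the square relating $\gamma_{F,d}$, $\gamma_{F^{(v)},d-1}$, the Witt residue $\partial_2$, and the cohomological residue $\partial^d_v$ (checked on Pfister generators), combined with $[\lambda q] \in W_0(F) = \ker \partial_2$, which is the only mildly technical point.
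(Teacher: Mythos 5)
Your proposal is correct and follows essentially the same route as the paper: reduce to $[\lambda q]$ via Lemma \ref{L:easy}, use Lemma \ref{L:Witt1} to get $[\lambda q]\in I_0(F)^d$, write it as a sum of unit Pfister classes, and observe that the resulting symbols $\chi_{u_1}\cup\cdots\cup\chi_{u_d}$ with $u_i\in U(F)$ are killed by the residue map. The extra justifications you supply (lifting generators of $I(F^{(v)})^d$ through $\partial_1$, and the compatibility of residues with cup products) are correct fillings-in of steps the paper treats as standard.
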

\begin{proof}
It immediately follows from Lemma \ref{L:easy} that $[q] \in I(F)^d$ and $\gamma_{F , d}([q]) = \gamma_{F , d}([\lambda q])$. So, it is enough to show that  if $[q] \in I(F)^d \cap W_0(F)$, then $\gamma_{F , d}([q])$ is unramified at $v$. According to Lemma~\ref{L:Witt1}, we have $I(F)^d \cap W_0(F) = I_0(F)^d$, which is additively generated by the classes of Pfister forms $\langle\!\langle a_1, \ldots , a_d \rangle\!\rangle$ with $a_i \in U(F)$. But for any such form,  the corresponding class
$$
\gamma_{F , d}([\langle\!\langle a_1, \ldots , a_d \rangle\!\rangle]) = \chi_{a_1} \cup \cdots \cup \chi_{a_d}
$$
is clearly unramified, and our claim follows.
\end{proof}

\bigskip

\noindent {\bf 3. Proof of Theorem \ref{T:F2}.} Let $\{ q_i \}_{i \in I}$ be a family of $n$-dimensional nondegenerate quadratic forms over $K$ such that

\vskip2mm

\noindent $\bullet$ for each $i \in I$, the spinor group $G_i = \mathrm{Spin}_n(q_i)$ has  good reduction at all $v \in V$; and

\vskip1mm

\noindent $\bullet$ \parbox[t]{15cm}{for $i , j \in I$, $i \neq j$, the forms $q_i$ and $q_j$ are not similar (i.e., $q_i$ is not equivalent to any nonzero scalar multiple of $q_j$).}

\vskip2mm

\noindent We wish to show that $I$ is finite and
\begin{equation}\label{E:estimate1}
| I | \leqslant d_0 d_1 \cdots d_{\ell}
\end{equation}
where
$$
d_0 = | \Pic(V) / 2 \cdot \Pic(V) | \ \ \ \text{and} \ \ \ d_i = | H^i(K , \mu_2)_V | \ \ \text{for} \ \ i = 1, \ldots , \ell = [\log_2 n] + 1.
$$
We begin by observing that the first of the above conditions implies that for each $i \in I$ and any $v \in V$, there exists $\lambda^{(i)}_v \in K^{\times}_v$ such that the form $\lambda^{(i)}_v q_i \in W_0(K_v)$ in the above notations. In addition, we may assume without loss of generality that for each $i \in I$, we have $\lambda^{(i)}_v = 1$ for almost all $v$; then $\lambda^{(i)} := (\lambda^{(i)}_v)_{v \in V} \in \mathbb{I}(K , V)$. Using Lemma \ref{L:Id}, we see that
$$
\I(K , V)/\I(K , V)^2 \I_0(K , V) K^{\times} \simeq \mathrm{Pic}(V) / 2 \cdot \mathrm{Pic}(V).
$$
So, there exists a subset $J_0 \subset I$ of size $\geqslant \vert I \vert / d_0$ (if $I$ is infinite then this simply means that $J_0$ is also infinite) such that all $\lambda^{(i)}$'s for $i \in J_0$ have the same image in $\I(K , V)/\I(K , V)^2 \I_0(K , V) K^{\times}$. Now, fix $j_0 \in J_0$. For  any $j \in J_0$, we can write
$$
\lambda^{(j)} = \lambda^{(j_0)} (\alpha^{(j)})^2 \beta^{(j)} \delta^{(j)} \ \ \ \text{with} \ \ \ \alpha^{(j)} \in \I(K , V), \ \ \beta^{(j)} \in \I_0(K , V), \ \ \delta^{(j)} \in K^{\times},
$$
(assuming that the elements $\alpha^{(j_0)}$, $\beta^{(j_0)}$ and $\delta^{(j_0)}$ are all trivial), and then set
$$
\tilde{q}_j = \delta^{(j)} q_j \ \ \text{and} \ \ \tilde{\lambda}^{(j)} = \lambda^{(j)} (\delta^{(j)})^{-1}.
$$
We note that the forms $\tilde{q}_j$ for $j \in J_0$ remain pairwise non-similar, in particular, inequivalent. Furthermore, for any $v \in V$ we have $\tilde{\lambda}^{(j)}_v \tilde{q}_j = \lambda^{(j)}_v q_j$, hence $[\tilde{\lambda}^{(j)}_v \tilde{q}_j] \in W_0(K_v)$, and
$$
\tilde{\lambda}^{(j)} = \tilde{\lambda}^{(j_0)} (\alpha^{(j)})^2 \beta^{(j)}.
$$
Then for any $v \in V$, the form
$$
q(j , v) := \tilde{\lambda}^{(j_0)}_v(\tilde{q}_j \perp (-\tilde{q}_{j_0})) = (\alpha^{(j)}_v)^{-2}(\beta^{(j)})^{-1} (\tilde{\lambda}^{(j)}_v\tilde{q}_j)
\perp \tilde{\lambda}^{(j_0)}_v (-\tilde{q}_{j_0})
$$
is equivalent to
$$
(\beta^{(j)}_v)^{-1}(\tilde{\lambda}^{(j)}_v\tilde{q}_j) \perp \tilde{\lambda}^{(j_0)}_v (-\tilde{q}_{j_0}).
$$
Since $\beta^{(j)}_v \in U(K_v)$, we see that $[q(j , v)] \in W_0(K_v) \cap I(K_v)$. Now, invoking Lemma \ref{L:unram111} with $d = 1$, we obtain that
$\gamma_{K_v , 1}([\tilde{q}_j] - [\tilde{q}_{j_0}]) \in H^1(K_v , \mu_2)$ is unramified at $v$. This being true for all $v \in V$, we conclude that
$$
\gamma_{K , 1}([\tilde{q}_j] - [\tilde{q}_{j_0}]) \in H^1(K , \mu_2)_V.
$$
Then one can find a subset $J_1 \subset J_0$ of size $$\geqslant | J_0 |/d_1 \geqslant | I | / (d_0 d_1)$$ (again, if $I$ is infinite this simply means that $J_1$ is also infinite) such that the elements $[\tilde{q}_j] - [\tilde{q}_{j_0}] \in I(K)$ for $j \in J_1$ have the same image under $\gamma_{K , 1}$. Fix $j_1 \in J_1$. Then for any $j \in J_1$ we have
$$
\gamma_{K , 1}([\tilde{q}_j] - [\tilde{q}_{j_1}]) = \gamma_{K , 1}(([\tilde{q}_j] - [\tilde{q}_{j_0}]) - ([\tilde{q}_{j_1}] - [\tilde{q}_{j_0}])) = 0,
$$
implying that $[\tilde{q}_j] - [\tilde{q}_{j_1}] \in I(K)^2$. Furthermore, we observe that
$$
\tilde{\lambda}^{(j)} = \tilde{\lambda}^{(j_1)} (\bar{\alpha}^{(j)})^2 \bar{\beta}^{(j)} \ \ \text{with} \ \ \bar{\alpha}^{(j)} \in \I(K , V), \ \bar{\beta}^{(j)} \in \I_0(K , V),
$$
using which one shows that for each $v \in V$ the class of the form $\tilde{\lambda}^{(j_1)}_v(\tilde{q}_j \perp (-\tilde{q}_{j_1}))$ lies in $W_0(K_v) \cap I(K_v)^2$. Now, Lemma \ref{L:unram111} with $d = 2$ yields that
$$
\gamma_{K , 2}([\tilde{q}_j] - [\tilde{q}_{j_1}]) \in H^2(K , \mu_2)_V.
$$
Then there exists a subset $J_2 \subset J_1$ of size $\geqslant | I | /(d_0d_1d_2)$ such that the elements $[\tilde{q}_j] - [\tilde{q}_{j_1}]$ for $j \in J_2$ have the same image under $\gamma_{K , 2}$. Consequently,  fixing $j_2 \in J_2$, we will have $[\tilde{q}_{j}] - [\tilde{q}_{j_2}] \in I(K)^3$ for all $j \in J_2$. Proceeding inductively, we construct nested sequence of subsets
$$
I \supset J_0 \supset J_1 \supset J_2 \supset \cdots \supset J_{\ell}
$$
such that

\vskip2mm

\noindent (a) $\displaystyle | J_m | \geqslant \frac{| I |}{d_0 d_1 \cdots d_m}$; and

\vskip1mm

\noindent (b) fixing $j_m \in J_m$, we will have $[\tilde{q}_j] - [\tilde{q}_{j_m}] \in I(K)^{m+1}$ for all $j \in J_m$,

\vskip2mm

\noindent for any $m = 1, \ldots , \ell$. However, according to \cite[Ch. X, Hauptsatz 5.1]{Lam}, the dimension of any positive-dimensional anisotropic form in $I(K)^{\ell + 1}$ is $\geqslant 2^{\ell+1} > 2^{\log_2 n + 1} = 2n$. So, the fact that
$$
[\tilde{q}_j] - [\tilde{q}_{j_{\ell}}] = [\tilde{q}_j \perp (- \tilde{q}_{j_{\ell}})] \in I(K)^{\ell + 1}
$$
implies that the form $\tilde{q}_j \perp (- \tilde{q}_{j_{\ell}})$ is hyperbolic, and consequently $\tilde{q}_j$ and $\tilde{q}_{j_{\ell}}$ are equivalent. Since $j \in J_{\ell}$ was arbitrary and the forms $\tilde{q}_j$ for $j \in J_{\ell}$ are pairwise inequivalent, we see that $J_{\ell}$ actually reduces to a single element. Then the inequality in (a) yields the required estimation (\ref{E:estimate1}). \hfill $\Box$

\bigskip

\noindent {\bf 4. Another application of the method.} The method developed to prove Theorem \ref{T:F2} can be used in various situations. Here we would like to indicate one application to the analysis of the global-to-local map in Galois cohomology.
\begin{thm}\label{T:F2-1}
Let $K$ be a field equipped with a set $V$ of discrete valuations satisfying conditions $(\mathrm{A})$ and $(\mathrm{B})$, and let $n \geqslant 5$ be an integer. Assume that

\vskip2mm

\noindent {$(\bullet)$} \parbox[t]{15.7cm}{for each $i = 1, \ldots , [\log_2 n] + 1=\ell$, the kernel $\Omega_i$ of the diagonal map $$\displaystyle H^i(K , \mu_2) \longrightarrow \prod_{v \in V} H^i(K_v , \mu_2)$$ is finite of order $\omega_i$.}

\vskip2mm

\noindent Then for a nondegenerate $n$-dimensional quadratic form $q$ over $K$ and the diagonal map $$\displaystyle \pi \colon H^1(K , \mathrm{SO}_n(q)) \longrightarrow \prod_{v \in V} H^1(K_v , \mathrm{SO}_n(q)),$$ the set $\pi^{-1}(\pi(h))$ is finite of size $\leqslant \omega_1 \cdots \omega_{\ell}$ for any $h \in H^1(K , \mathrm{SO}_n(q))$. In particular, $\pi$ is a proper map.
\end{thm}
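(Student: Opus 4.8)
The plan is to adapt the proof of Theorem~\ref{T:F2} almost verbatim, replacing the role of quadratic forms with good reduction by the quadratic forms parametrized by a fiber of $\pi$. Recall that $H^1(K, \mathrm{SO}_n(q))$ classifies (isomorphism classes of) nondegenerate $n$-dimensional quadratic forms $q'$ over $K$ with the same discriminant as $q$; concretely, a cocycle $h$ corresponds to a form $q'$ with $[q'] - [q] \in I(K)^2$ (this is the usual translation: $\mathrm{O}_n$-torsors are forms of the same dimension, and passing to $\mathrm{SO}_n$ pins down the discriminant). So first I would fix $h \in H^1(K, \mathrm{SO}_n(q))$ and let $\{q_i\}_{i \in I}$ be a family of pairwise non-isomorphic forms representing the classes in $\pi^{-1}(\pi(h))$; the condition $\pi(h_i) = \pi(h)$ means precisely that over each completion $K_v$ the forms $q_i$ and $q$ become isomorphic, in particular $[q_i]_{K_v} = [q]_{K_v}$ in $W(K_v)$ for every $v \in V$.

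The key steps then run parallel to \S\ref{S:F2}.3, but are actually simpler because there is no idele/Picard bookkeeping: since each $q_i$ is globally isomorphic to $q$ over $K_v$, we automatically have $[q_i \perp (-q)]_{K_v} = 0$, so no scalars $\lambda_v^{(i)}$ and no passage through $\mathbb{I}(K,V)/\mathbb{I}(K,V)^2\mathbb{I}_0(K,V)K^\times$ is needed. The inductive mechanism is: set $\tilde q_i = q_i$ and observe $[\tilde q_i] - [q] \in I(K)^2$ already. One shows inductively that after passing to nested subsets $I \supset J_2 \supset J_3 \supset \cdots \supset J_\ell$ with $|J_m| \geqslant |I|/(\omega_2 \cdots \omega_m)$ — wait, I should be careful with the indexing: since $[\tilde q_i]-[q]\in I(K)^2$ from the start, the first nontrivial invariant lives in $H^2(K,\mu_2)$. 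At stage $m$ (for $m = 2, \ldots, \ell$) one has, fixing $j_{m-1} \in J_{m-1}$, that $[\tilde q_j] - [\tilde q_{j_{m-1}}] \in I(K)^m$, so $\gamma_{K,m}([\tilde q_j] - [\tilde q_{j_{m-1}}]) \in H^m(K,\mu_2)$ is defined; the local hypothesis $[q_i]_{K_v} = [q]_{K_v}$ gives $[\tilde q_j]_{K_v} = [\tilde q_{j_{m-1}}]_{K_v}$, so this class lies in $\Omega_m$, which has order $\omega_m$. Partition $J_{m-1}$ into at most $\omega_m$ classes on which $\gamma_{K,m}$ is constant, keep a largest piece $J_m$, and fixing $j_m \in J_m$ deduce $[\tilde q_j] - [\tilde q_{j_m}] \in I(K)^{m+1}$ for $j \in J_m$. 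After stage $\ell$ one has $[\tilde q_j] - [\tilde q_{j_\ell}] \in I(K)^{\ell+1}$; by \cite[Ch.~X, Hauptsatz~5.1]{Lam} an anisotropic form in $I(K)^{\ell+1}$ has dimension $\geqslant 2^{\ell+1} > 2n$, so $\tilde q_j \perp (-\tilde q_{j_\ell})$ is hyperbolic, whence $q_j \cong q_{j_\ell}$; since the $q_j$ were pairwise non-isomorphic, $J_\ell$ is a singleton, giving $|I| \leqslant \omega_2 \omega_3 \cdots \omega_\ell \leqslant \omega_1 \cdots \omega_\ell$, and $\pi$ is proper because a finite subset of $\prod_v H^1(K_v,\mathrm{SO}_n(q))$ is covered by finitely many fibers $\pi^{-1}(\pi(h))$.

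The one point requiring care — and the main obstacle — is the translation between torsor classes and quadratic forms, and in particular making precise that "$\pi(h_i) = \pi(h)$" really does force $[q_i]_{K_v} = [q]_{K_v}$ in the Witt ring rather than merely some weaker statement. This is where the distinction between $\mathrm{O}_n$ and $\mathrm{SO}_n$ matters: an $\mathrm{SO}_n(q)$-torsor trivial over $K_v$ yields a form $q_i$ that is $K_v$-isometric to $q$ (not just similar, and not just Witt-equivalent), so the identity of Witt classes is immediate. Everything else is a routine transcription of \S\ref{S:F2}.3 with the Picard-theoretic first step deleted. I should also note that the estimate can be stated cleanly as $\leqslant \omega_1 \cdots \omega_\ell$ even though only $\omega_2, \ldots, \omega_\ell$ are used, matching the statement of the theorem.
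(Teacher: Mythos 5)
Your proposal is correct and follows essentially the same route as the paper's proof: identify the classes in $\pi^{-1}(\pi(h))$ with pairwise inequivalent $n$-dimensional forms that become isometric over every $K_v$, then run the nested-subset pigeonhole argument through the kernels $\Omega_m$ via the maps $\gamma_{K,m}$, and finish with the Arason--Pfister Hauptsatz plus Witt cancellation. Two minor remarks: your observation that fixing the discriminant places all differences in $I(K)^2$ from the outset lets you skip the $\omega_1$-stage (the paper starts at level $1$ and uses $\omega_1$, so your bound $\omega_2\cdots\omega_\ell$ is marginally sharper), and the condition $\pi(h_i)=\pi(h)$ makes $q_i$ isometric over $K_v$ to the twist $q_0$ of $q$ by $h$ rather than to $q$ itself --- a harmless slip, since your argument only uses that the $q_i$ agree pairwise in $W(K_v)$.
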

\begin{proof}
Let $\pi^{-1}(\pi(h)) = \{ h_i  \}_{i \in I}$, and let $q_i$ be the quadratic form obtained from $q$ by twisting using $h_i$. It is well-known that the $K$-equivalence classes of nondegenerate $n$-dimensional quadratic forms over $K$ are in a natural bijective correspondence with the elements of $H^1(K , \mathrm{O}_n(q))$. Since the map $H^1(K , \mathrm{SO}_n(q)) \to H^1(K , \mathrm{O}_n(q))$ is injective (cf. \cite[\S 6.6]{Pl-R}), the forms $q_i$ for $i \in I$ are pairwise inequivalent. On the other hand, by our construction, the form $q_i$ for any $i \in I$ is equivalent over $K_v$ to the form $q_0$ obtained from $q$ by twisting using $h$, for any $v \in V$. Thus, for any $i , j \in I$, the class $[q_i] - [q_j]$  is trivial in $W(K_v)$ for any $v \in V$. So, using the compatibility of the map $\gamma_{F, d}$ with base change, we conclude that if $[q_i] - [q_j] \in I(K)^d$ then $\gamma_{K , d}([q_i] - [q_j]) \in \Omega_d$.

We now proceed as in the proof of Theorem \ref{T:F2}. Fix $j_0 \in I$. Then there exists a subset $J_1 \subset I$ of size $\geqslant \vert I \vert / \omega_1$ such that all elements $[q_j] - [q_{j_0}] \in I(K)$ for $j \in J_1$ have the same image under $\gamma_{K , 1}$. Fix $j_1 \in J_1$.
Then for any $j \in J_1$ we have
$$
\gamma_{K , 1}([q_j] - [q_{j_1}]) = \gamma_{K , 1}(([q_j] - [q_{j_0}]) - ([q_{j_1}] - [q_{j_0}])) = 0,
$$
which means that $[q_{j}] - [q_{j_1}] \in I(K)^2$. Inductively, we construct a sequence of subsets $I \supset J_1 \supset J_2 \supset \cdots \supset J_{\ell}$ such that

\vskip2mm

\noindent $(\mathrm{a}')$ $\displaystyle | J_m | \geqslant \frac{| I |}{\omega_1 \cdots \omega_m}$;

\vskip1mm

\noindent $(\mathrm{b}')$ fixing $j_m \in J_m$, we will have $[q_j] - [q_{j_m}] \in I(K)^{m+1}$ for all $j \in J_m$,

\vskip2mm

\noindent for any $m = 1, \ldots , \ell$. As in the proof of Theorem \ref{T:F2}, we see that the fact that $[q_j] - [q_{j_{\ell}}] \in I(K)^{\ell + 1}$ implies that $q_j$ and $q_{j_{\ell}}$ are actually equivalent. This means that $J_{\ell}$ reduces to a single element, and then the inequality in  $(\mathrm{a}')$ yields the required estimation.
\end{proof}

We observe that for any $i$, we have the inclusion $\Omega_i \subset H^i(K , \mu_2)_V$ in the above notations. In particular, the finiteness of $H^i(K , \mu_2)_V$ implies that of $\Omega_i$, which enables us to apply Theorem \ref{T:F2-1}.

\vskip5mm

\section{Proof of Theorem \ref{T:F1}}\label{S:ProofTF1}


\noindent {\bf 1. The proof.} Let $K$ be a 2-dimensional global field of characteristic $\neq 2$, and let $V$ be any divisorial set of places of $K$ associated with a (normal) model $\fX$ of finite type on which $2$ is invertible (thus, $\mathrm{char}\: K^{(v)} \neq 2$ for all $v \in V$). According to  Theorem \ref{T:F2}, it is enough to establish the finiteness of the quotient $\mathrm{Pic}(V)/2 \cdot \mathrm{Pic}(V)$ and of the unramified cohomology groups $H^i(K , \mu_2)_V$ for all $i \geqslant 1$. But it easily follows from the definitions that the group $\Pic(V)$ coincides with the usual Picard group $\Pic(\fX)$ of the scheme $\fX$. So, the finiteness of $\Pic(V) / 2 \cdot \Pic(V)$ is a consequence of the following well-known statement.

\begin{prop}\label{P:Pic}
Let $\fX$ be an irreducible normal scheme of finite type over $\Z$ or a finite field, and let $n \geqslant 2$ be an integer which is invertible on $\fX$. Then the quotient $\Pic(\fX) / n \cdot \Pic(\fX)$ and the $n$-torsion ${}_n\Pic(\fX)$ are finite groups.
\end{prop}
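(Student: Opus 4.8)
The plan is to deduce both finiteness assertions from the Kummer sequence together with standard finiteness theorems for \'etale cohomology. Since $n$ is invertible on $\fX$, the sequence of \'etale sheaves
\[
1 \longrightarrow \mu_n \longrightarrow \mathbb{G}_m \stackrel{x \mapsto x^n}{\longrightarrow} \mathbb{G}_m \longrightarrow 1
\]
is exact on $\fX_{\mathrm{\acute{e}t}}$. Using the canonical identification $H^1_{\mathrm{\acute{e}t}}(\fX , \mathbb{G}_m) = \Pic(\fX)$, valid for an arbitrary scheme (here one uses nothing about $\fX$ being regular or even normal, since $\Pic$ is by definition the first cohomology of $\mathcal{O}^{\times}$), the long exact cohomology sequence gives
\[
0 \longrightarrow \mathcal{O}(\fX)^{\times}/n \cdot \mathcal{O}(\fX)^{\times} \longrightarrow H^1_{\mathrm{\acute{e}t}}(\fX , \mu_n) \longrightarrow {}_n\Pic(\fX) \longrightarrow 0
\]
and
\[
0 \longrightarrow \Pic(\fX)/n \cdot \Pic(\fX) \longrightarrow H^2_{\mathrm{\acute{e}t}}(\fX , \mu_n).
\]
Thus ${}_n\Pic(\fX)$ is a homomorphic image of $H^1_{\mathrm{\acute{e}t}}(\fX , \mu_n)$ and $\Pic(\fX)/n \cdot \Pic(\fX)$ is a subgroup of $H^2_{\mathrm{\acute{e}t}}(\fX , \mu_n)$; hence it suffices to prove that the groups $H^1_{\mathrm{\acute{e}t}}(\fX , \mu_n)$ and $H^2_{\mathrm{\acute{e}t}}(\fX , \mu_n)$ are finite.

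This last point is a standard instance of the finiteness of \'etale cohomology of schemes of finite type over $\Z$ or over a finite field with torsion coefficients of order prime to the residue characteristics. If $\fX$ is of finite type over a finite field, then $H^i_{\mathrm{\acute{e}t}}(\fX , \mathcal{F})$ is finite for every $i \geqslant 0$ and every constructible sheaf $\mathcal{F}$ with torsion invertible on $\fX$, by Deligne's finiteness theorem (SGA~4$\frac{1}{2}$, Th\'eor\`emes de finitude); in particular this applies to $\mathcal{F} = \mu_n$. If $\fX$ is of finite type over $\Z$, it is of finite type over $\Z[1/n]$, and I would combine the Leray spectral sequence for the structure morphism $\fX \to \mathrm{Spec}\,\Z[1/n]$ with Deligne's constructibility theorem for the higher direct images $R^j f_{*}\mu_n$ to reduce the assertion to the finiteness of the \'etale cohomology, with constructible coefficients of invertible order, of open subschemes of $\mathrm{Spec}\,\Z[1/n]$ --- that is, of rings of $S$-integers of number fields. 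The latter is classical: it rests on the finiteness of ideal class groups, the finite generation of groups of $S$-units, and the finiteness of the Brauer groups of rings of $S$-integers (the relevant higher cohomology being in any case finite in each degree, and zero away from $2$-torsion at the real places).

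The only step requiring a bit of care is the mixed-characteristic reduction, where one must ensure that the contributions of the archimedean places do not render the target groups infinite; but since we only need degrees $i = 1, 2$ --- and since, in each fixed degree, the \'etale cohomology of a ring of $S$-integers with finite coefficients is finite even when $2 \mid n$ --- this is harmless, and the argument ultimately amounts to citing Deligne's theorems together with the well-understood one-dimensional arithmetic case. (One could instead try to prove that $\Pic(\fX)$ is itself finitely generated, which would yield both statements simultaneously; but this appears to require a compactification and an alteration or desingularization in the non-regular case, together with a Mordell--Weil / N\'eron--Severi type input in the arithmetic case, so the Kummer-sequence route above is both shorter and more robust.)
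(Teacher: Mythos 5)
Your proof is correct and follows essentially the same route as the paper: the Kummer sequence together with $\Pic(\fX)=H^1_{\mathrm{\acute{e}t}}(\fX,\mathbb{G}_m)$ reduces both assertions to the finiteness of $H^1_{\mathrm{\acute{e}t}}(\fX,\mu_n)$ and $H^2_{\mathrm{\acute{e}t}}(\fX,\mu_n)$, which the paper likewise settles by Deligne's finiteness theorem in the finite-field case. The only difference is that for $\fX$ of finite type over $\Z$ the paper simply cites an existing finiteness result (\cite[Theorem 10.2]{CRR3}), whereas you sketch its proof via the Leray spectral sequence over $\mathrm{Spec}\:\Z[1/n]$ and the classical finiteness for rings of $S$-integers --- a legitimate, standard argument for the same ingredient.
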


\noindent {\it Sketch of proof.} Since $n$ is invertible on $\fX$, we have the following Kummer sequence of \'etale sheaves on $\fX$
$$
1 \to \mu_{n, \fX} \longrightarrow \mathbb{G}_{m, \fX} \stackrel{[n]}{\longrightarrow} \mathbb{G}_{m, \fX} \to 1,
$$
which gives rise to the long exact sequence
$$
H^1(\fX , \mu_{n , \fX}) \longrightarrow H^1(\fX , \mathbb{G}_{m , \fX}) \stackrel{[n]}{\longrightarrow} H^1(\fX , \mathbb{G}_{m , \fX}) \longrightarrow H^2(\fX , \mu_{n , \fX})
$$
in \'etale cohomology.
Since $\Pic(\fX) = H^1(\fX , \mathbb{G}_{m , \fX})$ (cf. \cite[Ch. III, Proposition 4.9]{Milne}), we have a surjection $H^1(\fX , \mu_{n , \fX}) \twoheadrightarrow {}_n\Pic(\fX)$ and an injection $\Pic(\fX) / n \cdot \Pic(\fX) \hookrightarrow H^2(\fX , \mu_{n , \fX})$. So, it is enough to prove the finiteness of $H^i(\fX , \mu_{n , \fX})$ for $i = 1, 2$. In fact, these groups are finite for {\it all} $i$. For $\fX$ of finite type over a finite field, this is a consequence of \cite[Th\'eor\`eme 1.1 in ``Th\'eor\`emes de finitude"]{Del} and the Leray spectral sequence \cite[Ch. III, Theorem 1.18]{Milne} in general and is \cite[Ch. VI, Corollary 5.5]{Milne} for $\fX$ smooth. For $\fX$ of finite type over $\Z$, this is \cite[Theorem 10.2]{CRR3}.
\hfill $\Box$

\vskip2mm

It should be noted that B.~Kahn \cite{Kahn} in fact showed that the group $\Pic(\fX)$ is finitely generated.

\vskip2mm

Combining the finiteness of ${}_2\Pic(V)$ with the fact that the group of $V$-units $$U(V) = \{\, a \in K^{\times} \: \vert \: v(a) = 0 \ \ \text{for all} \ \ v \in V\,\}$$ is finitely generated (cf. \cite{Samuel}) and applying \cite[Proposition 5.1(a)]{CRR3}, we obtain that $H^1(K , \mu_2)_V$ is finite. The finiteness of $H^2(K , \mu_2)_{V} = {}_2\Br(K)_V$ was established in \cite[\S 10]{CRR3}. The finiteness of $H^3(K , \mu_2)_V$ is a new result (see Corollary \ref{C:mu2} of the more general  Theorem \ref{T:H3-Finite}) whose proof requires separate arguments in the characteristic zero and positive characteristic cases. The proof for the characteristic zero case that is given in \S \ref{S:H3}  was suggested by one of the referees; it uses several powerful results, first and foremost, those of Kato \cite{Kato} on cohomological Hasse principles. We have included our original proof of Theorem \ref{T:H3-Finite} in the Appendix (\S\ref{S:Append}). It uses considerably less input; in particular, it does not rely on Kato's local-global principle but is based instead  on a modification of Jannsen's \cite{Jann} proof of the latter. The reason we decided to keep this argument in the paper is that it appears to be more amenable to generalizations in the spirit of Jannsen's proof \cite{Jann2} of Kato's local-global principle for higher-dimensional varieties that generalized his original argument \cite{Jann}.
The finiteness of $H^3(K , \mu_2)_V$ for 2-dimensional global fields of positive characteristic will be established in \S \ref{S:Fm} (we note that this result is implicitly contained in \cite[Proposition 4.4 and Remark 4.5]{IRap}).

Since the cohomological dimension of a 2-dimensional global field of positive characteristic is $\leqslant 3$ (cf. \cite[Ch. II, 4.2]{Serre-GC}), the groups $H^i(K , \mu_2)$ vanish for all $i \geqslant 4$. We will now show that in characteristic zero, the finiteness of $H^i(K , \mu_2)_V$ for $i \geqslant 4$ easily follows from the results of Poitou-Tate (cf. \cite[Ch. II, 6.3]{Serre-GC}), which will complete the proof of Theorem \ref{T:F1}.
\begin{prop}\label{P:PT}
Let $C$ be a smooth (but not necessarily projective) geometrically integral curve over a number field $k$, and $K = k(C)$. Then for $i \geqslant 4$, the groups $H^i(K , \mu_2)_{V_0}$, where $V_0$ is the set of geometric places of $K$ associated with the closed points of $C$, are finite.
\end{prop}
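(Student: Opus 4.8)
The plan is to realize $H^i(K , \mu_2)_{V_0}$ as a quotient of the \emph{finite} group $H^i_{\mathrm{\acute{e}t}}(C , \mu_2)$, and then prove the latter finiteness by descent to the number field $k$. Since $C$ is a smooth curve over the perfect field $k$, the Bloch--Ogus resolution of the Zariski sheaf $\mathcal{H}^q = \mathcal{H}^q(\mu_2)$ on $C$ reads
$$
0 \to \mathcal{H}^q \longrightarrow (\iota_{\eta})_* H^q(K , \mu_2) \longrightarrow \bigoplus_{v \in V_0} (\iota_v)_* H^{q-1}(K^{(v)} , \mu_2(-1)) \to 0 ,
$$
where $\eta$ is the generic point of $C$ and the second map is the collection of residue maps $\partial^q_v$; taking global sections identifies $H^0_{\mathrm{Zar}}(C , \mathcal{H}^q)$ with $H^q(K , \mu_2)_{V_0}$. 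The associated coniveau spectral sequence $E_2^{p,q} = H^p_{\mathrm{Zar}}(C , \mathcal{H}^q) \Rightarrow H^{p+q}_{\mathrm{\acute{e}t}}(C , \mu_2)$ has $E_2^{p,q} = 0$ for $p \geqslant 2$ because $\dim C = 1$, so it degenerates at $E_2$ and its edge map gives a surjection $H^i_{\mathrm{\acute{e}t}}(C , \mu_2) \twoheadrightarrow E_2^{0,i} = H^i(K , \mu_2)_{V_0}$. Thus it suffices to prove that $H^i_{\mathrm{\acute{e}t}}(C , \mu_2)$ is finite (in fact for every $i$, not just $i \geqslant 4$).

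For this I would use the Leray spectral sequence $H^p(k , R^q f_* \mu_2) \Rightarrow H^{p+q}_{\mathrm{\acute{e}t}}(C , \mu_2)$ for the structure morphism $f \colon C \to \mathrm{Spec}\: k$. The sheaves $R^q f_* \mu_2$ on $\mathrm{Spec}\: k$ are the finite $\Ga(\bar k/k)$-modules $H^q(C \otimes_k \bar k , \mu_2)$, which are finite abelian groups because $C \otimes_k \bar k$ is a smooth curve over an algebraically closed field, and which vanish for $q \geqslant 3$ (indeed for $q \geqslant 2$ when $C$ is affine). Hence $H^i_{\mathrm{\acute{e}t}}(C , \mu_2)$ is obtained by finitely many extensions from groups $H^p(k , M)$ with $M$ a finite Galois module and $p \leqslant i$ (and $p \geqslant i-2 \geqslant 2$ when $i \geqslant 4$). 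For a number field $k$ and a finite module $M$ the group $H^p(k , M)$ is finite for every $p$: this is classical for $p \leqslant 2$, and for $p \geqslant 3$ it follows from the theorems of Poitou--Tate, which identify $H^p(k , M)$ with $\bigoplus_{v \text{ real}} \widehat{H}^p(k_v , M)$, a finite group --- cf. \cite[Ch. II, 6.3]{Serre-GC}. This yields the finiteness of $H^i_{\mathrm{\acute{e}t}}(C , \mu_2)$, hence of $H^i(K , \mu_2)_{V_0}$.

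I do not anticipate a genuine obstacle: once the statement is reduced to the finiteness of $H^i_{\mathrm{\acute{e}t}}(C , \mu_2)$, everything is a formal consequence of the Leray/Hochschild--Serre spectral sequence together with the finiteness of Galois cohomology of number fields with finite coefficients. The one point to keep in mind is that, unlike over $p$-adic or finite residue fields, the groups $H^p(k , M)$ do not vanish for large $p$ --- they are controlled entirely by the real places of $k$ --- so one genuinely needs the Poitou--Tate description to conclude they are nonetheless finite; this is exactly the input recorded in \cite[Ch. II, 6.3]{Serre-GC}, and it is what makes the number-field hypothesis on the base essential.
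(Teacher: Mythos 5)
Your first step (the Bloch--Ogus/coniveau edge map giving a surjection $H^i_{\mathrm{\acute{e}t}}(C,\mu_2)\twoheadrightarrow H^i(K,\mu_2)_{V_0}$) matches the paper, but the second step contains a genuine error: over a number field $k$ it is \emph{not} true that $H^p(k,M)$ is finite for all $p$ and all finite Galois modules $M$. Already $H^1(k,\mu_2)\simeq k^{\times}/{k^{\times}}^2$ and $H^2(k,\mu_2)={}_2\Br(k)$ are infinite (infinitely many places contribute); the Poitou--Tate description as a product over the real places, and hence finiteness, is only valid in degrees $\geqslant 3$. Consequently your intermediate claim that $H^i_{\mathrm{\acute{e}t}}(C,\mu_2)$ is finite for \emph{every} $i$ is false (already for $i\leqslant 3$), and it even fails in the range you need when $C$ is projective: for $i=4$ the Leray term $E_2^{2,2}=H^2(k,H^2(C\otimes_k\bar{k},\mu_2))\simeq H^2(k,\Z/2\Z)$ is infinite, all differentials in or out of it involve only finite groups, so $E_{\infty}^{2,2}$ is infinite and $H^4_{\mathrm{\acute{e}t}}(C,\mu_2)$ is infinite. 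So the reduction ``it suffices to prove $H^i_{\mathrm{\acute{e}t}}(C,\mu_2)$ is finite'' cannot be carried out as stated whenever the term with $p=i-2=2$ is present.

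The gap is easy to close, and the repair is exactly what the paper does: first reduce to the case where $C$ is affine. Removing finitely many closed points shrinks $V_0$ and therefore only enlarges $H^i(K,\mu_2)_{V_0}$, so it suffices to prove finiteness for an affine open subcurve with the same function field. For affine $C$ one has $H^q(C\otimes_k\bar{k},\mu_2)=0$ for $q\geqslant 2$, so the Hochschild--Serre (Leray) sequence sandwiches $H^i_{\mathrm{\acute{e}t}}(C,\mu_2)$ between $H^i(k,\mu_2)$ and $H^{i-1}(k,H^1(C\otimes_k\bar{k},\mu_2))$; for $i\geqslant 4$ both are Galois cohomology groups of $k$ in degrees $\geqslant 3$ with finite coefficients, hence finite by Poitou--Tate (together with the finiteness of $H^1(C\otimes_k\bar{k},\mu_2)$). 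Note that the restriction to degrees $\geqslant 4$ is not an artifact: in degree $3$ the same sequence involves $H^2(k,-)$, which is infinite, and this is precisely why the paper needs a separate, much harder argument (Kato's Hasse principle) for the finiteness of unramified $H^3$.
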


To apply the proposition in our situation, we observe that given a divisorial set of places $V$ of a 2-dimensional global field $K$ of characteristic zero, there exists a smooth geometrically connected curve $C$ defined over a number field $k$ such that $K = k(C)$ and for the set of geometric places $V_0$ of $K$ associated with the closed points of $C$, we have the inclusion
$$
H^i(K , \mu_2)_V \, \subset \, H^i(K , \mu_2)_{V_0}.
$$
For the proof of the proposition,  we first need to review the exact sequence for the \'etale cohomology of a curve, which will also be used in the Appendix.

\vskip1mm

\noindent {\bf 2. The fundamental exact sequence.} Let $C$ be a geometrically integral smooth affine curve over an arbitrary field $k$, and let $n \geqslant 2$ be an integer prime to $\mathrm{char}\: k$. Then for any integer $d$, we consider the Hochschild-Serre spectral sequence in \'etale cohomology
$$
E_2^{p,q} = H^p(k , H^q(C \otimes_k \bar{k} , \mu_n^{\otimes d})) \, \Rightarrow H^{p+q}(C , \mu_n^{\otimes d}).
$$
Since $C$ is affine, we have $H^p(C \otimes_k \bar{k} , \mu_n^{\otimes d}) = 0$ for $p \geqslant 2$ (cf. \cite[Lemma 65.3]{EC-Stack}, \cite[Ch. V, \S 2]{Milne}). So, for each $\ell \geqslant 1$, the spectral sequence yields the following short exact sequence:
\begin{equation}\label{E:X0}
 H^{\ell}(k , \mu_n^{\otimes d}) \stackrel{\iota^{d , \ell}_{k , n}}{\longrightarrow} H^{\ell}(C , \mu_n^{\otimes d}) \stackrel{\omega^{d ,\ell}_{k , n}}{\longrightarrow} H^{\ell-1}(k , H^1(C \otimes_k \bar{k} , \mu_n^{\otimes d})).
\end{equation}
Furthermore, let $V_0$ denote the set of places of $K = k(C)$ associated with the closed points of $C$. Then, since $n$ is prime to $\mathrm{char}\: k$,  it follows from the Bloch-Ogus spectral sequence (see \cite{Bl-Og}, \cite{CT-H-K}, \cite{JannSS}) that for each $\ell \geqslant 1$, the natural map $H^{\ell}(C , \mu_n^{\otimes d}) \to H^{\ell}(K , \mu_n^{\otimes d})_{V_0}$ is surjective. (This is a consequence of the fact that the $E_2$-terms in the Bloch-Ogus spectral sequence satisfy
$E_2^{p , q} = 0$ for all $p > \dim C = 1$ and all $q$, while $E_2^{0 , q} = H^q(K , \mu_n^{\otimes d})_{V_0}$.)

\vskip5mm

Let
$$
M(n , d) = H^1(C \otimes_k \bar{k} , \mu_n^{\otimes d}).
$$
Equivalently,
\begin{equation}\label{E-FundGp}
M(n , d) = \mathrm{Hom}(\pi_1(C \otimes_k \bar{k} , \bar{\eta}) , \mu_n^{\otimes d}) \ \ \text{where} \ \ \bar{\eta} = \mathrm{Spec}\: \bar{K}.
\end{equation}
It follows that we have the following equality for the Tate twist
$$
M(n , d)(\ell) = M(n , d + \ell).
$$

\vskip2mm

\noindent {\bf 3. Proof of Proposition \ref{P:PT}.} We may assume that $C$ is affine, and it is enough to show that $H^i(C , \mu_n^{\otimes d})$ is finite for all $i \geqslant 4$. According to Poitou-Tate, for a number field $k$ and any finite Galois module $M$, the natural homomorphism
$$
H^i(k , M) \longrightarrow \prod_{v \in V_{\infty}^k} H^i(k_v , M),
$$
where $V_{\infty}^k$ is the set of all real places of $k$, is an isomorphism for any $i \geqslant 3$ (see \cite[Ch. II, 6.3, Theorem B]{Serre-GC}); in particular, the group $H^i(k , M)$ is finite. Since $H^1(C \otimes_k \bar{k} , \mu_n^{\otimes d})$ is finite by \cite[Expos\'e XVI, Th\'eor\`eme 5.2]{SGA4}, it follows that the left-most and the right-most terms in the exact sequence (\ref{E:X0}) are finite. So, the middle term is also finite for all $i \geqslant 4$, as required. \hfill $\Box$

\vskip2mm

\section{Proof of Theorems \ref{T:genus} and \ref{T:HP}.}\label{S:5}

\noindent {\bf 1. The finiteness of genus.} Theorem \ref{T:genus} will be derived from the following result.
\begin{thm}\label{T:genus1}
Let $K$ be a 2-dimensional global field of characteristic $\neq 2$, and let $G = \mathrm{Spin}_n(q)$, where $q$ is a nondegenerate quadratic form over $K$ of dimension $n \geqslant 5$. Then the number of $K$-isomorphism classes of the spinor groups $G' = \mathrm{Spin}_n(q')$ of nondegenerate $n$-dimensional quadratic forms $q'$ over $K$ that have the same isomorphism classes of maximal $K$-tori as $G$, is finite.
\end{thm}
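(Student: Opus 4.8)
The plan is to deduce Theorem \ref{T:genus1} from Theorem \ref{T:F1} by way of the now-standard link between the genus and good reduction. To begin, I would fix a convenient divisorial set of places of $K$. Starting from any divisorial set $V_0$ associated with a model on which $2$ is invertible, and writing $q \simeq \langle a_1 , \ldots , a_n \rangle$ with $a_i \in K^{\times}$, condition $(\mathrm{A})$ guarantees that $v(a_i) = 0$ for all $i$ and all but finitely many $v \in V_0$; at such $v$ the form $q$ is unimodular over $\mathcal{O}_v$, so $G = \mathrm{Spin}_n(q)$ has good reduction there. Hence there is a finite subset $S \subset V_0$ such that $G$ has good reduction at every $v \in V_0 \setminus S$, and by the remarks in the Introduction on divisorial sets the set $V_0 \setminus S$ contains a divisorial set $V$; note that $\mathrm{char}\: K^{(v)} \neq 2$ for every $v \in V$.

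The key step is then to invoke the connection between the genus and good reduction established in \cite[Theorem 5]{CRR4} and \cite{CRR6}: if $G' = \mathrm{Spin}_n(q')$ has the same isomorphism classes of maximal $K$-tori as $G$, then $G'$ has good reduction at every $v \in V$ (possibly after shrinking $V$ by a finite set and re-extracting a divisorial subset, if the cited result only yields good reduction at almost all such $v$). Here one has to verify that the minor technical hypotheses of that result hold at the places in $V$, which is the case because the residue fields $K^{(v)}$ for $v \in V$ are themselves global fields of characteristic $\neq 2$. It follows that the set of $K$-isomorphism classes of spinor groups $\mathrm{Spin}_n(q')$ of nondegenerate $n$-dimensional quadratic forms $q'$ over $K$ having the same isomorphism classes of maximal $K$-tori as $G$ is contained in the set of $K$-isomorphism classes of spinor groups $\mathrm{Spin}_n(q')$ of nondegenerate $n$-dimensional quadratic forms over $K$ that have good reduction at all $v \in V$.

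Applying Theorem \ref{T:F1} to the pair $(K , V)$, the latter set is finite --- in fact of size at most $|\Pic(V)/2 \cdot \Pic(V)| \cdot \prod_{i = 1}^{[\log_2 n] + 1} |H^i(K , \mu_2)_V|$ by Theorem \ref{T:F2} together with the finiteness results of \S\ref{S:ProofTF1} --- which completes the argument. I expect the only genuinely delicate point to be the genus-to-good-reduction implication: one must confirm that the technical conditions of \cite[Theorem 5]{CRR4}, \cite{CRR6} are met at the divisorial places of a $2$-dimensional global field, and then carry out the routine bookkeeping of passing between commensurable divisorial sets.
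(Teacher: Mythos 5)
Your argument is essentially identical to the paper's: both first shrink the divisorial set $V$ so that $G$ has good reduction at all $v \in V$, then invoke Theorem~\ref{T:Genus-smooth} (applicable since the residue fields of divisorial valuations are finitely generated) to propagate good reduction to every $G' \in \gen_K(G)$ of the form $\mathrm{Spin}_n(q')$, and finally apply Theorem~\ref{T:F1}. The minor hedges you add (possibly shrinking $V$ further, checking the technical hypotheses of Theorem~\ref{T:Genus-smooth}) are harmless and in fact unnecessary since Theorem~\ref{T:Genus-smooth} applies at every $v \in V$ once $G$ itself has good reduction there.
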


To prove Theorem \ref{T:genus1}, we need the following general statement.
\begin{thm}\label{T:Genus-smooth}
{\rm (\cite[Theorem 5]{CRR4})} Let $G$ be an absolutely almost simple simply connected algebraic group over a field $K$, and let $v$
be a discrete valuation of $K$. Assume that the residue field $K^{(v)}$ is finitely generated and that $G$ has good reduction at $v$. Then any $G' \in \gen_K(G)$ also has good reduction at $v$. Moreover, the reduction ${\uG'}^{(v)}$ lies in the genus $\gen_{K^{(v)}}(\uG^{(v)})$ of the reduction $\uG^{(v)}$.
\end{thm}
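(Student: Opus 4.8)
\emph{Reduction to the local situation.} Since good reduction at $v$ is a condition over the completion $K_v$, the first step is to descend to $K_v$. I would show that $G'\otimes_K K_v\in\gen_{K_v}(G\otimes_K K_v)$. The variety of maximal tori $\mathcal{T}_G=G/N_G(T)$ is $K$-unirational, and $K$ is infinite (it carries a discrete valuation), so $\mathcal{T}_G(K)$ is $v$-adically dense in $\mathcal{T}_G(K_v)$; as the orbit map $G\to\mathcal{T}_G$ is smooth, the $G(K_v)$-orbits in $\mathcal{T}_G(K_v)$ are open, so sufficiently close points give $G(K_v)$-conjugate, hence $K_v$-isomorphic, maximal tori. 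Thus every maximal $K_v$-torus of $G\otimes_K K_v$ is $K_v$-isomorphic to $T_0\otimes_K K_v$ for some maximal $K$-torus $T_0\subset G$; by $G'\in\gen_K(G)$ the torus $T_0$ is $K$-isomorphic to a maximal $K$-torus of $G'$, so its base change is a maximal $K_v$-torus of $G'\otimes_K K_v$ isomorphic to the one we started with. Interchanging the roles of $G$ and $G'$ yields $G'\otimes_K K_v\in\gen_{K_v}(G\otimes_K K_v)$, and this also reduces the ``moreover'' assertion to a comparison of reductions. From now on I work over $K_v$: write $\kappa=K^{(v)}$ (finitely generated, hence finite or Hilbertian), $H=G\otimes_K K_v$, $H'=G'\otimes_K K_v$, and let $\mathscr{H}$ be a reductive $\mathcal{O}_v$-model of $H$ with reduction $\underline H=\mathscr{H}\otimes_{\mathcal{O}_v}\kappa$.

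\emph{A maximally generic torus with good reduction.} Assume first that $\kappa$ is infinite (if $\kappa$ is finite then $\underline H$ is quasi-split by Lang's theorem and the genus is essentially trivial, a case treated separately). Using Hilbertianity of $\kappa$, I would pick a maximal $\kappa$-torus $\underline S$ of $\underline H$ whose $\ast$-action of $\Ga(\bar\kappa/\kappa)$ on the root datum surjects onto the full automorphism group of the root datum. By smoothness of the scheme of maximal tori of $\mathscr{H}$ over the henselian ring $\mathcal{O}_v$, I lift $\underline S$ to a maximal torus $\mathscr{S}\subset\mathscr{H}$; its generic fibre $S=\mathscr{S}\otimes_{\mathcal{O}_v}K_v$ is then a maximal $K_v$-torus of $H$ with good reduction, with unramified splitting field, and it is again maximally generic over $K_v$ because unramified extensions of $K_v$ match separable extensions of $\kappa$. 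By the genus, $H'$ contains a maximal $K_v$-torus $S'\cong_{K_v}S$; genericity forces the $\ast$-actions of $H$ and $H'$ to agree, so $H'$ is an inner form of $H$, and the transfer-via-generic-tori formalism (cf.\ \cite{PrRap-WC}) yields $H'={}_{c}H$ for a cocycle $c$ valued in the image $\bar S$ of $S$ in the adjoint group $\bar H=H_{\mathrm{ad}}$ --- a maximal torus of $\bar H$ with good reduction, extending to a maximal torus $\bar{\mathscr{S}}$ of the adjoint model $\mathscr{H}_{\mathrm{ad}}$.

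\emph{The crux: $c$ is unramified.} If $c$ lies in the image of $H^1_{\mathrm{et}}(\mathcal{O}_v,\bar{\mathscr{S}})\to H^1(K_v,\bar S)$, then it comes from $H^1_{\mathrm{et}}(\mathcal{O}_v,\mathscr{H}_{\mathrm{ad}})$ and the inner twist ${}_{c}\mathscr{H}$ is a reductive $\mathcal{O}_v$-model of $H'={}_{c}H$, so $H'$ has good reduction at $v$. The main obstacle is that a single torus cannot see the ramification of $c$: twisting by a cocycle with values in the abelian group $\bar S$ leaves $S$ unchanged up to $K_v$-isomorphism, so the bare existence of $S'\cong S$ inside $H'$ is not enough. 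To get around this I would use the \emph{whole} genus over $K_v$: for every maximal $K_v$-torus $S_2$ of $H$ with good reduction, the relation $H'\in\gen_{K_v}(H)$ constrains $c$ to be admissible relative to $S_2$ (to lie in the image of $H^1(K_v,N)$ for an appropriate subgroup $N$ containing a conjugate of $S_2$). Letting $S_2$ run through a rich enough family of generic good-reduction tori --- equivalently, testing $c$ against the residue/specialization maps attached to these tori, all of which vanish because the corresponding torus of $H'$ still has good reduction --- should force the residue of $c$ to be zero, i.e.\ $c$ unramified. Making this precise requires a careful dictionary between the genus condition over $K_v$ and residue constraints on the twisting cocycle relative to the various good-reduction tori; one might alternatively try to argue directly, via Bruhat--Tits theory, that a group containing a sufficient supply of generic tori with good reduction is itself unramified.

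\emph{Conclusion of the ``moreover''.} With $H'={}_{c}H$ of good reduction and model $\mathscr{H}'={}_{c}\mathscr{H}$, its reduction is $\underline{H'}={}_{\bar c}\underline H$, where $\bar c$ is the reduction of $c$. To see $\underline{H'}\in\gen_\kappa(\underline H)$: every maximal $\kappa$-torus of $\underline{H'}$ lifts, by Hensel, to a maximal torus of $\mathscr{H}'$, hence is the reduction of a maximal $K_v$-torus of $H'$ with good reduction; by the already established $H'\in\gen_{K_v}(H)$ this torus is $K_v$-isomorphic to a maximal $K_v$-torus of $H$, which therefore also has good reduction, extends to a maximal torus of $\mathscr{H}$, and whose reduction --- a maximal $\kappa$-torus of $\underline H$ --- is $\kappa$-isomorphic to the original one, because for tori with good reduction the reduction depends only on the $K_v$-isomorphism class. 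The symmetric argument gives the reverse inclusion, which finishes the proof.
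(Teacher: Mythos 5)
Note first that the paper gives no proof of Theorem \ref{T:Genus-smooth} at all: it is quoted from \cite[Theorem 5]{CRR4} (with details in \cite{CRR6}), so there is no internal argument to measure yours against; judged on its own, your proposal has a genuine gap exactly at the crux. The preliminary steps are fine and standard: the passage from $\gen_K$ to $\gen_{K_v}$ via density of rational points on the variety of maximal tori (this is \cite[Remark 2.2]{RR}, which the present paper itself invokes), and the construction of a generic maximal $\kappa$-torus of $\underline{H}$ lifted to an unramified generic maximal torus $S\subset H$. But from there on you only describe a program. You say yourself that a single torus cannot detect the ramification of the twisting cocycle $c$, and the proposed remedy --- testing $c$ against residues attached to ``a rich enough family of generic good-reduction tori,'' which ``should force the residue of $c$ to be zero,'' or ``alternatively'' a Bruhat--Tits argument --- is never turned into a proof. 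This is precisely where the theorem lives. For instance, take $D=(a,\pi)$ over $K_v$ with $a$ a unit whose residue is a nonsquare: then $\mathrm{SL}_2$ and $\mathrm{SL}_1(D)$ both contain the torus $\mathrm{R}^{(1)}_{L/K_v}(\mathbb{G}_m)$ with $L=K_v(\sqrt{a})$ unramified quadratic, which is exactly the lift of a generic torus of the reduction; yet $\mathrm{SL}_1(D)$ is an inner twist by a ramified cocycle and has no good reduction. So the existence in $H'$ of a torus isomorphic to your $S$ constrains nothing by itself, and converting the full coincidence of tori into unramifiedness of $c$ (equivalently, into a Galois-fixed hyperspecial point for $H'$) is the substantial content of \cite{CRR4}, \cite{CRR6}, not a routine verification.

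Two further points would need attention even if the crux were settled. The finite residue field case is not ``essentially trivial'': Lang's theorem only gives quasi-splitness of $\underline{H}$, and the assertion that a group over a local field with the same maximal tori as an unramified group is itself unramified is a genuine (known, but not free) fact. And in the ``moreover'' part you assert that a maximal $K_v$-torus of $H$ that is merely $K_v$-isomorphic to one with good reduction ``extends to a maximal torus of $\mathscr{H}$''; isomorphism as a torus only gives an unramified splitting field, and the existence of an embedding in good position with respect to the chosen reductive model $\mathscr{H}$ (so that its schematic closure is a maximal $\mathcal{O}_v$-torus whose special fiber realizes the prescribed $\kappa$-torus) is an additional statement requiring proof, not a formal consequence of the isomorphism class of the torus.
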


Let $V$ be a divisorial set of places of our 2-dimensional global field $K$ of characteristic $\neq 2$. Using the fact that $V$ satisfies condition (A), it is not difficult
to see that the set $v \in V$ where $G = \mathrm{Spin}_n(q)$ does not have good reduction, is finite. Picking a divisorial set of places in the complement of this finite set, we may assume without loss of generality that $G$ has good reduction at all $v \in V$. Then according to
Theorem \ref{T:Genus-smooth},  every $G' \in \gen_K(G)$ has good reduction at all $v \in V$. So, Theorem \ref{T:F1} yields the finiteness of the number of $K$-isomorphism classes of the spinor groups $G' = \mathrm{Spin}_n(q')$ lying in $\gen_K(G)$, which is precisely the assertion of Theorem
\ref{T:genus1}.

\vskip2mm

Now, Theorem \ref{T:genus} for quadratic forms of odd dimension immediately follows from Theorem \ref{T:genus1}. To consider the case of isotropic quadratic forms $q$ of even dimension, we need to combine the latter with part (a) of the following proposition, which implies that every group in $\gen_K(G)$ is of the form $\mathrm{Spin}_n(q')$.
\begin{prop}\label{P:reduction}
Let $K$ be an infinite finitely generated field of characteristic $\neq 2$, let $G = \mathrm{Spin}_n(q)$ where $q$ is a nondegenerate quadratic form over $K$ of even dimension $n \geqslant 10$, and $H = \widetilde{{\rm SU}}_m(D , h)$ (universal cover) where $D$ is a central division algebra over $K$ of degree $d \geqslant 1$ with an orthogonal involution $\tau$ and $h$ is a nondegenerate $m$-dimensional $\tau$-hermitian form.

\vskip2mm

\noindent {\rm (a)} \parbox[t]{15cm}{If $d > 1$ and $q$ is $K$-isotropic, then $H \notin \gen_K(G)$.}

\vskip2mm

\noindent {\rm (b)} \parbox[t]{15cm}{If $d > 2$, then $H \notin \gen_K(G)$ for any $q$.}
\end{prop}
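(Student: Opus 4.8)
\medskip

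\noindent {\it Outline of the proof.} The plan is to combine the parametrisation of maximal $K$-tori of spinor groups by \'etale $K$-algebras with involution with a divisibility constraint that the division algebra $D$ imposes on such algebras. First I would dispose of the case $md \neq n$: then $H$ and $G$ have different rank, so $H$ is not even a $K$-form of $G$ and hence $H \notin \gen_K(G)$ trivially. Assume from now on that $md = n$, and write $(A , \sigma) = (M_m(D) , \sigma_h)$ for the central simple $K$-algebra of degree $n$ equipped with the orthogonal involution adjoint to $h$ relative to $\tau$, so that $H = \mathrm{Spin}(A , \sigma)$.

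Two facts would enter. The first is the standard torus--algebra dictionary (as used throughout \cite{PrRap-WC}, \cite{CRR4}): for $B$ central simple over $K$ with an orthogonal involution $\theta$, every maximal $K$-torus $T$ of $\mathrm{Spin}(B , \theta)$ determines, through the vector representation of $\mathrm{SO}(B , \theta)$, a $\theta$-stable \'etale $K$-subalgebra $E_T \subseteq B$ with $\dim_K E_T = \deg B$, and the \'etale $K$-algebra $E_T$ depends only on the $K$-isomorphism class of $T$. The second is elementary: if $E$ is an \'etale $K$-subalgebra of $M_m(D)$ with $\dim_K E = md$, then $\deg D$ divides $[F : K]$ for every field direct factor $F$ of $E$. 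Indeed, such an $E$ has the maximal dimension an \'etale subalgebra of $M_m(D)$ can have, hence is self-centralising; writing $M_m(D)$ as the product of the blocks $M_{m_i}(D)$ cut out by the primitive idempotents of $E$ (so $\sum_i m_i = m$), the double centraliser theorem applied in each block gives $[F_i : K] = m_i \deg D$ for the field factor $F_i$ lying in that block. Putting the two together: if $H \in \gen_K(G)$, then for every maximal $K$-torus $T$ of $G = \mathrm{Spin}_n(q)$ all field factors of $E_T$ have degree divisible by $d$.

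It then remains to exhibit maximal $K$-tori of $G$ whose \'etale algebras violate this. For part (b), I would diagonalise $q$ and write $q \cong \langle a , b \rangle \perp q''$ with $q''$ nondegenerate of dimension $n - 2 \geq 8$. Since $\rk\, \mathrm{SO}(\langle a , b \rangle) + \rk\, \mathrm{SO}(q'') = 1 + (n-2)/2 = n/2 = \rk\, \mathrm{SO}(q)$, for any maximal $K$-torus $T''$ of $\mathrm{SO}(q'')$ the image of $\mathrm{SO}(\langle a , b \rangle) \times T''$ under the natural embedding $\mathrm{SO}(\langle a , b \rangle) \times \mathrm{SO}(q'') \hookrightarrow \mathrm{SO}(q)$ is a maximal $K$-torus of $\mathrm{SO}(q)$, and its preimage $T$ in $G$ is a maximal $K$-torus with $E_T \cong L \times E''$, where $L = K[t]/(t^2 + ab)$ is a $2$-dimensional \'etale $K$-algebra. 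Every field factor of $L$ has degree $\leq 2$, so $H \in \gen_K(G)$ would force $\deg D \leq 2$, contradicting $d > 2$. For part (a), using that $q$ is $K$-isotropic I would write $q \cong \HH \perp q'$ with $q'$ nondegenerate of dimension $n - 2$, and run the same construction with the hyperbolic plane $\HH$ in place of $\langle a , b \rangle$: here $\mathrm{SO}(\HH) \cong \mathbb{G}_m$, whose associated \'etale algebra is $K \times K$, so one obtains a maximal $K$-torus $T$ of $G$ with $E_T \cong (K \times K) \times E'$, which has $K$ itself as a field factor. Then $H \in \gen_K(G)$ would force $\deg D = 1$, contradicting $d > 1$.

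The construction of these tori from orthogonal decompositions of $q$, and the block computation behind the divisibility observation, are routine. The step I expect to require the most care --- and an explicit reference --- is the assertion that $E_T$ is recovered from the $K$-isomorphism class of the torus $T$ alone, which is what allows the genus hypothesis to transport $E_T$ from $G$ to $H$; in particular one must check that passing between $\mathrm{Spin}$ and $\mathrm{SO}$ loses nothing here, which is unproblematic since the relevant central kernel is $\mu_2$. The hypothesis $n \geq 10$ guarantees $n/2 \geq 5$, so that no triality anomalies interfere with the torus--algebra correspondence for $H$.
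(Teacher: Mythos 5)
Your two auxiliary ingredients are fine: the double-centralizer computation showing that every field factor of a self-centralizing \'etale subalgebra of $M_m(D)$ has degree $m_i d$, and the construction of maximal $K$-tori of $\mathrm{Spin}_n(q)$ from an orthogonal splitting $q \cong \langle a,b\rangle \perp q''$. The genuine gap is the bridge you yourself flag but do not close: the assertion that the \'etale algebra $E_T$ is an invariant of the \emph{abstract} $K$-isomorphism class of the torus $T$. The genus hypothesis only supplies a $K$-isomorphism of $T$ with some maximal $K$-torus $T'$ of $H$ as algebraic groups; it gives no identification of the ambient data, and the standard dictionary goes the other way (an embedded \'etale algebra with involution determines the torus, not conversely). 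There is no theorem allowing you to transport $E_T$ across an abstract torus isomorphism, and for the non-generic tori you use (an \emph{arbitrary} $T''$ of $\mathrm{SO}(q'')$) this is exactly where the argument can break: a torus of $H$ isomorphic to $\mathbb{G}_m\times T''$ need not arise from an \'etale algebra containing a factor $K\times K$ --- for instance split rank in $H$ is produced by factors $(F_j,\ E_j=F_j\times F_j)$ with $F_j$ a field inside $D$, and whether the resulting torus can be abstractly isomorphic to your $\mathbb{G}_m\times T''$ depends delicately on $T''$. So the conclusion ``$H\in\gen_K(G)$ forces $d\leqslant 2$ (resp.\ $d=1$)'' does not follow as written.

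The paper avoids any such transfer by making the second factor \emph{generic}: using that $K$ is infinite and finitely generated (a hypothesis your argument never uses, which is a warning sign) one chooses $T_2\subset \mathrm{Spin}_{n-2}(q_2)$ generic, and then Proposition \ref{P:reduction1} shows directly --- via the centralizer analysis of Lemma \ref{L:reduction} and a comparison of the order of the splitting field's Galois group (at least $2^{n/2-2}(n/2-1)!$ by genericity) with the Weyl groups available inside $C_{\mathscr{H}}(\mathbb{G}_m)$ --- that $H$ contains \emph{no} maximal torus that is an almost direct product of a split $\mathbb{G}_m$ with a torus isomorphic to a generic torus of type $\textsf{D}_{n/2-1}$. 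That property is manifestly preserved under abstract isomorphism of tori, so the genus hypothesis can be applied; note also that the bound $n\geqslant 10$ enters there through the inequality $2^{n/2-2}>n/2$, not merely to avoid triality. For part (b), where $\langle a_1,a_2\rangle$ need not be hyperbolic, the paper passes to $L=K(\sqrt{-a_1a_2})$, takes $T_2$ generic over $L$, and uses $d>2$ to ensure $D\otimes_K L$ still has index $>1$, again reducing to Proposition \ref{P:reduction1} over $L$. If you want to rescue your \'etale-algebra route, you would have to prove a transfer statement of the above kind, which is precisely the hard point; as it stands the proposal is incomplete.
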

(Note that $d$ is a power of $2$.)

\vskip2mm

We will first establish one general fact that involves the notion of {\it generic tori}, which we will now recall for the reader's convenience. Let $\mathscr{G}$ be a semi-simple algebraic group over a field $\mathscr{K}$. Fix a maximal $\mathscr{K}$-torus $\mathscr{T}$ of $\mathscr{G}$, and let $\Phi(\mathscr{G} , \mathscr{T})$ and $W(\mathscr{G} , \mathscr{T})$ denote the corresponding root system and the Weyl group. The natural action of the absolute Galois group $\Ga(\mathscr{K}^{\mathrm{sep}}/\mathscr{K})$, where $\mathscr{K}^{\mathrm{sep}}$ is a fixed separable closure of $\mathscr{K}$, on the character group $X(\mathscr{T})$ of $\mathscr{T}$ gives rise to a group homomorphism
$$
\theta_{\mathscr{T}} \colon \Ga(\mathscr{K}^{\mathrm{sep}}/\mathscr{K}) \to \mathrm{Aut}(\Phi(\mathscr{G} , \mathscr{T}))
$$
that factors through the Galois group $\Ga(\mathscr{K}_{\mathscr{T}}/\mathscr{K})$ of the minimal splitting field of $\mathscr{T}$ in $\mathscr{K}^{\mathrm{sep}}$ inducing an {\it injective} homomorphism $\bar{\theta}_{\mathscr{T}} \colon \Ga(\mathscr{K}_{\mathscr{T}}/\mathscr{K}) \to \mathrm{Aut}(\Phi(\mathscr{G} , \mathscr{T}))$.
We say that $\mathscr{T}$ is {\it generic} over $\mathscr{K}$ if $\theta_{\mathscr{T}}(\Ga(\mathscr{K}^{\mathrm{sep}}/\mathscr{K})) \supset W(\mathscr{T} , \mathscr{G})$.
\begin{prop}\label{P:reduction1}
Let $\mathscr{D}$ be a central division algebra of degree $d > 1$ over a field $\mathscr{K}$ of characteristic $\neq 2$ with an orthogonal involution $\tau$, and let $h$ be a nondegenerate $m$-dimensional $\tau$-hermitian form. Then $\mathscr{H} = \mathrm{SU}_m(\mathscr{D} , h)$ does not contain a maximal $\mathscr{K}$-torus of the form $\mathscr{T} = \mathscr{T}_1 \mathscr{T}_2$ (almost direct product over $\mathscr{K}$) where $\mathscr{T}_1 = \mathbb{G}_m$ and $\mathscr{T}_2$ is isomorphic to a generic torus in a $\mathscr{K}$-group of type $\textsf{D}_{n/2-1}$ with $n = dm$.
\end{prop}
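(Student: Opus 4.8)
The plan is to translate the statement into the language of étale subalgebras of the underlying central simple algebra, and then to extract a numerical contradiction from the hypothesis that $\mathscr{D}$ is a division algebra.

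Write $A = M_m(\mathscr{D})$, so that $\deg A = n := dm$, and let $\tau$ be the orthogonal involution of $A$ adjoint to $h$, so that $\mathscr{H} = \mathrm{SU}_m(\mathscr{D}, h)$ is identified with $\mathrm{SU}(A , \tau)$. I would begin by recalling the classical description of the maximal $\mathscr{K}$-tori of $\mathscr{H}$: any maximal $\mathscr{K}$-torus $\mathscr{T} \subset \mathscr{H}$ generates a $\tau$-invariant \emph{maximal} étale $\mathscr{K}$-subalgebra $E = \mathscr{K}[\mathscr{T}(\mathscr{K})]$ of $A$ (so $\dim_{\mathscr{K}} E = \deg A = n$ and $\dim_{\mathscr{K}} E^{\tau} = n/2$), and conversely $\mathscr{T}$ is recovered as the norm-one torus $R^{(1)}_{E/E^{\tau}}(\mathbb{G}_m)$. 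Now suppose, for contradiction, that $\mathscr{H}$ contains a maximal torus $\mathscr{T} = \mathscr{T}_1 \mathscr{T}_2$ of the stated form, and let $E$ be the corresponding algebra. Since $\mathscr{T}_2$ is isomorphic to a generic torus of type $\textsf{D}_{n/2-1}$, it is $\mathscr{K}$-anisotropic and $X^*(\mathscr{T}_2) \otimes \Q$ is an irreducible $\Ga(\bar{\mathscr{K}}/\mathscr{K})$-module (the reflection representation of $W(\textsf{D}_{n/2-1})$); as $\mathscr{T}_1 \times \mathscr{T}_2 \to \mathscr{T}$ is an isogeny and $\mathscr{K}$-rank is an isogeny invariant, $\mathscr{T}$ has $\mathscr{K}$-rank exactly $1$, realized by $\mathscr{T}_1 \cong \mathbb{G}_m$.

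Next I would analyze the decomposition $E = \prod_i E_i$ into field factors. Because $\dim_{\mathscr{K}} E^{\tau} = \tfrac12 \dim_{\mathscr{K}} E$, the involution $\tau$ fixes no factor pointwise, so each $\tau$-orbit is either a single $E_i$ carrying a quadratic involution (contributing a $\mathscr{K}$-anisotropic factor $R^{(1)}_{E_i/E_i^{\tau}}(\mathbb{G}_m)$ to $\mathscr{T}$) or a transposed pair $E_i \leftrightarrow E_{i'}$ (contributing a factor $\cong R_{E_i/\mathscr{K}}(\mathbb{G}_m)$, of $\mathscr{K}$-rank $1$). Comparing with $\mathscr{K}$-rank$(\mathscr{T}) = 1$ shows there is exactly one transposed pair $E_{j_0} \leftrightarrow E_{j_0'}$. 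Decomposing $X^*(\mathscr{T}) \otimes \Q$ accordingly and using that its non-trivial part must coincide with the single irreducible $X^*(\mathscr{T}_2) \otimes \Q$, one is left with two possibilities: either $E_{j_0} \cong \mathscr{K}$ (and then $E = \mathscr{K} \times \mathscr{K} \times E'$ with $E'$ a field of degree $n-2$ and $\tau$ transposing the two copies of $\mathscr{K}$), or $E$ has no further factors, so $E = E_{j_0} \times E_{j_0'}$ with $[E_{j_0} : \mathscr{K}] = n/2$ and $\mathscr{T} \cong R_{E_{j_0}/\mathscr{K}}(\mathbb{G}_m)$. In the latter case the Galois action on $X^*(\mathscr{T}_2)$ would factor through a subquotient of $S_{n/2}$ while containing $W(\textsf{D}_{n/2-1})$, which is impossible since $|W(\textsf{D}_{n/2-1})| = 2^{n/2-2}(n/2-1)! > (n/2)! = |S_{n/2}|$ in the range $n \geqslant 10$ where the proposition is applied. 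Hence $E$ has a factor isomorphic to $\mathscr{K}$.

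Finally comes the numerical contradiction. Since $E$ is a \emph{maximal} étale $\mathscr{K}$-subalgebra of $A \cong M_m(\mathscr{D})$ and $\mathscr{D}$ is a division algebra of degree $d$, each field factor $E_i$ of $E$ is a maximal subfield of the corner algebra $e_i A e_i$, where $e_i$ is the idempotent of $E$ that is the identity of $E_i$ (otherwise one could enlarge $E$); and $e_i A e_i \cong M_{s_i}(\mathscr{D})$, so $[E_i : \mathscr{K}] = \deg(e_i A e_i) = d s_i$ is divisible by $d$. But the factor $\mathscr{K}$ of $E$ has degree $1$, which is not divisible by $d > 1$ --- a contradiction. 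Therefore $\mathscr{H}$ contains no maximal torus of the form $\mathscr{T}_1 \mathscr{T}_2$ as claimed. The hard part is the structural assertion of the previous paragraph, that a split $\mathbb{G}_m$-component of $\mathscr{T}$ forces $E$ to split off a copy of $\mathscr{K}$: this is where both the genericity of $\mathscr{T}_2$ and the size of $n$ are used. (Alternatively, one can argue on the group side: $Z_{\mathscr{H}}(\mathscr{T}_1)$ is a proper Levi $\mathscr{K}$-subgroup whose derived group would have to be simple of type $\textsf{D}_{n/2-1}$ for $\mathscr{T}_2$ to be a generic maximal torus of it, whereas the Levi subgroups of $\mathrm{SU}_m(\mathscr{D}, h)$ with $\mathscr{D}$ division of degree $d > 1$ always carry a nontrivial factor of type $\textsf{A}_{dk-1}$, so their derived groups are never simple of type $\textsf{D}$.)
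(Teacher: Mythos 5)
Your argument is correct in its essentials, but it follows a genuinely different route from the paper. The paper argues entirely on the group side: it first proves a structure lemma for the centralizer $C_{\mathscr{H}}(\mathscr{S})$ of a nontrivial split torus (via the weight decomposition of $\mathscr{D}^m$), showing its semisimple factors are of inner type $\textsf{A}_{\ell-1}$ with $\ell \leqslant dr$ or of type $\textsf{D}_{\ell}$ with $\ell \leqslant d(m-2)/2$; then, using that the generic torus $\mathscr{T}_2$ is $\mathscr{K}$-irreducible, it projects $\mathscr{T}_2$ isogenously onto a maximal torus of one quasi-simple factor and derives a contradiction by comparing $\lvert \Ga(\mathscr{L}/\mathscr{K})\rvert \geqslant 2^{n/2-2}(n/2-1)!$ with (twice) the relevant Weyl group orders. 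You instead pass to the $\tau$-stable maximal \'etale subalgebra $E \subset M_m(\mathscr{D})$ attached to $\mathscr{T}$, use the rational character module (rank count plus semisimplicity and irreducibility of $X^*(\mathscr{T}_2)\otimes\Q$) to reduce to the dichotomy $E \cong E_{j_0}\times E_{j_0'}$ with $[E_{j_0}:\mathscr{K}]=n/2$ or $E \cong \mathscr{K}\times\mathscr{K}\times E'$, kill the first case by the same numerical comparison (now against $\lvert S_{n/2}\rvert=(n/2)!$), and kill the second by the fact that every field factor of a maximal \'etale subalgebra of $M_m(\mathscr{D})$ has degree divisible by $d$ --- which is where $d>1$ enters for you, whereas the paper uses $d\geqslant 2$ to bound the rank of the type-$\textsf{D}$ factor of the centralizer. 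Both proofs need $n \geqslant 10$ for the decisive inequality $2^{n/2-2} > n/2$ (the paper's $\textsf{A}$-case likewise only fails ``for all $n \geqslant 10$'', and the proposition is applied only in that range), so your explicit restriction matches the paper's implicit one. Your approach buys a cleaner case analysis and a purely arithmetic use of the division-algebra hypothesis; the paper's buys a reusable centralizer lemma. One caveat: your closing parenthetical ``alternative'' is too quick --- the hypothesis only says $\mathscr{T}_2$ is abstractly isomorphic to a generic torus of a group of type $\textsf{D}_{n/2-1}$, so there is no reason the derived group of $C_{\mathscr{H}}(\mathscr{T}_1)$ ``would have to be simple of type $\textsf{D}_{n/2-1}$''; closing that gap is exactly what the Galois-group-versus-Weyl-group size comparison (yours or the paper's) is for, so the aside should be dropped or reworked, while your main argument stands.
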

\begin{proof}
We begin with the following lemma.
\begin{lemma}\label{L:reduction}
Notations as in Proposition \ref{P:reduction1}, let $r$ denote the Witt index of $h$. Then for any nontrivial $\mathscr{K}$-split torus $\mathscr{S}$ of $\mathscr{H}$, the centralizer $C_{\mathscr{H}}(\mathscr{S})$ is an almost direct product $\mathscr{S}_0 \mathscr{H}_1 \cdots \mathscr{H}_s$ over $\mathscr{K}$, where $\mathscr{S}_0$ is the central torus of $C_{\mathscr{H}}(\mathscr{S})$ and each $\mathscr{H}_i$ is a semi-simple $\mathscr{K}$-group  either of inner type $\textsf{A}_{\ell - 1}$ with $\ell \leqslant dr$ or of type $\textsf{D}_{\ell}$ with $\ell \leqslant d(m-2)/2$ (and not a triality form when $\ell = 4$).
\end{lemma}
\begin{proof}
Let $\chi_0 = 1, \chi_1, -\chi_1, \ldots , \chi_t, -\chi_t$ be all the weights for the action of $\mathscr{S}(\mathscr{K})$ on $W = \mathscr{D}^m$, and denote by $W_{\chi}$ the weight subspace corresponding to the character $\chi$ of $\mathscr{S}$. Set
$$
W_0 = W_{\chi_0} \ \ \text{and} \ \ W_i = W_{\chi_i} \oplus W_{-\chi_i} \ \ \text{for} \ \ i = 1, \ldots , t.
$$
Then $W$ has the following orthogonal decomposition
$$
W = W_0 \perp W_1 \perp \cdots \perp W_t.
$$
Let
$$
m_0 = \dim_{\mathscr{D}} W_0 \ \ \text{and} \ \ m_i = \dim_{\mathscr{D}} W_{\chi_i} = \dim_{\mathscr{D}} W_{-\chi_i} \ \ \text{for} \ \ i = 1, \ldots t,
$$
so that $m_i \leqslant r \leqslant m/2$ and $\dim_{\mathscr{D}} W_i = 2m_i$. Then
$$
C_{\mathscr{H}}(\mathscr{S}) = \mathscr{F}_0 \times \mathscr{F}_1 \times \cdots \mathscr{F}_t,
$$
where $\mathscr{F}_0 = \mathrm{SU}_{m_0}(\mathscr{D} , h \vert W_0)$ and $\mathscr{F}_i$ for $i > 0$ is the stabilizer of the subspaces $W_{\chi_i}$ and $W_{-\chi_i}$ in $\mathrm{SU}_{2m_i}(\mathscr{D} , h \vert W_i)$. Since $m_i \geqslant 1$ and $t > 0$ as $\mathscr{S}$ is nontrivial, we see that $m_0 \leqslant m - 2$, and therefore $\mathscr{F}_0$ is of type $\textsf{D}_{\ell}$ with $$\ell = dm_0/2 \leqslant d(m-2)/2$$ (obviously, not a triality form). Furthermore, for $i > 0$, the group $\mathscr{F}_i$ is isomorphic to $\mathrm{GL}(W_{\chi_i})$ (or $\mathrm{GL}(W_{-\chi_i})$). So, its semi-simple part is $\mathrm{SL}(W_{\chi_i})$, which is an inner form of type $\textsf{A}_{\ell - 1}$ with $$\ell = dm_i \leqslant dr.$$
\end{proof}

We will now complete the proof of Proposition \ref{P:reduction1}. Assume the contrary, i.e. $\mathscr{H}$ contains a maximal $\mathscr{K}$-torus $\mathscr{T} = \mathscr{T}_1 \mathscr{T}_2$ as in the proposition. Applying the lemma to $\mathscr{S} = \mathscr{T}_1$, we obtain that
$$
\mathscr{T} = \mathscr{T}_1 \mathscr{T}_2 \subset \mathscr{S}_0 \mathscr{H}_1 \cdots \mathscr{H}_s
$$
where $\mathscr{S}_0$ is the central torus of $C_{\mathscr{H}}(\mathscr{T}_1)$ and the $\mathscr{H}_i$'s are semisimple $\mathscr{K}$-groups of the types specified in the lemma. Clearly, $\mathscr{T} \not\subset \mathscr{S}_0$ as otherwise $\mathscr{H}$ would be quasi-split over $\mathscr{K}$, which it is not because $d > 1$. Since $\mathscr{T}_1 \subset \mathscr{S}_0$, we conclude that $\mathscr{T}_2 \not\subset \mathscr{S}_0$. It follows that there exists $i \in \{1, \ldots , s \}$ such that
$$
\mathscr{T}_2 \not\subset \hat{\mathscr{H}}_i := \mathscr{S}_0 \mathscr{H}_1 \cdots \mathscr{H}_{i-1} \mathscr{H}_{i+1} \cdots \mathscr{H}_s.
$$
This means that for the quotient map $\pi_i \colon C_{\mathscr{H}}(\mathscr{T}_1) \to C_{\mathscr{H}}(\mathscr{T}_1)/\hat{\mathscr{H}}_i =: \bar{\mathscr{H}}_i$, we have $\pi_i(\mathscr{T}_2) \neq \{ e \}$. Being generic over $\mathscr{K}$, the torus $\mathscr{T}_2$ is $\mathscr{K}$-irreducible, i.e. contains no proper $\mathscr{K}$-defined subtori. So, the restriction of $\pi_i$ gives an isogeny of $\mathscr{T}_2$ onto a $\mathscr{K}$-subtorus $\bar{\mathscr{T}}_i$ of $\bar{\mathscr{H}}_i$. Moreover, since $\dim \mathscr{T}_2 = \mathrm{rk}\: \mathscr{H} - 1$ and $\mathrm{rk}\: \bar{\mathscr{H}}_i \leqslant \mathrm{rk}\: \mathscr{H} - 1$, we see that $\bar{\mathscr{T}}_i$ is actually a maximal $\mathscr{K}$-torus of $\bar{\mathscr{H}}_i$. Let $\mathscr{L}$ be the common minimal splitting field of $\mathscr{T}_2$ and $\bar{\mathscr{T}}_i$. Since $\mathscr{T}_2$ is $\mathscr{K}$-generic in a group of type $\textsf{D}_{n/2 - 1}$ where
$n = dm$, we have
\begin{equation}\label{E:Ineq10}
\vert \Ga(\mathscr{L}/\mathscr{K}) \vert \geqslant 2^{n/2 - 2} \cdot (n/2 - 1)!.
\end{equation}
Now, assume that $\mathscr{H}_i$ is an inner form of type $\textsf{A}_{\ell - 1}$ with $\ell \leqslant dr$. Since $r \leqslant m/2$, we obtain
$$
\vert \Ga(\mathscr{L}/\mathscr{K}) \vert \leqslant \vert W(\bar{\mathscr{H}}_i , \bar{\mathscr{T}}_i) \vert = \ell ! \leqslant (n/2)!.
$$
Comparing with (\ref{E:Ineq10}), we obtain the inequality
$$
2^{n/2 - 2} \leqslant n/2,
$$
which fails for all $n \geqslant 10$.

\vskip1mm

Next, let $\mathscr{H}_i$ be of type $\textsf{D}_{\ell}$ with $\ell \leqslant d(m - 2)/2 \leqslant n/2 - 2$ as $d \geqslant 2$. Since it is not a triality form when $\ell = 4$, we have
$$
\vert \Ga(\mathscr{L}/\mathscr{K}) \vert \leqslant 2 \cdot  \vert W(\bar{\mathscr{H}}_i , \bar{\mathscr{T}}_i) \vert = 2^{\ell} \cdot \ell ! \leqslant
2^{n/2 - 2} \cdot (n/2 - 2)!.
$$
So, comparing with (\ref{E:Ineq10}), we obtain the inequality
$$
2^{n/2 - 2} \cdot (n/2 - 1)! \leqslant 2^{n/2 - 2} \cdot (n/2 - 2)!
$$
which never holds for $n \geqslant 6$.

Thus, the assumption that $\mathscr{H}$ contains a maximal $\mathscr{K}$-torus $\mathscr{T} = \mathscr{T}_1 \mathscr{T}_2$ leads to a contradiction in both cases, proving our claim.
\end{proof}

\vskip2mm

\noindent {\it Proof of Proposition \ref{P:reduction}.} (a) Write $q = q_1 \perp q_2$, where $q_1$ is a 2-dimensional hyperbolic form. Then $T_1 = \mathrm{Spin}_2(q_1)$ is a 1-dimensional split torus $\mathbb{G}_m$. Since $K$ is infinite and finitely generated, we can find a maximal $K$-torus $T_2$ of $\mathrm{Spin}_{n-2}(q_2)$ that is generic over $K$ (cf. \cite{Pr-Rap1}, \cite{Pr-Rap2}). Consider the maximal $K$-torus $T = T_1T_2$ of $G$. Then it follows from Proposition \ref{P:reduction1} that $T$ is not isomorphic to a maximal $K$-torus of $H$, hence $H \notin \gen_K(G)$.

\vskip1mm

(b) Again, write $q = q_1 \perp q_2$ where $q_1 = \langle a_1 , a_2 \rangle$ is a binary form. Set $L = K(\sqrt{-a_1a_2})$. If $L = K$, then our claim follows from part (a). So, we may assume that $L$ is a quadratic extension of $K$. Then $T_1 = \mathrm{Spin}_2(q_1)$ is the 1-dimensional norm torus $\mathrm{R}^{(1)}_{L/K}(\mathbb{G}_m)$. Furthermore, let $T_2$ be a maximal $K$-torus of $\mathrm{Spin}_{n-2}(q_2)$ that is generic over $L$ (cf. \cite{Pr-Rap1}, \cite{Pr-Rap2}). Then the maximal $K$-torus $T = T_1T_2$ is not isomorphic to a maximal $K$-torus of $H$. Indeed, assume the contrary, i.e. $T \subset H$. Note that over $L$, the torus $T$ is an almost direct product $T'_1T'_2$ where $T'_1$ is a 1-dimensional split torus and $T'_2$ is isomorphic to a generic torus in a group of type $\textsf{D}_{n/2 - 1}$. At the same time, since $d > 2$, the group $H$ is $L$-isomorphic to $\widetilde{SU}_{m'}(\mathscr{D} , h')$ where $\mathscr{D}$ is a central division algebra over $\mathscr{K} = L$ of degree $d' > 1$ and $h'$ is an $m'$-dimensional hermitian form (note that $d'm' = dm$). But this contradicts Proposition \ref{P:reduction1}. Thus, again $H \notin \gen_K(G)$. \hfill $\Box$

\vskip2mm

\noindent {\bf Remark 5.6.} It follows from Proposition \ref{P:reduction}(b) that in order to complete the proof of Theorem \ref{T:genus} for all even-dimensional form, it would be sufficient to demonstrate that $\gen_K(G)$ cannot contain a group of the form $H = \widetilde{\mathrm{SU}}_m(D , h)$ where $D$ is a central {\it quaternion} division algebra over $K$. The proof of Proposition \ref{P:reduction}(b) given above
shows that this boils down to proving that for any quaternion division algebra $D$ over $K$, the given quadratic form $q$ contains a binary subform $q_1 = \langle a_1 , a_2 \rangle$ such that the field $K(\sqrt{-a_1a_2})$ does not split $D$, which seems very likely at least when the dimension of $q$ is sufficiently large (note that those $d \in K^{\times} \setminus {K^{\times}}^2$ for which $K(\sqrt{d}) \hookrightarrow D$ must be represented by the ternary quadratic form associated with $D$).

\addtocounter{thm}{1}

\vskip2mm

\noindent {\bf 2. Properness of the global-to-local map.} Let $K$ be a 2-dimensional global field of characteristic $\neq 2$, and let $V$ be a divisorial set of places. As we already mentioned, for each $i \geqslant 1$, the kernel
$$
\Omega_i = \ker\left(H^i(K , \mu_2) \longrightarrow \prod_{v \in V} H^i(K_v , \mu_2)\right)
$$
is contained in $H^i(K , \mu_2)_V$, hence is finite\footnote{We can assume without loss of generality that $\mathrm{char}\: K^{(v)} \neq 2$ for all $v \in V$.} (cf. \S 4.1). Applying now Theorem \ref{T:F2-1}, we conclude that for $q$ a nondegenerate quadratic form in $n \geqslant 5$ variables and $G = \mathrm{SO}_n(q)$, the map
$$
\theta_{G , V} \colon H^1(K , G) \longrightarrow \prod_{v \in V} H^1(K_v , G)
$$
is proper, which is precisely the assertion of Theorem \ref{T:HP}.

To put this result in a more general context, we recall the following general observation \cite{CRR2}, \cite{R-ICM}: {\it Let $G$ be an absolutely almost simple simply connected algebraic group over a field $K$, and let $V$ be a set of discrete valuations of $K$ that satisfies condition {\rm (A)}. Assume that for any finite subset $S \subset V$, the set of $K$-isomorphism classes of inner $K$-forms of $G$ having good reduction at all $v \in V \setminus S$ is finite. Then for the corresponding adjoint group $\overline{G}$, the map $$\theta_{\overline{G} , V} \colon H^1(K , \overline{G}) \longrightarrow \prod_{v \in V} H^1(K_v , \overline{G})$$ is proper.} This means, in particular, that for $n$ odd, Theorem \ref{T:HP}
{\it directly} follows from Theorem \ref{T:F1}. On the other hand, for $n$ even, the latter implies neither Theorem \ref{T:HP} nor the corresponding fact for the simply connected and adjoint groups $\widetilde{G} = \mathrm{Spin}_n(q)$ and $\overline{G} = \mathrm{PSO}_n(q)$ (thus, in this case Theorem \ref{T:HP} is an independent result). Later, we will see that the analogues of Theorem \ref{T:HP} are valid for special unitary groups of hermitian forms over a quadratic extension of the base (2-dimensional global) field and for the group of type $\textsf{G}_2$ over such a field (note that all these groups are simply connected) --- see Theorems \ref{T:U3} and \ref{T:G2-1}(iii). We will close this subsection with some additional results on the properness of the global-to-local map for simply connected groups.

\vskip3mm

\begin{thm}\label{T:HP1}
Let $K$ be a 2-dimensional global field,  $V$ be its divisorial set of places, and $m$ be a square-free integer prime to $\mathrm{char}\: K$.
Then for a central simple $K$-algebra $A$ of degree $m$ and $G = \mathrm{SL}_{1 , A}$, the map $$\theta_{G , V} \colon H^1(K , G) \longrightarrow \prod_{v \in V} H^1(K_v , G)$$ is proper.
\end{thm}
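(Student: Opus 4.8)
The approach will be to mirror, one cohomological degree higher, the argument for $\mathrm{PGL}_n$ recalled in the introduction: the role played there by the finiteness of the unramified Brauer group will be played here by the finiteness of the third unramified cohomology group. First, the exact sequence $1 \to \mathrm{SL}_{1,A} \to \mathrm{GL}_{1,A} \xrightarrow{\Nrd} \mathbb{G}_m \to 1$ together with the vanishing of $H^1(F, \mathrm{GL}_{1,A})$ for every field extension $F/K$ yields a functorial bijection $H^1(F, \mathrm{SL}_{1,A}) \cong F^{\times}/\Nrd(A_F^{\times})$. The right-hand side being an \emph{abelian group}, under these identifications $\theta_{G,V}$ becomes a homomorphism of abelian groups $K^{\times}/\Nrd(A^{\times}) \to \prod_{v \in V} K_v^{\times}/\Nrd(A_{K_v}^{\times})$, whose nonempty fibers are cosets of $\ker \theta_{G,V}$; hence $\theta_{G,V}$ is proper if and only if $\ker \theta_{G,V}$ is finite. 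Since the image of the reduced norm is unaffected by replacing $A$ with its underlying division algebra, we may assume $A = D$ is a central division $K$-algebra of squarefree degree $m$ prime to $\mathrm{char}\, K$.

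The key tool is the so-called \emph{Suslin homomorphism}. For a central division algebra $D$ of squarefree degree $m$ over any field $F$ (with $m$ prime to $\mathrm{char}\, F$), cup product with $[D] \in H^2(F, \mu_m) = {}_m\Br(F)$ gives a homomorphism $\phi_{D,F} \colon F^{\times}/\Nrd(D_F^{\times}) \to H^3(F, \mu_m^{\otimes 2})$, $\bar{x} \mapsto (x) \cup [D]$. It is well defined because every $y \in D^{\times}$ lies in some maximal subfield $M$ of $D$ (e.g., the centralizer of $K(y)$ is a division algebra over $K(y)$ and contains a maximal subfield of itself), and for such $M$ one has $\Nrd_D(y) = N_{M/K}(y)$, whence $(N_{M/K}(y)) \cup [D] = \mathrm{cor}_{M/K}\big((y) \cup \mathrm{res}_{M/K}[D]\big) = 0$ since $M$ splits $D$. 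More importantly, $\phi_{D,F}$ is \emph{injective}: for $m$ prime this is a theorem of Merkurjev and Suslin, and the general squarefree case follows by primary decomposition $D = D_1 \otimes_F \cdots \otimes_F D_r$ with $D_i$ of prime degree $p_i$. Indeed, $H^3(F, \mu_m^{\otimes 2})$ is the direct sum of the $H^3(F, \mu_{p_i}^{\otimes 2})$ and $(x) \cup [D] = \sum_i (x) \cup [D_i]$, so $(x) \cup [D] = 0 \iff x \in \bigcap_i \Nrd(D_i^{\times})$; and this intersection equals $\Nrd(D^{\times})$, one inclusion being the prime-degree injectivity applied to each $D_i$, the other following from $\Nrd_D(d_i \otimes 1 \otimes \cdots \otimes 1) = \Nrd_{D_i}(d_i)^{m/p_i}$ together with $\gcd_i(m/p_i) = 1$.

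These homomorphisms are compatible with scalar extension, so there is a commutative square
$$
\begin{array}{ccc}
K^{\times}/\Nrd(D^{\times}) & \longrightarrow & H^3(K, \mu_m^{\otimes 2}) \\
\downarrow & & \downarrow \\
\prod_{v \in V} K_v^{\times}/\Nrd(D_{K_v}^{\times}) & \longrightarrow & \prod_{v \in V} H^3(K_v, \mu_m^{\otimes 2})
\end{array}
$$
with top map $\phi_{D,K}$, bottom map $\prod_v \phi_{D,K_v}$, left map $\theta_{G,V}$, and right map the global-to-local map. If $\bar{x} \in \ker \theta_{G,V}$, then $x \in \Nrd(D_{K_v}^{\times})$ for every $v \in V$, hence $\mathrm{res}_{K_v/K}(\phi_{D,K}(\bar{x})) = \phi_{D,K_v}(\bar{x}) = 0$; as the residue map $\partial^3_v$ factors through restriction to $K_v$, this gives $\phi_{D,K}(\bar{x}) \in H^3(K, \mu_m^{\otimes 2})_V$. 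By Theorem \ref{T:H3-Finite}, applied with $\mu_m^{\otimes 2}$-coefficients (only the degree-$3$ group enters), this group is finite; hence $\phi_{D,K}(\ker \theta_{G,V})$ is finite, and the injectivity of $\phi_{D,K}$ forces $\ker \theta_{G,V}$ to be finite. This is precisely the properness of $\theta_{G,V}$.

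The one genuinely deep ingredient, and the step I expect to be the main obstacle, is the injectivity of the Suslin homomorphism $K^{\times}/\Nrd(D^{\times}) \hookrightarrow H^3(K, \mu_m^{\otimes 2})$ for $D$ of prime degree — equivalently, the identity $\Nrd(D^{\times}) = \{x \in K^{\times} : (x) \cup [D] = 0\}$ — which is exactly where squarefreeness of $m$ is used (for non-squarefree degree the obstruction is governed by $\mathrm{SK}_1(D)$); without it one obtains only that $\phi_{D,K}(\ker \theta_{G,V})$ is finite, not $\ker \theta_{G,V}$ itself. The other ingredients — the identification of the relevant $H^1$'s with quotients by reduced norms, the primary-decomposition reduction, and the finiteness of $H^3(K, \mu_m^{\otimes 2})_V$ — are routine or already available in the paper.
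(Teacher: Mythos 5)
Your proposal is correct and follows essentially the same route as the paper: identify $H^1(F,\mathrm{SL}_{1,A})$ with $F^{\times}/\Nrd(A_F^{\times})$, map it injectively into $H^3(F,\mu_m^{\otimes 2})$ via cup product with $[A_F]$ (Merkurjev--Suslin, where square-freeness enters), and conclude from the finiteness of $H^3(K,\mu_m^{\otimes 2})_V$ established in Theorem \ref{T:H3-Finite}. The only cosmetic difference is that the paper cites \cite[Theorem 12.2]{MS} directly for square-free index, while you re-derive that case from the prime-degree statement by primary decomposition.
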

\begin{proof}
We may assume without loss of generality that $\mathrm{char}\: K^{(v)}$ is prime to $m$ for all $v \in V$. It is well-known (cf. \cite[29.4]{KMRT}) that for any extension $F/K$ there is a bijection
$$
\nu_F \colon H^1(F , G) \to F^{\times} / \mathrm{Nrd}_{A_F/F}(A_F^{\times}) \ \ \text{where} \ \ A_F = A \otimes_K F,
$$
and this bijection is functorial in $F$. So, letting $A_v = A_{K_v}$ for $v \in V$, we need to show
that the map
$$
\iota \colon K^{\times}/ \mathrm{Nrd}_{A/K}(A^{\times}) \to \prod_{v \in V} K_v^{\times} / \mathrm{Nrd}_{A_v/K_v}(A_v^{\times})
$$
has finite kernel. For any field extension $F/K$, we have a homomorphism
$$
F^{\times} \to H^3(F , \mu_m^{\otimes 2}), \ \ a \mapsto [A_F] \cup \chi_{m , a},
$$
where $[A_F]$ is the class of $A_F$ in ${}_m\Br(F) = H^2(F , \mu_m)$. Since $m$ is square-free, according to
\cite[Theorem 12.2]{MS}, the kernel of this map  coincides with $\mathrm{Nrd}_{A_F/F}(A_F^{\times})$, so we have an {\it injective} homomorphism
$$
\delta_F \colon F^{\times} / \mathrm{Nrd}_{A_F/F}(A_F^{\times}) \to H^3(F , \mu_m^{\otimes 2}).
$$
We then have the following commutative diagram
$$
\xymatrix{K^{\times} / \mathrm{Nrd}_{A/K}(A^{\times}) \ar[r]^{\iota} \ar[d]_{\delta_K} & {\small \prod_{v \in V} K_v^{\times} / \mathrm{Nrd}_{A_v/K_v}(A_v^{\times})} \ar[d]^{{\small \Delta_V}} \\ H^3(K , \mu_m^{\otimes 2}) \ar[r]^{\eta} & \prod_{v \in V} H^3(K_v , \mu_m^{\otimes 2})}
$$
where $\Delta_V = \prod_{v \in V} \delta_{K_v}$. The fact that the unramified cohomology group $H^3(K , \mu_m^{\otimes 2})_V$ is finite (Theorem \ref{T:H3-Finite}) implies that $\ker \eta$ is finite. Then using the injectivity of $\delta_K$, we conclude that $\ker \iota$ is finite as well, as needed.
\end{proof}

\vskip2mm

We now turn to spinor groups. Let $q$ be a nondegenerate  quadratic form of dimension $\geqslant 3$ over a field $K$ of characteristic $\neq 2$, and let $\pi \colon \mathrm{Spin}_n(q) \to \mathrm{SO}_n(q)$ be the canonical $K$-isogeny. For a field extension $F/K$, we let $$\pi_F \colon H^1(F , \mathrm{Spin}_n(q)) \to H^1(F , \mathrm{SO}_n(q))$$ be the map induced by $\pi$. Assume now that $K$ is equipped with a set $V$ of discrete valuations such that the map $\theta \colon H^1(K , \mathrm{SO}_n(q)) \to \prod_{v \in V} H^1(K_v , \mathrm{SO}_n(q))$ is proper. Then, analyzing the commutative diagram
$$
\xymatrix{H^1(K , \mathrm{Spin}_n(q)) \ar[r]^{\tilde{\theta}} \ar[d]_{\pi_K} & \displaystyle \prod_{v \in V} H^1(K_v , \mathrm{Spin}_n(q)) \ar[d]^{\Pi} \\ H^1(K , \mathrm{SO}_n(q)) \ar[r]^{\theta} & \displaystyle \prod_{v \in V} H^1(K_v , \mathrm{SO}_n(q))}
$$
where $\Pi = \prod_{v \in V} \pi_{K_v}$, and using twisting, we see that in order to prove that $\tilde{\theta}$ is proper, it is enough to prove that for {\it any} nondegenerate $n$-dimensional quadratic form $q'$ over $K$, the set
\begin{equation}\label{E:Set}
\text{\brus Sh}(\mathrm{Spin}_n(q') , V) \cap \ker \pi'_K
\end{equation}
(where $\pi'_K$ is the map analogous to $\pi_K$ for the quadratic form $q'$) is finite. For a field extension $F/K$, combining the standard exact sequence of cohomology corresponding to the exact sequence
$$
1 \to \mu_2 \longrightarrow \mathrm{Spin}_n(q') \stackrel{\pi'}{\longrightarrow} \mathrm{SO}_n(q') \to 1
$$
with the description of the spinor norm (cf. \cite[13.30, 13.31]{KMRT}), one easily obtains a (functorial) identification of $\ker \pi'_F$ with
the quotient $F^{\times}/\mathrm{Sn}(q' , F)$, where $\mathrm{Sn}(q' , F)$ denotes the image of the spinor norm $\mathrm{SO}_n(q')_F$. Furthermore, one observes that this identification induces an injection of the set (\ref{E:Set}) into the kernel of the map
$$
\nu_{q' , V} \colon K^{\times} / \mathrm{Sn}(q' , K) \longrightarrow \prod_{v \in V} K_v^{\times} / \mathrm{Sn}(q' , K_v).
$$
Thus, if $\theta$ is known to be proper, then to establish the properness of $\tilde{\theta}$, it is sufficient to prove that the kernel of $\nu_{q' , V}$ is finite for all nondegenerate $n$-dimensional quadratic forms $q'$ over $K$. (Of course, $\ker \nu_{q' , V}$ is automatically trivial if $q'$ is $K$-isotropic, so the properness of $\theta$ immediately implies that of $\tilde{\theta}$ {\it if} every $n$-dimensional quadratic form over $K$ is isotropic, i.e. $n$ is greater than the $u$-invariant of $K$.)

Here we would like to point our the following partial result on the finiteness of the set (\ref{E:Set}).
\begin{prop}\label{P:HP2}
Let $K$ be a 2-dimensional global field of characteristic $\neq 2$, and $V$ be its divisorial set of places. If $q$ is a quadratic Pfister form over $K$ of dimension $n = 2^d$, then the intersection
$$
\text{\brus Sh}(\mathrm{Spin}_n(q) , V) \cap \ker \pi_K
$$
is finite.
\end{prop}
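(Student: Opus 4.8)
The plan is to reduce the statement, via the cohomological reformulation set up just before it, to the finiteness of the unramified group $H^{d+1}(K,\mu_2)_V$, which is available from the proof of Theorem~\ref{T:F1}. Taking $q'=q$ in the discussion preceding the proposition, the set $\text{\brus Sh}(\mathrm{Spin}_n(q),V)\cap\ker\pi_K$ injects into the kernel of
$$
\nu_{q,V}\colon K^{\times}/\mathrm{Sn}(q,K)\longrightarrow\prod_{v\in V}K^{\times}_v/\mathrm{Sn}(q,K_v),
$$
so it is enough to prove that $\ker\nu_{q,V}$ is finite.

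The heart of the argument is an explicit cohomological description of $F^{\times}/\mathrm{Sn}(q,F)$ for every extension $F/K$, which is possible because $q$ is a Pfister form. Write $q=\langle\!\langle a_1,\dots,a_d\rangle\!\rangle$ and set $\epsilon=\chi_{a_1}\cup\cdots\cup\chi_{a_d}=\gamma_{K,d}([q])\in H^d(K,\mu_2)$. Since $q_F$ is a Pfister form it is round, so the set $D(q_F)$ of its nonzero values is a subgroup of $F^{\times}$, equal to the group $G(q_F)$ of similarity factors of $q_F$, and containing $(F^{\times})^{2}$. Combining this with the Cartan--Dieudonn\'e presentation of orthogonal maps as products of reflections $\tau_v$ (legitimate as $\mathrm{char}\,F\neq 2$) and the fact that the spinor norm of $\tau_{v_1}\cdots\tau_{v_{2k}}$ is the class of $q(v_1)\cdots q(v_{2k})$, one gets $\mathrm{Sn}(q,F)=D(q_F)=G(q_F)$ (indeed $1\in D(q_F)$, so every value is a product of two values). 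Next, $\lambda\in G(q_F)$ holds iff $\langle 1,-\lambda\rangle\otimes q_F=\langle\!\langle a_1,\dots,a_d,\lambda\rangle\!\rangle$ is hyperbolic; and by the Arason--Pfister Hauptsatz \cite[Ch. X, Hauptsatz 5.1]{Lam} together with the injectivity of $\gamma_{F,d+1}$, a $(d+1)$-fold Pfister form is hyperbolic exactly when its invariant $\chi_{a_1}\cup\cdots\cup\chi_{a_d}\cup\chi_{\lambda}=\epsilon_F\cup\chi_{\lambda}$ vanishes in $H^{d+1}(F,\mu_2)$. Hence $\lambda\mapsto\epsilon_F\cup\chi_{\lambda}$ induces an \emph{injective} homomorphism $\alpha_F\colon F^{\times}/\mathrm{Sn}(q,F)\hookrightarrow H^{d+1}(F,\mu_2)$, functorial in $F$.

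Applying $\alpha$ over $K$ and over each $K_v$, and using compatibility of cup products with restriction, I would obtain a commutative square whose top row is $\nu_{q,V}$, whose bottom row is the localization map $H^{d+1}(K,\mu_2)\to\prod_{v\in V}H^{d+1}(K_v,\mu_2)$, and whose left vertical arrow $\alpha_K$ is injective. Therefore $\ker\nu_{q,V}$ embeds into $\Omega_{d+1}=\ker\big(H^{d+1}(K,\mu_2)\to\prod_{v\in V}H^{d+1}(K_v,\mu_2)\big)$, which is contained in $H^{d+1}(K,\mu_2)_V$. As the latter group is finite for a $2$-dimensional global field of characteristic $\neq 2$ (established in \S\ref{S:ProofTF1}), we conclude that $\ker\nu_{q,V}$, hence also $\text{\brus Sh}(\mathrm{Spin}_n(q),V)\cap\ker\pi_K$, is finite.

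The only step needing genuine care is the identification $\mathrm{Sn}(q,F)=G(q_F)$ combined with the cohomological hyperbolicity criterion for $(d+1)$-fold Pfister forms; once the injective functorial map $\alpha_F$ is in place, the rest is the formal diagram chase above. I do not expect a real obstacle, since the roundness of Pfister forms and the Milnor-conjecture input underlying the maps $\gamma_{F,d+1}$ are standard and are already used elsewhere in the paper.
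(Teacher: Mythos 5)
Your proposal is correct and follows essentially the same route as the paper: reduce to the finiteness of $\ker \nu_{q,V}$, use roundness of the Pfister form to identify $\mathrm{Sn}(q,F)$ with the value group and characterize membership by the hyperbolicity of $\langle\!\langle a_1,\dots,a_d,\lambda\rangle\!\rangle$, i.e.\ the vanishing of $\chi_{a_1}\cup\cdots\cup\chi_{a_d}\cup\chi_{\lambda}$, obtaining an injective functorial map into $H^{d+1}(F,\mu_2)$, and then conclude by the commutative square and the finiteness of $H^{d+1}(K,\mu_2)_V$. The only difference is that you spell out (via Cartan--Dieudonn\'e) the spinor-norm identification that the paper dismisses as ``easy to see,'' which is harmless.
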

\begin{proof}
According to the discussion prior to the statement of the proposition, it is enough to show that the map
$$
\eta_{q , V} \colon K^{\times}/\mathrm{Sn}(q , K) \to \prod_{v \in V} K_v^{\times} / \mathrm{Sn}(q , K_v)
$$
has finite kernel. By our assumption, $q$ is a Pfister form $\langle\!\langle a_1, \ldots , a_d \rangle\!\rangle$ for some $a_1, \ldots , a_d \in
K^{\times}$. For any field extension $F/K$, using the fact that the nonzero values of $q$ over $F$ form a subgroup of $F^{\times}$ (see \cite[Ch. X, Theorem 1.8]{Lam}), it is easy to see that $a \in F^{\times}$ belongs to $\mathrm{Sn}(q , F)$ if and only if the Pfister form $q \perp (-a q) = \langle\!\langle a_1, \ldots , a_d , a \rangle\!\rangle$ is isotropic, hence hyperbolic (cf. \cite[Ch. X, Theorem 1.7]{Lam}). The latter is equivalent to the symbol
$$
(a_1, \ldots , a_d, a) \in k_{d+1}(F) = K_{d+1}(F) / 2 \cdot K_{d+1}(F),
$$
or the cup-product
$$
\chi_{a_1} \cup \cdots \cup \chi_{a_d} \cup \chi_a \in H^{d+1}(F , \mu_2),
$$
being trivial. So, the map
$$
F^{\times} \to H^{d+1}(F , \mu_2), \ \ a \mapsto \chi_{a_1} \cup \cdots \cup \chi_{a_d} \cup \chi_{a},
$$
gives rise to an {\it injective} map
$$
\delta_F \colon F^{\times} / \mathrm{Sn}(q , F) \to H^{d+1}(F , \mu_2).
$$
We then have the following commutative diagram
$$
\xymatrix{K^{\times} / \mathrm{Sn}(q , K) \ar[r]^{\eta_{q , V}} \ar[d]_{\delta_K} & \displaystyle \prod_{v \in V} K_v^{\times} / \mathrm{Sn}(q , K_v) \ar[d]^{\Delta_V} \\ H^{d+1}(K , \mu_2) \ar[r]^{\iota_{d+1}} & \displaystyle \prod_{v \in V} H^{d+1}(K_v , \mu_2)}
$$
where $\Delta_V = \prod_{v \in V} \delta_{K_v}$. As we already mentioned several times, the finiteness of the unramified cohomology group $H^{d+1}(K , \mu_2)_V$ (see \S 4.1), implies the finiteness of $\ker \iota_{d+1}$. So, the injectivity of $\delta_K$ yields the finiteness of  $\ker \eta_{q , V}$, as required.
\end{proof}

\vskip5mm

\section{The finiteness of unramified $H^3$ for 2-dimensional global fields: characteristic zero case}\label{S:H3}

To complete the proof of Theorem \ref{T:F1} and its consequences, we need to prove the following.
\begin{thm}\label{T:H3-Finite}
Let $K$ be a 2-dimensional global field, $n \geqslant 1$ be an integer prime to $\mathrm{char}\: K$, and $V$ a divisorial set of places
such that $\mathrm{char}\: K^{(v)}$ is prime to $n$ for all $v \in V$. Then
the unramified cohomology group $H^3(K , \mu_n^{\otimes 2})_V$ is finite.
\end{thm}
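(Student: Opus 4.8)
\noindent\emph{Sketch of the approach (characteristic zero case; the positive characteristic case is handled separately in \S\ref{S:Fm}).} The idea is to pass to a regular two‑dimensional model $\fX$ of $K$ and to reduce the finiteness of $H^3(K,\mu_n^{\otimes 2})_V$, via the Bloch--Ogus spectral sequence, to two finiteness statements for $\fX$: the standard finiteness of the \'etale cohomology of $\fX$ with $\Z/n$‑coefficients, and --- the deep one --- the finiteness of the Chow group of zero‑cycles $\mathrm{CH}_0(\fX)$ modulo $n$, which is where Kato's cohomological Hasse principle enters. Concretely, starting from any model $\mathfrak{Y}$ of $K$ (normal, of finite type over $\Z$) with $V=V(\mathfrak{Y})$, I would remove from $\mathfrak{Y}$ its non‑regular locus --- which, by normality, has codimension $\geqslant 2$, so that no prime divisor disappears --- together with the fibers over the primes dividing $n$. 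This yields a regular scheme $\fX$, flat of finite type over $\mathrm{Spec}\,\Z[1/n]$, of dimension $2$, with $V(\fX)\subseteq V$; since then $H^3(K,\mu_n^{\otimes 2})_V\subseteq H^3(K,\mu_n^{\otimes 2})_{V(\fX)}$, it is enough to prove that $H^3(K,\mu_n^{\otimes 2})_{V(\fX)}$ is finite.

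For this, consider the Bloch--Ogus spectral sequence
$$
E_2^{p,q}=H^p_{\mathrm{Zar}}\bigl(\fX,\mathscr{H}^q(\mu_n^{\otimes 2})\bigr)\ \Longrightarrow\ H^{p+q}\bigl(\fX,\mu_n^{\otimes 2}\bigr),
$$
where $\mathscr{H}^q(\mu_n^{\otimes 2})$ is the Zariski sheafification of \'etale cohomology (the Gersten resolution required here being available for regular schemes of this type with $\Z/n$‑coefficients, cf.\ the references quoted in \S\ref{S:ProofTF1}.2). Since $\dim\fX=2$ we have $E_2^{p,q}=0$ for $p\geqslant 3$, and the Gersten resolution of $\mathscr{H}^3(\mu_n^{\otimes 2})$ identifies $E_2^{0,3}$ with the common kernel of the residue maps $H^3(K,\mu_n^{\otimes 2})\to H^2(\kappa(v),\mu_n^{\otimes 1})$ over the prime divisors $v$ of $\fX$, i.e.\ with $H^3(K,\mu_n^{\otimes 2})_{V(\fX)}$. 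Because $E_2^{3,1}=0$, the only nontrivial differential leaving $E_2^{0,3}$ is $d_2\colon E_2^{0,3}\to E_2^{2,2}$, so $E_\infty^{0,3}=\ker d_2$; moreover $E_\infty^{0,3}$ is a subquotient of $H^3(\fX,\mu_n^{\otimes 2})$, which is finite by the finiteness of \'etale cohomology of schemes of finite type over $\Z$ with locally constant $\Z/n$‑coefficients recalled in the proof of Proposition~\ref{P:Pic}. Hence there is an exact sequence
$$
0\longrightarrow E_\infty^{0,3}\longrightarrow H^3(K,\mu_n^{\otimes 2})_{V(\fX)}\stackrel{d_2}{\longrightarrow} H^2\bigl(\fX,\mathscr{H}^2(\mu_n^{\otimes 2})\bigr)
$$
with $E_\infty^{0,3}$ finite, which reduces the whole problem to the finiteness of $H^2\bigl(\fX,\mathscr{H}^2(\mu_n^{\otimes 2})\bigr)$.

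Applying the Gersten resolution once more --- this time to $\mathscr{H}^2(\mu_n^{\otimes 2})$ --- identifies $H^2\bigl(\fX,\mathscr{H}^2(\mu_n^{\otimes 2})\bigr)$ with the cokernel of the divisor map
$$
\bigoplus_{v\in\fX^{(1)}}\kappa(v)^{\times}/{\kappa(v)^{\times}}^{n}\ \longrightarrow\ \bigoplus_{x\in\fX^{(2)}}\Z/n\Z
$$
(where $\fX^{(i)}$ is the set of points of codimension $i$, so $\fX^{(1)}$ indexes the prime divisors and $\fX^{(2)}$ the closed points), that is, with $\mathrm{CH}_0(\fX)/n$, the Chow group of zero‑cycles of $\fX$ taken modulo $n$ (the mod‑$n$ form of Bloch's formula). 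The finiteness of $\mathrm{CH}_0(\fX)/n$ for a regular scheme of finite type over $\Z$ of dimension $\leqslant 2$ is the arithmetic core of the argument: it is furnished by the higher‑dimensional unramified class field theory of Kato and Kato--Saito, being in substance a form of Kato's cohomological Hasse principle for $2$‑dimensional global fields. (Should $\fX$ fail to be proper over $\mathrm{Spec}\,\Z[1/n]$, I would first pass to a proper regular model $\overline{\fX}\supseteq\fX$, using resolution of singularities for arithmetic surfaces, and then invoke the surjection $\mathrm{CH}_0(\overline{\fX})\twoheadrightarrow\mathrm{CH}_0(\fX)$.) Granting this, the exact sequence above exhibits $H^3(K,\mu_n^{\otimes 2})_{V(\fX)}$ as an extension of a subgroup of the finite group $\mathrm{CH}_0(\fX)/n$ by the finite group $E_\infty^{0,3}$, hence as finite, which proves the theorem.

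The only genuinely hard step is the finiteness of $\mathrm{CH}_0(\fX)/n$: everything preceding it is formal once one has the Bloch--Ogus formalism together with the standard finiteness of \'etale cohomology of arithmetic schemes. A lesser point that needs some care is the initial reduction to a regular --- and, if desired, proper --- model, which rests on resolution of singularities in dimension two.
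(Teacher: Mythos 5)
Your proposal is correct in outline and takes a genuinely different route from the paper's characteristic zero argument, while running closely parallel to its positive characteristic argument. In \S\ref{S:H3} the paper proves the characteristic zero case by first applying Kato's cohomological Hasse principle for the function field of a projective curve over a number field (injectivity of $H^3(k(\widetilde{C}),\mu_n^{\otimes 2})\to\prod_v H^3(k_v(\widetilde{C}_v),\mu_n^{\otimes 2})$, together with finiteness of each local unramified group), and then descending from the projective model to an arbitrary one via the residue exact sequence of Proposition~\ref{P:Open} and finiteness of unramified Brauer groups of number fields. You instead work with the two-dimensional arithmetic model $\fX$ directly by way of the coniveau spectral sequence, reducing to the finiteness of $CH_0(\fX)/n$ and invoking Kato--Saito unramified class field theory for arithmetic surfaces. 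This is precisely the shape of the paper's proof in positive characteristic (\S\ref{S:Fm}), where Bloch--Ogus on a smooth surface $X$ over a finite field reduces the problem to $CH^2(X)/n$ with finiteness from \cite{CTSS}; so your route buys a characteristic-uniform treatment, at the price of trading the Poitou--Tate and Artin--Hasse--Brauer--Noether inputs for the heavier Kato--Saito theorem (which, as you note, is in substance a form of the Hasse principle the paper uses). Two points need more care than your sketch allows: first, over a mixed-characteristic regular scheme the Cousin form of the $E_1$-terms rests on Gabber's absolute purity rather than on the equicharacteristic references in \S\ref{S:ProofTF1}.2 --- though in fact you never need the full Gersten resolution, since the identifications $E_2^{0,3}\cong H^3(K,\mu_n^{\otimes 2})_{V(\fX)}$ and $E_2^{2,2}\cong CH_0(\fX)/n$, and the disappearance of all differentials out of $(0,3)$ past $d_2$, are formal once purity and $\dim\fX=2$ are in hand; second, the Kato--Saito finite generation of $CH_0$ is for a regular scheme proper over $\mathrm{Spec}\,\Z$ (or over $\mathcal{O}_k$), so the compactification must be taken over $\Z$, not merely over $\Z[1/n]$, after which the localization surjection for zero-cycles finishes the job.
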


\begin{cor}\label{C:mu2}
Let $K$ be a 2-dimensional global field of characteristic $\neq 2$, and $V$ be a divisorial set of places such that $\mathrm{char}\:
K^{(v)} \neq 2$ for all $v \in V$. Then the unramified cohomology group $H^3(K , \mu_2)_V$ is finite.
\end{cor}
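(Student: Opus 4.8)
The plan is to deduce Corollary \ref{C:mu2} directly from Theorem \ref{T:H3-Finite} by identifying the coefficient module $\mu_2$ with its square Tate twist $\mu_2^{\otimes 2}$. Since $K$ has characteristic $\neq 2$, the absolute Galois group $\Ga(\bar{K}/K)$ acts trivially on $\mu_2 = \{\pm 1\}$, so $\mu_2 \cong \Z/2\Z$ as a Galois module, and therefore
$$
\mu_2^{\otimes 2} = \mu_2 \otimes_{\Z/2\Z} \mu_2 \cong \Z/2\Z \cong \mu_2
$$
as Galois modules. The one point that needs to be checked is that this isomorphism is compatible with the residue maps: under the induced isomorphisms $H^3(K , \mu_2) \cong H^3(K , \mu_2^{\otimes 2})$ and $H^2(K^{(v)} , \mu_2) \cong H^2(K^{(v)} , \mu_2^{\otimes 1})$, the residue map $\partial^3_v$ should correspond to $\partial^{2 , 2}_{3 , v}$ for every $v \in V$. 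This is routine, because the residue map attached to an unramified coefficient module is functorial in that module and the isomorphism above is induced by an isomorphism of Galois modules; I would simply record this compatibility. It then follows that
$$
H^3(K , \mu_2)_V = \bigcap_{v \in V} \mathrm{Ker}\: \partial^3_v \cong \bigcap_{v \in V} \mathrm{Ker}\: \partial^{2 , 2}_{3 , v} = H^3(K , \mu_2^{\otimes 2})_V .
$$

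With this identification in hand, the corollary follows by applying Theorem \ref{T:H3-Finite} with $n = 2$: the hypotheses are exactly met, since $K$ is a 2-dimensional global field of characteristic $\neq 2$ (so $n = 2$ is prime to $\mathrm{char}\: K$) and $V$ is a divisorial set of places with $\mathrm{char}\: K^{(v)} \neq 2$, i.e.\ prime to $n = 2$, for all $v \in V$. Thus $H^3(K , \mu_2^{\otimes 2})_V$ is finite, and hence so is $H^3(K , \mu_2)_V$. There is no real obstacle in this reduction --- all of the substance sits in Theorem \ref{T:H3-Finite}, whose proof (given in \S\ref{S:H3} in the characteristic zero case and in \S\ref{S:Fm} in positive characteristic) is where the actual work is done.
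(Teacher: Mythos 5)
Your proposal is correct and follows exactly the route the paper intends: since $\mu_2^{\otimes 2}\cong\mu_2$ canonically as Galois modules (compatibly with residue maps, as you note), Corollary \ref{C:mu2} is the case $n=2$ of Theorem \ref{T:H3-Finite}. The paper leaves this reduction unstated, and your write-up simply makes it explicit.
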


In the current section, we will treat the characteristic zero case, where $K$ is the function field of a smooth geometrically integral curve $C$ over a number field $k$, following the referee's suggestions; the positive characteristic case will be handled at the end of \S \ref{S:Fm}. We begin by considering the case of a projective curve. So, let $\widetilde{C}$ be a smooth {\it projective} geometrically integral curve over a number field $k$ with function field $K = k(\widetilde{C})$, and let $V_0(\widetilde{C})$ be the set of geometric places of $K$
corresponding to the closed points of $\widetilde{C}.$
It was observed by Colliot-Th\'el\`ene \cite[proof of Theorem 6.2]{CTFinitude} and (independently) by Berhuy \cite[Example 6]{Berhuy} that Kato's cohomological Hasse principle \cite{Kato} implies the following finiteness result.

\begin{thm}\label{T:CT-Ber}
In the above notations, the unramified cohomology group $H^3(K , \mu_n^{\otimes 2})_{V_0(\widetilde{C})}$ is finite, for any $n \geqslant 1$.
\end{thm}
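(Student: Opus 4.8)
Our plan is to deduce Theorem~\ref{T:CT-Ber} from Kato's cohomological Hasse principle \cite{Kato} for two‑dimensional arithmetic schemes, in the manner of the observations of Colliot‑Th\'el\`ene \cite{CTFinitude} and Berhuy \cite{Berhuy}. First we would choose a regular scheme $\mathcal{X}$, proper and flat over $\mathrm{Spec}\: \mathcal{O}_k$ (the ring of integers of $k$), with function field $K$ and generic fibre $\widetilde{C}$; such a model exists by resolution of arithmetic surfaces. Its prime divisors split into two families: the \emph{horizontal} ones, in bijection with the closed points of $\widetilde{C}$, whose associated valuations are exactly those in $V_0(\widetilde{C})$; and the \emph{vertical} ones, the irreducible components of the special fibres $\mathcal{X}_{\mathfrak{p}}$, whose residue fields are global fields of positive characteristic. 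Kato's theorem (see also Jannsen \cite{Jann}) asserts that the Kato complex of the $2$-dimensional scheme $\mathcal{X}$,
$$
H^3(K , \mu_n^{\otimes 2}) \stackrel{\partial_1}{\longrightarrow} \bigoplus_{z \in \mathcal{X}^{(1)}} H^2(\kappa(z) , \mu_n) \stackrel{\partial_0}{\longrightarrow} \bigoplus_{y \in \mathcal{X}^{(2)}} H^1(\kappa(y) , \Z/n)
$$
(with the standard modifications for the $p$-primary part at the primes $p \mid n$, where $\mu_n^{\otimes \bullet}$ is replaced by logarithmic de Rham--Witt sheaves, and the customary correction at the archimedean places of $k$), is exact at its first two terms; in particular $\partial_1$ is \emph{injective}.

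Granting this, let $\xi \in H^3(K , \mu_n^{\otimes 2})_{V_0(\widetilde{C})}$. By definition $\xi$ has trivial residue at every horizontal divisor, so $\partial_1(\xi)$ is supported on vertical divisors, and the first step is to bound this support \emph{uniformly} in $\xi$. For a prime $\mathfrak{p} \nmid n$ at which $\widetilde{C}$ has good reduction --- all but finitely many $\mathfrak{p}$ --- the fibre $\mathcal{X}_{\mathfrak{p}}$ is a smooth projective geometrically integral curve over the finite field $\kappa(\mathfrak{p})$, with generic point $z_{\mathfrak{p}}$. Since $\partial_0 \circ \partial_1 = 0$ and $\partial_1(\xi)$ has no horizontal component, at each closed point $y$ of $\mathcal{X}_{\mathfrak{p}}$ the only contribution to $\partial_0(\partial_1\xi)_y$ comes from $z_{\mathfrak{p}}$ (the unique vertical divisor through $y$), whence $\partial_{z_{\mathfrak{p}}}(\xi) \in {}_n\Br(\kappa(z_{\mathfrak{p}}))$ is unramified at every closed point of $\mathcal{X}_{\mathfrak{p}}$. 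By purity it then lies in $\Br(\mathcal{X}_{\mathfrak{p}})$, which vanishes by Lang's theorem (a smooth projective geometrically connected curve over a finite field has trivial Brauer group). Hence $\partial_{z_{\mathfrak{p}}}(\xi) = 0$, and $\partial_1(\xi)$ is supported on the fixed finite set $z_1, \ldots, z_M$ of vertical divisors lying over the primes dividing $n$ or of bad reduction.

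It remains to see that each $\beta_i := \partial_{z_i}(\xi) \in H^2(\kappa(z_i),\mu_n)$ varies in a finite set. Running $\partial_0 \circ \partial_1 = 0$ once more, $\beta_i$ is unramified at every closed point of $z_i$ that lies on no other divisor of $\mathcal{X}$, hence is ramified only within the fixed finite set $T_i$ of places of $\kappa(z_i)$ lying over the closed points where $z_i$ meets the remaining components of its special fibre. Since $\kappa(z_i)$ is a global function field, class field theory shows that a Brauer class of exponent dividing $n$ that is ramified only within $T_i$ is determined by its finitely many local invariants, so there are at most $n^{|T_i|}$ such classes. Therefore $\partial_1(\xi) = (\beta_1, \ldots, \beta_M)$ ranges over a set of size $\leqslant \prod_{i} n^{|T_i|}$, and the injectivity of $\partial_1$ yields the asserted finiteness, with an explicit bound.

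I expect the entire difficulty to reside in the first paragraph: having Kato's cohomological Hasse principle at one's disposal in precisely the required shape --- for the $2$-dimensional arithmetic scheme $\mathcal{X}$, with finite coefficients, the $p$-primary part at primes $p \mid n$ handled via logarithmic de Rham--Witt sheaves, and the correction at the real places of $k$. Once this is in place, the remainder is formal: the cancellation of residues forced by $\partial_0 \circ \partial_1 = 0$, the vanishing of $\Br(\mathcal{X}_{\mathfrak{p}})$ over a finite field, and the finiteness of the set of Brauer classes over a global field with prescribed ramification.
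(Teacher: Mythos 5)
Your route is genuinely different from the paper's: the paper invokes Kato's Hasse principle in the form of \cite[Theorem 0.8]{Kato} --- injectivity of $H^3(K,\mu_n^{\otimes 2})\to\prod_{v\in V^k}H^3(k_v(\widetilde{C}_v),\mu_n^{\otimes 2})$ over all completions of the number field $k$ --- combined with Kato's finiteness of the unramified $H^3$ of $k_v(\widetilde{C}_v)$ and its vanishing at places of good reduction, whereas you work with the Kato complex of a proper regular model $\mathcal{X}/\mathcal{O}_k$ and then use $\partial_0\circ\partial_1=0$, the vanishing of the Brauer group of a smooth projective curve over a finite field, and Albert--Brauer--Hasse--Noether over the global function fields $\kappa(z_i)$. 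Those latter steps are essentially fine, but the keystone claim is not: $\partial_1$ is \emph{not} injective in general. If $k$ has a real place and $n$ is even --- and $n=2$ is exactly the case the paper needs --- then a nonzero class inflated from $H^3(k,\mu_2^{\otimes 2})\simeq(\Z/2\Z)^{r}$ ($r$ the number of real places) is unramified at every horizontal divisor (it is constant) and has zero residue at every vertical divisor (its image in $H^3(k_{\mathfrak p},\mu_2^{\otimes 2})=0$ already kills the residue at $\mathfrak p$), yet it can be nonzero in $H^3(K,\mu_2^{\otimes 2})$; concretely, $\chi_{-1}\cup\chi_{-1}\cup\chi_{-1}$ on $\mathcal{X}=\mathbb{P}^1_{\Z}$, $K=\Q(t)$, lies in $\ker\partial_1\setminus\{0\}$. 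This is precisely why Kato's exactness statement for flat arithmetic schemes carries a correction at the archimedean places: after the correction it is the \emph{modified} complex that is exact, so ``in particular $\partial_1$ is injective'' does not follow from the parenthetical you inserted. What your argument actually needs --- and what is true --- is only that $\ker\partial_1$ is \emph{finite}; but extracting that finiteness from Kato's results requires putting the real places back in (e.g.\ via \cite[Theorem 0.8]{Kato} over the completions together with the finiteness of the relevant unramified groups there), which is essentially the paper's own proof.

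Two smaller points. At a closed point $y$ of a bad vertical component $z_i$, the Kato differential is a sum of corestricted residues computed on the normalization $\tilde{z}_i$, so $\partial_0\partial_1\xi=0$ lets you conclude that $\beta_i$ is unramified at the place over $y$ only when $y$ is a \emph{regular} point of $z_i$ lying on no other component in the support of $\partial_1\xi$; you must therefore enlarge $T_i$ by the finite singular locus of $z_i$, which is harmless for the count. And the residues at vertical components over primes dividing $n$ indeed require the $p$-primary (logarithmic de Rham--Witt) form of the complex, as you note. With $\ker\partial_1$ finite and these adjustments, your argument does yield the theorem, and even an explicit bound, by a path different from the one taken in the paper.
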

\begin{proof}
Let $V^k$ be the set of (the equivalence classes of) all valuations of $k$. Then according to \cite[Theorem 0.8]{Kato}, the map
$$
H^3(k(\widetilde{C}) , \mu_n^{\otimes 2}) \longrightarrow \prod_{v \in V^k} H^3(k_v(\widetilde{C}_v) , \mu_n^{\otimes 2}),
$$
where $\widetilde{C}_v = \widetilde{C} \times_k k_v$, is injective. Clearly, for $v \in V^k$, the restriction map takes $H^3(k(\widetilde{C}) ,
\mu_n^{\otimes 2})_{V_0(\widetilde{C})}$ into $H^3(k_v(\widetilde{C}) , \mu_n^{\otimes 2})_{V_0(\widetilde{C}_v)}$. But it is shown in \cite[Corollary 2.9]{Kato} that the latter group is finite for all $v$ (see also Theorem \ref{T:IRap} below), and is in fact trivial if $v$ is nonarchimedean and $\widetilde{C}_v$ has good reduction at $v$. Since $\widetilde{C}$ has good reduction at almost all $v \in V^k$, the required finiteness follows.
\end{proof}

Now, given an arbitrary smooth geometrically integral curve $C$ over a number field $k$, we let $\widetilde{C}$ denote the (unique) smooth geometrically integral projective curve over $k$ containing $C$ as an open subset. Then Theorem \ref{T:CT-Ber} immediately gives the finiteness of the unramified cohomology group $H^3(K , \mu_n^{\otimes 2})$ for any divisorial set of places $V$ of $K = k(C) = k(\widetilde{C})$ that contains
$V_0(\widetilde{C})$. In order to establish the finiteness for {\it any} divisorial set, we need to describe the relationship between the unramified
cohomology of a scheme and that of the complement of a closed subscheme of codimension 1. This relationship, which essentially follows from the construction of Kato's complex in \cite{Kato}, was first observed by Colliot-Th\'el\`ene \cite[\S 2]{CTFinitude}.

Let $X$ be an excellent noetherian scheme and $n$ an integer invertible on $X$. For a point $x \in X$, we let $\kappa(x)$ denote its residue field, and let $X_p$ be the set of points of dimension $p$. For any integers $i, j$, Kato \cite{Kato} constructs a homological complex $C_n^{i,j}(X)$
$$
\cdots \to \bigoplus_{x \in X_p} H^{p+ i} (\kappa(x), \mu_n^{\otimes (p + j)}) \to \bigoplus_{x \in X_{p-1}} H^{p+i-1} (\kappa(x), \mu_n^{\otimes (p+j-1)}) \to \cdots \to \bigoplus_{x \in X_0} H^{i} (\kappa(x), \mu_n^{\otimes j}),
$$
where the term $\oplus_{x \in X_p} H^{p+ i} (\kappa(x), \mu_n^{\otimes (p + j)})$ is placed in degree $p$.
The differentials
$$
\partial_p \colon \bigoplus_{x \in X_{p}} H^{p+i}(\kappa(x), \mu_n^{\otimes (p + j)}) \to \bigoplus_{x \in X_{p-1}} H^{p+i-1}(\kappa(x), \mu_n^{\otimes (p + j-1)})
$$
are defined as follows. Let $x \in X_p$ and set $Z_x = \overline{ \{ x \}}$ to be the closure of $x$ in $X$. Then each point $y$ of codimension 1 on $Z_x$ corresponds to a point in $X_{p-1}.$ Let $y_1, \dots, y_s$ be the points on the normalization $\tilde{Z}_x$ lying above $y.$ The local ring at each $y_k$ is a discrete valuation ring, yielding a discrete valuation on the function field $\kappa(x)$ of $\tilde{Z}_x.$ Let
$$
\partial^x_{y_k} \colon H^{p+i} (\kappa(x), \mu_n^{\otimes (p+j)}) \to H^{p+i-1} (k(y_k), \mu_n^{\otimes (p+j-1)})
$$
be the corresponding residue map. One then defines
$$
\partial^x_y = \sum_{k=1}^s {\rm Cor}_{\kappa(y_k)/ \kappa (y)} \circ \partial^x_{y_k},
$$
where ${\rm Cor}$ is the corestriction map. The differential $\partial_p$ is the direct sum of all such maps. In \cite[Proposition 1.7]{Kato}, Kato verified that this construction produces a complex. Notice that if $X$ is a noetherian integral {\it normal} scheme of dimension $d$ with function field $K$, then the construction of the differentials implies that the degree $d$ homology $H_d (C^{i,j}_n(X))$ of $C^{i,j}_n(X)$ is precisely the unramified cohomology $H^{d+j}(K, \mu_n^{\otimes (d+j)})_{V(X)}$, where $V(X)$ is the set of discrete valuations of $K$ corresponding to the prime divisors of $X$.

Suppose now that $X$ is an excellent integral noetherian normal scheme with function field $K$ and $Y \subset X$ is a {\it normal} closed subscheme of pure codimension 1. So, if $Y = \bigcup_{j \in J} Y_j$ is the decomposition into irreducible components, then each $Y_j$ has codimension 1 in $X$, and we let $\kappa_j$ denote the function field of $Y_j$. Set $U = X \setminus Y$, and let $n$ be a positive integer invertible on $X$. Then for any $i \geqslant 1$ and $\ell$ we  have a canonical embedding
$$
s \colon H^i(K , \mu_n^{\otimes \ell})_{V(X)} \longrightarrow H^i(K , \mu_n^{\otimes \ell})_{V(U)}.
$$
On the other hand, for each $j \in J$, one has the residue map $H^i(K , \mu_n^{\otimes \ell}) \to H^{i-1}(\kappa_j , \mu_n^{\otimes (\ell-1)})$, and we let
$$
r \colon H^i(K , \mu_n^{\otimes \ell}) \longrightarrow \bigoplus_{j \in J} H^{i-1}(\kappa_j , \mu_n^{\otimes (\ell-1)})
$$
denote the direct sum of the residue maps.
\begin{prop}\label{P:Open}
The above maps $r$ and $s$ give the following exact sequence of unramified cohomology groups
\begin{equation}\label{E:Open}
0 \to H^i(K , \mu_n^{\otimes \ell})_{V(X)} \stackrel{s}{\longrightarrow} H^i(K , \mu_n^{\otimes \ell})_{V(U)} \stackrel{r}{\longrightarrow} \bigoplus_{j \in J} H^{i-1}(\kappa_j , \mu_n^{\otimes (\ell-1)})_{V(Y_j)}.
\end{equation}
\end{prop}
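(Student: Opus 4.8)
The plan is to deduce the exact sequence from Kato's complex $C_n^{i,j}(X)$ together with the localization/excision behavior of that complex under removal of a codimension-one closed subscheme. First I would set up the three relevant complexes: $C := C_n^{\ast, \ast}(X)$, $C' := C_n^{\ast, \ast}(U)$, and the complex $C'' := \bigoplus_{j \in J} C_n^{\ast, \ast}(Y_j)$ (suitably reindexed). Since the points of $U$ of a given dimension are exactly the points of $X$ of that dimension that do not lie on $Y$, and since each $Y_j$ has pure codimension $1$ in $X$ (so its points of dimension $p$ are points of $X$ of dimension $p$ lying on $Y$), the underlying graded groups fit into a short exact sequence of graded abelian groups $0 \to C' \to C \to C''[-1] \to 0$, where the shift reflects the codimension-one drop. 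The key point, which follows directly from the definition of the differentials $\partial^x_y$ via residue maps and corestrictions recalled just above, is that this is in fact a short exact sequence of \emph{complexes}: the differential of $C$ restricted to components supported on $U$ lands in components supported on $U$ only up to the ``boundary'' contributions hitting points on $Y$, and those contributions are exactly recorded by the map to $C''$, which intertwines with the differential of $C''$ because the residue maps compose correctly (this is the compatibility used in \cite[Proposition 1.7]{Kato}).

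Next I would pass to the long exact sequence in homology associated with this short exact sequence of complexes. The crucial identification, already stated in the excerpt, is that for a noetherian integral normal scheme $Z$ of dimension $d$ with function field $F$, one has $H_d(C_n^{i,j}(Z)) = H^{d+j}(F, \mu_n^{\otimes(d+j)})_{V(Z)}$ — i.e.\ the top homology of Kato's complex computes unramified cohomology. Applying this with $Z = X$ (dimension $d$, say), $Z = U$ (also dimension $d$, since $U$ is open dense), and $Z = Y_j$ (dimension $d-1$), and choosing the indices $(i,j)$ of Kato's complex so that the relevant cohomological degree is our fixed $i$ and the twist is our fixed $\ell$, the top-degree portion of the long exact homology sequence reads
\[
\cdots \to H^i(K,\mu_n^{\otimes\ell})_{V(X)} \xrightarrow{\ s\ } H^i(K,\mu_n^{\otimes\ell})_{V(U)} \xrightarrow{\ r\ } \bigoplus_{j\in J} H^{i-1}(\kappa_j,\mu_n^{\otimes(\ell-1)})_{V(Y_j)} \to \cdots
\]
Here I would check that the connecting map $H_d(C) \to H_{d-1}(C'')$ is indeed the direct sum of residue maps $r$ as defined in the statement (this is immediate from how the connecting homomorphism is computed: lift a class from $C$, apply the differential, which by construction is precisely the residue contribution to the components supported on the $Y_j$), and that $s$ is the evident restriction of unramified classes (immediate, since $V(X) \supset V(U)$ as sets of valuations on $K$, so an element unramified at all of $V(X)$ is unramified at all of $V(U)$). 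Finally, exactness at the left term $H^i(K,\mu_n^{\otimes\ell})_{V(X)}$, i.e.\ injectivity of $s$, is the only remaining claim and it is already clear from the direct description: $s$ is literally an inclusion of subgroups of $H^i(K,\mu_n^{\otimes\ell})$, so it is injective; alternatively it follows because the homology term one degree higher, $H_{d+1}(C'') = \bigoplus_j H^{i}(\kappa_j, \mu_n^{\otimes \ell})_{V(Y_j)}$ sits in dimension $d+1 > \dim Y_j$ and, while not necessarily zero as an abstract group, the connecting map into $H_d(C')$ through which the kernel of $s$ would be controlled is handled by the explicit description.

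The step I expect to be the main obstacle is verifying carefully that $0 \to C' \to C \to C''[-1] \to 0$ is a short exact sequence of \emph{complexes} — that is, that the differentials genuinely commute with the inclusion and projection. This requires a somewhat delicate bookkeeping of which residue maps $\partial^x_y$ contribute where when $x$ is a point of $U$ and $y$ lies on $Y$, using that $Y$ (and hence each $Y_j$) is normal so that the normalization $\widetilde{Z}_x$ step in Kato's definition behaves transparently, and that the corestriction maps assemble correctly across the components $Y_j$. Once this is in place, everything else is a formal consequence of the long exact homology sequence and the identification of top homology with unramified cohomology; in particular, since the excerpt explicitly attributes this relationship to Colliot-Th\'el\`ene \cite[\S 2]{CTFinitude}, I would expect the detailed diagram-chase to be short in the write-up, citing \cite{Kato} and \cite{CTFinitude} for the structural facts about Kato's complex.
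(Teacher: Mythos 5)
Your overall strategy --- derive the exact sequence from the structure of Kato's complex under removal of a codimension-one locus --- is close in spirit to the paper's, but your short exact sequence of complexes has the arrows in the wrong direction, and this is not cosmetic: the argument breaks down precisely at the step you flag as the ``main obstacle.'' You assert $0 \to C(U) \to C(X) \to C''[-1] \to 0$ with $C(U)$ as the subcomplex. But the graded subgroup of $C(X)$ supported on $U$ is \emph{not} preserved by the differential of $C(X)$: applied to a class supported at $x\in U_p$, the differential has contributions at codimension-one points of $\overline{\{x\}}\subset X$, which may lie on $Y$. You even describe this phenomenon (``the differential $\ldots$ lands in components supported on $U$ only up to the boundary contributions hitting points on $Y$'') and then conclude, contradictorily, that one has a short exact sequence of complexes. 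The truth is the reverse: $Y$-supported classes \emph{are} preserved by the differential (since $Y$ is closed, closures of $Y$-points remain in $Y$), so the $Y$-supported part is the subcomplex, $C(U)$ is the quotient, and the correct sequence is
\[
0 \longrightarrow \bigoplus_{j\in J} C_n^{*,*}(Y_j)\longrightarrow C_n^{*,*}(X) \longrightarrow C_n^{*,*}(U) \longrightarrow 0.
\]
The long exact sequence of your sequence, were it valid, would in fact yield the map $H^i(K,\mu_n^{\otimes\ell})_{V(U)} \to H^i(K,\mu_n^{\otimes\ell})_{V(X)}$, in the wrong direction for the proposition; the sequence you finally write down happens to agree with the statement but does not follow from your set-up.

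Two further points, then a comparison. The shift $[-1]$ is spurious when indexing Kato's complex by dimension of points as the paper does, since a point of $(Y_j)_p$ is a point of $X_p$. More importantly, you never invoke the hypothesis that $Y$ is \emph{normal}. This is what allows the $Y$-supported subcomplex to be identified with $\bigoplus_j C_n^{*,*}(Y_j)$: a normal noetherian scheme is a disjoint union of its irreducible components, so no point in dimension $\leqslant d-2$ lies on two distinct $Y_j$'s. Without that, $\bigoplus_j C_n^{*,*}(Y_j)$ over-counts points of $Y_j\cap Y_{j'}$, so this direct sum does not embed in $C_n^{*,*}(X)$, and one also loses the conclusion that $r$ lands inside $\bigoplus_j H^{i-1}(\kappa_j,\mu_n^{\otimes(\ell-1)})_{V(Y_j)}$ rather than the a priori larger top homology of the full complex for $Y$. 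For comparison, the paper's own proof bypasses the short-exact-sequence and long-exact-sequence machinery altogether: it identifies $H^i(K,\mu_n^{\otimes\ell})_{V(X)}$, $H^i(K,\mu_n^{\otimes\ell})_{V(U)}$ and each $H^{i-1}(\kappa_j,\mu_n^{\otimes(\ell-1)})_{V(Y_j)}$ directly as kernels of the initial Kato differentials $\partial_X$, $\partial_U$, $\partial_{Y_j}$; reads off $\ker\partial_X = \ker\partial_U\cap\ker r$ from the decomposition of $\partial_X$ according to $X_{d-1} = U_{d-1}\sqcup J$; and uses only $\partial\circ\partial = 0$ together with the disjointness of the $Y_j$ (from normality) to conclude that $r(\ker\partial_U)\subset \bigoplus_j\ker\partial_{Y_j}$. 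That direct argument is shorter and avoids the pitfall you ran into.
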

\begin{proof}
Let $d = \dim X.$ The initial segment of the Kato complex for $X$ is
$$
\partial_X \colon H^i (K, \mu_n^{\otimes \ell}) \to \left( \bigoplus_{x \in U_{d-1}} H^{i-1}(\kappa(x), \mu_n^{\otimes (\ell -1)}) \right) \oplus \left( \bigoplus_{j \in J} H^{i-1} (\kappa_j, \mu_n^{\otimes (\ell-1)}) \right)
$$
and for $U$ it is
$$
\partial_U \colon H^i (K, \mu_n^{\otimes \ell}) \to  \bigoplus_{x \in U_{d-1}} H^{i-1}(\kappa(x), \mu_n^{\otimes (\ell -1)}).
$$
As noted above, the normality of $X$ implies that $$H^i(K , \mu_n^{\otimes \ell})_{V(X)} = \ker \partial_X \ \ \text{and} \ \  H^i(K , \mu_n^{\otimes \ell})_{V(U)} = \ker \partial_U.$$ At the next stage of the Kato complex for $X$, there are residue maps
$$
\partial_{Y_j} \colon H^{i-1}(\kappa_j, \mu_n^{\otimes (\ell -1)}) \to \bigoplus_{x \in (Y_j)_{c-1}} H^{i-2}(\kappa(x), \mu_n^{\otimes (\ell-2)})
$$
for all $j \in J$, where $c = \dim Y_j = d-1$, and the assumption that $Y$ is normal gives $$H^{i-1}(\kappa_j , \mu_n^{\otimes (\ell-1)})_{V(Y_j)} = \ker \partial_{Y_j}.$$ Now, the fact that we have a complex implies that the map $r$ takes $H^i(K , \mu_n^{\otimes \ell})_{V(U)}$ into
$$\displaystyle \bigoplus_{j \in J} H^{i-1}(\kappa_j , \mu_n^{\otimes (\ell-1)})_{V(Y_j)},$$ and the exactness of (\ref{E:Open}) then follows easily from the construction.
\end{proof}

\vskip1mm

\noindent {\it Proof of Theorem \ref{T:H3-Finite}.} First, let $C$ be any smooth geometrically integral curve over $k$ such that $K = k(C)$, and let $\widetilde{C}$ be a smooth projective geometrically integral curve over $k$ containing $C$. Suppose that we are given smooth models $\mathcal{C} \subset \widetilde{\mathcal{C}}$ of $C$ and $\tilde{C}$, respectively, over a suitable open subsect $T \subset {\rm Spec}(\Z)$ such that $\tilde{\mathcal{C}} \setminus \mathcal{C}$ is of pure codimension 1. The inclusion $$H^3 (K, \mu_n^{\otimes 2})_{V(\widetilde{\mathcal{C}})} \subset H^3(K, \mu_n^{\otimes 2})_{V_0 (\widetilde{C})}$$ in conjunction with Theorem \ref{T:CT-Ber} implies that $H^3 (K, \mu_n^{\otimes 2})_{V(\widetilde{\mathcal{C}})}$ is finite.

Now, let $$\widetilde{\mathcal{C}} \setminus \mathcal{C} = \bigcup_{j \in J} Y_j$$ be the decomposition into irreducible components, and let
$\kappa_j$ be the function field of $Y_j.$ By construction, each $\kappa_j$ is a number field and $Y_j$ is the spectrum of the ring of $S_j$-integers in $\kappa_j$ for some finite subset $S_j \subset V^{\kappa_j}$ containing $V_{\infty}^{\kappa_j}$. Set $T_j = V^{\kappa_j} \setminus S_j$. Then
$H^2(\kappa_j, \mu_n)_{V(Y_j)}$ coincides with the unramified Brauer group ${}_n\Br(\kappa_j)$, the finiteness of which easily follows from the Artin-Hasse-Brauer-Noether Theorem (cf. \cite[3.6]{CRR1}, \cite[3.5]{CRR3}). Let us consider now the exact sequence of Proposition \ref{P:Open} for $X = \widetilde{\mathcal{C}}$ and $U = \mathcal{C}$ for $i = 3$ and $\ell = 2$. Since the finiteness of
$$
H^3 (K, \mu_n^{\otimes 2})_{V(\widetilde{\mathcal{C}})}  \ \ \text{and} \ \ \bigoplus_{j \in J} H^{2}(\kappa_j , \mu_n)_{V(Y_j)}
$$
has already been established, we obtain the finiteness of $H^3 (K, \mu_n^{\otimes 2})_{V(\mathcal{C})}$. This proves the Theorem for the divisorial
set $V = V(\mathcal{C})$. However, it is easy to see that any divisorial set of valuations $V$ of $K$ contains a set of the form $V(\mathcal{C})$ for a suitable curve $C$ over $k$ and its model $\mathcal{C}$ as above, completing the argument in the general case. \hfill $\Box$

\vskip2mm

As we already mentioned, in the Appendix (\S\ref{S:Append}) we will give an alternative (in fact, our initial) proof of Theorem \ref{T:H3-Finite} in characteristic zero, which does not use Kato's cohomological Hasse principle, but rather is inspired by Jannsen's \cite{Jann} approach to the latter.

\vskip5mm

\section{Proofs of Theorem \ref{T:F3} and of Theorem \ref{T:H3-Finite} in positive characteristic characteristic}\label{S:Fm}

\vskip5mm

The reason we treat these two results in the same section is that both arguments ultimately depend on the same techniques, namely finiteness theorems in \'etale cohomology and Bloch-Ogus theory. We begin with the proof of Theorem \ref{T:F3}, which relies on the following.
\begin{thm}\label{T:IRap}
{\rm (\cite[Theorem 1.3(a)]{IRap})}
Let $C$ be a smooth geometrically integral curve  over a field $k$, $V_0$ the set of places of $K = k(C)$ associated with the closed points of $C$, and $m \geqslant 1$ an integer prime to $\mathrm{char}\: k$. If $k$ satisfies condition $(\mathrm{F}'_m)$ (see \S1), then for any $i \geqslant 1$ and any $j$, the unramified cohomology group $H^i(K , \mu_n^{\otimes j})_{V_0}$ is finite.
\end{thm}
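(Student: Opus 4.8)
The plan is to use the fibration $C \to \mathrm{Spec}\: k$ to reduce the asserted finiteness over $K = k(C)$ to a finiteness statement for the Galois cohomology of the base field $k$ with coefficients annihilated by $m$, and to derive the latter from the norm residue isomorphism theorem.

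First I would reduce to the case that $C$ is affine: deleting finitely many closed points from $C$ only shrinks $V_0$ and hence only enlarges the unramified cohomology group, so it suffices to treat a nonempty affine open. For $C$ affine, the Bloch--Ogus surjection $H^i(C , \mu_m^{\otimes j}) \twoheadrightarrow H^i(K , \mu_m^{\otimes j})_{V_0}$ recalled in \S\ref{S:ProofTF1}.2 reduces the problem to showing that $H^i(C , \mu_m^{\otimes j})$ is finite for all $i \geqslant 1$ and all $j$. For that I would feed in the fundamental exact sequence (\ref{E:X0}): its outer terms are $H^i(k , \mu_m^{\otimes j})$ and $H^{i-1}(k , M(m , j))$, where $M(m , j) = H^1(C \otimes_k \bar{k} , \mu_m^{\otimes j})$ is a \emph{finite} $\Ga(\bar{k}/k)$-module annihilated by $m$ (finiteness by \cite[Expos\'e XVI, Th\'eor\`eme 5.2]{SGA4}). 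Thus the whole question comes down to the claim:

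\smallskip

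\noindent $(\ast)$\quad \emph{for every finite $\Ga(\bar{k}/k)$-module $N$ with $mN = 0$ and every $q \geqslant 0$, the group $H^q(k , N)$ is finite.}

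\smallskip

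To prove $(\ast)$ I would argue as follows. One first records that $\Fpm$ forces $F^{\times}/{F^{\times}}^m$ to be finite for \emph{every} finite extension $F/k$, not only the separable ones (this is \cite[Lemma 2.8]{IRap}); by Kummer theory this amounts to the finiteness of $H^1(F , \mu_m)$ for all such $F$. The decisive input is then the Voevodsky--Rost norm residue isomorphism (for $m = 2$, the Milnor isomorphism $\gamma_{F , \bullet}$ recalled in \S\ref{S:F2}.2): when $\mu_m \subset F$ and $1/m \in F$ one has $H^q(F , \mu_m^{\otimes q}) \cong K_q^M(F)/m$, which is a quotient of $(F^{\times}/{F^{\times}}^m)^{\otimes q}$ and hence finite. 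From this, finiteness of $H^q(F , \mu_m^{\otimes j})$ for arbitrary twists $j$ and arbitrary finite $F/k$ should follow by a routine descent: one decomposes $m$ into prime powers and, for each prime power $p^a$ exactly dividing $m$, passes first to the prime-to-$p$ subextension of $F(\mu_{p^a})/F$, where restriction is injective on $p$-primary cohomology by a restriction--corestriction argument, and then disposes of the remaining step, of $p$-power degree, by the Hochschild--Serre spectral sequence, all of whose terms in a fixed total degree are finite. Finally, for a general $N$ with $mN = 0$, choose a finite Galois extension $F/k$ containing $\mu_m$ over which $N$ becomes a trivial module; since $\Fpm$ implies the analogous condition for every divisor of $m$, the preceding gives the finiteness of $H^q(F , N)$, and one more application of Hochschild--Serre for the finite group $\Ga(F/k)$ yields the finiteness of $H^q(k , N)$. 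This proves $(\ast)$, and with it the theorem.

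The hard part is $(\ast)$, and within it the passage from degree $1$ --- which is essentially a restatement of $\Fpm$ --- to all higher degrees: this step genuinely relies on Bloch--Kato, since finiteness of $H^1$ alone, over all finite extensions, does not by itself propagate to the higher cohomology. The remaining manipulations --- Tate twists and descent to and from $k$ --- are standard but must be handled carefully so as not to lose control of the $p$-primary components when $\mu_m \not\subset k$.
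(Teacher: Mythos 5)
Your proposal is correct and takes essentially the same route as the paper's sketch: the Bloch--Ogus surjection $H^i(C , \mu_m^{\otimes j}) \twoheadrightarrow H^i(K , \mu_m^{\otimes j})_{V_0}$ reduces the theorem to the finiteness of the \'etale cohomology of the curve, which is then deduced from the Hochschild--Serre (fundamental) sequence together with the finiteness of $H^q(k , N)$ for finite $m$-torsion Galois modules over a field of type $(\mathrm{F}'_m)$. Your claim $(\ast)$ is precisely Proposition \ref{P:IRap}, which the paper (via Proposition \ref{P:IRap2}) simply cites from \cite{IRap}, and your Bloch--Kato/norm-residue argument for it matches the proof given in that reference, so you have merely unfolded the black boxes rather than found a different path.
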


We refer the reader to \cite{IRap} for a discussion of properties and examples of fields satisfying condition $(\mathrm{F}'_m)$. A sketch of the proof of Theorem \ref{T:IRap} will be given below after briefly recalling some facts from Bloch-Ogus theory that are also needed for the proof of
Theorem \ref{T:H3-Finite} in positive characteristic. Here we only mention that the proof of Theorem \ref{T:IRap} makes use of the following statement, which is of independent interest and which we will also need in the proof of Theorem \ref{T:F3}.
\begin{prop}\label{P:IRap}
{\rm (\cite[Theorem 1.1]{IRap})} Let $K$ be a field and $m \geqslant 1$ be an integer prime to $\mathrm{char}\: K$. Assume that $K$ satisfies $(\mathrm{F}'_m)$. Then for any finite Galois module $A$ over $K$ such that $mA = 0$, the groups $H^i(K , A)$ are finite for all $i \geqslant 0$.
\end{prop}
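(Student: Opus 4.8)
The plan is to reduce, by a sequence of standard dévissage and base-change manoeuvres, to the computation of $H^t(L,\mu_p)$ for a field $L\supseteq\mu_p$ satisfying $(\mathrm{F}'_p)$, and then to invoke the norm residue isomorphism to identify this group with a quotient of a finite tensor power of $L^{\times}/{L^{\times}}^p$, which is finite by hypothesis.

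\emph{Step 1: reduction to $m=p$ prime.} Writing $m=\prod p_i^{e_i}$, the module $A$ splits into its $p_i$-primary components, and each $p$-primary component $A_p$ carries the finite filtration $A_p\supseteq pA_p\supseteq p^2A_p\supseteq\cdots$ whose successive quotients are finite $\F_p[\Ga(\bar K/K)]$-modules. Using the long exact cohomology sequences attached to these filtrations and inducting on $|A|$, it suffices to treat the case $pA=0$ for a prime $p\mid m$. Note that $(\mathrm{F}'_m)$ implies $(\mathrm{F}'_p)$, since ${L^{\times}}^m\subseteq{L^{\times}}^p$ exhibits $L^{\times}/{L^{\times}}^p$ as a quotient of the finite group $L^{\times}/{L^{\times}}^m$; and $(\mathrm{F}'_p)$ passes trivially to any finite separable extension of $K$, because a finite separable extension of such an extension is again finite separable over $K$.

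\emph{Step 2: reduction to $\mu_p\subset K$ and to $A=\mu_p$.} Put $K'=K(\mu_p)$, a finite separable extension of $K$ of degree $e$ dividing $p-1$, hence prime to $p$. The kernel of $\Res\colon H^i(K,A)\to H^i(K',A)$ is killed both by $e=[K':K]$ (via $\Cor\circ\Res$) and by $p$, hence is trivial; since $K'$ again satisfies $(\mathrm{F}'_p)$, we may replace $K$ by $K'$ and so assume $\mu_p\subset K$. Next, the $\Ga(\bar K/K)$-action on $A$ factors through $\Ga(L/K)$ for some finite Galois extension $L/K$, over which $A$ becomes trivial, so that $A\vert_L\cong\mu_p^{\oplus r}$ as Galois modules (using $\mu_p\subset L$). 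Granting that each $H^t(L,\mu_p)$ is finite, finiteness of $\Ga(L/K)$ makes every term $H^s(\Ga(L/K),H^t(L,A))$ of the Hochschild--Serre spectral sequence $H^s(\Ga(L/K),H^t(L,A))\Rightarrow H^{s+t}(K,A)$ finite, and therefore $H^i(K,A)$ is finite.

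\emph{Step 3: the case $A=\mu_p$ over $L\supseteq\mu_p$, and the main obstacle.} For $t=0$ the group is $\mu_p$. For $t\geqslant 1$, since $\mu_p\subset L$ we have $\mu_p\cong\mu_p^{\otimes t}$, so the norm residue isomorphism theorem (the Bloch--Kato conjecture, proved by Voevodsky, Rost, and others; this is precisely the input already used in \S\S\ref{S:F2}--\ref{S:ProofTF1} for $p=2$) gives $H^t(L,\mu_p)\cong K_t(L)/p\,K_t(L)$, Milnor $K$-theory mod $p$. The symbol map furnishes a surjection $\big(L^{\times}/{L^{\times}}^p\big)^{\otimes t}\twoheadrightarrow K_t(L)/p\,K_t(L)$ whose source is finite by $(\mathrm{F}'_p)$; hence $H^t(L,\mu_p)$ is finite, which closes the induction. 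The substantive point of the whole argument is exactly this last step, which rests on the norm residue isomorphism in all degrees (for $t=2$ this is already Merkurjev--Suslin); the reductions in Steps 1 and 2 are routine once one records the transfer argument and the (immediate) stability of $(\mathrm{F}'_m)$ and $(\mathrm{F}'_p)$ under finite separable extension.
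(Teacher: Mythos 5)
Your argument is correct: the d\'evissage to a prime $p \mid m$, the restriction--corestriction passage to $K(\mu_p)$, the Hochschild--Serre reduction to a trivializing extension $L$, and the identification $H^t(L,\mu_p)\cong K^M_t(L)/p$ via the norm residue isomorphism (finite because it is a quotient of $(L^{\times}/{L^{\times}}^p)^{\otimes t}$) all go through, and this is essentially the proof given in the cited source \cite[Theorem 1.1]{IRap}, which the paper itself does not reproduce but simply quotes. So your proposal matches the intended argument, with the norm residue isomorphism theorem as the one substantive input.
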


\vskip2mm

\noindent {\it Proof of Theorem \ref{T:F3}.}  We need to check the conditions of Theorem \ref{T:F2} for $V = V_0$. Condition (2) follows immediately from Theorem \ref{T:IRap}. To verify (1), we observe that the argument given in the proof of Proposition \ref{P:Pic} shows that there exists an embedding
$$
\Pic(V_0) / 2 \cdot \Pic(V_0) \hookrightarrow H^2(C , \mu_2).
$$
Then, since the group $H^2(C \otimes_k \bar{k} , \mu_2)$ is finite and of exponent 2 (see \cite[Expos\'e XVI, Th\'eor\`eme 5.2]{SGA4}), the finiteness of $H^2(C, \mu_2)$ follows easily from Proposition \ref{P:IRap} (with condition $(\mathrm{F}'_2)$) and the Hochschild-Serre spectral sequence
$$
E_2^{p,q} = H^p(k, H^q (C \otimes_k \bar{k} , \mu_2)) \Rightarrow H^{p+q}(C, \mu_2).
$$
Alternatively, note that we may assume that $C$ is affine, in which case the fundamental sequence  (\ref{E:X0}) yields the following exact sequence
$$
H^2(k , \mu_2) \longrightarrow H^2(C , \mu_2) \longrightarrow H^1(k , H^1(C \otimes_k \bar{k} , \mu_2)).
$$
The extreme terms are finite in view of condition $(\mathrm{F}'_2)$ and Proposition \ref{P:IRap}, so
the middle term is also finite, as required. \hfill $\Box$

\vskip2mm

We expect a suitable analogue of Theorem \ref{T:F3} to be valid for all types, and would like to propose the following conjecture.

\vskip2mm

\noindent {\bf Conjecture 7.3.} {\it Let $K = k(C)$ be the function field of a smooth affine geometrically integral curve over a field $k$, and let $V_0$ be the set of discrete valuations associated with the closed points of $C$. Furthermore, let $G$ be an absolutely almost simple simply connected algebraic $K$-group, and let $m$ be the order of the automorphism group of its root system. Assume that $\mathrm{char}\: k$ is prime to $m$ and that $k$ satisfies $(\mathrm{F}'_m)$. Then  the set of $K$-isomorphism classes of $K$-forms of $G$ that have good reduction at all $v \in V_0$ is finite.}

\addtocounter{thm}{1}

\vskip2mm

It is likely that the conclusion should be true under more lax assumptions, e.g. it is probably enough to require condition $(\mathrm{F}'_p)$ for all primes $p \ \vert \ m$ or even a certain subset of the set of such primes.

\vskip2mm

Applying Theorem \ref{T:F2-1} in conjunction with Theorem \ref{T:IRap}, we obtain the following.
\begin{thm}\label{T:HP5}
Let $C$ be a smooth geometrically integral curve over a field $k$ of characteristic $\neq 2$ that satisfies condition $(\mathrm{F}'_2)$, and let $K =
k(C)$ be its function field. Denote by $V_0$ the set of discrete valuations of $K$ associated with the closed points of $C$. Then for a nondegenerate quadratic form $q$ of dimension $n \geqslant 5$ over $K$ and $G = \mathrm{SO}_n(q)$, the map
$$
\theta_{G , V_0} \colon H^1(K , G) \longrightarrow \prod_{v \in V_0} H^1(K_v , G)
$$
is proper.
\end{thm}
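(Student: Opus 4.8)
The plan is to obtain this as a direct application of Theorem~\ref{T:F2-1} to the pair $(K , V_0)$, so the first task is to verify the hypotheses of that theorem. Conditions $(\mathrm{A})$ and $(\mathrm{B})$ from \S\ref{S:Pic} hold for $V_0$: a nonzero rational function on $C$ has zeros and poles at only finitely many closed points, which gives $(\mathrm{A})$, and each residue field $K^{(v)}$ with $v \in V_0$ is a finite extension of $k$, so $\mathrm{char}\: k \neq 2$ forces $\mathrm{char}\: K^{(v)} \neq 2$, which gives $(\mathrm{B})$. (Note that $V_0$ need not be ``divisorial'' in the sense used earlier when $k$ is not finitely generated, but Theorem~\ref{T:F2-1} only requires $(\mathrm{A})$ and $(\mathrm{B})$.)

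Next I would feed in the finiteness of unramified cohomology. Since $\mathrm{char}\: k \neq 2$, the integer $2$ is prime to $\mathrm{char}\: k$, and $k$ satisfies $(\mathrm{F}'_2)$ by assumption; hence Theorem~\ref{T:IRap} applies with $m = 2$ and shows that the unramified groups $H^i(K , \mu_2^{\otimes j})_{V_0}$ are finite for all $i \geqslant 1$ and all $j$. Because the Galois action on $\mu_2 = \{ \pm 1 \}$ is trivial, every twist $\mu_2^{\otimes j}$ is isomorphic to $\mu_2$ as a Galois module, so this yields the finiteness of $H^i(K , \mu_2)_{V_0}$ for every $i \geqslant 1$. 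In particular, by the observation recorded immediately after Theorem~\ref{T:F2-1}, for each $i$ the kernel $\Omega_i$ of the diagonal map $H^i(K , \mu_2) \to \prod_{v \in V_0} H^i(K_v , \mu_2)$ is contained in $H^i(K , \mu_2)_{V_0}$, hence is finite, say of order $\omega_i$.

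Taking $i = 1, \ldots , \ell := [\log_2 n] + 1$, condition $(\bullet)$ of Theorem~\ref{T:F2-1} is thereby verified, and applying that theorem to the form $q$ gives that every fiber of $\theta_{G , V_0}$ for $G = \mathrm{SO}_n(q)$ has cardinality at most $\omega_1 \cdots \omega_\ell$; since the preimage of a finite set is a finite union of such fibers, $\theta_{G , V_0}$ is proper, as desired.

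There is no substantial obstacle here: the entire content is carried by Theorems~\ref{T:F2-1} and \ref{T:IRap}, and the argument is purely a matter of matching their hypotheses. The only step requiring a moment's attention is the reduction from the $\mu_2^{\otimes j}$-coefficients in Theorem~\ref{T:IRap} to the $\mu_2$-coefficients needed in Theorem~\ref{T:F2-1}, which is immediate from the triviality of the Galois action on $\mu_2$.
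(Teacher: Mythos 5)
Your argument is correct and is exactly the paper's proof: the authors obtain Theorem \ref{T:HP5} by applying Theorem \ref{T:F2-1} in conjunction with Theorem \ref{T:IRap}, using the observation after Theorem \ref{T:F2-1} that $\Omega_i \subset H^i(K , \mu_2)_{V_0}$ to convert the finiteness of unramified cohomology into condition $(\bullet)$. Your explicit verification of conditions $(\mathrm{A})$ and $(\mathrm{B})$ and the reduction of $\mu_2^{\otimes j}$ to $\mu_2$ coefficients are straightforward details left implicit in the paper.
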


We expect that the conclusion of the theorem should be true for any absolutely almost simple group under the assumptions made in Conjecture 6.3.
Here is one result for spinor groups in a more specialized situation. Keep the above notations but now assume that $k$ is a finite extension of the $p$-adic field $\Q_p$.
Recall that $k$ satisfies Serre's condition $(\mathrm{F})$ (cf. \cite[Ch. III, \S 4]{Serre-GC}), hence condition $(\mathrm{F}'_m)$
for all $m$. Furthermore, Parimala and Suresh \cite{Par-Sur2} showed that the $u$-invariant of $K$ is $8$, i.e. any quadratic form over $K$ of dimension $\geqslant 9$ is isotropic. So, combining Theorem \ref{T:HP5} with the discussion preceding Proposition \ref{P:HP2}, we obtain the following.
\begin{cor}
In the above notations, for any quadratic form $q$ over $K$ in $n \geqslant 9$ variables and $\tilde{G} = \mathrm{Spin}_n(q)$, the map
$\theta_{\tilde{G} , V_0}$ is proper.
\end{cor}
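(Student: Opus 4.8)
The plan is to combine the two ingredients named just before the statement: the properness of the global-to-local map for the special orthogonal group (Theorem~\ref{T:HP5}), and the reduction from $\mathrm{SO}_n$ to $\mathrm{Spin}_n$ carried out in the discussion preceding Proposition~\ref{P:HP2}. First I would record that a finite extension $k$ of $\Q_p$ satisfies Serre's condition $(\mathrm{F})$, hence in particular $(\mathrm{F}'_2)$; since $n \geqslant 9 \geqslant 5$, Theorem~\ref{T:HP5} then applies to $K = k(C)$ and shows that $\theta_{G , V_0}$ is proper for $G = \mathrm{SO}_n(q)$.

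Next I would invoke the reduction step. Because $\theta_{G , V_0}$ is proper, the commutative square relating it to $\theta_{\tilde{G} , V_0}$ through the isogeny $\pi \colon \mathrm{Spin}_n(q) \to \mathrm{SO}_n(q)$, together with twisting, reduces the properness of $\theta_{\tilde{G} , V_0}$ to showing that for every nondegenerate $n$-dimensional quadratic form $q'$ over $K$ the set $\text{\brus Sh}(\mathrm{Spin}_n(q') , V_0) \cap \ker \pi'_K$ is finite. As explained in \S\ref{S:5}, this set injects into $\ker \nu_{q' , V_0}$, which is contained in $K^{\times}/\mathrm{Sn}(q' , K)$, the target of the spinor norm of $\mathrm{SO}_n(q')$ over $K$. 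Here the hypothesis $n \geqslant 9$ and the Parimala--Suresh theorem recalled above, $u(K) = 8$, do the work: every such $q'$ is $K$-isotropic, hence universal, and a product of two reflections therefore realizes an arbitrary spinor norm, so $\mathrm{Sn}(q' , K) = K^{\times}$ and $K^{\times}/\mathrm{Sn}(q' , K)$ is trivial (cf. \cite[13.30, 13.31]{KMRT}). Thus the set in question vanishes for every $q'$, and the properness of $\theta_{\tilde{G} , V_0}$ follows.

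I do not expect a genuine obstacle here: the corollary is a formal consequence of Theorem~\ref{T:HP5} and of the reduction already established in this section, the only step that is not a formality being the invocation of the Parimala--Suresh computation of the $u$-invariant of $p$-adic function fields \cite{Par-Sur2}, which is external to this paper. The single minor point requiring a line of care is the classical surjectivity of the spinor norm for isotropic forms of dimension $\geqslant 3$, used above.
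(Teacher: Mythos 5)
Your argument is exactly the one the paper has in mind: invoke Theorem~\ref{T:HP5} via $(\mathrm{F}'_2)$ for $p$-adic fields, then use Parimala--Suresh's $u(K)=8$ to conclude every $n$-dimensional form with $n\geqslant 9$ is isotropic, whence the spinor norm is surjective and the obstruction set $K^\times/\mathrm{Sn}(q',K)$ vanishes, as anticipated in the parenthetical remark preceding Proposition~\ref{P:HP2}. The proposal is correct and follows the paper's proof essentially verbatim.
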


\vskip2mm

We will now recall several results from Bloch-Ogus theory. For any smooth algebraic variety $X$ over an arbitrary field $F$, Bloch and Ogus \cite{Bl-Og} established the existence of the following cohomological first quadrant spectral sequence
\begin{equation}\label{E-ConiveauSS}
E_1^{p,q}(X/F, \mu_m^{\otimes b}) = \bigoplus_{x \in X^{(p)}} H^{q-p}(\kappa(x), \mu_m^{\otimes (b - p)}) \Rightarrow H^{p+q}(X, \mu_m^{\otimes b}),
\end{equation}
where $X^{(p)}$ denotes the set of points of $X$ of codimension $p$ and the summands are the Galois cohomology groups of the residue fields $\kappa(x)$ (the original statement of Bloch-Ogus was actually given in terms of \'etale homology, with the above version obtained via absolute purity; we refer the reader to \cite{CT-H-K} for a derivation of this spectral sequence that avoids the use of \'etale homology, as well as to \cite{CT-SB} for an extensive discussion of applications of this spectral sequence to the Gersten conjecture).
The spectral sequence yields a complex
\begin{equation}\label{E-BOComplex}
E_1^{\bullet, q}(X/F, \mu_m^{\otimes b}),
\end{equation}
and it is well-known (see, e.g., \cite[Remark 2.5.5]{JannSS}) that the differentials in (\ref{E-BOComplex}) coincide up to sign with the differentials in Kato's complex that was recalled in \S5. The fundamental result of Bloch and Ogus was the calculation of the $E_2$-term of (\ref{E-ConiveauSS}): they showed that
$$
E_2^{p,q}(X/F, \mu_m^{\otimes b}) = H^p (X, \mathcal{H}^q (\mu_m^{\otimes b})),
$$
where $\mathcal{H}^q (\mu_m^{\otimes j})$ denotes the Zariski sheaf on $X$ associated to the presheaf that assigns to an open $U \subset X$ the cohomology group $H^i(U, \mu_m^{\otimes j}).$
The resulting (first quadrant) spectral sequence
\begin{equation}
E_2^{p,q}(X/F, \mu_m^{\otimes b}) = H^p (X, \mathcal{H}^q (\mu_m^{\otimes b})) \Rightarrow H^{p+q}(X, \mu_m^{\otimes b})
\end{equation}
is usually referred to as the {\it Bloch-Ogus spectral sequence} and has the following key properties:

\vskip1mm

\noindent {\rm (a)} $E_2^{p,q} = 0$ for $p > \dim X$ and all $q$;

\vskip1mm

\noindent {\rm (b)} $E_2^{p,q} = 0$ for $p > q$; and

\vskip1mm

\noindent {\rm (c)} \parbox[t]{16cm}{$E_2^{0, q} = H^0 (X, \mathcal{H}^q (\mu_m^{\otimes b}))$ coincides with the unramified cohomology $H^q (F(X), \mu_m^{\otimes b})_{V_0}$ with respect to the geometric places (associated with the prime divisors of $X$) of the function field $F(X)$.}

\vskip3mm

For our applications, we will now state the following proposition, whose proof combines Proposition \ref{P:IRap} with the Hochschild-Serre spectral
sequence.
\begin{prop}\label{P:IRap2}
{\rm (\cite[Corollary 3.2]{IRap})}
Suppose $K$ is a field and $m \geqslant 1$ is integer prime to ${\rm char}~K.$ If $K$ is of type $\Fpm$, then for any smooth geometrically integral algebraic variety $X$ over $K$, the \'etale cohomology groups $H^i(X, \mu_m^{\otimes j})$ are finite for all $i \geqslant 0$ and all $j$.
\end{prop}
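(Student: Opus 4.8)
\textbf{Proof proposal for Proposition~\ref{P:IRap2}.} The plan is to extract the statement from Proposition~\ref{P:IRap} by means of the Hochschild--Serre spectral sequence, the point being that geometric \'etale cohomology supplies precisely the kind of finite Galois modules to which Proposition~\ref{P:IRap} applies. Fix a separable closure $\bar{K}$ of $K$, put $\bar{X} = X \otimes_K \bar{K}$, and fix an integer $j$. One has the Hochschild--Serre spectral sequence in \'etale cohomology
$$
E_2^{p,q} = H^p\bigl(K , H^q(\bar{X} , \mu_m^{\otimes j})\bigr) \ \Longrightarrow \ H^{p+q}(X , \mu_m^{\otimes j}).
$$
For a fixed total degree $n$, the abutment $H^n(X , \mu_m^{\otimes j})$ carries a finite filtration whose successive quotients are subquotients of the finitely many groups $E_2^{p , n-p}$ with $0 \leqslant p \leqslant n$; hence it suffices to show that each term $E_2^{p,q}$ is finite. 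Note that no bound on the cohomological dimension of $K$ is needed for this reduction.

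First I would record that for every $q$ the geometric cohomology group $A^q := H^q(\bar{X} , \mu_m^{\otimes j})$ is finite and annihilated by $m$, and vanishes for $q > 2 \dim X$: this is the standard finiteness theorem for \'etale cohomology with torsion coefficients prime to the characteristic on a smooth variety of finite type over a separably closed field (\cite[Expos\'e XVI, Th\'eor\`eme 5.2]{SGA4}), where we use that $m$ is prime to $\mathrm{char}\: K$. The natural continuous action of $\Ga(\bar{K}/K)$ turns each $A^q$ into a finite Galois module with $m A^q = 0$. Since $K$ is of type $\Fpm$, Proposition~\ref{P:IRap} then gives that $H^p(K , A^q)$ is finite for all $p \geqslant 0$. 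Consequently every $E_2^{p,q}$ is finite, and by the previous paragraph $H^n(X , \mu_m^{\otimes j})$ is finite for all $n$, which is the assertion.

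I do not expect a genuine obstacle here: once Proposition~\ref{P:IRap} is granted, the argument is essentially formal, and the only place that asks for a little care is invoking the geometric finiteness theorem in the (possibly) non-proper case --- but this is harmless, since $X$ is smooth of finite type and the coefficients are prime to the characteristic, so the same reference already used elsewhere in the paper applies verbatim. Conceptually, the mechanism is that the class of finite $\Ga(\bar{K}/K)$-modules killed by $m$ is stable under the operation $M \mapsto H^p(K , M)$ by Proposition~\ref{P:IRap}, while the base change to $\bar{K}$ supplies such modules in each cohomological degree; anything assembled from them through a finite spectral sequence is then automatically finite.
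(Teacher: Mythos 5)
Your argument is correct and is essentially the paper's own: the paper quotes this result from \cite[Corollary 3.2]{IRap} and describes its proof exactly as you do, namely combining Proposition~\ref{P:IRap} with the Hochschild--Serre spectral sequence, using that the geometric groups $H^q(X \otimes_K \bar{K}, \mu_m^{\otimes j})$ are finite Galois modules killed by $m$. The only cosmetic remark is that for a general smooth (possibly non-proper, higher-dimensional) variety the cleanest reference for that geometric finiteness is Deligne's finiteness theorem \cite{Del} (or \cite[Ch. VI, Corollary 5.5]{Milne}) rather than \cite[Expos\'e XVI, Th\'eor\`eme 5.2]{SGA4}, which the paper invokes for curves.
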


\vskip2mm

\noindent {\it Sketch of proof of Theorem \ref{T:IRap}.} Since $\dim C = 1$, property (a) of the Bloch-Ogus spectral sequence gives surjective edge maps $E^i \to E_2^{0,i}$ for all $i \geqslant 1$. So, in view of property (c), we get surjections
$$
H^i (C, \mu_2) \twoheadrightarrow H^i(k(C), \mu_2)_{V_0}
$$
for all $i \geq 1.$ The finiteness of the unramified cohomology groups then follows from Proposition \ref{P:IRap2}.

\vskip2mm

\noindent {\it Proof of Theorem \ref{T:H3-Finite} in positive characteristic.} Here $K$ is the function field $k(X)$ of a smooth geometrically integral surface $X$ over a finite field $k$. Property (b) of the Bloch-Ogus spectral sequence yields an exact sequence
$$
E^3 \to E_2^{0,3} \to E_2^{2,2}.
$$
Hence, using the well-known isomorphism $$H^2(X, \mathcal{H}^2 (\mu_m^{\otimes 2}))\simeq CH^2 (X)/m \cdot CH^2(X),$$ where $CH^2(X)$
is the Chow group of codimension 2 cycles on $X$ (see, e.g., the proof of \cite[Theorem 7.7]{Bl-Og}), together with property (c) above, we obtain the exact sequence
$$
H^3 (X, \mu_m^{\otimes 2}) \to H^3(K, \mu_m^{\otimes 2})_{V_0} \to CH^2 (X)/m \cdot CH^2(X).
$$
The left-most term is finite by Proposition \ref{P:IRap2}. On the other hand, since $X$ is a smooth surface over a finite field, the group $CH^2(X)$ is finitely generated (see \cite[Proposition 4 and Corollaire 7]{CTSS}). It follows that $H^3(K, \mu_m^{\otimes 2})_{V_0}$ is finite, as needed. $\Box$








\vskip5mm

\section{Special unitary groups of hermitian forms over quadratic extensions}\label{S:unitary}

Let $K$ be a field of characteristic $\neq 2$, and let $L/K$ be a quadratic field extension with nontrivial automorphism $\tau$. A choice of a basis of $L$ over $K$ enables us to identify $L$ with $K^2$. Then for any $n \geqslant 1$ the space $L^n$ gets identified with $K^{2n}$, and under this identification, to any
$\tau$-hermitian form $h$ on $L^n$, there naturally corresponds a quadratic form $q = q_h$ on $K^{2n}$ given by $q_h(x) = h(x)$ for $x \in K^{2n} = L^n$. Recall that if $s(x , y)$ is the sesqui-linear form on $L^n \times L^n$ that gives $h$ (i.e. $h(x) = s(x , x)$) then the bilinear form $b$ on $K^{2n} \times K^{2n}$ associated with $q_h$ is given by
$$
b(x , y) = \frac{1}{2} \mathrm{Tr}_{L/K}(s(x , y)) \ \ \text{for} \ \ x , y \in K^{2n} = L^n.
$$
The following result, attributed in \cite[\S 2.9]{LJ-Ger} to Jacobson, is well-known and easy to prove: {\it two $n$-dimensional hermitian forms $h_1$ and $h_2$ are equivalent if and only if the corresponding $2n$-dimensional quadratic forms $q_{h_1}$ and $q_{h_2}$ are equivalent.} In terms of Galois cohomology, this means that the natural map
$$
H^1(K , \mathrm{U}_n(L/K , h)) \longrightarrow H^1(K , \mathrm{O}_{2n}(q_h))
$$
is injective. On the other hand, it is well-known that the map $$H^1(K , \mathrm{SU}_n(L/K , h)) \longrightarrow H^1(K , \mathrm{U}_n(L/K , h))$$ is also injective (the proof repeats verbatim the argument for the injectivity of the map $$H^1(K , \mathrm{SO}_m(q)) \longrightarrow H^1(K , \mathrm{O}_m(q)),$$  cf. \cite[proof of Corollary IV.11.3]{Ber} or \cite[29.E]{KMRT}). It follows that the map
$$
H^1(K , \mathrm{SU}_n(L/K , h)) \longrightarrow H^1(K , \mathrm{SO}_{2n}(q_h))
$$
is injective (in fact, it remains injective also over any extension of $K$). In this section, we will use these facts to prove the analogues of Theorems 1.1-1.4 for the special unitary groups $\mathrm{SU}_n(L/K , h)$ of nondegenerate $n$-dimensional hermitian forms $(n \geqslant 2)$.

\vskip2mm

First, let $G = \mathrm{SU}_n(L/K , h)$ in the above notations, and let $v$ be a discrete valuation of $K$. There are two cases.

\vskip1mm

\noindent {\it Case 1.} \underline{$v$ splits in $L$} (i.e. $L \otimes_K K_v \simeq K_v \oplus K_v$). In this case $G$ is $K_v$-isomorphic to $\mathrm{SL}_n$, hence has good reduction at $v$. At the same time, the corresponding quadratic form $q_{h}$ becomes hyperbolic over $K_v$ and therefore (trivially) $[q_h] \in W_0(K_v)$ in the notations of \S \ref{S:F2}.

\vskip1mm

\noindent {\it Case 2.} \underline{$v$ does not split in $L$} (i.e. $L_v := L \otimes_K K_v$ is a quadratic field extension of $K_v$). Then $G$ has good reduction at $v$ if and only if $L_v/K_v$ is unramified at $v$ and there exists $\lambda \in K_v^{\times}$ such that the hermitian $L_v/K_v$-form $\lambda h$ is equivalent to a hermitian form given by
$$
h'(x_1, \ldots , x_n) = a_1 N_{L_v/K_v}(x_1) + \cdots + a_v N_{L_v/K_v}(x_n)
$$
with $a_i \in U(K_v)$. Then again $[\lambda q_h] \in W_0(K_v)$.

\vskip1mm

\noindent Thus, in all cases, the fact that $G$ has a good reduction at $v$ implies that there exists $\lambda \in K_v^{\times}$ such that $[\lambda q_h] \in W_0(K_v)$, or equivalently the group $H = \mathrm{Spin}_{2n}(q_h)$ has a good reduction at $v$.
It is now easy to derive a unitary analogue of Theorem \ref{T:F1}.
\begin{thm}\label{T:U1}
Let $K$ be a 2-dimensional global field of characteristic $\neq 2$, and let $V$ be a divisorial set of places. Fix a quadratic extension $L/K$, and let $n \geqslant 2$. Then the number of $K$-isomorphism classes of special unitary groups $G = \mathrm{SU}_n(L/K , h)$ of nondegenerate hermitian $L/K$-forms in $n$ variables that have good reduction at all $v \in V$ is finite.
\end{thm}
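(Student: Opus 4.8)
The plan is to reduce Theorem~\ref{T:U1} to the already-proved spinor case when $n \geqslant 3$, and to dispose of the single remaining value $n = 2$ using the known finiteness results for inner forms of type $\textsf{A}_1$.

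For $n \geqslant 3$ I would argue as follows. Let $\{ h_i \}_{i \in I}$ be a family of nondegenerate $n$-dimensional hermitian $L/K$-forms such that the groups $G_i = \mathrm{SU}_n(L/K , h_i)$ have good reduction at all $v \in V$ and are pairwise non-$K$-isomorphic; it suffices to bound $\vert I \vert$. The key point is that $G_i \not\cong G_j$ forces the associated $2n$-dimensional quadratic forms $q_{h_i}$, $q_{h_j}$ to be non-similar over $K$: if $q_{h_i} \simeq \mu \cdot q_{h_j}$ for some $\mu \in K^{\times}$, then since $\mu \cdot q_{h_j} = q_{\mu h_j}$ (immediate from $q_h(x) = h(x)$), Jacobson's theorem gives $h_i \simeq \mu h_j$; but the matrices preserving a hermitian form are unchanged under scaling it by an element of $K^{\times}$, so $\mathrm{SU}_n(L/K, \mu h_j) = \mathrm{SU}_n(L/K, h_j)$ and hence $G_i \cong G_j$, a contradiction. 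Therefore $i \mapsto [q_{h_i}]$ (similarity class) is injective on $I$. Moreover, by the discussion preceding the theorem, good reduction of $G_i$ at $v$ yields a $\lambda \in K_v^{\times}$ with $[\lambda q_{h_i}] \in W_0(K_v)$, which is exactly the condition that $\mathrm{Spin}_{2n}(q_{h_i})$ has good reduction at $v$. So the image of $I$ lies in the set of similarity classes of $2n$-dimensional quadratic forms whose spinor group has good reduction at all $v \in V$. Since $2n \geqslant 6 \geqslant 5$, and since the hypotheses of Theorem~\ref{T:F2} --- finiteness of $\mathrm{Pic}(V)/2 \cdot \mathrm{Pic}(V)$ and of the groups $H^i(K, \mu_2)_V$ --- hold for any $2$-dimensional global field $K$ of characteristic $\neq 2$ with divisorial $V$ (as shown in the proof of Theorem~\ref{T:F1}), Theorem~\ref{T:F2} applied in dimension $2n$ bounds that set, and hence $\vert I \vert$, by $\vert \mathrm{Pic}(V)/2 \cdot \mathrm{Pic}(V) \vert \cdot \prod_{i=1}^{\ell} \vert H^i(K, \mu_2)_V \vert$ with $\ell = [\log_2(2n)] + 1$; in particular $\vert I \vert$ is finite.

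The case $n = 2$ requires a different input, since then $\dim q_h = 4$ lies just below the range of Theorem~\ref{T:F2}. Here $G = \mathrm{SU}_2(L/K, h)$ is a simply connected group of type $\textsf{A}_1$; as the automorphism group of the root system $\textsf{A}_1$ is trivial, $G$ is an \emph{inner} form of $\mathrm{SL}_2$, so $G \cong \mathrm{SL}_{1, D}$ for some quaternion $K$-algebra $D$ (necessarily split by $L$). It is well known that good reduction of $\mathrm{SL}_{1,D}$ at $v$ is equivalent to the class $[D] \in {}_2\Br(K)$ being unramified at $v$ (cf.~\cite{CRR3}, \cite{CRR4}); hence if $G$ has good reduction at all $v \in V$, then $[D]$ lies in the unramified Brauer group ${}_2\Br(K)_V = H^2(K, \mu_2)_V$, which is finite (recalled in the proof of Theorem~\ref{T:F1}, cf.~\cite[\S 10]{CRR3}). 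Thus only finitely many quaternion algebras $D$, and hence only finitely many $K$-isomorphism classes of $G$, can occur. Equivalently, one may simply quote the known validity of Conjecture~A for inner forms of type $\textsf{A}_1$ over arbitrary finitely generated fields (cf.~\cite{CRR3}, \cite{CRR4}).

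Given the machinery of \S\ref{S:F2} and the finiteness of $H^i(K, \mu_2)_V$, this argument is routine; the only delicate points are the translation between $K$-isomorphism classes of $\mathrm{SU}_n(L/K, h)$ and similarity classes of hermitian --- and thence, via Jacobson's theorem, quadratic --- forms, and the fact that the value $n = 2$ (quadratic forms of dimension $4$) is not covered by Theorem~\ref{T:F2} and must be treated on its own. I do not anticipate any genuine additional obstacle, since the crucial implication ``good reduction of $\mathrm{SU}_n(L/K, h)$ at $v$ $\Rightarrow$ good reduction of $\mathrm{Spin}_{2n}(q_h)$ at $v$'' is already established in the paragraph preceding the statement.
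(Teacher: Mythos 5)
Your proof is correct, and for $n \geqslant 3$ it takes essentially the same route as the paper: observe that $G_i \not\cong G_j$ forces the quadratic forms $q_{h_i}$, $q_{h_j}$ to be non-similar (via Jacobson together with $q_{\mu h} = \mu q_h$ and $\mathrm{SU}_n(L/K,\mu h) = \mathrm{SU}_n(L/K,h)$), note that good reduction of $\mathrm{SU}_n(L/K,h_i)$ forces $[\lambda q_{h_i}] \in W_0(K_v)$ for some $\lambda \in K_v^{\times}$ so that $\mathrm{Spin}_{2n}(q_{h_i})$ has good reduction, and invoke the spinor-group finiteness. Two small points set your version apart. First, the paper applies Theorem~\ref{T:F1} to conclude ``$H_i \cong H_j$ for some $i \neq j$'' and then infers $q_{h_i} \sim q_{h_j}$; you instead apply Theorem~\ref{T:F2} to bound the number of similarity classes of the $q_{h_i}$ directly, which is what the proof of Theorem~\ref{T:F2} actually yields and which sidesteps any worry about low-dimensional or triality isomorphisms between non-similar forms' spinor groups. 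Second, and more substantively, you correctly flagged that for $n = 2$ the associated quadratic form has dimension $4$, which is below the stated range $n \geqslant 5$ of both Theorems~\ref{T:F1} and~\ref{T:F2}; the paper's one-paragraph proof does not address this. Your separate treatment of $n = 2$ --- identifying $\mathrm{SU}_2(L/K,h)$ as $\mathrm{SL}_{1,D}$ for a quaternion $K$-algebra $D$ split by $L$, noting that good reduction at $v$ amounts to $[D]$ being unramified at $v$, and invoking the finiteness of ${}_2\Br(K)_V$ --- is a legitimate and clean way to close that gap (alternatively one could observe that the Witt-ring argument in the proof of Theorem~\ref{T:F2} goes through for dimension $4$ as well). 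So your proposal is essentially the paper's argument, made a bit more careful at the boundary case.
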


Indeed, let $G_i = \mathrm{SU}_n(L/K , h_i)$ $(i \in I)$ be an infinite family of pairwise nonisomorphic special unitary groups associated with the quadratic extension $L/K$ such that each $G_i$ has a good reduction at every $v \in V$. Then $H_i = \mathrm{Spin}_{2n}(q_{h_i})$ $(i \in I)$ is a family of spinor groups
each having good reduction at all $v \in V$. Applying Theorem \ref{T:F1}, we conclude that the groups $H_i$ and $H_j$ are $K$-isomorphic for some $i , j \in I$, $i \neq j$. Then for some $\lambda \in K^{\times}$, the quadratic forms $q_{h_i}$ and $\lambda q_{h_j}$ are equivalent. It now follows from Jacobson's
theorem that the hermitian forms $h_i$ and $\lambda h_j$ are equivalent, hence the groups $G_i$ and $G_j$ are isomorphic, a contradiction.

\vskip2mm

\noindent {\bf Remark 8.2.} It follows, for example, from Proposition 5.1 in \cite{CRR3} that $K$ has only finitely many quadratic extensions $L/K$ that are unramified at all $v \in V$ (note that this conclusion remains valid for any finitely generated field $K$ and a divisorial set of places $V$). So, in effect, Theorem \ref{T:U1} yields the finiteness of the set of $K$-isomorphic classes of the special unitary groups  with good reduction at all $v \in V$ of $n$-dimensional nondegenerate hermitian forms associated with {\it all} quadratic extensions $L/K$.

\vskip2mm

\addtocounter{thm}{1}

Turning now to the genus of $G = \mathrm{SU}_n(L/K , h)$, we observe that any $G' \in \gen_K(G)$ is of the form $G' = \mathrm{SU}_n(L/K , h')$. This is clear for $n = 2$, so we assume that $n \geqslant 3$. The group $G$ possesses a maximal $K$-torus $T$ of the form $\mathrm{R}_{L/K}(\mathbb{G}_m)^{n-1}$, so the group $G'$ also has such a maximal $K$-torus. Note that the nontrivial automorphism $\tau \in \Ga(L/K)$ acts on the character group $X(T)$ as multiplication by $(-1)$, and since $- \mathrm{id}$ is not in the Weyl group of the root system of type $A_{n-1}$ $(n \geqslant 3)$, we see that $G'$ is an outer form of the split group of this type and $L$ is the minimal extension of $K$ over which it becomes an inner form. Furthermore, since $G'$ splits over $L$, it cannot involve any noncommutative division $L$-algebra in its description, and therefore it must be of the form $\mathrm{SU}_n(L/K , h')$ (cf. \cite[2.3]{Pl-R}). Now, arguing as in the proof of Theorem \ref{T:genus} (cf. \S \ref{S:ProofTF1}.4) on the basis of Theorem \ref{T:U1}, we obtain the following statement, which is even more complete (in the sense that it has no exceptions) than Theorem \ref{T:genus}.
\begin{thm}\label{T:U2}
Let $K$ be a 2-dimensional global field of characteristic $\neq 2$, and let $G = \mathrm{SU}_n(L/K , h)$, where $L/K$ is a quadratic extension and $h$ is a nondegenerate hermitian form of dimension $n \geqslant 2$ associated with $L/K$. Then the genus $\gen_K(G)$ is finite.
\end{thm}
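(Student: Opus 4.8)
The plan is to follow verbatim the argument used for Theorem~\ref{T:genus} (see \S\ref{S:ProofTF1}), with Theorem~\ref{T:U1} playing the role that Theorem~\ref{T:F1} played there. Fix $G = \mathrm{SU}_n(L/K , h)$ and a divisorial set of places $V$ of $K$. First I would observe that, since $V$ satisfies condition (A), the set of $v \in V$ at which $G$ fails to have good reduction is finite: by the case analysis preceding Theorem~\ref{T:U1}, good reduction at $v$ is controlled by whether $L/K$ is unramified at $v$ and whether $\lambda h$ becomes a form with unit coefficients over $K_v$ for a suitable $\lambda$, and the relevant data (the extension $L/K$ and the entries of $h$) are non-integral at only finitely many $v$. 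Since for any finite subset $S$ of a divisorial set the complement $V \setminus S$ contains a divisorial set, we may replace $V$ by such a set and thus assume without loss of generality that $G$ has good reduction at \emph{all} $v \in V$.

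Next I would invoke Theorem~\ref{T:Genus-smooth} (\cite[Theorem 5]{CRR4}). The residue field $K^{(v)}$ of a $2$-dimensional global field at a divisorial place is finitely generated (it is a number field, or a global function field, or a finite field, depending on the prime divisor), so the hypotheses of that theorem are met and every $G' \in \gen_K(G)$ again has good reduction at all $v \in V$. Combining this with the structural fact established in the discussion immediately preceding the statement of the theorem --- that any $G' \in \gen_K(G)$ is necessarily of the form $\mathrm{SU}_n(L/K , h')$ for the \emph{same} quadratic extension $L$ and some nondegenerate $n$-dimensional hermitian form $h'$ --- one concludes that $\gen_K(G)$ is contained in the set of $K$-isomorphism classes of special unitary groups $\mathrm{SU}_n(L/K , h')$ of nondegenerate $n$-dimensional hermitian forms over $L/K$ having good reduction at all $v \in V$. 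That set is finite by Theorem~\ref{T:U1}, whence $\gen_K(G)$ is finite.

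The only genuinely delicate point is the reduction to hermitian forms, i.e.\ that a member of $\gen_K(G)$ cannot be an outer form of type $\textsf{A}_{n-1}$ over $L$ involving a noncommutative division $L$-algebra; but this is already handled in the exposition before the theorem. One uses that $G'$ shares with $G$ a maximal $K$-torus of the form $\mathrm{R}_{L/K}(\mathbb{G}_m)^{n-1}$, that $\tau \in \Ga(L/K)$ acts on its character lattice by $-\mathrm{id}$, and that $-\mathrm{id}$ lies outside the Weyl group of $\textsf{A}_{n-1}$ for $n \geqslant 3$ (the case $n = 2$ being immediate): hence $G'$ is split by $L$, its underlying algebra is split, and $G'$ is a genuine special unitary group, cf.\ \cite[2.3]{Pl-R}. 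Beyond this, the argument is entirely formal, and I expect no further obstacle --- in particular, in contrast with the spinor case there is no dichotomy between odd- and even-dimensional forms, which is why the conclusion holds with no exceptions.
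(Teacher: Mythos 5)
Your argument is correct and coincides with the paper's proof: pass to a divisorial $V$ at which $G$ has good reduction everywhere, propagate good reduction to all of $\gen_K(G)$ via Theorem~\ref{T:Genus-smooth}, use (exactly as you recall) the structural fact that every member of $\gen_K(G)$ is again of the form $\mathrm{SU}_n(L/K , h')$, and conclude by Theorem~\ref{T:U1}. One trivial imprecision: the residue fields at divisorial places of a $2$-dimensional global field are number fields or global function fields, never finite fields (those occur only at codimension-two points), but since these are all finitely generated this does not affect the application of Theorem~\ref{T:Genus-smooth}.
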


\vskip2mm

Next, we have the following cohomological statement, which is analogous to Theorem \ref{T:HP}.
\begin{thm}\label{T:U3}
Notations as in Theorem \ref{T:U1}, for $G = \mathrm{SU}_n(L/K , h)$ the map $$H^1(K , G) \to \prod_{v \in V} H^1(K_v , G)$$ is proper.
\end{thm}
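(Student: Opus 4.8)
The plan is to deduce this from the properness statement for special orthogonal groups, Theorem~\ref{T:HP}, in the same spirit as the derivation of Theorem~\ref{T:HP} from Theorem~\ref{T:F2-1}, exploiting the injection of special unitary cohomology into orthogonal cohomology recorded at the beginning of this section. First I would reduce properness to finiteness of fibers: the preimage under $\theta_{G,V}$ of any finite subset of $\prod_{v\in V} H^1(K_v,G)$ is contained in a finite union of fibers $\theta_{G,V}^{-1}(\theta_{G,V}(\xi))$, so it suffices to show that $\theta_{G,V}^{-1}(\theta_{G,V}(\xi))$ is finite for every $\xi \in H^1(K,G)$.

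Suppose first that $n \geqslant 3$, so that $2n \geqslant 5$. Put $q = q_h$, the associated $2n$-dimensional quadratic form, and let $j \colon H^1(-, G) \to H^1(-, \mathrm{SO}_{2n}(q))$ be the natural map induced by $\mathrm{SU}_n(L/K,h) \hookrightarrow \mathrm{SO}_{2n}(q_h)$. As recalled above (Jacobson's theorem together with the injectivity of $H^1(-, \mathrm{SU}_n) \to H^1(-, \mathrm{U}_n)$), the map $j$ is injective over $K$ and remains injective over every field extension of $K$; in particular each local map $j_v \colon H^1(K_v, G) \to H^1(K_v, \mathrm{SO}_{2n}(q))$ is injective. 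I would then invoke the commutative square
$$
\xymatrix{H^1(K,G) \ar[r]^{\theta_{G,V}} \ar[d]_{j} & \displaystyle\prod_{v\in V} H^1(K_v,G) \ar[d]^{\prod_v j_v} \\ H^1(K,\mathrm{SO}_{2n}(q)) \ar[r]^{\theta_{\mathrm{SO}_{2n}(q),V}} & \displaystyle\prod_{v\in V} H^1(K_v,\mathrm{SO}_{2n}(q))}
$$
to conclude that $j$ carries $\theta_{G,V}^{-1}(\theta_{G,V}(\xi))$ injectively into the fiber $\theta_{\mathrm{SO}_{2n}(q),V}^{-1}(\theta_{\mathrm{SO}_{2n}(q),V}(j(\xi)))$: indeed, if $\theta_{G,V}(\eta) = \theta_{G,V}(\xi)$, then applying $\prod_v j_v$ and using commutativity gives $\theta_{\mathrm{SO}_{2n}(q),V}(j(\eta)) = \theta_{\mathrm{SO}_{2n}(q),V}(j(\xi))$, while injectivity of $j$ makes $\eta \mapsto j(\eta)$ injective on the fiber. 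Since $2n \geqslant 5$, Theorem~\ref{T:HP} asserts that $\theta_{\mathrm{SO}_{2n}(q),V}$ is proper, so the target fiber is finite, and hence so is $\theta_{G,V}^{-1}(\theta_{G,V}(\xi))$.

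Finally, for $n = 2$ this reduction is unavailable because $2n = 4 < 5$, and I would instead argue directly: $G = \mathrm{SU}_2(L/K,h)$ is a $K$-form of $\mathrm{SL}_2$, and since the Dynkin diagram of type $\textsf{A}_1$ has no nontrivial automorphisms, $G$ is an inner form, hence $K$-isomorphic to $\mathrm{SL}_{1,D}$ for a quaternion algebra $D$ over $K$ (with $D$ split, i.e. $G \cong \mathrm{SL}_2$, exactly when $h$ is isotropic). As $\deg D = 2$ is square-free and prime to $\mathrm{char}\,K$, the properness of $\theta_{G,V}$ follows from Theorem~\ref{T:HP1}. The only genuine subtlety in the argument is exactly this case split forced by the hypothesis $n \geqslant 5$ in Theorems~\ref{T:HP} and~\ref{T:F2-1}: the orthogonal reduction is completely routine once $2n \geqslant 5$, and only the residual value $n = 2$ requires its own (equally short) treatment via quaternion algebras.
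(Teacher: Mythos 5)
Your argument is correct, and for $n\geqslant 3$ it is essentially the paper's proof: the paper also passes to $H=\mathrm{SO}_{2n}(q_h)$, uses the injectivity of $H^1(K,\mathrm{SU}_n(L/K,h))\to H^1(K,\mathrm{SO}_{2n}(q_h))$ recorded at the start of \S\ref{S:unitary}, and concludes properness of $\theta_{G,V}$ from Theorem \ref{T:HP} via the same commutative square. The one genuine difference is your treatment of $n=2$: the paper simply cites Theorem \ref{T:HP} for $\mathrm{SO}_{2n}(q_h)$ without comment, even though that theorem is stated (via Theorem \ref{T:F1}) only for forms of dimension $\geqslant 5$, so the case $2n=4$ is not literally covered by its statement; implicitly the authors are relying on the fact that the proof of Theorem \ref{T:F2-1} works for any dimension once the groups $H^i(K,\mu_2)_V$ are known to be finite for all $i$, which is the case here. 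Your alternative for $n=2$ --- identifying $\mathrm{SU}_2(L/K,h)$ with $\mathrm{SL}_{1,D}$ for a quaternion algebra $D$ (type $\mathsf{A}_1$ has only inner forms) and invoking Theorem \ref{T:HP1} with $m=2$ square-free and prime to $\mathrm{char}\:K$ --- is a clean and correct way to close this gap without revisiting the hypotheses of Theorems \ref{T:HP} and \ref{T:F2-1}, so your write-up is, if anything, more careful than the paper's on this point.
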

Indeed, let $H = \mathrm{SO}_{2n}(q_h)$. Then we have the following commutative diagram
$$
\xymatrix{H^1(K,G) \ar[rr]^{\alpha} \ar[d]_{\beta} & & \prod_{v \in V} H^1(K_v, G) \ar[d]^{\gamma} \\ H^1(K,H) \ar[rr]^{\delta} & & \prod_{v \in V} H^1(K_v, H)}
$$
According to Theorem \ref{T:HP}, the map $\delta$ is proper. On the other hand, as we pointed out earlier in this section, the map $\beta$ is injective, and the properness of $\alpha$ follows.

\vskip2mm

Finally, we have the following result for special unitary groups over function fields of curves over fields satisfying ${\rm (F'_2)}$, which is analogous to Theorem \ref{T:F3} and which is actually derived from it just like Theorem \ref{T:U1} was derived from Theorem \ref{T:F1}.
\begin{thm}\label{T:U4}
Let $C$ be a smooth geometrically integral curve over a field $k$ of characteristic $\neq 2$ that satisfies condition $(\mathrm{F}'_2)$, and let $K = k(C)$. Denote by $V$ the set of discrete valuations  of $K$ corresponding to the closed points of $C$. Let $L/K$ be a quadratic extension. Then the number of $K$-isomorphism classes of special unitary groups $G = \mathrm{SU}_n(L/K , h)$ of nondegenerate hermitian $L/K$-forms $h$ in $n \geqslant 2$ variables that have good reduction at all $v \in V$ is finite.
\end{thm}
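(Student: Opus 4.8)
The plan is to derive Theorem~\ref{T:U4} from Theorem~\ref{T:F3} by exactly the reduction that was used to obtain Theorem~\ref{T:U1} from Theorem~\ref{T:F1}. Suppose the assertion fails, so that there is an infinite family $\{G_i = \mathrm{SU}_n(L/K , h_i)\}_{i \in I}$ of pairwise non-$K$-isomorphic special unitary groups of nondegenerate $n$-dimensional hermitian $L/K$-forms, each having good reduction at all $v \in V$. The first step is to note that the local analysis in the two cases preceding Theorem~\ref{T:U1} is entirely local and uses nothing about the global structure of $K$: for every discrete valuation $v$ of $K$ with $\mathrm{char}\, K^{(v)} \neq 2$ --- in particular for every $v \in V$, where $\mathrm{char}\, K^{(v)} = \mathrm{char}\, k \neq 2$ --- good reduction of $G_i$ at $v$ produces $\lambda \in K_v^{\times}$ with $[\lambda q_{h_i}] \in W_0(K_v)$, i.e. the spinor group $H_i := \mathrm{Spin}_{2n}(q_{h_i})$ has good reduction at $v$. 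Hence $\{H_i\}_{i \in I}$ is an infinite family of $K$-isomorphism classes of spinor groups of nondegenerate $2n$-dimensional quadratic forms over $K$ with good reduction at all $v \in V$.

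Assume first that $n \geqslant 3$, so that $2n \geqslant 6 \geqslant 5$. Then Theorem~\ref{T:F3}, applied to the curve $C$ over $k$, produces indices $i \neq j$ with $H_i \cong H_j$ over $K$. Since an isomorphism of spinor groups of nondegenerate quadratic forms of dimension $\geqslant 5$ forces the forms to be similar, we get $q_{h_i} \cong \lambda\, q_{h_j}$ for some $\lambda \in K^{\times}$; equivalently $q_{h_i} \cong q_{\lambda h_j}$, since scaling a hermitian form by $\lambda \in K^{\times}$ scales the associated quadratic form by $\lambda$. Jacobson's theorem, recalled at the beginning of \S\ref{S:unitary}, now shows that $h_i$ and $\lambda h_j$ are $L/K$-equivalent, and as multiplication by a $\tau$-fixed scalar does not change the unitary group, $G_i = \mathrm{SU}_n(L/K , h_i) \cong \mathrm{SU}_n(L/K , \lambda h_j) = G_j$, contradicting the choice of the family. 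This settles the case $n \geqslant 3$.

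The boundary case $n = 2$ lies just outside the range of Theorem~\ref{T:F3} and would be handled separately. Here $G = \mathrm{SU}_2(L/K , h)$ is an absolutely almost simple simply connected $K$-group of type $\textsf{A}_1$, hence $K$-isomorphic to $\mathrm{SL}_{1 , B_h}$ for a quaternion $K$-algebra $B_h$, and $G$ has good reduction at $v \in V$ precisely when $B_h$ is unramified at $v$. Finiteness of the set of such $B_h$ up to isomorphism --- and hence of the corresponding groups $G$ --- follows from the finiteness of the unramified Brauer group ${}_2\Br(K)_V = H^2(K , \mu_2)_V$, which holds in our setting: the argument in the proof of Theorem~\ref{T:F3} gives that $H^2(C , \mu_2)$ is finite, and the Bloch--Ogus surjection $H^2(C , \mu_2) \twoheadrightarrow H^2(K , \mu_2)_{V_0}$ transfers this to the unramified group.

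I do not anticipate a serious obstacle: once the purely local comparison ``good reduction of $G$ at $v$ implies good reduction of $\mathrm{Spin}_{2n}(q_h)$ at $v$'' is in place, Theorem~\ref{T:U4} is a formal consequence of Theorem~\ref{T:F3} and Jacobson's theorem, just as in the 2-dimensional global case. The only genuinely separate ingredient is the value $n = 2$, which needs the independently available finiteness of the unramified Brauer group rather than Theorem~\ref{T:F3} itself; one must also make sure that the similarity factor obtained from comparing the spinor groups is $K$-rational, so that Jacobson's theorem applies over $K$.
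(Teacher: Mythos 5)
Your proposal is correct and follows essentially the same route as the paper, which derives Theorem \ref{T:U4} from Theorem \ref{T:F3} exactly as Theorem \ref{T:U1} was derived from Theorem \ref{T:F1}: good reduction of $\mathrm{SU}_n(L/K,h)$ at $v$ forces good reduction of $\mathrm{Spin}_{2n}(q_h)$, finiteness of the spinor groups gives similarity of the quadratic forms, and Jacobson's theorem transfers this back to the hermitian forms. Your separate treatment of $n=2$ (where $q_h$ is only $4$-dimensional, outside the stated range of Theorem \ref{T:F3}) via $\mathrm{SU}_2(L/K,h)\simeq \mathrm{SL}_{1,B}$ and the finiteness of ${}_2\Br(K)_V$ is a sound patch of a point the paper leaves implicit.
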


\vskip2mm

\noindent {\bf Remark 8.6.} A similar approach can be used to obtain the analogues of Theorems \ref{T:U1} and \ref{T:U2}-\ref{T:U4} for the (special)
unitary groups $G = \mathrm{SU}_n(D , h)$ of nondegenerate  hermitian forms $h$ of dimension $n \geqslant 2$ over a quaternion division algebra $D$ with center $K$ with the canonical involution. (We note that these are precisely the absolutely almost simple simply connected groups of type $\textsf{C}_n$ that split over a quadratic extension of the base field.) More precisely, let us first fix a central quaternion division algebra $D$ over $K$. Then fixing a basis of $D$ over $K$ enables us to identify $D^n$ with $K^{4n}$, and in terms of this identification, the hermitian form $h$ corresponds to a quadratic form $q_h$ on $K^{4n}$. Furthermore, one has an analogue of Jacobson's theorem: {\it two nondegenerate $n$-dimensional hermitian forms $h_1$ and $h_2$ over $D$ if and only if the corresponding $4n$-dimensional quadratic forms $q_{h_1}$ and $q_{h_2}$ are equivalent.} Next, if $G = \mathrm{SU}_n(D , h)$ has good reduction at a discrete valuation $v$ of $K$ then $D$ is unramified at $v$ and $H = \mathrm{Spin}_{4n}(b_h)$ has good reduction at $v$. Using these remarks, one can repeat the above arguments almost verbatim to establish the analogues of the theorems of the current section in this situation over the same two classes of fields, i.e. over 2-dimensional global fields $K$ of characteristic $\neq 2$ equipped with a divisorial set of places $V$ and over the function fields $K = k(C)$ of smooth geometrically integral curves $C$ over a field $k$ of characteristic $\neq 2$ satisfying condition ${\rm (F'_2)}$ equipped with the set $V$ of geometric places associated with the closed points of $C$. Details will be published elsewhere. We now recall that for both classes, ${}_2\Br(K)_V$ is known to be finite, implying that the number of isomorphism classes of central quaternion division $K$-algebras $D$ such that there exists a nondegenerate $n$-dimensional hermitian form $h$ for which $G = \mathrm{SU}_n(D , h)$ has good reduction at all $v \in V$ is finite. Eventually, this shows that the set of $K$-isomorphism classes of absolutely almost simple simply connected $K$-groups of type $\textsf{C}_n$ that split over a quadratic extension of the base field and have good reduction at all $v \in V$ is finite (for $K$ and $V$ as above). Observe, that Remark 8.2 yields a similar statement for type $\textsf{A}_n$, and the results of \ref{S:G2} do so for groups of type $\textsf{G}_2$. So, it would be interesting to see if this statement in fact extends to forms of all types that split over a quadratic extension of the base field --- this would be an important test for the general problem $(*)$ from \S\ref{S:Intro}.

\vskip5mm

\section{Groups of type $\mathsf{G_2}$}\label{S:G2}

\vskip3mm

Let $G_0$ be the split group of type
$\textsf{G}_2$ over a field $K$ of characteristic $\neq 2$. Then the $K$-isomorphism classes of $K$-groups of type $\textsf{G}_2$ are in a natural one-to-one
correspondence with the elements of the (pointed) set $H^1(K , G_0)$. Furthermore, there is a natural map
$$
\lambda_K \colon H^1(K , G_0) \to H^3(K , \mu_2)
$$
that has the following explicit description: if $\xi \in H^1(K , G_0)$ and the twisted group $G = {}_{\xi} G_0$
is the automorphism group of the octonion algebra $\mathbb{O} = \mathbb{O}(a , b , c)$ corresponding to a triple
$(a, b, c) \in {(K^{\times})}^3$, then
$$
\lambda_K(\xi) = \chi_a \cup \chi_b \cup \chi_c,
$$
where for $t \in K^{\times}$ we let $\chi_t \in H^1(K , \mu_2)$ denote, by abuse of notation, the cohomology class of the usual 1-cocycle $\chi_t$ coming from Kummer theory.
It is well-known that $\lambda_K$ is injective (cf. \cite[Ch. III, Appendix 2, 3.3]{Serre-GC}).

Now, suppose that $K$ is equipped with a discrete valuation $v$ such that $\mathrm{char}\: K^{(v)} \neq 2$. Then the automorphism group $G$ of an octonian algebra $\mathbb{O}$ has good reduction at $v$ if and only if $\mathbb{O}$ can be represented by a triple $(a, b, c) \in (K^{\times})^3$ such that
$$
v(a) = v(b) = v(c) = 0.
$$
It follows that if $G = {}_{\xi} G_0$ has good reduction at $v$, then the cocycle $\lambda_K(\xi) \in H^3(K , \mu_2)$ is unramified at $v$. Now,
using Corollary \ref{C:mu2}, we obtain the following.
\begin{thm}\label{T:G2-1}
Let $K$ be a 2-dimensional global field of characteristic $\neq 2$, $V$ a divisorial set of places, and $G$ a simple algebraic $K$-group of type $\textsf{G}_2$.

\vskip2mm

\noindent \hskip6pt  {\rm (i)} \parbox[t]{15.5cm}{The number of $K$-isomorphism classes of $K$-forms $G'$ having good reduction at all $v \in V$ is finite.}

\vskip2mm

\noindent \hskip3pt {\rm (ii)} \parbox[t]{15.5cm}{The genus $\gen_K(G)$ is finite.}

\vskip2mm

\noindent {\rm (iii)} \parbox[t]{15.5cm}{The map $\displaystyle \theta_{G , V} \colon H^1(K , G) \longrightarrow \prod_{v \in V} H^1(K_v , G)$ is proper.}
\end{thm}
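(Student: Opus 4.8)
The plan is to prove the three assertions in the order (i), (ii), (iii), with (i) doing the real work and (ii), (iii) being formal consequences of it together with results already available. For (i), the idea is to transport the classification problem into unramified cohomology via the invariant $\lambda_K$. After replacing $V$ by a divisorial subset associated to a model on which $2$ is invertible — which only enlarges the set of forms under consideration — we may assume $\mathrm{char}\: K^{(v)} \neq 2$ for all $v \in V$. If a $K$-form $G' = {}_{\xi}G_0$ has good reduction at $v \in V$, then by the criterion recalled above the underlying octonion algebra can be written $\mathbb{O}(a,b,c)$ with $v(a)=v(b)=v(c)=0$, so $\chi_a,\chi_b,\chi_c \in H^1(K,\mu_2)$ are unramified at $v$, and hence so is $\lambda_K(\xi)=\chi_a\cup\chi_b\cup\chi_c$. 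Thus good reduction at all $v \in V$ forces $\lambda_K(\xi) \in H^3(K,\mu_2)_V$, which is finite by Corollary \ref{C:mu2}; since $\lambda_K$ is injective, (i) follows.

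For (ii), I would reduce to (i) via Theorem \ref{T:Genus-smooth}. Writing $G = \mathrm{Aut}(\mathbb{O}(a,b,c))$ and using that $V$ satisfies condition $(\mathrm{A})$, the set of $v \in V$ at which $G$ has bad reduction is contained in $V(a)\cup V(b)\cup V(c)$ and is therefore finite, so after passing to a divisorial subset we may assume $G$ has good reduction at every $v \in V$. Each residue field $K^{(v)}$, for $v$ in a divisorial set of a $2$-dimensional global field, is itself a global field, in particular finitely generated, so Theorem \ref{T:Genus-smooth} applies and shows that every $G' \in \gen_K(G)$ also has good reduction at all $v \in V$. Part (i) then bounds the number of $K$-isomorphism classes of such $G'$, which is precisely (ii).

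For (iii), I would invoke the general reduction recorded in \S\ref{S:5}.2: if, for every finite subset $S \subset V$, the set of $K$-isomorphism classes of inner $K$-forms of an absolutely almost simple simply connected group $H$ having good reduction at all $v \in V \setminus S$ is finite, then the global-to-local map $\theta_{\overline{H},V}$ for the adjoint group is proper. Since $G$ is of type $\textsf{G}_2$ it is at once simply connected and adjoint (so $\overline{G}=G$), and the Dynkin diagram of $\textsf{G}_2$ has no nontrivial symmetries, so every $K$-form of $G$ is inner. Given a finite $S \subset V$, the complement $V\setminus S$ contains a divisorial set $V'$, and any $K$-form of $G$ with good reduction at all $v \in V \setminus S$ has, a fortiori, good reduction at all $v \in V'$; by (i) applied to $V'$ there are only finitely many such forms, so the cited reduction yields the properness of $\theta_{G,V}$. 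Alternatively, one can argue directly by twisting and chasing the commutative square relating $\theta_{G,V}$ to the diagonal map $H^3(K,\mu_2)\to\prod_{v\in V}H^3(K_v,\mu_2)$ through $\lambda$, exactly as in the proofs of Theorem \ref{T:HP1} and Proposition \ref{P:HP2}.

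The one genuinely substantive ingredient is the finiteness of $H^3(K,\mu_2)_V$ (Corollary \ref{C:mu2}); granting that, the proof is a matter of assembling the injectivity of $\lambda_K$, the good-reduction criterion for octonion algebras, Theorem \ref{T:Genus-smooth}, and the properness-from-finiteness principle of \S\ref{S:5}.2. The only minor points requiring attention are the verification that the residue fields at divisorial places are finitely generated (needed to apply Theorem \ref{T:Genus-smooth}) and the harmless shrinkings of $V$ used to ensure both $\mathrm{char}\: K^{(v)}\neq 2$ and good reduction of the fixed group $G$; neither presents a real obstacle.
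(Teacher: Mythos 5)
Your proposal is correct and follows essentially the same route as the paper: (i) via the injectivity of $\lambda_K$, the good-reduction criterion for octonion algebras, and the finiteness of $H^3(K,\mu_2)_V$ from Corollary \ref{C:mu2}; (ii) via Theorem \ref{T:Genus-smooth} exactly as Theorem \ref{T:genus} was deduced from Theorem \ref{T:F1}; and (iii) via the properness-from-finiteness observation of \S\ref{S:5}.2, with the alternative argument through $\lambda_K$ and the kernel of $H^3(K,\mu_2)\to\prod_v H^3(K_v,\mu_2)$ being precisely the parenthetical alternative the paper records. The extra details you supply (shrinking $V$ so that $2$ is invertible and $G$ has good reduction, the residue fields being global hence finitely generated, and $V\setminus S$ containing a divisorial set) are exactly the points the paper leaves implicit.
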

\begin{proof}
(i) immediately follows from the remarks preceding the statement of the theorem and Corollary \ref{C:mu2}. (ii) is derived from (i) and Theorem \ref{T:Genus-smooth} just like Theorem \ref{T:genus} was derived from Theorem \ref{T:F1}. Finally, (iii) is derived from (i) and the observation mentioned in \S5.2. (Alternatively, one can use the injectivity of $\lambda_K$ and the fact that the kernel $\Omega_3$ of the map
$$
H^3(K , \mu_2) \longrightarrow \prod_{v \in V} H^3(K_v , \mu_2)
$$
is finite, which immediately follows from the finiteness of $H^3(K , \mu_2)_V$.)
\end{proof}

\vskip2mm

Next, we describe the analogues for groups of type $\textsf{G}_2$ of the results of \S \ref{S:Fm}.
\begin{thm}\label{T:G2-2}
Let $k$ be a field of characteristic $\neq 2$  that satisfies condition $(\mathrm{F}'_2)$, let $K = k(C)$ be the function field of a smooth affine geometrically integral curve $C$ over $k$, and let $V_0$ be the set of places of $K$ corresponding to the closed points of $C$. If $G$ is a simple algebraic $K$-group of type $\textsf{G}_2$, then

\vskip2mm

\noindent \hskip3mm {\rm (i)} \parbox[t]{15.5cm}{The number of $K$-isomorphism classes of $K$-forms $G'$ having good reduction at all $v \in V_0$ is finite.}

\vskip2mm

\noindent {\rm (ii)}  \parbox[t]{15.5cm}{The map $\displaystyle \theta_{G , V_0} \colon H^1(K , G) \longrightarrow \prod_{v \in V_0} H^1(K_v , G)$ is proper.}
\end{thm}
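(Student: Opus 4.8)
The plan is to follow the proof of Theorem~\ref{T:G2-1} essentially verbatim, substituting the finiteness of the unramified group $H^3(K,\mu_2)_{V_0}$ supplied by Theorem~\ref{T:IRap} (with $m=2$, which is available because $k$ is of type $(\mathrm{F}'_2)$) for the use of Corollary~\ref{C:mu2}, and keeping the $\textsf{G}_2$-specific input recalled at the beginning of this section: every $K$-form of $G$ is the automorphism group of an octonion $K$-algebra (as $\textsf{G}_2$ has no diagram automorphisms), such forms are classified up to isomorphism by $H^1(K,G_0)$ for the split group $G_0$, the invariant $\lambda_K\colon H^1(K,G_0)\to H^3(K,\mu_2)$ is injective, and if ${}_{\xi}G_0$ has good reduction at $v$ then $\lambda_K(\xi)$ is unramified at $v$.

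For part~(i): if $G'={}_{\xi}G_0$ has good reduction at every $v\in V_0$, then $\lambda_K(\xi)$ lies in $H^3(K,\mu_2)_{V_0}$; since this group is finite by Theorem~\ref{T:IRap} and $\lambda_K$ is injective, only finitely many classes $\xi$ can arise, which is the assertion.

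For part~(ii): I would deduce it from (i) just as Theorem~\ref{T:G2-1}(iii) is deduced from Theorem~\ref{T:G2-1}(i). The group $G$ is simultaneously simply connected and adjoint and all its $K$-forms are inner, so by the observation recalled in \S\ref{S:5}.2 it is enough to check that for every finite $S\subset V_0$ the set of $K$-isomorphism classes of (inner) forms of $G$ with good reduction at all $v\in V_0\setminus S$ is finite. But $V_0\setminus S$ is precisely the set of places attached to the closed points of $C\setminus S$, which is again a smooth affine geometrically integral curve over $k$ with function field $K$, so part~(i) applied to $C\setminus S$ gives this finiteness. Alternatively, one can argue directly with $\lambda$: it is injective over every field and compatible with base change, so, after reducing to $G_0$ by twisting, the $\lambda_K$-image of any fibre of $\theta_{G,V_0}$ is contained in one coset of the kernel $\Omega_3$ of $H^3(K,\mu_2)\to\prod_{v\in V_0}H^3(K_v,\mu_2)$; since $\Omega_3\subseteq H^3(K,\mu_2)_{V_0}$ is finite and $\lambda_K$ is injective, the fibre is finite.

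I do not expect any genuine difficulty here: granting Theorem~\ref{T:IRap} and the injectivity of $\lambda_K$, the argument is a routine assembly of the $\textsf{G}_2$-formalism of this section with the finiteness of unramified $H^3$ over function fields of curves over fields of type $(\mathrm{F}'_2)$. The only point requiring a little care is checking, in part~(ii), that removing finitely many closed points from $C$ leaves a curve to which Theorem~\ref{T:IRap}, and hence part~(i), still applies --- which is immediate.
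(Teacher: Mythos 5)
Your proposal is correct and follows the paper's own argument essentially verbatim: part (i) from the injectivity of $\lambda_K$, the fact that good reduction forces $\lambda_K(\xi)$ to be unramified, and Theorem~\ref{T:IRap}; part (ii) via the general observation in \S\ref{S:5}.2 (with the alternative direct argument using the finite kernel $\Omega_3\subseteq H^3(K,\mu_2)_{V_0}$ and injectivity of $\lambda_K$ also matching the paper's parenthetical remark). Your added check that removing finitely many closed points from $C$ leaves a curve to which (i) still applies is a useful and correct bit of care that the paper leaves implicit.
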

\begin{proof}
Again, (i) follows from the discussion preceding the statement of Theorem \ref{T:G2-1} and Theorem \ref{T:IRap}. Then (ii) follows from (i) and Remark 4.9. (As in the proof of Theorem \ref{T:G2-1}, one can alternatively use the injectivity of $\lambda_K$ in conjunction with the fact that the homomorphism
$$
H^3(K , \mu_2) \longrightarrow \prod_{v \in V_0} H^3(K_v , \mu_2)
$$
has finite kernel, which is a consequence of the finiteness of $H^3(K , \mu_2)_{V_0}$.)
\end{proof}

\vskip3mm

We now turn to a result that provides information about the genus of a group of type $\textsf{G}_2$ over the fields of rational functions over global fields.
\begin{thm}\label{T:G2-3}
Let $K = k(x_1, \ldots , x_r)$ the field of rational functions in $r$ variables over a global field $k$ of characteristic $\neq 2$, and let $G$ be a simple $K$-group of type $\textsf{G}_2$.

\vskip2mm

\noindent \hskip3pt {\rm (i)} \parbox[t]{15.5cm}{If $r = 1$, then the genus $\gen_K(G)$ reduces to a single element.}

\vskip2mm

\noindent {\rm (ii)} \parbox[t]{15.5cm}{The genus $\gen_K(G)$ is finite for any $r$.}
\end{thm}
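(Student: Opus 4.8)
The plan is to exploit, exactly as in the proof of Theorem~\ref{T:G2-1}, the injectivity of the invariant $\lambda_K \colon H^1(K , G_0) \hookrightarrow H^3(K , \mu_2)$. Writing $G = \mathrm{Aut}(\mathbb{O})$ and, for $G' \in \gen_K(G)$, $G' = \mathrm{Aut}(\mathbb{O}')$, it is enough to control the difference $\alpha := \lambda_K(G') - \lambda_K(G) \in H^3(K , \mu_2)$. The inputs are Theorem~\ref{T:Genus-smooth} --- so $G'$ has good reduction at every discrete valuation of $K$ at which $G$ does, and the reductions lie in the same genus over the residue field --- and the following \emph{key fact}, which is the heart of the matter: if $G' \in \gen_K(G)$ then $\alpha$ is unramified at \emph{every} discrete valuation $v$ of $K$ with $\mathrm{char}\: K^{(v)} \neq 2$, i.e.\ $\partial^3_v \lambda_K(G') = \partial^3_v \lambda_K(G)$. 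At valuations of good reduction this is immediate from Theorem~\ref{T:Genus-smooth} (both residues vanish, since a $\textsf{G}_2$-group of good reduction has unramified invariant, as recalled before Theorem~\ref{T:G2-1}), so the content is entirely at the finitely many valuations of bad reduction; this is the $\textsf{G}_2$-analogue of the statement that the genus of $\mathrm{SL}_{1 , D}$ determines the ramification of $D$, and I would try to prove it by a local analysis of the maximal tori of $\mathrm{Aut}(\mathbb{O})$ over $K_v$ in the spirit of \cite{CRR1}.

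Granting the key fact, here is how (i) would go. Since $K = k(x)$ with $k$ global of characteristic $\neq 2$, $K$ is a $2$-dimensional global field, so $\gen_K(G)$ is a priori finite by Theorem~\ref{T:G2-1}(ii); we must upgrade this to a single element. Applying the key fact at all valuations $v_{p(x)}$ of $K$ trivial on $k$ shows that $\alpha$ is unramified at every closed point of $\mathbb{A}^1_k$; by the Faddeev (Bloch-Ogus) computation of $H^3$ of the rational curve $\mathbb{A}^1_k$ (cf.\ \cite{GMS}), $\alpha$ is then the pullback of a constant class $\beta \in H^3(k , \mu_2)$. Now pick a $k$-rational point $x_0$ of $\mathbb{P}^1_k$ at which $G$, hence also $G'$ by Theorem~\ref{T:Genus-smooth}, has good reduction (possible because $G$ has bad reduction at only finitely many closed points of $\mathbb{P}^1_k$, while $\mathbb{P}^1_k(k)$ is infinite). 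Specializing at $x_0$, and using that the specialization of a constant class is that class while the specialization of the invariant of a group of good reduction computes the invariant of its reduction, we get $\beta = \lambda_k(\underline{G'}_0) - \lambda_k(\underline{G}_0)$ with $\underline{G}_0 , \underline{G'}_0$ the reductions at $x_0$ and $\underline{G'}_0 \in \gen_k(\underline{G}_0)$. Over a global field of characteristic $\neq 2$ the genus of a $\textsf{G}_2$-group is trivial: for a global function field every octonion algebra splits, its norm form being an $8$-dimensional (hence isotropic) quadratic form; for a number field it follows from the classification of octonion algebras by ramification at the real places, since by weak approximation a non-split $\textsf{G}_2$-group cannot share all maximal $k$-tori with one that is split or compact at some real place. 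Hence $\underline{G'}_0 \cong \underline{G}_0$, so $\beta = 0$, so $\alpha = 0$, and $G \cong G'$.

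For (ii) I would induct on $r$, with $r = 1$ being (i). Writing $K = F(x_r)$, $F = k(x_1 , \ldots , x_{r-1})$, the key fact together with the same rational-curve computation over $F$ gives $\alpha = \beta_{F(x_r)}$ for a constant class $\beta \in H^3(F , \mu_2)$, and specialization at a fixed $F$-rational point $x_0$ of $\mathbb{P}^1_F$ of good reduction for $G$ yields $\beta = \lambda_F(\underline{G'}_0) - \lambda_F(\underline{G}_0)$ with $\underline{G}_0$ a \emph{fixed} $\textsf{G}_2$-group over $F$ and $\underline{G'}_0 \in \gen_F(\underline{G}_0)$. By the inductive hypothesis $\gen_F(\underline{G}_0)$ is finite, so $\beta$ --- and with it $\alpha$ and $\lambda_K(G')$ --- takes finitely many values; injectivity of $\lambda_K$ then gives $|\gen_K(G)| \leqslant |\gen_F(\underline{G}_0)| < \infty$, completing the induction.

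The hard part is the key fact: that lying in $\gen_K(G)$ forces equality of the residues of the $\textsf{G}_2$-invariant at the finitely many valuations of bad reduction; once this is available the rest is bookkeeping with the residue sequence on $\mathbb{P}^1$ and Theorem~\ref{T:Genus-smooth}. Two routine points also need checking: that for a $\textsf{G}_2$-group of good reduction the specialization of $\lambda_K$ at a place of good reduction equals $\lambda$ of the reduction, and that the $\textsf{G}_2$-genus over a global field of characteristic $\neq 2$ is trivial (the base of the induction in (ii), and the source of the sharp conclusion in (i)).
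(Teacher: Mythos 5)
Your part (i) follows the paper's own route: equality of the residues of the $\mathsf{G}_2$-invariant at the bad places (the paper's Lemma \ref{L:G2-1} together with Corollary \ref{C:G2}, which is exactly the ``local analysis of maximal tori in the spirit of \cite{CRR1}'' you defer), then Faddeev to see the difference is a constant class, then specialization at a rational point of good reduction and triviality of the $\mathsf{G}_2$-genus over a global field (the paper's condition $(\star\star)$, quoted from \cite[Theorem 7.5]{PrRap-WC}; your direct argument for it is fine). Note, however, that even in the paper the residue comparison is \emph{not} ``equal residues'' per se: Lemma \ref{L:G2-1} only gives that the residue quaternion algebras share their maximal \'etale subalgebras, and equality of residues is then deduced from property $(\star)$ of the residue field. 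For (i) this is harmless, because the residue fields at the places trivial on $k$ are finite extensions of a global field, where $(\star)$ holds.

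This is precisely where your part (ii) has a genuine gap. In the inductive step you invoke the ``key fact'' over $F = k(x_1,\ldots,x_{r-1})$, i.e.\ you need $\partial_v\lambda_K(\xi') = \partial_v\lambda_K(\xi)$ at every closed point $v$ of $\mathbb{P}^1_F$. The residue fields there are arbitrary finite extensions of $F$ — already for $r=2$ these are function fields of arbitrary curves over number fields — and for such fields the equality of residues is not available: the torus-comparison argument only shows that the two residue quaternion algebras have the same maximal \'etale subalgebras, and upgrading this to isomorphism (property $(\star)$) is an open problem there; only the \emph{finiteness} of the genus of a quaternion algebra over a finitely generated field is known (\cite{CRR2}). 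So the assertion that $\alpha$ is unramified along $\mathbb{P}^1_F$, hence constant, is unjustified, and the induction as written does not go through. The paper circumvents this exact difficulty: it uses only Lemma \ref{L:G2-1} to define a map $\rho \colon \gen_K(G) \to \prod_i \gen_{\mathscr{K}_i}(\mathscr{D}_i)$ into the (finite, by \cite{CRR2}) genera of the residue quaternion algebras at the finitely many divisorial places of bad reduction of $\mathbb{P}^r_k$, and then shows each fiber of $\rho$ is finite because two groups in the same fiber have invariants differing by an element of $H^3(K,\mu_2)_V = H^3(k,\mu_2)$ (Faddeev, \cite[Theorem 10.1]{GMS}), which is finite by Poitou--Tate, and $\lambda_K$ is injective. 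Your induction could be repaired along the same lines — replace ``equal residues'' by ``residues lying in the same finite genus,'' and use your specialization step plus the inductive hypothesis only to bound the constant part — but as stated the key step fails for $r \geqslant 2$.
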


\vskip2mm

The proof requires more detailed information about the ``residues" of groups of type $\textsf{G}_2$ at places of bad reduction. So, let $K$ be a field with a discrete valuation $v$ such that the residue field $\mathscr{K} = K^{(v)}$ is of characteristic $\neq 2$, and let
$$
\partial_v \colon H^3(K , \mu_2) \to H^2(\mathscr{K} , \mu_2)
$$
be the corresponding residue map. It is well-known (cf. \cite[\S\S 7.1 and 7.5]{Gille}) that any ``decomposable" element $\chi_a \cup \chi_b \cup \chi_c$, where
$a, b, c \in K^{\times}$ (and by the Bloch-Kato conjecture such elements generate $H^3(K , \mu_2)$) can be written in the form  $\chi_{a'} \cup \chi_{b'} \cup \chi_{c'}$ with $v(a') = v(b') = 0$ and $v(c') = 0$ or $1$, and that on elements of this form the residue map is given by:
$$
\partial_v(\chi_a \cup \chi_b \cup \chi_c) = \left\{
\begin{array}{lcl}
0 & , & v(a) = v(b) = v(c) = 0, \\
\chi_{\bar{a}} \cup \chi_{\bar{b}} & , & v(a) = v(b) = 0, \ v(c) =
1,
\end{array} \right.
$$
where $\bar{a} , \bar{b}$ are the images of $a , b$ in $\mathscr{K}^{\times}$. We define the quaternion algebra corresponding to the residue of such an element to be the matrix algebra $M_2(\mathscr{K})$ in the first case and the standard quaternion algebra $\displaystyle \left( \frac{\bar{a} , \bar{b}}{\mathscr{K}}  \right)$
in the second. We will assume henceforth that $\mathscr{K}$ is {\it finitely generated}.

\vskip2mm

\begin{lemma}\label{L:G2-1}
Given $\xi_1, \xi_2 \in H^1(K , G_0)$, we denote by $G_i = {}_{\xi_i}G_0$ the corresponding group and set $\mathscr{D}_i$ to be the quaternion algebra
over $\mathscr{K}$ corresponding to the residue $\partial_v(\lambda_K(\xi_i))$ for $i = 1, 2$.
If $G_1$ and $G_2$ have the same isomorphism classes of maximal $K$-tori, then $\mathscr{D}_1$ and $\mathscr{D}_2$
have the same isomorphism classes of maximal \'etale $\mathscr{K}$-subalgebras.
\end{lemma}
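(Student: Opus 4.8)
The plan is to prove that every maximal \'etale $\mathscr{K}$-subalgebra $\ell$ of $\mathscr{D}_1$ also embeds into $\mathscr{D}_2$; since the hypothesis is symmetric in the two indices, this suffices. I begin with reductions to the ``generic'' situation. If $G_1$ has good reduction at $v$, then by the discussion preceding Theorem~\ref{T:G2-1} the class $\lambda_K(\xi_1)$ is unramified at $v$, so $\partial_v(\lambda_K(\xi_1)) = 0$ and $\mathscr{D}_1$ is split; since $\mathscr{K} = K^{(v)}$ is finitely generated, Theorem~\ref{T:Genus-smooth} applied to $G_1$ and $G_2 \in \gen_K(G_1)$ shows that $G_2$ also has good reduction at $v$, so $\mathscr{D}_2$ is likewise split and there is nothing to prove. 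If instead $G_1$ has bad reduction at $v$, then $\mathscr{D}_1$ must be a division algebra: for groups of type $\textsf{G}_2$ over a complete discretely valued field, good reduction is equivalent to the vanishing of the residue of the mod-$2$ invariant (this follows from the injectivity of $\lambda_K$, the isomorphism $H^3(\mathcal{O}_v, \mu_2) \cong H^3(\mathscr{K}, \mu_2)$ for the henselian ring $\mathcal{O}_v$, and the fact that the image of $\lambda$ consists of symbols), so $\partial_v(\lambda_K(\xi_1)) \neq 0$; and again Theorem~\ref{T:Genus-smooth} forces $G_2$ to have bad reduction, so $\mathscr{D}_2$ is division too. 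Thus we may assume both $\mathscr{D}_i$ are division quaternion algebras and $\ell = \mathscr{K}(\sqrt{\bar d})$ is a quadratic field extension of $\mathscr{K}$.

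Next I would realize $\ell$ by a maximal torus. Choose a second quadratic field $\ell' = \mathscr{K}(\sqrt{\bar{d}'})$, distinct from $\ell$, that also splits $\mathscr{D}_1$ (such an $\ell'$ exists because $\mathscr{K}$, being finitely generated, is not formally real-closed, so a division quaternion algebra over it is split by more than one quadratic field). Writing $\mathbb{O}_1 = \mathbb{O}(a_1, b_1, c_1)$ with $v(a_1) = v(b_1) = 0$ and $v(c_1) = 1$, the quaternion subalgebra $Q_1 = (a_1, b_1) \subset \mathbb{O}_1$ has good reduction at $v$ with residue $\mathscr{D}_1$, so over $K_v$ both of the unramified quadratic extensions $M, M'$ of $K_v$ with residue fields $\ell, \ell'$ embed into $Q_1 \otimes_K K_v$ (lift the embeddings $\ell, \ell' \hookrightarrow \mathscr{D}_1$ by Hensel's lemma). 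Hence $G_1 \otimes_K K_v$ contains a maximal $K_v$-torus inside $\mathrm{Stab}(Q_1 \otimes_K K_v)$ of the form $(\mathrm{R}^{(1)}_{M/K_v}(\mathbb{G}_m) \times \mathrm{R}^{(1)}_{M'/K_v}(\mathbb{G}_m))/\mu_2$. Since the variety of maximal tori of $G_1$ is $K$-rational and the $K_v$-isomorphism type of a maximal torus is locally constant in the $v$-adic topology, I can find a maximal $K$-torus $T$ of $G_1$ whose base change $T \otimes_K K_v$ is $K_v$-isomorphic to this torus; in particular $T \otimes_K K_v$ is of ``$\textsf{A}_1 + \widetilde{\textsf{A}}_1$ type'' with its two constituent unramified quadratic extensions having the distinct residue fields $\ell$ and $\ell'$.

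Now invoke the hypothesis: $T$ is $K$-isomorphic to a maximal $K$-torus $T'$ of $G_2$. The $\Ga(\bar K/K)$-action on the cocharacter lattice of $T'$ coincides with that of $T$, hence has image a Klein four-subgroup of $W(\textsf{G}_2)$ (the two commuting sign-changes coming from the two quadratic extensions). Since every Klein four-subgroup of $W(\textsf{G}_2)$ is the Weyl group of an $\textsf{A}_1 + \widetilde{\textsf{A}}_1$ subsystem, $T'$ lies in a $K$-subgroup $H \subset G_2$ of type $\textsf{A}_1 + \widetilde{\textsf{A}}_1$, i.e. in $\mathrm{Stab}_{G_2}(Q_2)$ for some quaternion subalgebra $Q_2 \subset \mathbb{O}_2$; and comparing with $T$ shows that $M$ and $M'$ embed into $Q_2 \otimes_K K_v$. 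As $M$ and $M'$ are unramified over $K_v$ with distinct residue fields, a quaternion $K_v$-algebra split by both is itself unramified (its residue in $H^1(\mathscr{K}, \mu_2)$ is killed by restriction to both $\ell$ and $\ell'$), so $Q_2$ has good reduction at $v$; and any quaternion subalgebra of $\mathbb{O}_2$ with good reduction at $v$ has residue isomorphic to $\mathscr{D}_2$, as one sees by comparing the second residue of the $3$-fold Pfister norm form of $\mathbb{O}_2$ with that of the $2$-fold Pfister norm form of the subalgebra, both being similar (hence isometric) $2$-fold Pfister forms over $\mathscr{K}$. Reducing the embedding $M \hookrightarrow Q_2 \otimes_K K_v$ modulo $v$ then gives $\ell \hookrightarrow Q_2^{(v)} \cong \mathscr{D}_2$, as required.

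The main obstacle I anticipate is the passage from the residue datum $\ell \hookrightarrow \mathscr{D}_1$ to an honest maximal $K$-torus $T$ of $G_1$ with the prescribed behaviour at $v$, together with the dual step of pinning down the $\textsf{A}_1 + \widetilde{\textsf{A}}_1$ subgroup $H \subset G_2$ through which $T'$ must factor and checking that $H = \mathrm{Stab}_{G_2}(Q_2)$ with $Q_2$ of good reduction. This is the $\textsf{G}_2$-analogue of the centralizer and Weyl-group bookkeeping carried out in the proof of Proposition~\ref{P:reduction1}, and it is precisely where the auxiliary choice of $\ell'$ and the care about which quaternion subalgebras acquire good reduction become essential; one also needs the (standard but somewhat technical) local constancy of the $K_v$-isomorphism type of a maximal torus and the equivalence, for $\textsf{G}_2$-groups over complete discretely valued fields, of good reduction with unramifiedness of the mod-$2$ invariant.
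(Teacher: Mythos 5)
There is a genuine gap at the central step of your argument. You construct a maximal $K$-torus $T$ of $G_1$ by weak approximation so that $T \otimes_K K_v$ is the prescribed torus $(\mathrm{R}^{(1)}_{M/K_v}(\mathbb{G}_m) \times \mathrm{R}^{(1)}_{M'/K_v}(\mathbb{G}_m))/\mu_2$, and then assert that the $\Ga(\bar{K}/K)$-action on $X_*(T)$ (hence on $X_*(T')$) has image a Klein four-subgroup of $W(\textsf{G}_2)$. But the approximation argument only controls the completion: the splitting field of $T$ \emph{over $K$} is in general much larger than the compositum $MM'$ (generically $T$ will have Galois image all of $W(\textsf{G}_2)$), and nothing in your construction bounds it. Consequently you cannot conclude that $T'$ sits inside a $K$-defined subgroup of type $\textsf{A}_1+\widetilde{\textsf{A}}_1$ of $G_2$, and the rest of the argument --- the identification $H=\mathrm{Stab}_{G_2}(Q_2)$, the good reduction of $Q_2$, and the reduction of the embedding $M\hookrightarrow Q_2\otimes_K K_v$ --- has no support. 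The missing ingredient is a way to transfer the hypothesis ``same isomorphism classes of maximal $K$-tori'' down to the completion $K_v$; this is exactly what the paper uses, namely \cite[Remark 2.2]{RR}, which says that $G_1$ and $G_2$ then also have the same isomorphism classes of maximal $K_v$-tori.

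Once that transfer is available, the paper's local argument is much simpler than your scheme and avoids the auxiliary field $\ell'$ and all bookkeeping with quaternion subalgebras of $\mathbb{O}_2$: if $\mathscr{L}$ splits $\mathscr{D}_i$, Hensel's lemma shows $(a_i,b_i)$ splits over the unramified lift $L$ of $\mathscr{L}$, so $G_i$ splits over $L$ and hence $G_i\otimes_K K_v$ contains the maximal torus $\mathrm{R}^{(1)}_{L/K_v}(\mathbb{G}_m)\times\mathrm{R}^{(1)}_{L/K_v}(\mathbb{G}_m)$ (or $\mathbb{G}_m\times\mathbb{G}_m$); by the local transfer this torus embeds in $G_{3-i}\otimes_K K_v$, forcing $G_{3-i}$ to split over $L$, and then the compatibility of residues with restriction along unramified extensions (formula (\ref{E:L}) in the paper) gives $\mathrm{Res}_{\mathscr{L}/\mathscr{K}}(\partial_v(\lambda_K(\xi_{3-i})))=0$, i.e.\ $\mathscr{L}$ splits $\mathscr{D}_{3-i}$. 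Your preliminary reduction (good reduction of $G_1$ forces good reduction of $G_2$ via Theorem~\ref{T:Genus-smooth}, so both residues vanish) is fine and parallels the paper's first case, and your observation that a quaternion $K_v$-algebra split by two distinct unramified quadratic extensions is unramified is correct; but these do not repair the unjustified passage from local to global control of the torus $T$.
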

\begin{proof}
Let $\mathbb{O}_i$ be an octonian $K$-algebra such that $G_i = \mathrm{Aut}(\mathbb{O}_i)$ for $i = 1, 2$. Suppose $\mathbb{O}_i$ can
be represented by a triple $(a_i, b_i, c_i) \in (K^{\times})^3$ such that $v(a_i) = v(b_i) = v(c_i) = 0$. Then $G_i$ has a good reduction
at $v$. Since $G_1$ and $G_2$ have the same isomorphism classes of maximal $K$-tori, by Theorem \ref{T:Genus-smooth} the group $G_{3-i}$ also has good reduction at $v$. This means that $\mathbb{O}_{3-i}$ can also be represented by a triple $(a_{3-i}, b_{3-i}, c_{3-i}) \in (K^{\times})^3$ such that $v(a_{3-i}) = v(b_{3-i}) = v(c_{3-i}) = 0$. In this case, both residues $\partial_v(\lambda_K(\xi_1))$ and $\partial_v(\lambda_K(\xi_2))$ are trivial, so our claim is obvious.
It remains to consider the case where $\mathbb{O}_i$ is represented by a triple $(a_i, b_i, c_i) \in (K^{\times})^3$ satisfying
$$
v(a_i) = v(b_i) = 0, \ \ v(c_i) = 1
$$
for $i = 1, 2$. Then $\mathscr{D}_i$ is the quaternion algebra $\displaystyle \left( \frac{\bar{a}_i , \bar{b}_i}{\mathscr{K}} \right)$ where $\bar{a}_i , \bar{b}_i$ are the images of $a_i , b_i$ in $\mathscr{K}^{\times}$. To prove that $\mathscr{D}_1$ and $\mathscr{D}_2$ have the same maximal \'etale subalgebras, it suffices to prove the following:

\vskip2mm

\begin{center}

\parbox[t]{15cm}{\it Let $\mathscr{L}$ be an extension of $\mathscr{K}$ of degree $\leqslant 2$. Then $\mathscr{L}$ splits $\mathscr{D}_1$ if and only if
it splits $\mathscr{D}_2$.}

\end{center}

\vskip2mm

Let $L$ be the unramified extension of $K_v$ with the residue field $\mathscr{L}$. We have
$$
\partial_v(\lambda_K(\xi_i)) = \hat{\partial}_v(\lambda_{K_v}(\hat{\xi}_i)),
$$
where $\hat{\partial}_v \colon H^3(K_v , \mu_2) \to H^2(\mathscr{K} , \mu_2)$ is the residue map and $\hat{\xi}_i$ is the image of $\xi_i$ under the restriction map $H^1(K , G_0) \to H^1(K_v , G_0)$. Since residue maps commute with restriction maps for unramified extensions, we further obtain
\begin{equation}\label{E:L}
\mathrm{Res}_{\mathscr{L}/\mathscr{K}}(\partial_v(\lambda_K(\xi_i))) = \hat{\partial}^L_v(\lambda_{L}(\mathrm{Res}_{L/K_v}(\hat{\xi}_i))),
\end{equation}
where $\hat{\partial}^L_v \colon H^3(L , \mu_2) \to H^2(\mathscr{L} , \mu_2)$ is the residue map for $L$ (see \cite[Proposition 3.3.1]{CT-SB}).

Now, suppose that $\mathscr{L}$ splits $\mathscr{D}_i$, i.e. the quaternion algebra $\displaystyle \left( \frac{\bar{a}_i , \bar{b}_i}{\mathscr{L}} \right)$ is trivial. Then it follows from Hensel's Lemma that the quaternion algebra $\displaystyle \left( \frac{a_i , b_i}{L} \right)$ is also trivial, and therefore the group $G_i$ becomes split over $L$. Define a two-dimesional $K_v$-torus $T$ to be $$\mathbb{G}_m \times \mathbb{G}_m \ \ \text{if} \ \  L = K_v \ \ \ \  \text{and} \ \ \ \  \mathrm{R}^{(1)}_{L/K_v}(\mathbb{G}_m) \times \mathrm{R}^{(1)}_{L/K_v}(\mathbb{G}_m) \ \  \text{if} \ \ [L : K_v] = 2$$ (where, as usual, $\mathrm{R}^{(1)}_{L/K_v}(\mathbb{G}_m)$ denotes the corresponding norm torus). Then $T$ is isomorphic to a maximal $K_v$-torus of $G_i$. Since $G_1$ and $G_2$ have the same isomorphism classes of maximal $K$-tori, they also have the same isomorphism classes of maximal $K_v$-tori (see \cite[Remark 2.2]{RR}). Thus, $T$ is isomorphic to a maximal $K_v$-torus of $G_{3-i}$ as well. Then the cocycle $\mathrm{Res}_{L/K_v}(\hat{\xi}_{3-i})$ is trivial, so applying (\ref{E:L}) with $i$ replaced by $3-i$, we obtain that
$\mathrm{Res}_{\mathscr{L}/\mathscr{K}}(\partial_v(\lambda_K(\xi_{3-i})))$ is trivial, i.e. $\mathscr{L}$ splits $\mathscr{L}_{3-i}$, as required.
\end{proof}

\vskip0.2mm

\begin{cor}\label{C:G2}
Assume that the residue field $\mathscr{K} = K^{(v)}$ is finitely generated and has the following property:

\vskip2mm

\noindent $(\star)$ \parbox[t]{15cm}{If $D_1$ and $D_2$ are central quaternion
algebras over $\mathscr{K}$ that have the same maximal \'etale subalgebras,
then $D_1 \simeq D_2$.}

\vskip2mm

\noindent If $\xi_1 , \xi_2 \in H^1(K , G_0)$ are such that the corresponding groups $G_i = {}_{\xi_i} G_0$ for $i = 1, 2$ have the same isomorphism
classes of maximal $K$-tori, then $\partial_v (\lambda_K(\xi_1)) = \partial_v (\lambda_K(\xi_2))$.
\end{cor}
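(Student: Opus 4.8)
The plan is to derive Corollary~\ref{C:G2} as a formal consequence of Lemma~\ref{L:G2-1} together with the hypothesis $(\star)$; the only point requiring a word of justification is that the residue $\partial_v(\lambda_K(\xi_i))$ coincides with the Brauer class of the quaternion algebra $\mathscr{D}_i$. To see this, recall that by the explicit description of $\lambda_K$, if $\mathbb{O}_i$ is an octonion $K$-algebra with $G_i = \mathrm{Aut}(\mathbb{O}_i)$ represented by a triple $(a_i , b_i , c_i) \in (K^{\times})^3$, then $\lambda_K(\xi_i) = \chi_{a_i} \cup \chi_{b_i} \cup \chi_{c_i}$; as noted in the discussion preceding Lemma~\ref{L:G2-1}, this triple may be chosen so that $v(a_i) = v(b_i) = 0$ and $v(c_i) \in \{0 , 1\}$, and since $\partial_v$ is a well-defined homomorphism on $H^3(K , \mu_2)$ this changes nothing. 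With such a representative, the residue formula recalled above gives $\partial_v(\lambda_K(\xi_i)) = 0$ or $\chi_{\bar{a}_i} \cup \chi_{\bar{b}_i}$ in $H^2(\mathscr{K} , \mu_2)$, which in either case is exactly the class of $\mathscr{D}_i$ in $H^2(\mathscr{K} , \mu_2) = {}_2\Br(\mathscr{K})$, since $\chi_{\bar{a}} \cup \chi_{\bar{b}}$ is the Brauer class of $\left( \frac{\bar{a} , \bar{b}}{\mathscr{K}} \right)$.

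With this in hand, I would apply Lemma~\ref{L:G2-1}: the assumption that $G_1$ and $G_2$ have the same isomorphism classes of maximal $K$-tori yields that $\mathscr{D}_1$ and $\mathscr{D}_2$ have the same isomorphism classes of maximal \'etale $\mathscr{K}$-subalgebras. Hypothesis $(\star)$ then forces $\mathscr{D}_1 \simeq \mathscr{D}_2$ as $\mathscr{K}$-algebras. Since two quaternion algebras over a field are isomorphic precisely when they have the same class in the Brauer group, we obtain $[\mathscr{D}_1] = [\mathscr{D}_2]$ in $H^2(\mathscr{K} , \mu_2)$, and by the previous paragraph this is exactly the asserted equality $\partial_v(\lambda_K(\xi_1)) = \partial_v(\lambda_K(\xi_2))$.

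There is no genuine obstacle here, since all the substance is contained in Lemma~\ref{L:G2-1}: the corollary simply packages that lemma, using $(\star)$ to pass from ``same maximal \'etale subalgebras'' to ``isomorphic'', together with the elementary fact that isomorphism of quaternion algebras is detected by the Brauer class. The only point demanding minor care is to compute the residue with respect to a representative triple in the normalized form $v(a) = v(b) = 0$, $v(c) \in \{0 , 1\}$, which is always available and is precisely the form in which the residue formula was stated.
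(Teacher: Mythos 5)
Your argument is correct and is exactly how the paper intends the corollary to be read: it follows immediately from Lemma \ref{L:G2-1} together with $(\star)$, once one notes that $\partial_v(\lambda_K(\xi_i))$ is precisely the class of $\mathscr{D}_i$ in $H^2(\mathscr{K},\mu_2) = {}_2\Br(\mathscr{K})$ via the residue formula for the normalized triple. No further comment is needed.
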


\vskip3mm

\noindent {\it Proof of Theorem \ref{T:G2-3}(i).} We will prove a more general result. To formulate it, in addition to property $(\star)$ of a field $\mathscr{K}$  from Corollary \ref{C:G2}, we need to introduce the following property of a field $k$:

\vskip2mm

\noindent $(\star\star)$ \ \parbox[t]{15cm}{\baselineskip=5mm If $G_1$ and $G_2$ are two $k$-groups of type $\textsf{G}_2$ having the same maximal tori, then $$G_1 \simeq G_2 \ \ \text{over} \ \ k. \hskip3cm $$}

\vskip2mm

\begin{thm}\label{T:G2-5}
Assume that a field $k$ of characteristic $\neq 2$ satisfies $(\star\star)$ and that any finite extension $\ell$ of $k$ satisfies $(\star)$. Then the field of rational functions $K = k(x)$ satisfies $(\star\star)$.
\end{thm}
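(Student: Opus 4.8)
The plan is to reduce the statement to a claim in degree-$3$ Galois cohomology via the invariant $\lambda$, and then to split that claim into a ``ramified part'', handled by $(\star)$ for finite extensions of $k$, and a ``constant part'', handled by $(\star\star)$ for $k$ itself; the two are glued together by the Faddeev (localization) exact sequence for $\mathbb{P}^1_k$.

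Concretely, write $K = k(x)$, and suppose $\xi_1, \xi_2 \in H^1(K, G_0)$ are such that the groups $G_i = {}_{\xi_i}G_0$ have the same isomorphism classes of maximal $K$-tori. Since $\lambda_K \colon H^1(K, G_0) \to H^3(K, \mu_2)$ is injective, it is enough to show that $\zeta := \lambda_K(\xi_1) + \lambda_K(\xi_2)$ vanishes (the coefficients being $2$-torsion). I would first deal with the residues: for each closed point $w$ of $\mathbb{P}^1_k$ the residue field $k(w)$ is a finite extension of $k$ (it is $k$ itself at $w = \infty$), hence satisfies $(\star)$; since $G_1$ and $G_2$ have the same maximal $K$-tori, they have the same maximal $K_w$-tori, so Lemma \ref{L:G2-1} shows the quaternion $k(w)$-algebras attached to $\partial_w(\lambda_K(\xi_1))$ and $\partial_w(\lambda_K(\xi_2))$ have the same maximal \'etale subalgebras, and by $(\star)$ they are isomorphic --- i.e. $\partial_w(\zeta) = 0$ (this is precisely Corollary \ref{C:G2}). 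Hence $\zeta$ is unramified at every closed point of $\mathbb{P}^1_k$, and from the Faddeev exact sequence
$$
0 \to H^3(k, \mu_2) \stackrel{\mathrm{res}}{\longrightarrow} H^3(K, \mu_2) \stackrel{\partial}{\longrightarrow} \bigoplus_{w \in (\mathbb{P}^1_k)^{(1)}} H^2(k(w), \mu_2),
$$
with $\partial = \oplus_w \partial_w$, one obtains a class $\beta \in H^3(k, \mu_2)$ with $\zeta = \mathrm{res}_{K/k}(\beta)$.

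It then remains to prove $\beta = 0$. If $k$ is finite then $\mathrm{cd}(k) \leqslant 1$, so $H^3(k, \mu_2) = 0$ and we are done; assume therefore $k$ is infinite. Each $G_i$ has good reduction at all but finitely many closed points of $\mathbb{A}^1_k$, so I can pick $\lambda \in k$ for which both $G_1$ and $G_2$ have good reduction at the degree-one place $v_0 = v_{x-\lambda}$, whose residue field is $k$. As $\zeta$ is unramified at $v_0$ and $\mathrm{res}_{K/k}$ is split by the specialization map $s_{v_0}$, we have $\beta = s_{v_0}(\zeta)$, so it suffices to show $s_{v_0}(\zeta) = 0$. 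Let $\underline{G}_i$ be the reduction of $G_i$ at $v_0$, a $k$-group of type $\textsf{G}_2$. By Theorem \ref{T:Genus-smooth}, since $G_1$ and $G_2$ have the same maximal $K$-tori, $\underline{G}_2$ lies in $\gen_k(\underline{G}_1)$; then $(\star\star)$ for $k$ gives $\underline{G}_1 \simeq \underline{G}_2$ over $k$, so (by injectivity of $\lambda_k$) their $\lambda$-invariants in $H^3(k, \mu_2)$ coincide. On the other hand, good reduction of $G_i$ at $v_0$ lets us represent the corresponding octonion algebra as $\mathbb{O}(a_i, b_i, c_i)$ with $v_0(a_i) = v_0(b_i) = v_0(c_i) = 0$, so $\lambda_K(\xi_i) = \chi_{a_i} \cup \chi_{b_i} \cup \chi_{c_i}$ is unramified at $v_0$ and specializes to $\chi_{\bar{a}_i} \cup \chi_{\bar{b}_i} \cup \chi_{\bar{c}_i} = \lambda_k(\underline{\xi}_i)$, the $\lambda$-invariant of $\underline{G}_i$. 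Therefore $s_{v_0}(\zeta) = \lambda_k(\underline{\xi}_1) + \lambda_k(\underline{\xi}_2) = 0$, whence $\beta = 0$, $\zeta = 0$, and $G_1 \simeq G_2$ over $K$.

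The hardest part, I expect, is the constant part: one must transfer the ``same maximal tori'' hypothesis from $K = k(x)$ down to the residue field $k$ at a place of good reduction (which is exactly what Theorem \ref{T:Genus-smooth} provides) and invoke $(\star\star)$ for $k$ there, while arranging that place to be \emph{rational} so that the same specialization also extracts the constant part $\beta$ of $\zeta$ in the Faddeev decomposition. A secondary technical point to keep track of is that all residue and specialization fields appearing above are finite over $k$ --- in particular finitely generated whenever $k$ is, as in the intended applications (e.g. $k$ a global field) --- so that Lemma \ref{L:G2-1}, Corollary \ref{C:G2} and Theorem \ref{T:Genus-smooth} genuinely apply.
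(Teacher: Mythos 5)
Your proof is correct and follows essentially the same route as the paper's: kill the residues of $\zeta = \lambda_K(\xi_1) - \lambda_K(\xi_2)$ via Corollary \ref{C:G2} (using $(\star)$ over the finite extensions $k(w)$), invoke Faddeev's sequence to express $\zeta$ as an inflated constant class from $H^3(k,\mu_2)$, and then use Theorem \ref{T:Genus-smooth} together with $(\star\star)$ for $k$ at a rational place of good reduction to show the constant class vanishes via the specialization map. The only cosmetic differences are that you dispatch the finite-field case via $\mathrm{cd}(k)\leqslant 1$ (the paper dismisses it by appeal to the global-field case) and that you name the splitting $s_{v_0}$ where the paper calls it $\epsilon_v$.
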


We note that the mere fact that $k$ satisfies $(\star)$ implies that $K = k(x)$ also satisfies $(\star)$ - see \cite[Theorem A]{RR}. Furthermore, it is well-known that global fields satisfy both conditions $(\star)$ (cf. \cite[3.6]{CRR1}) and $(\star\star)$ (cf. \cite[Theorem 7.5]{PrRap-WC}), so Theorem \ref{T:G2-5} immediately yields
the assertion of Theorem \ref{T:G2-3}(i).

\vskip2mm

To prove Theorem \ref{T:G2-5}, we let $V$ denote the set of all places of $K = k(x)$ that are trivial on $k$ (these of course correspond to the closed
points of $\mathbb{P}^1_k$). It follows from the above remarks that we may assume the field $k$ to be infinite. Now, let $G' \in \gen_K(G)$, and let $\xi, \xi' \in H^1(K , G_0)$ be the corresponding cocycles (so that $G = {}_{\xi}G_0$ and $G' = {}_{\xi'}G_0$). For any $v \in V$, since the residue field $K^{(v)}$ is a finite extension of $k$, hence satisfies $(\star)$ by our assumption, we conclude from Corollary \ref{C:G2} that
$$
\partial_v(\lambda_K(\xi)) = \partial_v(\lambda_K(\xi')).
$$
Thus,
\begin{equation}\label{E:G2-007}
\lambda_K(\xi') = \lambda_K(\xi) \cdot \zeta \ \ \text{for some} \ \ \zeta \in H^3(K , \mu_2)_V.
\end{equation}
But according to Faddeev's sequence (cf. \cite[Ch. III, \S 9]{GMS}, \cite[6.9]{Gille}), the natural (inflation) map $\iota \colon H^3(k , \mu_2) \to H^3(K , \mu_2)$ identifies $H^3(k , \mu_2)$ with
the unramified group $H^3(K , \mu_2)_V$. Then in terms of this identification, $\zeta$ in (\ref{E:G2-007}) belongs to $H^3(k , \mu_2)$.  We would like to show that actually $\zeta = 1$, which will imply that $G' \simeq G$.

Since $k$ is infinite, we can pick $v \in V$ so that $K^{(v)} = k$ and $G$ and $G'$ have smooth reductions $\uG^{(v)}$ and ${\uG'}^{(v)}$ at $v$. We have the natural map $$\epsilon_v \colon H^3(K , \mu_2)_{\{ v \}} \to H^3(k , \mu_2)$$ (defined on the $v$-unramified part) such that $\epsilon_v \circ \iota = \mathrm{id}$. Clearly, the images under $\lambda_k$ of the cocycles that correspond to $\uG^{(v)}$ and ${\uG'}^{(v)}$ coincide with $\epsilon_v(\lambda_K(\xi))$ and $\epsilon_v(\lambda_K(\xi'))$, respectively. Now, by the last assertion of Theorem \ref{T:Genus-smooth}, the groups $\uG^{(v)}$ and ${\uG'}^{(v)}$ have the same isomorphism classes of maximal $k$-tori, so since $k$ is assumed to satisfy $(\star\star)$, we conclude that $\uG^{(v)} \simeq {\uG'}^{(v)}$. This means that
\begin{equation}\label{E:G2-008}
\epsilon_v(\lambda_K(\xi)) = \epsilon_v(\lambda_K(\xi')).
\end{equation}
Applying $\epsilon_v$ to (\ref{E:G2-007}) and comparing the result with (\ref{E:G2-008}), we obtain that $\epsilon_v(\zeta) = 1$, and consequently $\zeta = 1$, as required. \hfill $\Box$

\vskip2mm

\noindent {\it Proof of Theorem \ref{T:G2-3}(ii).} We view $K$ as the field of rational functions on $\mathbb{P}_k^r$, and let $V$ be the set of discrete valuations on $K$ associated with the prime divisors. Pick a finite subset $V_0 = \{ v_1, \ldots , v_d \}$ of $V$ so that $G$ has a good reduction at all $v \in V \setminus V_0$. For each $i = 1, \ldots , d$, we let $\mathscr{D}_i$ denote the quaternion algebra over the residue field $\mathscr{K}_i = K^{(v_i)}$ that corresponds to the residue $\partial_{v_i}(\lambda_K(\xi))$, where $\xi \in H^1(K , G_0)$ is such that $G = {}_{\xi}G_0$. Given any other $K$-group $G'$ of
type $\textsf{G}_2$, we set $\xi' = \xi(G')$ to be the cohomology class in $H^1(K , G_0)$ such that $G' = {}_{\xi'} G_0$, and for $i = 1, \ldots , d,$ denote by $\mathscr{D}'_i = \mathscr{D}_i(G')$ the quaternion algebra corresponding to the residue $\partial_{v_i}(\lambda_K(\xi'))$. Then Lemma \ref{L:G2-1} tells us that the correspondence $G' \mapsto (\mathscr{D}'_i)$ gives a map
$$
\rho \colon \gen_K(G) \longrightarrow \prod_{i = 1}^r \gen_{\mathscr{K}_i}(\mathscr{D}_i),
$$
where the genus $\gen_{\mathscr{K}}(\mathscr{D})$ of a central quaternion algebra $\mathscr{D}$ over a field $\mathscr{K}$ (of characteristic $\neq 2$) is defined to be the set of the Brauer classes $[\mathscr{D'}]$ of central quaternion $\mathscr{K}$-algebras $\mathscr{D}'$ having the same maximal \'etale subalgebras as $\mathscr{D}$ -- see \cite{CRR2}, \cite{CRR3} for the details. Since each genus $\gen_{\mathscr{K}_i}(\mathscr{D}_i)$ is finite \cite{CRR2}, in order to prove that $\gen_K(G)$ is finite, it suffices to show that each fiber $\rho^{-1}(\rho(G'))$ for $G' \in \gen_K(G)$ is finite. For this we note that Theorem \ref{T:Genus-smooth} implies that for any $G' \in \gen_K(G)$ and any $v \in V \setminus V_0$, the residue $\partial_v(\lambda_K(G'))$ is trivial. Consequently, for any $G' \in \gen_K(G)$, any $G'' \in \rho^{-1}(\rho(G'))$, and corresponding cocycles $\xi' , \xi''$, we have
$$
\lambda_K(\xi'') = \lambda_K(\xi') \cdot \zeta \ \ \text{for some} \ \ \zeta \in H^3(K , \mu_2)_V.
$$
But from Faddeev's sequence, we again see that in our situation $H^3(K , \mu_2)_V = H^3(k , \mu_2)$ (cf. \cite[Theorem 10.1]{GMS}). Since $k$ is a global field, the latter group is finite by Poitou-Tate, so, from the injectivity of $\lambda_K$, we obtain the finiteness of the fibers $\rho^{-1}(\rho(G'))$ for $G' \in \gen_K(G)$, and hence the finiteness of $\gen_K(G)$. \hfill $\Box$

\vskip5mm

\section{Appendix: An alternative proof of Theorem \ref{T:H3-Finite} in characteristic zero}\label{S:Append}

\vskip3mm

Let $K$ be a 2-dimensional global field of characteristic zero. Take any smooth geometrically integral affine curve $C$ over a number field $k$ such that
$K = k(C)$, and let $V_0$ denote the set of discrete valuations of $K$ corresponding to the closed points points of $C$. Furthermore, we can pick a finite subset $S \subset V^k$ containing all archimedean places so that there exists a model $\mathcal{C}$ of $C$ over the ring of $S$-integers $\mathcal{O}_{k,S}$ that has good reduction at all $v \in V^k \setminus S$. Then every such $v$ has a canonical extension $\tilde{v}$ to $K$ defined by $\mathcal{C}$, and we let $V_1 = \{\,\tilde{v} \, \vert \, v \in V^k \setminus S\,\}$. It is easy to see that any divisorial set of places of $K$ contains $V_0 \cup V_1$ for a suitable choice of $C$, $S$ and $\mathcal{C}$. So, it is enough to prove the following.
\begin{thm}\label{T:char0}
Set $V = V_0 \cup V_1$ in the above notations. Let $n \geqslant 1$ be an integer such that $S$ contains all divisors of $n$, so that
$\mathrm{char}\: K^{(v)}$ is prime to $n$ for all $v \in V$. Then the unramified cohomology group $H^3(K , \mu^{\otimes 2}_n)_V$ is finite.
\end{thm}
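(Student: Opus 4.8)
The plan is to follow Jannsen's strategy for Kato's cohomological local-global principle, replacing the exactness of the Kato complex of a regular scheme proper over $\Z$ by the weaker assertion that the relevant Kato homology is merely finite once one works with a model over $\mathcal{O}_{k,S}$ --- which is precisely the assertion of Theorem \ref{T:char0}. We may assume $V = V(\mathcal{C}) = V_0 \cup V_1$, and since enlarging $S$ only enlarges the group in question, we may also assume that the smooth projective curve $\widetilde{C} \supset C$ over $k$ admits a smooth proper model $\widetilde{\mathcal{C}}$ over $\mathcal{O}_{k,S}$ with $Y := \widetilde{\mathcal{C}} \setminus \mathcal{C}$ normal of pure codimension $1$. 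Applying Proposition \ref{P:Open} with $X = \widetilde{\mathcal{C}}$, $U = \mathcal{C}$, $i = 3$, $\ell = 2$ gives an exact sequence $0 \to H^3(K,\mu_n^{\otimes 2})_{V(\widetilde{\mathcal{C}})} \to H^3(K,\mu_n^{\otimes 2})_{V(\mathcal{C})} \to \bigoplus_j H^2(\kappa_j,\mu_n)_{V(Y_j)}$ in which each $Y_j$ is the spectrum of a ring of $S$-integers in a number field, so the rightmost term is a finite direct sum of unramified Brauer groups of number rings, finite by the Artin-Hasse-Brauer-Noether theorem (exactly as in the proof of Theorem \ref{T:H3-Finite} in Section \ref{S:H3}). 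Hence it suffices to prove that $H^3(K,\mu_n^{\otimes 2})_{V(\widetilde{\mathcal{C}})}$ is finite, where $V(\widetilde{\mathcal{C}}) = V_0(\widetilde{C}) \cup V_1'$ and $V_1'$ is the set of places of $K$ cut out by the primes of $\mathcal{O}_{k,S}$ via $\widetilde{\mathcal{C}}$.

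For the projective case, set $M = H^1(\widetilde{C} \otimes_k \bar{k}, \mu_n^{\otimes 2})$; this is a finite $\Ga(\bar{k}/k)$-module, unramified outside $S$, and $M^* := \mathrm{Hom}(M, \mu_n)$ is isomorphic to $M$ up to a Tate twist (Poincar\'e duality on $\widetilde{C}$, i.e. the Weil pairing on $\mathrm{Jac}(\widetilde{C})[n]$). Combining the Hochschild-Serre spectral sequence $H^p(k, H^q(\widetilde{C} \otimes_k \bar{k}, \mu_n^{\otimes 2})) \Rightarrow H^{p+q}(\widetilde{C}, \mu_n^{\otimes 2})$ with the Bloch-Ogus identification $H^0(\widetilde{C}, \mathcal{H}^3(\mu_n^{\otimes 2})) = H^3(K, \mu_n^{\otimes 2})_{V_0(\widetilde{C})}$, every unramified class is assembled from components in $H^3(k, \mu_n^{\otimes 2})$, $H^2(k, M)$ and $H^1(k, \mu_n) = k^\times/{k^\times}^n$, modulo a correction of Picard-Brauer type, namely $H^1(\widetilde{C}, \mathcal{H}^2(\mu_n^{\otimes 2}))$. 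The component in $H^3(k, \mu_n^{\otimes 2})$ already lies in a finite group; the key point is that imposing unramifiedness at the places of $V_1'$ --- by a residue computation along the smooth special fibres $\widetilde{\mathcal{C}} \otimes \mathbb{F}_v$, compatible via Hochschild-Serre with the residue maps on $\mathrm{Spec}(\mathcal{O}_v)$ --- forces the $H^2(k, M)$-component into the image of $H^2(\mathcal{O}_{k,S}, M)$, the $H^1(k, \mu_n)$-component into $\mathcal{O}_{k,S}^\times/{(\mathcal{O}_{k,S}^\times)}^n$, and the correction into a Selmer-type subgroup. Now $H^i(\mathcal{O}_{k,S}, M)$ and $H^i(\mathcal{O}_{k,S}, \mu_n)$ are finite for all $i$ (the finite-type-over-$\Z$ case of the finiteness theorem recalled in the proof of Proposition \ref{P:Pic}), the Selmer-type subgroup is finite because the $n$-Selmer group of an abelian variety over a number field is finite, and Poitou-Tate duality --- using the near-self-duality of $M$ --- pins down which subgroup of $H^2(\mathcal{O}_{k,S}, M)$ actually occurs. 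Assembling these bounds yields the finiteness of $H^3(K, \mu_n^{\otimes 2})_{V(\widetilde{\mathcal{C}})}$, and hence of $H^3(K, \mu_n^{\otimes 2})_V$.

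The main obstacle is the residue computation at the places of $V_1'$: one has to control, simultaneously, the ramification behaviour of every graded piece of the coniveau (Hochschild-Serre) filtration of an unramified class and of the Picard-Brauer correction term $H^1(\widetilde{C}, \mathcal{H}^2(\mu_n^{\otimes 2}))$. This is the heart of Jannsen's argument for Kato's principle, and the place where it must be modified so as to deliver finiteness rather than vanishing once the base $\mathcal{O}_{k,S}$ fails to be proper over $\Z$; everything else is bookkeeping with the cited finiteness statements for Galois cohomology with restricted ramification, for Selmer groups of abelian varieties, and for the \'etale cohomology of schemes of finite type over $\Z$. (An alternative route --- at the cost of heavier input --- is to apply the Bloch-Ogus spectral sequence directly to the $2$-dimensional regular scheme $\widetilde{\mathcal{C}}$, reducing the finiteness in question to that of $H^3(\widetilde{\mathcal{C}}, \mu_n^{\otimes 2})$, finite since $\widetilde{\mathcal{C}}$ is of finite type over $\Z$, and of $H^2(\widetilde{\mathcal{C}}, \mathcal{H}^2(\mu_n^{\otimes 2})) \cong CH^2(\widetilde{\mathcal{C}})/n$; the latter is finite by the finiteness of $CH_0$ of an arithmetic surface, but that lies about as deep as Kato's principle itself, which is why the Jannsen-style argument is preferable here.)
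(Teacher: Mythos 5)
Your plan is in the same spirit as the paper's Appendix argument (a Jannsen-style analysis via the Hochschild--Serre filtration, avoiding Kato's Hasse principle), but as written it has a genuine gap: the step you yourself flag as ``the main obstacle'' --- namely that unramifiedness of a class in $H^3(K,\mu_n^{\otimes 2})$ at the arithmetic places $V_1$ forces the $H^2(k,M)$-graded piece (and the correction terms) to have restricted ramification over $k$ --- is precisely the technical heart of the proof, and you give no argument for it. In the paper this is Proposition \ref{P:unramX1}, and its proof is not bookkeeping: it needs (i) residue maps on the \'etale cohomology of the curve over $\mathcal{O}_v$ that are compatible both with the Hochschild--Serre edge map $\omega_k^{2,3}$ and with the usual Galois-cohomological residues on the function field (Theorem \ref{T:Residue}, imported from a separate paper), and (ii) an injectivity statement for the reduction, $H^2(\uC^{(v)},\cdot)\hookrightarrow H^2$ of its function field, which is obtained only after passing to $\Q_p/\Z_p(1)$-coefficients and using that the reduction is an \emph{affine} curve over a finite field, so that its Picard group is torsion (Lemma \ref{L:XX02}, Corollary \ref{C:XX001}). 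Your decision to first reduce, via Proposition \ref{P:Open}, to the projective model $\widetilde{\mathcal{C}}$ actually works against you here: for the projective special fibre the degree map makes $\Pic\otimes\Q_p/\Z_p\neq 0$, so the analogous injectivity fails and the residue computation at $v\in V_1$ does not go through as sketched; the paper deliberately stays with the affine curve $C$ and the set $V=V_0\cup V_1$ for exactly this reason (the passage to $\Q_p/\Z_p$-coefficients being legitimized on the function-field side by the Bloch--Kato injectivity, Lemma \ref{L:XX002}).

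The second half of your outline is also only a gesture. Once one knows the relevant component lies in $H^2(k,M)$ with restricted ramification, its finiteness (the paper's Proposition \ref{P:XX002}) is proved by splitting $M(p)$ through the localization sequence into the projective-curve part $A(p)$ --- handled with Jannsen's isomorphism $H^2(k,A(p))\simeq\bigoplus_v H^2(k_v,A(p))$ and local vanishing at good places --- and a multinorm-torus part, handled via finiteness of unramified Brauer groups (Lemma \ref{L:XX015}); some care with divisible coefficients (the $p^{m_0}$-divisibility trick) is needed to pass between the two. Your references to ``a Selmer-type subgroup,'' ``Poitou--Tate duality'' and ``near-self-duality of $M$'' point at ingredients of this kind but do not constitute an argument, and finiteness of $H^i(\mathcal{O}_{k,S},M)$ alone does not suffice because you have not shown the class actually comes from $\mathcal{O}_{k,S}$. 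So the proposal is an honest plan, not a proof: the two pillars (residue control at $V_1$ through the filtration, and finiteness of the resulting restricted-ramification group) are exactly what the Appendix spends \S\S10.1--10.3 establishing.
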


\vskip2mm

We begin by developing some formalism that applies in a more general situation.

\vskip2mm

\noindent {\bf 1. Two injectivity results.} Let $C$ be a smooth geometrically integral curve over an arbitrary field $k$ and let $p$ be a prime $\neq \mathrm{char}\: k$. Then for any $n \geqslant m$,
we have the following commutative diagram of \'etale sheaves on $C$
$$
\xymatrix{1 \ar[r] & \mu_{p^m} \ar[d]^{{\rm id}} \ar[r] & \mathbb{G}_m \ar[d]^{{\rm id}} \ar[r]^{[p^m]} & \mathbb{G}_m \ar[d]^{[p^{n-m}]} \ar[r] & 1 \\ 1 \ar[r] & \mu_{p^n} \ar[r] & \mathbb{G}_m \ar[r]^{[p^n]} & \mathbb{G}_m \ar[r] & 1}
$$
where $[p^{\ell}]$ denotes the morphism $x \mapsto x^{p^{\ell}}$. Passing to cohomology, we get the following commutative diagram
\begin{equation}\label{E:XX001}
\xymatrix{ {\rm Pic}(C) \ar[d]^{{\rm id}} \ar[r]^{[p^m]} & {\rm Pic}(C) \ar[d]^{[p^{n-m}]} \ar[r] & H^2(C, \mu_{p^m}) \ar[r] \ar[d] & \Br(C) \ar[d]^{{\rm id}} \ar[r]^{[p^m]} & \Br(C) \ar[d]^{[p^{n-m}]} \\ {\rm Pic}(C) \ar[r]^{[p^n]} & {\rm Pic}(C) \ar[r] & H^2(C, \mu_{p^n}) \ar[r] & \Br(C) \ar[r]^{[p^n]} & \Br(C)}
\end{equation}
where $[p^{\ell}]$ denotes multiplication by $p^{\ell}$. We now use the following elementary statement.
\begin{lemma}\label{L:XX001}
Let $A$ be an abelian group, and $p$ be a prime. Consider the family of abelian groups $A/p^nA$ for $n \geqslant 1$ with morphisms $\pi^m_n \colon A/p^mA \to A/p^nA$ for $n \geqslant m$ given by $$\pi^m_n(a + p^mA) = p^{n-m} a + p^nA.$$ Then the direct limit $\displaystyle \lim_{\longrightarrow} (A/p^nA , \pi^m_n)$
can be naturally identified with $A \otimes_{\Z} \Q_p/\Z_p$.
\end{lemma}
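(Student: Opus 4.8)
\textit{Proof proposal.} The plan is to reduce the statement to the single case $A = \Z$ and then invoke the fact that tensoring commutes with direct limits.

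First I would record the identification
$$
\Q_p/\Z_p \;\cong\; \lim_{\longrightarrow}\, (\Z/p^n\Z,\, \pi^m_n),
$$
where the maps $\pi^m_n \colon \Z/p^m\Z \to \Z/p^n\Z$ ($n \geqslant m$) are multiplication by $p^{n-m}$. Indeed, writing $\Q_p/\Z_p = \Z[1/p]/\Z$, its $p^n$-torsion subgroup is $p^{-n}\Z/\Z$, and $\Z/p^n\Z \xrightarrow{\ \sim\ } p^{-n}\Z/\Z$, $a \mapsto a/p^n$, is an isomorphism; under these identifications the inclusion $p^{-m}\Z/\Z \hookrightarrow p^{-n}\Z/\Z$ becomes $\pi^m_n$ because $a/p^m = p^{n-m}a/p^n$. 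Since $\Q_p/\Z_p$ is the union of its $p^n$-torsion subgroups, this yields the displayed isomorphism of direct systems, hence of their colimits.

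Next I would apply $A \otimes_{\Z} (-)$ to this isomorphism. Using that $A \otimes_{\Z} \Z/p^n\Z$ is canonically $A/p^nA$ (right exactness of $\otimes$) and that $\otimes$ commutes with direct limits (being a left adjoint), one gets
$$
A \otimes_{\Z} (\Q_p/\Z_p) \;\cong\; A \otimes_{\Z} \lim_{\longrightarrow}\,\Z/p^n\Z \;\cong\; \lim_{\longrightarrow}\,(A \otimes_{\Z} \Z/p^n\Z) \;\cong\; \lim_{\longrightarrow}\,A/p^nA .
$$
The one point needing an explicit line is that the transition maps in the last system are the stated $\pi^m_n$: the map $\mathrm{id}_A \otimes \pi^m_n$ sends $a \otimes \bar 1 \mapsto a \otimes \overline{p^{n-m}} = (p^{n-m}a)\otimes \bar 1$, which under $A \otimes_{\Z} \Z/p^{\ell}\Z \cong A/p^{\ell}A$ is exactly $a + p^mA \mapsto p^{n-m}a + p^nA$. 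All the isomorphisms above are functorial in $A$, which gives the asserted naturality.

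I do not expect any real obstacle here — the lemma is purely formal homological algebra. The only items requiring (minor) care are the bookkeeping in the first step (checking that the inclusions of $p^n$-torsion subgroups become multiplication by $p^{n-m}$, not the identity) and, in the second step, verifying that the canonical isomorphisms $A \otimes_{\Z} \Z/p^n\Z \cong A/p^nA$ intertwine $\mathrm{id}_A \otimes \pi^m_n$ with the $\pi^m_n$ of the statement; both are immediate once the direct systems are set up correctly.
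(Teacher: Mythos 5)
Your argument is correct, and it takes a different (more categorical) route than the paper. The paper proves the lemma entirely by hand: it defines maps $\lambda_n \colon A/p^nA \to A \otimes_{\Z} \Q_p/\Z_p$, $a + p^nA \mapsto a \otimes (p^{-n} + \Z_p)$, checks they are well defined and compatible with the transition maps $\pi^m_n$, assembles them into a surjection $\lambda$ from the colimit, and then constructs the inverse explicitly by verifying that $(a, p^{-n}b + \Z_p) \mapsto ba + p^nA$ is a well-defined bilinear map inducing a homomorphism $A \otimes_{\Z} \Q_p/\Z_p \to \lim_{\longrightarrow}(A/p^nA, \pi^m_n)$. You instead reduce to the case $A = \Z$ by exhibiting $\Q_p/\Z_p$ as $\lim_{\longrightarrow}(\Z/p^n\Z, \pi^m_n)$ via its $p^n$-torsion filtration, and then invoke the facts that $- \otimes_{\Z} A$ preserves filtered colimits and that $A \otimes_{\Z} \Z/p^n\Z \cong A/p^nA$, with the (correctly noted) check that the induced transition maps are the stated $\pi^m_n$. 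Both proofs yield the same identification and the same naturality in $A$; yours buys brevity and conceptual clarity by outsourcing the inverse construction to a standard categorical fact, while the paper's is self-contained and makes the isomorphism and its inverse completely explicit, which is convenient when the maps are used later (e.g.\ in passing to the limit in the Kummer-sequence diagrams). No gaps.
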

\begin{proof}
For $n \geqslant 1$, define $\lambda_n \colon A/p^nA \to A \otimes_{\Z} \Q_p/\Z_p$ by $a + p^nA \mapsto a \otimes (p^{-n} + \Z_p)$. Since
$$
p^na \otimes (p^{-n} + \Z_p) = a \otimes (1 + \Z_p) = 0 \ \ \text{in} \ \ A \otimes_{\Z} \Q_p/\Z_p,
$$
this map is well-defined. Furthermore, for $n \geqslant m$ we have
$$
\lambda_m(a + p^mA) = a \otimes (p^{-m} + \Z_p) = p^{n-m}a \otimes (p^{-n} + \Z_p) = \lambda_n(\pi^m_n(a + p^mA)),
$$
so the $\lambda_m$'s assemble into a (surjective) homomorphism
$$
\lambda \colon \lim_{\longrightarrow} (A/p^nA , \pi^m_n) \longrightarrow A \otimes_{\Z} \Q_p/\Z_p.
$$
To construct the inverse map, we start with a map
$$
 A \times \Q_p/\Z_p \longrightarrow \lim_{\longrightarrow} (A/p^nA , \pi^m_n), \ \ \ (a , p^{-n}b + \Z_p) \mapsto ba + p^nA \ \ \text{(where} \ b \in \Z , \ n \geqslant 1),
$$
and check that this map is well-defined and bilinear. This yields a homomorphism
$$
A \otimes_{\Z} \Q_p/\Z_p \longrightarrow \lim_{\longrightarrow} (A/p^nA , \pi^m_n)
$$
which is easily seen to be the inverse of $\lambda$.
\end{proof}

\vskip3mm

As usual, for an abelian group $A$ and $n \in \mathbb{N}$, we write ${}_nA = \{ a \in A \: \vert \: n a = 0 \}$. Furthermore, for a prime $p$, we write ${}_{p^{\infty}} A = \{a \in A \: \vert \: p^n a = 0 \ \ \text{for some} \ \ n \geqslant 1 \}$. If $p \neq \mathrm{char}\: k$, then we set
$$
\Q_p/\Z_p(d) = \lim_{\longrightarrow} \mu_{p^n}^{\otimes d}.
$$
Taking the direct limits of the diagrams (\ref{E:XX001}) over all $n \geqslant m$ and  using Lemma \ref{L:XX001}, we obtain the following exact sequence
\begin{equation}\label{E:XX003}
0 \to \Pic(C) \otimes_{\Z} \Q_p/\Z_p \longrightarrow H^2(C , \Q_p/\Z_p(1)) \longrightarrow {}_{p^{\infty}}\Br(C) \to 0,
\end{equation}
which leads to our first injectivity result.
\begin{lemma}\label{L:XX02}
For a prime $p \neq \mathrm{char}\: k$, if $\Pic(C) \otimes_{\Z} \Q_p/\Z_p = 0$ (in particular, if $\Pic(C)$ is torsion), then the canonical map $H^2(C , \Q_p/\Z_p(1)) \longrightarrow {}_{p^{\infty}} \Br(C)$ is an isomorphism.
\end{lemma}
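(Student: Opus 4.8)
The plan is to read the lemma off directly from the exact sequence (\ref{E:XX003}), which has just been assembled. Recall that
$$
0 \to \Pic(C) \otimes_{\Z} \Q_p/\Z_p \longrightarrow H^2(C , \Q_p/\Z_p(1)) \longrightarrow {}_{p^{\infty}}\Br(C) \to 0
$$
is obtained as the filtered direct limit, over $n \geqslant m$, of the short exact sequences $0 \to \Pic(C)/p^n\Pic(C) \to H^2(C , \mu_{p^n}) \to {}_{p^n}\Br(C) \to 0$ coming from the Kummer sequence. In forming this limit one uses that filtered colimits are exact in the category of abelian groups; that étale cohomology of the (noetherian, hence quasi-compact quasi-separated) scheme $C$ commutes with filtered colimits of abelian sheaves, so that $\lim_{\longrightarrow} H^2(C , \mu_{p^n}) = H^2(C , \Q_p/\Z_p(1))$; that Lemma \ref{L:XX001} identifies $\lim_{\longrightarrow} \Pic(C)/p^n\Pic(C)$ with $\Pic(C) \otimes_{\Z} \Q_p/\Z_p$; and that, reading the transition maps off the commutative ladder (\ref{E:XX001}) — where the relevant vertical arrow on the copy of $\Br(C)$ receiving the boundary map is the identity — one has $\lim_{\longrightarrow} {}_{p^n}\Br(C) = \bigcup_n {}_{p^n}\Br(C) = {}_{p^{\infty}}\Br(C)$. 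The third arrow of (\ref{E:XX003}) is then by construction the colimit of the Kummer boundary maps $H^2(C , \mu_{p^n}) \to {}_{p^n}\Br(C) \hookrightarrow \Br(C)$, i.e.\ precisely the canonical map named in the statement.

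Granting this, the proof is one line: if $\Pic(C) \otimes_{\Z} \Q_p/\Z_p = 0$, then exactness of (\ref{E:XX003}) forces $H^2(C , \Q_p/\Z_p(1)) \to {}_{p^{\infty}}\Br(C)$ to be simultaneously injective and surjective, hence an isomorphism. It then remains only to record the parenthetical implication. If $\Pic(C)$ is torsion, take $a \in \Pic(C)$ of order $N$ and $x \in \Q_p/\Z_p$; since $\Q_p/\Z_p$ is divisible we may write $x = Ny$, whence $a \otimes x = a \otimes Ny = (Na) \otimes y = 0$ in $\Pic(C) \otimes_{\Z} \Q_p/\Z_p$, and as simple tensors generate this group it vanishes.

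I do not expect a genuine obstacle here: all the content is already packaged in the exact sequence (\ref{E:XX003}) and in Lemma \ref{L:XX001}. The only point deserving a moment's care is the bookkeeping underlying (\ref{E:XX003}) itself — namely verifying that the transition maps on the $\Br(C)$-factors induced by (\ref{E:XX001}) are the inclusions ${}_{p^m}\Br(C)\hookrightarrow {}_{p^n}\Br(C)$, so that their colimit is the full $p$-primary torsion ${}_{p^{\infty}}\Br(C)$, rather than multiplications by powers of $p$ (which would instead yield $\Br(C) \otimes_{\Z} \Q_p/\Z_p$), and that the surviving middle map is indeed the one induced by the Kummer boundary maps. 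With those identifications in place, the lemma is immediate.
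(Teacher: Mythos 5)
Your proposal is correct and follows exactly the paper's approach: Lemma~\ref{L:XX02} is read off directly from the exactness of (\ref{E:XX003}), which the paper obtains (as you describe) by taking the filtered direct limit of the ladders~(\ref{E:XX001}). Your verification that the transition maps on the $\Br(C)$-factors are inclusions (so the colimit is ${}_{p^\infty}\Br(C)$, not $\Br(C)\otimes_{\Z}\Q_p/\Z_p$) is precisely the bookkeeping underlying the paper's construction of~(\ref{E:XX003}).
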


\vskip1mm

In particular, if $k$ is a finite field and $C$ is affine, then $\Pic(C)$ is finite, and we obtain the following.
\begin{cor}\label{C:XX001}
If $k$ is a finite field and $C$ is affine, then for any prime $p \neq \mathrm{char}\: k$,  the canonical map $$H^2(C , \Q_p/\Z_p(1)) \longrightarrow \Br(C)$$ is injective.
\end{cor}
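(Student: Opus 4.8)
The plan is to derive the corollary directly from Lemma~\ref{L:XX02}. The only fact that needs to be recalled is that for a smooth geometrically integral affine curve $C$ over a finite field $k$ the Picard group $\Pic(C)$ is finite; this is standard: if $\widetilde{C}$ denotes the smooth projective model, then $\Pic^0(\widetilde{C})$ is the group of $k$-points of an abelian variety and hence finite, $\Pic(\widetilde{C})/\Pic^0(\widetilde{C})$ embeds in $\Z$ via degree, and $\Pic(C)$ is the quotient of $\Pic(\widetilde{C})$ by the subgroup generated by the finitely many closed points of $\widetilde{C}\setminus C$, which is nonempty and contains a point of positive degree; this forces $\Pic(C)$ to be finite (it is also asserted in the text preceding the corollary and could simply be cited).

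Next I would observe that $\Q_p/\Z_p$ is a divisible abelian group, hence $\Pic(C)\otimes_{\Z}\Q_p/\Z_p$ is divisible as well; since it is also annihilated by the exponent of the finite group $\Pic(C)$, and a divisible group of finite exponent is trivial, we conclude $\Pic(C)\otimes_{\Z}\Q_p/\Z_p = 0$. Lemma~\ref{L:XX02} then yields that the canonical map $H^2(C,\Q_p/\Z_p(1)) \to {}_{p^{\infty}}\Br(C)$ coming from the exact sequence (\ref{E:XX003}) is an isomorphism.

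To conclude, I would note that the map $H^2(C,\Q_p/\Z_p(1)) \to \Br(C)$ named in the corollary is by construction the composite of this isomorphism with the inclusion ${}_{p^{\infty}}\Br(C) \hookrightarrow \Br(C)$ — both maps arise from passing to the direct limit over $n$ of the Kummer sequences (\ref{E:XX001}) — and a composite of an isomorphism with an inclusion is injective. I do not anticipate any genuine obstacle: the corollary is essentially a repackaging of Lemma~\ref{L:XX02}, and the only points that deserve a line of justification are the finiteness of $\Pic(C)$ and the identification of the canonical map in the statement with the one produced by (\ref{E:XX003}), both of which are routine.
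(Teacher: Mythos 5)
Your argument is correct and is essentially the paper's own: the corollary is obtained by noting that $\Pic(C)$ is finite (hence $\Pic(C)\otimes_{\Z}\Q_p/\Z_p=0$) and invoking Lemma \ref{L:XX02}, the map in question being the composite of the resulting isomorphism onto ${}_{p^{\infty}}\Br(C)$ with the inclusion into $\Br(C)$. The extra details you supply (finiteness of $\Pic(C)$ and the identification of the canonical map via the limit of the Kummer sequences (\ref{E:XX001})) are exactly what the paper leaves implicit.
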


\vskip2mm

The second injectivity result that we need is the following well-known consequence of the truth of the Bloch-Kato conjecture (cf. \cite[p. 3]{Jann2}).
\begin{lemma}\label{L:XX002}
Let $p$ be a prime $\neq \mathrm{char}\: k$. Then for all $n \geqslant 1$,  the map
$$
H^{\ell}(k , \mu_{p^n}^{\otimes (\ell-1)}) \longrightarrow H^{\ell}(k , \Q_p/\Z_p(\ell-1))
$$
induced by the natural embedding $\mu_{p^n}^{\otimes(\ell-1)} \hookrightarrow \mathbb{Q}_p/
\mathbb{Z}_p (\ell-1)$ is injective for any $\ell \geqslant 2$.
\end{lemma}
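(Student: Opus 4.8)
\noindent\emph{Proof proposal.} The plan is to reduce the claim to the assertion that the group $H^{\ell-1}(k,\Q_p/\Z_p(\ell-1))$ is $p$-divisible, and then to establish the latter by combining the Bloch--Kato conjecture with Lemma \ref{L:XX001}.

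First I would use the short exact sequence of Galois modules
\[
0 \longrightarrow \mu_{p^n}^{\otimes(\ell-1)} \longrightarrow \Q_p/\Z_p(\ell-1) \xrightarrow{\ p^n\ } \Q_p/\Z_p(\ell-1) \longrightarrow 0,
\]
which is available because $\Q_p/\Z_p(\ell-1)$ is $p$-divisible with $p^n$-torsion exactly $\mu_{p^n}^{\otimes(\ell-1)}$. The associated long exact cohomology sequence identifies the kernel of the map in the statement with the cokernel of multiplication by $p^n$ on $H^{\ell-1}(k,\Q_p/\Z_p(\ell-1))$, i.e. with $H^{\ell-1}(k,\Q_p/\Z_p(\ell-1))/p^n$. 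Thus it is enough to show that $H^{\ell-1}(k,\Q_p/\Z_p(\ell-1))$ is $p$-divisible; note that $p$-divisibility automatically gives $p^n$-divisibility for every $n$, so the dependence on $n$ is harmless.

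To compute $H^{\ell-1}(k,\Q_p/\Z_p(\ell-1))$ I would write it, using that Galois cohomology commutes with filtered colimits and that $\Q_p/\Z_p(\ell-1) = \varinjlim_m \mu_{p^m}^{\otimes(\ell-1)}$, as $\varinjlim_m H^{\ell-1}(k,\mu_{p^m}^{\otimes(\ell-1)})$. By the norm residue isomorphism theorem (the Bloch--Kato conjecture; cf. \cite[p.~3]{Jann2}), together with $H^{\ell-1}_{\mathcal M}(k,\Z(\ell-1)) = K^{\mathrm M}_{\ell-1}(k)$ and the vanishing $H^{\ell}_{\mathcal M}(k,\Z(\ell-1)) = 0$ of motivic cohomology of a field above the weight, each term $H^{\ell-1}(k,\mu_{p^m}^{\otimes(\ell-1)})$ is canonically isomorphic to $K^{\mathrm M}_{\ell-1}(k)/p^m$. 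Comparing with the coefficient sequences $0 \to \Z(\ell-1) \xrightarrow{p^m} \Z(\ell-1) \to \Z/p^m(\ell-1) \to 0$ then shows that the transition maps in the colimit correspond to the multiplication-by-$p$ maps $K^{\mathrm M}_{\ell-1}(k)/p^m \to K^{\mathrm M}_{\ell-1}(k)/p^{m+1}$. Lemma \ref{L:XX001} now yields
\[
H^{\ell-1}(k,\Q_p/\Z_p(\ell-1)) \;\cong\; K^{\mathrm M}_{\ell-1}(k) \otimes_{\Z} \Q_p/\Z_p,
\]
and the right-hand side is $p$-divisible because $\Q_p/\Z_p$ is; this finishes the argument.

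The hard part will be the compatibility in the last step: one must check carefully that the structure maps $\mu_{p^m}^{\otimes(\ell-1)} \hookrightarrow \mu_{p^{m+1}}^{\otimes(\ell-1)}$ defining $\Q_p/\Z_p(\ell-1)$ become, after the norm residue identifications, the maps $\bar x \mapsto p\,\bar x$ rather than $\bar x \mapsto p^{\ell-1}\bar x$, since it is precisely this that prevents the colimit from carrying a nontrivial non-divisible part. For $\ell = 2$ this amounts to an elementary manipulation of Kummer cocycles, requiring nothing beyond Hilbert~90, whereas for $\ell \geqslant 3$ it is exactly the point at which the full strength of the Bloch--Kato conjecture is invoked.
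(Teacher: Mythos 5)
Your proposal is correct: the paper itself does not prove this lemma (it simply cites \cite[p.~3]{Jann2}), and your argument is precisely the standard one behind that citation — use the coefficient sequence $0 \to \mu_{p^n}^{\otimes(\ell-1)} \to \Q_p/\Z_p(\ell-1) \xrightarrow{p^n} \Q_p/\Z_p(\ell-1) \to 0$ to reduce injectivity to $p$-divisibility of $H^{\ell-1}(k,\Q_p/\Z_p(\ell-1))$, then identify this group with $K^{\mathrm{M}}_{\ell-1}(k)\otimes_{\Z}\Q_p/\Z_p$ via the norm residue isomorphism and the colimit description (your Lemma~\ref{L:XX001} step). The compatibility you flag is indeed the one delicate point and it does hold as you state, since the structure maps of $\Q_p/\Z_p(\ell-1)$ are the inclusions of torsion subgroups (``multiplication by $p$'' on compatible generators), not the $(\ell-1)$-fold tensor powers of $\mu_{p^m}\hookrightarrow\mu_{p^{m+1}}$; for $\ell=3$ only Merkurjev--Suslin is actually needed.
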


\vskip3mm

\noindent {\bf 2. The fundamental sequence and unramified cohomology.} Let $k$ be a field equipped with a discrete valuation $v$, with valuation ring $\mathcal{O}_v$ and residue field $k^{(v)}.$ Fix a prime $p \neq \mathrm{char}\: k^{(v)}$. Given a smooth geometrically integral affine curve $C$ over $k$, we let $\widetilde{C}$ denote the smooth geometrically integral projective curve over $k$ that contains $C$ as an open subset. We will assume there exist models $\mathcal{C} \subset \widetilde{\mathcal{C}}$ of these curves over $\mathcal{O}_v$ such that the associated reductions $\uC^{(v)} \subset \underline{\widetilde{C}}^{(v)}$ are smooth, geometrically integral and satisfy
$$
\vert \widetilde{C}(\bar{k}) \setminus C(\bar{k}) \vert = \vert \underline{\widetilde{C}}^{(v)}(\overline{k^{(v)}}) \setminus \uC^{(v)}(\overline{k^{(v)}}) \vert.
$$
Then the specialization map defines an isomorphism of the maximal pro-$p$ quotients of the fundamental groups:
\begin{equation}\label{E:FundGr}
\pi_1(C \otimes_k \bar{k})^{(p)} \longrightarrow \pi_1(\uC^{(v)} \otimes_{k^{(v)}} \overline{k^{(v)}})^{(p)}
\end{equation}
(with a compatible choice of base points) --- see \cite[Ch. XIII]{SGA1}.

As in our previous discussion, let $\tilde{v}$ be the extension of $v$ to $K = k(C)$ defined by $\mathcal{C}$. We say that an element of $H^{\ell}(C , \mu_{p^m}^{\otimes d})$ is \emph{unramified} at $v$ if its image in $H^{\ell}(K , \mu_{p^m}^{\otimes d})$ is unramified at $\tilde{v}$ in the usual sense.

Now, set
$$
M_m(p) = H^1(C \otimes_k \bar{k} , \mu_{p^m}^{\otimes 2}) \ \ \text{for} \ \ m \geqslant 1 \ \ \text{and} \ \ M(p) = \lim_{\longrightarrow} M_m(p) = H^1(C \otimes_k \bar{k} , \Q_p/\Z_p(2)).
$$
Considering the fundamental sequences (\ref{E:X0}) in \S\ref{S:ProofTF1} for $d = 2$, $\ell = 3$ and $n = p^m$ $(m \geqslant 1)$ and taking their direct limit, we obtain a map
$$
H^3(C , \Q_p/\Z_p(2)) \stackrel{\omega^{2,3}_k(p)}{\longrightarrow} H^2(k , M(p)).
$$
Similarly, we obtain a map
$$
H^2(\uC^{(v)} , \Q_p/\Z_p(1)) \stackrel{\omega^{1,2}_{k^{(v)}}(p)}{\longrightarrow} H^1(k^{(v)} , M^{(v)}(p)) \ \ \text{where} \ \  M^{(v)}(p) = H^1(\uC^{(v)} \otimes_{k^{(v)}} \overline{k^{(v)}} , \Q_p/\Z_p(1)).$$
The isomorphism (\ref{E:FundGr}) enables us to identify $$M^{(v)}(p) = \mathrm{Hom}(\pi_1(\uC^{(v)} \otimes_{k^{(v)}} \overline{k^{(v)}}) , \Q_p/\Z_p(1))$$ with
$\mathrm{Hom}(\pi_1(C \otimes_k \bar{k}) , \Q_p/\Z_p(1))$, which in turn, according to (\ref{E-FundGp}), can be identified with the twist $M(p)(-1)$. In the sequel (particularly, in  the proof of Proposition \ref{P:unramX1}), we will routinely use the identification of $M(p)(-1)$ with $M^{(v)}(p)$ as Galois modules compatible with the canonical identification of the decomposition group of $v$ with the absolute Galois group of $k^{(v)}$.
Furthermore,
in view of the isomorphism (\ref{E:FundGr}), the inertia group of $v$ acts trivially on $M_m(p)$, hence on on $M(p)$. We thus have the residue map
$$
\partial^{M(p)}_v \colon H^2(k , M(p)) \longrightarrow H^1(k^{(v)} , M(p)(-1))
$$
(obtained by taking the direct limit of the residue maps for all $M_m(p)$).
Our goal in this subsection is to prove that $\omega^{2 , 3}_k(p)$ takes unramified classes to unramified ones. More precisely, we have the following.
\begin{prop}\label{P:unramX1}
Let $p$ be a prime $\neq \mathrm{char}\: k^{(v)}$ and assume that $\Pic(\uC^{(v)}) \otimes_{\Z} \Q_p/\Z_p = 0$. If $x \in H^3(C , \Q_p/\Z_p(2))$ is unramified at $v$ in the sense specified above, then $\omega^{2 , 3}_k(p)(x)$ is also unramified at $v$ (i.e., $\partial_{v}^{M(p)}(\omega^{2 , 3}_k(p)(x)) = 0$).
\end{prop}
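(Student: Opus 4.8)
The plan is to realise $\omega^{2,3}_k(p)$ and $\omega^{1,2}_{k^{(v)}}(p)$ as the horizontal edge maps of a commutative square whose vertical arrows are residue (specialization) maps, and thereby reduce the claim to the vanishing of a suitable ``geometric residue'' of $x$. Using the models $\mathcal{C} \subset \widetilde{\mathcal{C}}$ over $\mathcal{O}_v$ fixed above, write $\mathcal{C}_{\eta} = \mathcal{C} \times_{\mathcal{O}_v} k_v$ for the generic fibre, so that $\mathcal{C}_{\eta} = C \times_k k_v$ and $\uC^{(v)}$ is the smooth special fibre, a prime divisor of $\mathcal{C}$ of codimension $1$. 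Pulling back along $\mathcal{C}_{\eta} \to C$ and then applying the Gysin (localization) residue for $\uC^{(v)} \subset \mathcal{C}$ --- available since $p$ is invertible and, by the choice of models, $\uC^{(v)}$ is a smooth divisor, so absolute purity applies --- produces a map
$$
\rho \colon H^3(C, \Q_p/\Z_p(2)) \longrightarrow H^3(\mathcal{C}_{\eta}, \Q_p/\Z_p(2)) \longrightarrow H^2(\uC^{(v)}, \Q_p/\Z_p(1)).
$$
The first step is to verify that the square
$$
\xymatrix{H^3(C, \Q_p/\Z_p(2)) \ar[r]^{\omega^{2,3}_k(p)} \ar[d]_{\rho} & H^2(k, M(p)) \ar[d]^{\partial^{M(p)}_v} \\ H^2(\uC^{(v)}, \Q_p/\Z_p(1)) \ar[r]^{\omega^{1,2}_{k^{(v)}}(p)} & H^1(k^{(v)}, M(p)(-1))}
$$
commutes, where the bottom arrow is read off using the identification $M^{(v)}(p) \cong M(p)(-1)$ furnished by (\ref{E:FundGr}). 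Both $\omega$'s are edge maps of the Hochschild--Serre (equivalently Leray) spectral sequences for $C \to \mathrm{Spec}\, k$ and $\uC^{(v)} \to \mathrm{Spec}\, k^{(v)}$, and commutativity should follow formally from the compatibility of these spectral sequences with the localization triangle for $\uC^{(v)} \subset \mathcal{C}$ --- itself obtained from the Leray spectral sequence of $\mathcal{C} \to \mathrm{Spec}\, \mathcal{O}_v$ --- and with the residue map on $H^2(k, M(p))$; here one uses crucially that the inertia group at $v$ acts trivially on $M(p)$ and that (\ref{E:FundGr}) identifies the geometric $H^1$'s of the two fibres as Galois modules over the decomposition group of $v$. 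I expect this compatibility to be the main obstacle: the argument is pure diagram chasing with spectral sequences, but splicing the Leray spectral sequence of $\mathcal{C}/\mathcal{O}_v$ with the purity triangle and keeping track of the Tate twists is delicate, and it is precisely where the ``matching cusps'' hypothesis on the models is used.

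The second step is to show that $\rho(x) = 0$ whenever $x$ is unramified at $v$. For this I would invoke the standard compatibility of the Gysin residue with the cohomological residue at generic points (as in \cite[Proposition~3.3.1]{CT-SB} and the Bloch--Ogus formalism, already used above for (\ref{E:L})): the image of $\rho(x)$ under restriction to the generic point $\mathrm{Spec}\, K^{(\tilde v)}$ of $\uC^{(v)}$, with $K^{(\tilde v)} = k^{(v)}(\uC^{(v)})$ the residue field of $\tilde v$, equals the residue $\partial_{\tilde v}(x_K) \in H^2(K^{(\tilde v)}, \Q_p/\Z_p(1))$ of the image $x_K$ of $x$ in $H^3(K, \Q_p/\Z_p(2))$. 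By hypothesis $x$ is unramified at $v$, i.e.\ $\partial_{\tilde v}(x_K) = 0$, so $\rho(x)$ dies under restriction to the generic point of $\uC^{(v)}$. Now the assumption $\Pic(\uC^{(v)}) \otimes_{\Z} \Q_p/\Z_p = 0$ together with Lemma~\ref{L:XX02} identifies $H^2(\uC^{(v)}, \Q_p/\Z_p(1))$ with ${}_{p^{\infty}}\Br(\uC^{(v)})$, and the Brauer group of the regular integral scheme $\uC^{(v)}$ injects into $\Br(K^{(\tilde v)})$; hence the restriction map $H^2(\uC^{(v)}, \Q_p/\Z_p(1)) \to H^2(K^{(\tilde v)}, \Q_p/\Z_p(1))$ is injective, which forces $\rho(x) = 0$. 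Feeding this into the commutative square of the first step yields $\partial^{M(p)}_v\bigl(\omega^{2,3}_k(p)(x)\bigr) = \omega^{1,2}_{k^{(v)}}(p)\bigl(\rho(x)\bigr) = 0$, which is the assertion of the proposition.
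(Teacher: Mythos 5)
Your argument is correct and follows essentially the same route as the paper: the map $\rho$ you construct is the residue map $\rho_v^{\ell}$ of Theorem \ref{T:Residue}, and the two compatibilities you need---the commutative square relating $\rho$ to $\omega^{2,3}_k(p)$, $\omega^{1,2}_{k^{(v)}}(p)$ and $\partial_v^{M(p)}$, and the compatibility of $\rho$ with the Galois-cohomological residue at the generic point---are exactly diagrams (I) and (II) there, quoted from \cite{IRap-Residue}, so the step you flag as delicate is supplied by that reference rather than reproved here. The rest of your argument (injectivity of restriction to the generic point of $\uC^{(v)}$ via Lemma \ref{L:XX02}, the hypothesis $\Pic(\uC^{(v)}) \otimes_{\Z} \Q_p/\Z_p = 0$, and the injection of $\Br(\uC^{(v)})$ into the Brauer group of its function field) coincides with the paper's proof.
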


The proof critically depends on the existence of certain analogues of residue maps with nice properties in the \'etale cohomology of curves. Namely, let $n$ be an integer that is invertible in $\mathcal{O}_v$. Combining the localization sequence with absolute purity,
one obtains the following ``residue map":
$$
\rho_v^{\ell} \colon H^{\ell}(C, \mu_n^{\otimes b}) \to H^{\ell-1}(\uC^{(v)}, \mu_n^{\otimes(b-1)})
$$
(see \cite[\S 3.2]{CT-SB} and \cite{IRap-Residue} for the details). It should be noted that this approach in fact enables one to recover the usual residue maps in Galois cohomology, at least up to sign (cf. \cite{JannSS}). We need the following properties of the above maps.

\begin{thm}\label{T:Residue}
{\rm (\cite{IRap-Residue})} For every $\ell \geq 2$, we have commutative diagrams
\begin{equation}\tag{I}
\xymatrix{
H^{\ell}(C, \mu_n^{\otimes b}) \ar[rr]^{\omega_k^{b, \ell}} \ar[d]_{\rho_v^{\ell}} &  & H^{\ell-1}(k, H^1(C \otimes_k \overline{k} , \mu_n^{\otimes b})) \ar[d]^{\partial_v^{\ell-1}} \\ H^{\ell-1}(\uC^{(v)}, \mu_n^{\otimes(b-1)}) \ar[rr]^{\omega_{k^{(v)}}^{b-1, \ell-1}} & & H^{\ell-2}(k^{(v)}, H^1 (\uC^{(v)} \otimes_{k^{(v)}} \overline{k^{(v)}} , \mu_n^{\otimes (b-1)}))}
\end{equation}
and
\begin{equation}\tag{II}
\xymatrix{
H^{\ell}(C, \mu_n^{\otimes b}) \ar[r]^{\nu^{b, \ell}_k} \ar[d]_{\rho_v^{\ell}} & H^{\ell} (k(C), \mu_n^{\otimes b}) \ar[d]^{\delta_v^{\ell}} \\ H^{\ell-1} (\uC^{(v)}, \mu_n^{\otimes(b-1)}) \ar[r]^{\nu^{b-1, \ell-1}_{k^{(v)}}} & H^{\ell-1} (k^{(v)}(\uC^{(v)}), \mu_n^{\otimes(b-1)})}
\end{equation}
where $\nu^{b, \ell}_k$ and $\nu^{b-1, \ell-1}_{k^{(v)}}$ are the natural maps induced by passage to the generic point,
and the maps $\partial_v^{\ell-1}$ and $\delta_v^{\ell}$ coincide up to sign with the usual residue maps in Galois cohomology.
\end{thm}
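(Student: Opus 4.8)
The plan is to reduce both commutative squares to a single geometric construction: the localization (Gysin) boundary for the closed immersion of the special fibre into the model, combined with the base-change behaviour of the absolute purity isomorphism. Write $\mathcal{X}$ for the relevant regular two-dimensional model over $\mathcal{O}_v$ (one of $\mathcal{C}$, $\widetilde{\mathcal{C}}$) and $i\colon \uC^{(v)}\hookrightarrow\mathcal{X}$ for the inclusion of the smooth divisor $\uC^{(v)}$, with open complement $C$. Since $n$ is invertible on $\mathcal{O}_v$, absolute purity identifies $H^{\ell+1}_{\uC^{(v)}}(\mathcal{X},\mu_n^{\otimes b})$ with $H^{\ell-1}(\uC^{(v)},\mu_n^{\otimes(b-1)})$, and $\rho_v^{\ell}$ is by construction the localization boundary $H^{\ell}(C,\mu_n^{\otimes b})\to H^{\ell+1}_{\uC^{(v)}}(\mathcal{X},\mu_n^{\otimes b})$ followed by this isomorphism (cf. \cite[\S 3.2]{CT-SB}, \cite{IRap-Residue}). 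First I would record the only external inputs: that Gysin maps are compatible with flat base change on $\mathcal{X}$ and with passage to (strict) henselizations, and that over a trait this construction agrees up to sign with the usual residue map in Galois cohomology (cf. \cite{JannSS}).

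Diagram (II) should then fall out by localizing at the generic point $\xi$ of the divisor $\uC^{(v)}$. Its local ring $A=\mathcal{O}_{\mathcal{X},\xi}$ is a discrete valuation ring with fraction field $k(C)$ and residue field $k^{(v)}(\uC^{(v)})$, and the valuation it defines is precisely $\tilde v$, so $\delta_v^{\ell}$ is the residue attached to the trait $(\mathrm{Spec}\,A,\mathrm{Spec}\,\kappa(\xi))$. Since this trait is the flat pullback of the pair $(\mathcal{X},\uC^{(v)})$ along $\mathrm{Spec}\,A\to\mathcal{X}$, the base-change compatibility from the previous step immediately gives $\nu^{b-1,\ell-1}_{k^{(v)}}\circ\rho_v^{\ell}=\delta_v^{\ell}\circ\nu^{b,\ell}_k$, which is (II); the assertion that $\delta_v^{\ell}$ coincides up to sign with the usual Galois residue is part of the same statement.

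Diagram (I) is where the real work lies, and I would deduce it from the Leray spectral sequence of the structure morphism $f\colon\mathcal{X}\to\mathrm{Spec}\,\mathcal{O}_v$ (smooth affine of relative dimension $1$): one has $R^q f_*\mu_n^{\otimes b}=0$ for $q\geqslant 2$, while $R^1 f_*\mu_n^{\otimes b}$ is lisse on $\mathrm{Spec}\,\mathcal{O}_v$ with geometric generic, resp.\ special, stalk $H^1(C\otimes_k\bar k,\mu_n^{\otimes b})$, resp.\ $H^1(\uC^{(v)}\otimes_{k^{(v)}}\overline{k^{(v)}},\mu_n^{\otimes b})$, identified via the specialization isomorphism (\ref{E:FundGr}) (via the smooth compactification $\widetilde{\mathcal{C}}$ and the finiteness of the boundary over $\mathcal{O}_v$; this is exactly where the hypothesis comparing the points at infinity of $C$ and $\uC^{(v)}$ enters). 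Restricting this spectral sequence to $\mathrm{Spec}\,k$ recovers the fundamental sequence (\ref{E:X0}) together with $\omega^{b,\ell}_k$, and restricting it to $\mathrm{Spec}\,k^{(v)}$ recovers the analogous sequence for $\uC^{(v)}$ together with $\omega^{b-1,\ell-1}_{k^{(v)}}$, once one performs the Tate twist identifying $H^1(C\otimes_k\bar k,\mu_n^{\otimes b})(-1)$ with $H^1(\uC^{(v)}\otimes_{k^{(v)}}\overline{k^{(v)}},\mu_n^{\otimes(b-1)})$ recorded in \S\ref{S:ProofTF1}. I would then apply the localization triangle of the trait $\mathrm{Spec}\,\mathcal{O}_v$: termwise on the columns of the base spectral sequence its boundary is, by definition, the Galois residue $\partial_v$, while on the abutment it is $\rho_v^{\ell}$ — the latter because $j^*Rf_*\cong R(f_\eta)_*j'^*$ for the open immersion, the contribution of the closed fibre is computed by base change, and, crucially, absolute purity for $\uC^{(v)}\subset\mathcal{X}$ is the $f$-pullback of absolute purity for the closed point of $\mathrm{Spec}\,\mathcal{O}_v$ (``relative purity'', since $\uC^{(v)}$ is the fibre of the smooth morphism $f$). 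Since $\partial_v$ and $\rho_v^{\ell}$ are then the boundary of one and the same localization triangle read at two different levels of the spectral sequence, their compatibility with the edge maps $\omega$ is automatic, giving (I). The hard part — and the substance of \cite{IRap-Residue} — will be establishing this relative-purity compatibility cleanly and keeping track of the signs introduced by the several boundary maps; each ingredient is standard, but matching the purity twist on $\mathcal{X}$ with the one on the base, coherently with the spectral-sequence differentials, is the delicate point.
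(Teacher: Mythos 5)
A preliminary remark on the comparison itself: the paper contains no proof of Theorem \ref{T:Residue} — it is quoted wholesale from \cite{IRap-Residue}, and the only internal indication of how $\rho_v^{\ell}$ is built is the sentence recalling that it comes from the localization sequence for $\uC^{(v)} \subset \mathcal{C}$ combined with absolute purity (cf.\ \cite[\S 3.2]{CT-SB}). So there is no argument in the text to match your proposal against; it can only be judged on whether it would actually establish the two squares. For square (II) your reduction is essentially fine modulo standard facts: the generic point $\xi$ of $\uC^{(v)}$ does define $\tilde{v}$, $\mathcal{O}_{\mathcal{C},\xi}$ is a discrete valuation ring with fraction field $k(C)$ and residue field $k^{(v)}(\uC^{(v)})$, and compatibility of the localization/purity construction with the flat pullback $\mathrm{Spec}\,\mathcal{O}_{\mathcal{C},\xi} \to \mathcal{C}$, together with the identification (up to sign) of the resulting map over a trait with the Galois-cohomological residue, are available in the literature you cite (\cite{CT-SB}, \cite{JannSS}).

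Square (I) is where your argument has a genuine gap, and it is not only the one you flag. Your route through the Leray spectral sequence of $f \colon \mathcal{C} \to \mathrm{Spec}\,\mathcal{O}_v$ rests on three unproven assertions: that $R^qf_*\mu_n^{\otimes b} = 0$ for $q \geqslant 2$, that $R^1f_*\mu_n^{\otimes b}$ is lisse with special stalk $H^1(\uC^{(v)} \otimes_{k^{(v)}} \overline{k^{(v)}}, \mu_n^{\otimes b})$ matched to the generic stalk by (\ref{E:FundGr}), and that the boundary of the localization triangle of the trait acts on the abutment as $\rho_v^{\ell}$. Since $f$ is affine rather than proper, proper base change does not apply: the stalk of $R^qf_*$ at the closed geometric point is the cohomology of $\mathcal{C}$ over the strict henselization of $\mathcal{O}_v$, and identifying it with the cohomology of the closed fibre is itself a specialization statement that needs the good-reduction and tameness hypotheses (the matching of the points at infinity via $\widetilde{\mathcal{C}}$) in an essential way — it cannot simply be quoted. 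More importantly, the identification of the abutment boundary with $\rho_v^{\ell}$, compatibly with the purity twist and with the edge maps $\omega^{b,\ell}_k$ and $\omega^{b-1,\ell-1}_{k^{(v)}}$, is precisely the content of diagram (I); you explicitly defer it (``the hard part''), together with the sign and twist bookkeeping. So what you have is a plausible blueprint in the spirit of \cite{JannSS}, but the substantive step that \cite{IRap-Residue} supplies is exactly the one left open, and the proposal as written does not constitute a proof of the theorem.
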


\vskip2mm

\noindent {\it Proof of Proposition \ref{P:unramX1}.}
Taking the direct limits of diagrams (I) and (II) above, we obtain the following commutative diagrams
\begin{equation}\label{E:XX1}
\xymatrix{H^3(C, \Q_p/\Z_p(2)) \ar[r]^{\omega^{2,3}_{k}(p)} \ar[d]_{\rho_{v}(p)} & H^2(k, M(p)) \ar[d]^{\partial_{v}^{2}(p)} \\ H^{2} (\uC^{(v)} , \Q_p/\Z_p(1)) \ar[r]^{\omega^{1,2}_{k^{(v)}}(p)} & H^{1}(k^{(v)} , M(p)(-1))}
\end{equation}
(here we use the identification of $M^{(v)}(p)$ with $M(p)(-1)$ mentioned earlier),
and
\begin{equation}\label{E:XX2}
\xymatrix{H^{3}(C , \Q_p/\Z_p(2)) \ar[r]^{\nu^2_{k}(p)} \ar[d]_{\rho_v(p)} & H^{3}(k(C) , \Q_p/\Z_p(2)) \ar[d]^{\delta^3_v(p)} \\ H^{2}(\uC^{(v)} , \Q_p/\Z_p(1)) \ar[r]^{\nu^1_{k^{(v)}}(p)} & H^{2}(k^{(v)}(\uC^{(v)}) , \Q_p/\Z_p(1))}
\end{equation}
where, up to sign, $\partial_v^2(p)$ coincides with with $\partial_v^{M(p)}$, and $\delta^3_v(p)$ with the residue map in Galois cohomology.
Since $\uC^{(v)}$ is smooth (so that $\Br(\uC^{(v)})$ injects into $\Br(k^{(v)}(\uC^{(v)})$), Lemma \ref{L:XX02} and our assumption that $\Pic(\uC^{(v)}) \otimes_{\Z} \Q_p/\Z_p = 0$ imply that $\nu^1_{k^{(v)}}(p)$ is injective. On the other hand, since $x$ is unramified at $v$,
using the commutativity of (\ref{E:XX2}), we obtain
$$
\delta^3_v(p)(\nu^2_{k}(p)(x)) = 0 = \nu^1_{k^{(v)}}(p)(\rho_v(p)(x)).
$$
So, we conclude that $\rho_v(p)(x) = 0$. The commutativity of (\ref{E:XX1}) then implies
$$
\partial^{2}_v(p)(\omega^{2,3}_k(p)(x)) = \omega^{1,2}_{k^{(v)}}(p)(\rho_v(p)(x)) = 0,
$$
and the required fact follows. \hfill $\Box$

\vskip2mm

\noindent {\bf 3. The unramified cohomology of $M(p)$.} Let now $k$ be a number field and $V \subset V^k_f$ be a cofinite set of (finite) places that does not contain any places lying above $p$ and such that $C$ has good reduction at all $v \in V$. Then, as we discussed above, for every $v \in V$ one has the residue map
$$
\partial^{M(p)}_v \colon H^2(k , M(p)) \longrightarrow H^1(k^{(v)} , M(p)(-1)).
$$
We then define the (second) unramified cohomology of $M(p)$ by
$$
H^2(k , M(p))_V = \{ x \in H^2(k , M(p)) \: \vert \: \partial^{M(p)}_v(x) = 0 \ \ \text{for all} \ \ v \in V \}.
$$
The following proposition is crucial for the proof of Theorem \ref{T:char0}.
\begin{prop}\label{P:XX002}
For any $m \geqslant 1$, the group ${}_{p^m} H^2(k , M(p))_V$ is finite.
\end{prop}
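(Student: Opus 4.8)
My plan is to interpret $M(p)$ as the module of global sections of a lisse $\Q_p/\Z_p$-sheaf on a ring of $S$-integers and to deduce the statement from the classical finiteness theorems for the \'etale cohomology of such rings together with the localization (purity) sequence. Concretely, I would first introduce $S := V^k \setminus V$. Since $V$ is cofinite in $V^k$, the set $S$ is finite; since $V \subseteq V^k_f$, it contains all archimedean places; it contains all places above $p$ by hypothesis; and since $C$ has good reduction at every $v \in V$, it contains all places of bad reduction of $C$. In particular $M(p) = H^1(C \otimes_k \bar{k}, \Q_p/\Z_p(2))$ is unramified at every $v \notin S$, hence extends to a lisse $\Q_p/\Z_p$-sheaf $\mathcal{M}$ on $U := \mathrm{Spec}\, \mathcal{O}_{k,S}$, and one has a canonical identification $H^i(U, \mathcal{M}) \cong H^i(\Ga(k_S/k), M(p))$ for all $i$, where $k_S$ is the maximal extension of $k$ unramified outside $S$ (obtained by passing to the limit from the finite modules $M_m(p)$).

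Next I would invoke the localization exact sequence attached to the open immersion $\mathrm{Spec}\, k \hookrightarrow U$, whose complement is the set of closed points indexed by $V$ with residue fields $k^{(v)}$. By absolute purity, and using the comparison (up to sign) of the resulting residues with the maps $\partial^{M(p)}_v$ figuring in the statement --- the same comparison that underlies Theorem \ref{T:Residue} and is recorded in \cite{IRap-Residue}, \cite[\S 3]{CT-SB} --- this yields an exact sequence
$$
H^2(U, \mathcal{M}) \longrightarrow H^2(k, M(p)) \longrightarrow \bigoplus_{v \in V} H^1(k^{(v)}, M(p)(-1)),
$$
in which the second arrow is $(\partial^{M(p)}_v)_{v \in V}$. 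By definition $H^2(k, M(p))_V$ is the kernel of that arrow, hence coincides with the image of $H^2(U, \mathcal{M})$; in particular it is a $\Z_p$-linear quotient of $H^2(U, \mathcal{M})$.

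Finally I would apply the theorem that, for a number field $k$ and a finite set $S$ of places containing all archimedean places and all places above $p$, the groups $H^i(\Ga(k_S/k), A)$ are cofinitely generated over $\Z_p$ for every discrete $\Ga(k_S/k)$-module $A$ which is cofinitely generated over $\Z_p$ (the finite-coefficient version is part of the Poitou--Tate package; cf. \cite[Ch.~II, \S 6]{Serre-GC}). Since $M(p)$ is cofinitely generated over $\Z_p$, being the $\Q_p/\Z_p$-cohomology of a variety over $\bar{k}$, it follows that $H^2(U, \mathcal{M})$ is cofinitely generated over $\Z_p$; and a $\Z_p$-linear quotient of a cofinitely generated $\Z_p$-module is again cofinitely generated. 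Hence $H^2(k, M(p))_V$ is cofinitely generated over $\Z_p$, and therefore ${}_{p^m}H^2(k, M(p))_V$ is finite for every $m \geqslant 1$. Alternatively, one may argue purely with finite coefficients: the sheaf sequence $0 \to M_m(p) \to \mathcal{M} \to \mathcal{M} \to 0$ given by multiplication by $p^m$ shows that $H^2(U, \mathcal{M})[p^m]$ is a quotient of $H^2(U, M_m(p)) = H^2(\Ga(k_S/k), M_m(p))$, which is finite by the classical finiteness of the \'etale cohomology of rings of $S$-integers with finite coefficients.

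The step demanding the most care is the identification in the localization sequence: one must verify that the residues occurring there are literally the $\partial^{M(p)}_v$ of the statement --- this is exactly what the cited comparison results provide --- and that the good-reduction hypothesis on $C$ along $V$ is used in the form guaranteeing that $H^1(C \otimes_k \bar{k}, -)$ is unramified at the places of $V$, so that $M(p)$ genuinely spreads out to a lisse sheaf on $U$. Once these points are settled, the finiteness is formal.
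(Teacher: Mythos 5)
Your argument is correct, but it follows a genuinely different route from the one in the Appendix. You spread $M(p)$ out to a locally constant ($p$-divisible torsion) sheaf $\mathcal{M}$ on $U=\mathrm{Spec}\:\mathcal{O}_{k,S}$ with $S=V^k\setminus V$, identify $H^2(k,M(p))_V$ with (or at least inside) the image of $H^2(U,\mathcal{M})\to H^2(k,M(p))$ by the limit-plus-purity argument, and then invoke the classical finiteness of $H^2(G_S,\cdot)$ with finite coefficients, together with cofinite generation over $\Z_p$, to bound the $p^m$-torsion; your use of cofinite generation is exactly what is needed to avoid the trap that the $p^m$-torsion of a quotient is not the image of the $p^m$-torsion. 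The paper instead stays entirely within Galois cohomology over $k$: it uses the localization sequence on the curve to produce the extension $0\to A(p)\to M(p)\to T(p)\to 0$ with $A(p)=H^1(\widetilde{C}\otimes_k\bar{k},\Q_p/\Z_p(2))$ and $T(p)$ the $p$-primary torsion of a multi-norm torus, treats the $T(p)$-part by reducing to the finiteness of unramified Brauer groups (Lemma \ref{L:XX015}, via Lemma \ref{L:XX007}), and treats the $A(p)$-part by Jannsen's theorem that $H^2(k,A(p))\to\bigoplus_v H^2(k_v,A(p))$ is an isomorphism, combined with local vanishing at places of good reduction and local finiteness. What your approach buys is brevity and uniformity: everything is delegated to the standard finiteness theorem for cohomology of rings of $S$-integers; the price is that you must use the purity/residue compatibility over $\mathcal{O}_{k,S}$ (the arithmetic analogue of Theorem \ref{T:Residue}, standard as in \cite[\S 3]{CT-SB}, \cite{JannSS}, and correctly flagged by you), the fact that $M(p)$ is unramified along $V$ (which the paper secures via the specialization isomorphism (\ref{E:FundGr})), and, for your finite-coefficient variant, the affineness of $C$ to get $ {}_{p^m}M(p)\simeq M_m(p)$ and $p$-divisibility. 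What the paper's longer route buys is independence from étale cohomology of arithmetic schemes at this step and, as stated in \S\ref{S:ProofTF1}.1, an argument modeled on Jannsen's method, which the authors consider more amenable to generalization; it also gives the finer structural information on $M(p)$ (the $A(p)$/$T(p)$ decomposition) that is reused in the surrounding discussion. Minor citation point: the cofinite-generation statement for $H^i(G_S,\cdot)$ is more precisely found in Milne's duality book or Neukirch--Schmidt--Wingberg than in \cite[Ch.~II, \S 6]{Serre-GC}, though only the finite-coefficient finiteness is really needed.
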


We begin with the following elementary statement.
\begin{lemma}\label{L:XX007}
Let $T$ be a torus over an arbitrary field $k$ and $p$ be a prime $\neq \mathrm{char}\: k$. Then

\vskip1mm

\noindent {\rm (1)} for any $i \geqslant 2$, the natural map $H^{i}(k , {}_{p^{\infty}}T(\bar{k})) \to {}_{p^{\infty}}H^{i}(k , T)$ is an isomorphism;

\vskip1mm

\noindent {\rm (2)} \parbox[t]{16cm}{for $i = 1$, we have an exact sequence
$$
0 \to T(k) \otimes_{\Z} \Q_p/\Z_p \longrightarrow H^1(k , {}_{p^{\infty}}T(\bar{k})) \longrightarrow {}_{p^{\infty}} H^1(k , T) \to 0. $$}
\end{lemma}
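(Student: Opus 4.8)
The plan is to obtain both statements simultaneously by passing to the direct limit in the family of Kummer sequences attached to the multiplication-by-$p^m$ maps on $T$. Since $p \neq \mathrm{char}\: k$, the Galois module $T(\bar{k})$ is $p$-divisible: over the separable closure every torus is split, so $T(\bar{k})$ is a finite product of copies of $\bar{k}^{\times}$, and multiplication by $p^m$ is surjective on it (for $a \in \bar{k}^{\times}$ the polynomial $X^{p^m} - a$ is separable). Hence for each $m \geqslant 1$ we have a short exact sequence of discrete $\Ga(\bar{k}/k)$-modules
$$
0 \longrightarrow {}_{p^m}T(\bar{k}) \longrightarrow T(\bar{k}) \stackrel{p^m}{\longrightarrow} T(\bar{k}) \longrightarrow 0,
$$
whose long exact cohomology sequence splits into short exact sequences
$$
0 \longrightarrow H^{i-1}(k , T)/p^m H^{i-1}(k , T) \longrightarrow H^{i}(k , {}_{p^m}T(\bar{k})) \longrightarrow {}_{p^m}H^{i}(k , T) \longrightarrow 0
$$
for every $i \geqslant 1$ and every $m \geqslant 1$.

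Next I would let $m$ vary, using the inclusions ${}_{p^m}T(\bar{k}) \hookrightarrow {}_{p^{m+1}}T(\bar{k})$. These fit into a morphism from the $m$-th Kummer sequence to the $(m+1)$-st in which the middle vertical arrow is $\id_{T(\bar{k})}$ and the right-hand one is multiplication by $p$; chasing connecting homomorphisms shows that the induced transition maps in the displayed short exact sequences are the inclusions ${}_{p^m}H^{i}(k , T) \hookrightarrow {}_{p^{m+1}}H^{i}(k , T)$ on the quotient terms and the maps $H^{i-1}(k , T)/p^m \to H^{i-1}(k , T)/p^{m+1}$ induced by multiplication by $p$ on the sub-terms --- precisely the direct system of Lemma \ref{L:XX001} applied to $A = H^{i-1}(k , T)$. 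Since Galois cohomology of discrete modules commutes with filtered direct limits and the direct-limit functor is exact, passing to the limit over $m$ yields, using Lemma \ref{L:XX001} together with $\varinjlim_m {}_{p^m}H^i(k , T) = {}_{p^{\infty}}H^i(k , T)$ and $\varinjlim_m {}_{p^m}T(\bar{k}) = {}_{p^{\infty}}T(\bar{k})$, the short exact sequence
$$
0 \longrightarrow H^{i-1}(k , T) \otimes_{\Z} \Q_p/\Z_p \longrightarrow H^{i}(k , {}_{p^{\infty}}T(\bar{k})) \longrightarrow {}_{p^{\infty}}H^{i}(k , T) \longrightarrow 0.
$$

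For $i = 1$ this is precisely assertion (2), since $H^0(k , T) = T(k)$. For $i \geqslant 2$, the group $H^{i-1}(k , T)$ is Galois cohomology in a degree $\geqslant 1$, hence torsion, and the tensor product of any torsion group with the divisible group $\Q_p/\Z_p$ vanishes; so the left-hand term of the sequence is zero and the middle arrow becomes the isomorphism of assertion (1). The argument is largely routine; the one point to treat with care is the bookkeeping of the transition maps in the limit step, namely verifying that the maps on the subquotients $H^{i-1}(k , T)/p^m$ are multiplication by $p$ rather than the identity --- this is exactly what makes Lemma \ref{L:XX001} produce the factor $\otimes_{\Z}\Q_p/\Z_p$ instead of a vanishing or misidentified limit.
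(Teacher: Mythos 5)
Your proof is correct, and it takes a genuinely different route from the paper's. The paper works with the single exact sequence of Galois modules $1 \to T(p) \to T(\bar{k}) \to T(\bar{k})/T(p) \to 1$, where $T(p) = {}_{p^{\infty}}T(\bar{k})$, and exploits the fact that the quotient $T(\bar{k})/T(p)$ is \emph{uniquely} $p$-divisible so that its $p$-primary cohomology vanishes; part (1) then falls out immediately, but part (2) requires an explicit hands-on identification of the cokernel $T(p)_0/T(p)T(k)$ with $T(k)\otimes_{\Z}\Q_p/\Z_p$. Your argument instead runs a Kummer sequence $0 \to {}_{p^m}T(\bar{k}) \to T(\bar{k}) \xrightarrow{p^m} T(\bar{k}) \to 0$ for each $m$ and passes to the limit, and the key bookkeeping point you flag --- that the transition maps on $H^{i-1}(k,T)/p^m$ are induced by multiplication by $p$, not the identity, precisely because the right-hand vertical arrow between the $m$-th and $(m+1)$-st Kummer sequences must be $\times p$ to make the square commute --- is exactly right and is what lets the paper's Lemma \ref{L:XX001} produce the factor $\otimes_{\Z}\Q_p/\Z_p$. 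What this buys you: both assertions come out of the same limit short exact sequence for all $i \geqslant 1$, the $i=1$ case requires no separate elementwise computation, and the argument dovetails naturally with the surrounding machinery (Lemma \ref{L:XX001} was already established for exactly this kind of limit). Your observation that torsion groups tensored with $\Q_p/\Z_p$ vanish, applied to $H^{i-1}(k,T)$ for $i \geqslant 2$, cleanly isolates why degree $1$ is the only case where a nontrivial kernel survives.
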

\begin{proof}
Let $T(p) = {}_{p^{\infty}}T(\bar{k})$. The exact sequence
$$
1 \to T(p) \longrightarrow T(\bar{k}) \longrightarrow T(\bar{k})/T(p) \to 1
$$
for any $i \geqslant 1$ gives rise to the following exact sequence
$$
{}_{p^{\infty}} H^{i-1}(k , T) \longrightarrow {}_{p^{\infty}} H^{i-1}(k , T(\bar{k})/T(p)) \longrightarrow {}_{p^{\infty}} H^{i}(k , T(p)) \longrightarrow {}_{p^{\infty}} H^{i}(k , T) \longrightarrow {}_{p^{\infty}} H^{i}(k , T(\bar{k})/T(p)).
$$
On the other hand, the quotient $T(\bar{k})/T(p)$ is a uniquely $p$-divisible group, implying that
$$
{}_{p^{\infty}} H^{i}(k , T(\bar{k})/T(p)) = 0 \ \ \text{for all} \ \  i \geqslant 1,
$$
and our claim for $i \geqslant 2$ follows. To consider the case $i = 1$, we need to show that the cokernel of the map
$$
H^0(k , T) = T(k) \stackrel{\alpha}{\longrightarrow} H^0(k , T(\bar{k})/T(p))
$$
is isomorphic to $T(k) \otimes_{\Z} \Q_p/\Z_p$. It is easy to see that $H^0(k , T(\bar{k})/T(p)) = T(p)_0/T(p)$, where
$$
T(p)_0 = \{t \in T(\bar{k}) \: \vert \: t^{p^m} \in T(k) \ \ \text{for some} \ \ m \geqslant 1 \}.
$$
So,  $\mathrm{Coker}\: \alpha \simeq T(p)_0/T(p)T(k)$.
But the map
$$
(t , \frac{n}{p^m}) \mapsto t^{n/p^m} \cdot T(p)T(k) \ \ \text{for} \ \ t \in T(k), \ n \in \Z
$$
extends to an isomorphism $T(k) \otimes_{\Z} \Z[1/p]/\Z \simeq T(p)_0/T(p)T(k)$, and assertion (2) follows.
\end{proof}

\vskip1mm

We will now describe the structure of $M(p)$. Let $\widetilde{C}$ be the smooth projective curve over $k$ that contains $C$ as an open subset. We then have the following localization sequence (cf. \cite{Jann}, p. 126):
\begin{equation}\label{E:XX011}
0 \longrightarrow H^1(\widetilde{C} \otimes_k \bar{k} , \Q_p/\Z_p(2)) \longrightarrow H^1(C \otimes_k \bar{k} , \Q_p/\Z_p(2)) \longrightarrow
\end{equation}
$$
\hfill \bigoplus_{x \in \widetilde{C}\setminus C} \mathrm{Ind}^{k}_{k(x)}(\Q_p/\Z_p(1)) \stackrel{tr}{\longrightarrow} \Q_p/\Z_p(1) \longrightarrow 0.
$$
Let us recall that given a finite collection $\ell_1, \ldots , \ell_d$ of finite separable extensions of an arbitrary field $k$, the $k$-torus
$$
T = \mathrm{Ker}\left( \prod_{i = 1}^d \mathrm{R}_{\ell_i/k}(\mathbb{G}_m) \stackrel{N}{\longrightarrow} \mathbb{G}_m \right),
$$
where $N$ is the product of the norm maps $N_{\ell_i/k}$ for the extensions $\ell_i/k$, $i = 1, \ldots , d$, is called the {\it multi-norm torus} associated with $\ell_1, \ldots , \ell_d$. Henceforth, we will assume that for each $x \in \widetilde{C} \setminus C$, the residue field $k(x)$ is a separable extension of $k$, which of course is automatically true if $k$ is perfect (recall that in Theorem \ref{T:char0}, $k$ is a number field).
Then (\ref{E:XX011}) gives rise to the following exact sequence of Galois modules
\begin{equation}\label{E:XX012}
0 \to A(p) \longrightarrow M(p) \longrightarrow T(p) \to 0
\end{equation}
where $A(p) = H^1(\widetilde{C} \otimes_k \bar{k} , \Q_p/\Z_p(2))$ and $T(p) = {}_{p^{\infty}} T(\bar{k})$, with $T$ being the multi-norm torus associated with the field extensions $k(x)/k$ for $x \in \widetilde{C} \setminus C$.

\vskip2mm

Let $T$ be a torus over a field $\mathscr{K}$ which is complete with respect to a discrete valuation $v$. We call an element $x \in H^i(\mathscr{K} , T)$ \emph{unramified} if it lies in the image of the inflation map
$$
H^i(\mathscr{K}^{\mathrm{ur}}/\mathscr{K} , T(\mathscr{O}(\mathscr{K}^{\mathrm{ur}}))) \longrightarrow H^i(\mathscr{K} , T),
$$
where $\mathscr{K}^{\mathrm{ur}}$ is the maximal unramified extension of $\mathscr{K}$ with the valuation ring $\mathscr{O}(\mathscr{K}^{\mathrm{ur}})$, and
$T(\mathscr{O}(\mathscr{K}^{\mathrm{ur}}))$ is the (unique) maximal bounded subgroup of $T(\mathscr{K}^{\mathrm{ur}})$. It follows from \cite[Ch. II, \S7]{GMS} that for a finite unramified $\Ga(\overline{\mathscr{K}}/\mathscr{K})$-module $\mathscr{M}$ whose order is prime to the residue characteristic $\mathrm{char}\: \mathscr{K}^{(v)}$, an element $x \in H^i(\mathscr{K} , \mathscr{M})$ is unramified as defined in \S\ref{S:Pic} if and only if it lies in the image of the inflation map $H^i(\mathscr{K}^{\mathrm{ur}}/\mathscr{K} , \mathscr{M}) \longrightarrow H^i(\mathscr{K} , \mathscr{M})$. So, for a finite Galois submodule $\mathscr{M} \subset  T(\mathscr{K}^{\mathrm{ur}})$ of order prime to $\mathrm{char}\: \mathscr{K}^{(v)}$, the natural map $H^i(\mathscr{K} , \mathscr{M}) \to H^i(\mathscr{K} , T)$ takes unramified classes to unramified ones. In addition, if the splitting field of $T$ is unramified over $\mathscr{K}$, then any finite subgroup $\mathscr{M} \subset T(\overline{\mathscr{K}})$ of order prime to $\mathrm{char}\: \mathscr{K}^{(v)}$ is automatically contained in $T(\mathscr{O}(\mathscr{K}^{\mathrm{ur}}))$. More generally, for a torsion Galois submodule $\mathscr{M}$ of $T(\mathscr{O}(\mathscr{K}^{\mathrm{ur}}))$ that does not contain elements of order divisible by the residue characteristic, an element $x \in H^i(\mathscr{K} , \mathscr{M})$ is unramified if it comes from an unramified element in $H^i(\mathscr{K} , \mathscr{M}')$ for some finite Galois submodule $\mathscr{M}'$ of $\mathscr{M}$.

Now, if $T$ is a torus defined over an arbitrary field $k$ with a discrete valuation $v$, then $x \in H^i(k , T)$ is defined to be unramified at $v$ if its image in $H^i(k_v , T)$ is unramified in the sense specified above. Furthermore, for a set $V$ of discrete valuations of $k$, we let $H^i(k , T)_V$ denote the subgroup of $H^i(k , T)$ consisting of elements that are unramified at all $v \in V$. We will use these remarks to prove the following.
\begin{lemma}\label{L:XX015}
Let $k$ be a number field, let $\ell_1, \ldots , \ell_d$ be finite extensions of $k$, and let $T$ be the multi-norm torus associated with these extensions.
Suppose $p$ is a prime and let $V \subset V^k_f$ be a cofinite set of places such that $\mathrm{char}\: k^{(v)}$ is prime to $p$ for all $v \in V$. Then for any $m \geqslant 1$, the group $${}_{p^m} H^2(k , T(p))_V$$ is finite.
\end{lemma}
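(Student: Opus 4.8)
The plan is to reduce the assertion to the finiteness of the $p^m$-torsion of a Tate--Shafarevich-type subgroup of $H^2(k , T)$ and then deduce the latter from the Albert--Brauer--Hasse--Noether theorem via the standard presentation of a multi-norm torus. First I would shrink $V$: since passing to a smaller cofinite subset of places only enlarges the unramified cohomology groups, it is enough to treat the cofinite set $V'$ consisting of those $v \in V$ at which $T$ has good reduction (equivalently, at which the splitting field of $T$ is unramified); note $\mathrm{char}\: k^{(v)} \neq p$ for $v \in V'$ by hypothesis. Next, Lemma \ref{L:XX007}(1) provides natural isomorphisms $H^2(k , T(p)) \xrightarrow{\ \sim\ } {}_{p^{\infty}} H^2(k , T)$ and $H^2(k_v , T(p)) \xrightarrow{\ \sim\ } {}_{p^{\infty}} H^2(k_v , T)$, compatible with restriction, and I would use these to transfer the entire problem to the torus $T$ itself.

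For a place $v \in V'$, the group $\mathrm{Gal}(k_v^{\mathrm{ur}}/k_v) \cong \hat{\mathbb{Z}}$ has cohomological dimension $\leqslant 1$, so $H^2(k_v^{\mathrm{ur}}/k_v , \mathscr{M}) = 0$ for every finite $\mathrm{Gal}(k_v^{\mathrm{ur}}/k_v)$-module $\mathscr{M}$. By the definition of ``unramified'' recalled just before the lemma, a class of $H^2(k_v , T(p))$ that is unramified at $v$ comes, through some finite $p$-primary Galois submodule $\mathscr{M}' \subseteq T(p)$, from an unramified class of $H^2(k_v , \mathscr{M}')$; since $\mathscr{M}'$ has order prime to $\mathrm{char}\: k^{(v)}$ and $T$ has good reduction at $v$, the module $\mathscr{M}'$ is unramified at $v$, so this unramified class lies in the image of $H^2(k_v^{\mathrm{ur}}/k_v , \mathscr{M}') = 0$ and is therefore trivial in $H^2(k_v , T(p))$. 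Combining this with the isomorphisms above, I obtain
$$
H^2(k , T(p))_{V'} \ \cong \ {}_{p^{\infty}} B , \qquad B := \ker\Bigl( H^2(k , T) \longrightarrow \prod_{v \in V'} H^2(k_v , T) \Bigr) ,
$$
so that ${}_{p^m} H^2(k , T(p))_{V'} \cong {}_{p^m} B$, and it remains to prove that ${}_{p^m} B$ is finite.

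For this I would use the presentation of the multi-norm torus
$$
1 \longrightarrow T \longrightarrow \prod_{i=1}^{d} \mathrm{R}_{\ell_i/k}(\mathbb{G}_m) \stackrel{N}{\longrightarrow} \mathbb{G}_m \longrightarrow 1 .
$$
Since $H^1(k , \mathrm{R}_{\ell_i/k}(\mathbb{G}_m)) = H^1(\ell_i , \mathbb{G}_m) = 0$ and $H^1(k , \mathbb{G}_m) = 0$ by Hilbert 90, the associated long exact sequence gives an embedding $H^2(k , T) \hookrightarrow \prod_i \Br(\ell_i)$, and likewise over each $k_v$, compatibly with the residue maps at places of good reduction. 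Hence a class in $B$ maps to a tuple $(\beta_i)$ with each $\beta_i \in \Br(\ell_i)$ having trivial local invariant at every place of $\ell_i$ lying over $V'$, i.e.\ $\beta_i$ is supported on the finite set $S_i$ of places of $\ell_i$ over $S := V^k \setminus V'$. By Albert--Brauer--Hasse--Noether the classes of $\Br(\ell_i)$ supported on $S_i$ form $\ker\bigl( \bigoplus_{w \in S_i} \Br(\ell_{i,w}) \to \mathbb{Q}/\mathbb{Z}\bigr)$, whose $p^m$-torsion is finite (it injects into $\bigl(\tfrac{1}{p^m}\mathbb{Z}/\mathbb{Z}\bigr)^{|S_i|}$). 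Thus ${}_{p^m} B$ embeds into the finite group $\prod_i {}_{p^m}\ker\bigl(\bigoplus_{w \in S_i} \Br(\ell_{i,w}) \to \mathbb{Q}/\mathbb{Z}\bigr)$, completing the argument.

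The main difficulty I anticipate is bookkeeping rather than conceptual: one must carefully reconcile the several notions of ``unramified'' in play — the residue-map definition for $\mu_n$-coefficients from \S\ref{S:Pic}, the inflation-from-$k_v^{\mathrm{ur}}$ definition for tori and torsion modules from the paragraph preceding the lemma, and their behaviour under the passage to $p$-power torsion via Lemma \ref{L:XX007} — and check that at the good-reduction places these all agree, so that ``unramified'' really does force local triviality. (An alternative to the last paragraph is to invoke Poitou--Tate duality for the multi-norm torus directly, but the Brauer-group route is more self-contained.)
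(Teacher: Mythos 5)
Your argument is correct, and its skeleton is the same as the paper's: shrink $V$ to places where the splitting data of $T$ is unramified, pass from $T(p)$ to $T$ via Lemma \ref{L:XX007}(1), exploit the multi-norm presentation $1 \to T \to \prod_i \mathrm{R}_{\ell_i/k}(\mathbb{G}_m) \to \mathbb{G}_m \to 1$ together with Hilbert 90 and Shapiro to land in Brauer groups of the $\ell_i$, and finish with Albert--Brauer--Hasse--Noether. The one genuine difference is how the unramifiedness condition is exploited. The paper never converts it into local triviality: it checks that $H^2(k,T(p)) \to H^2(k,T) \hookrightarrow H^2(k,T_0)$ carries classes unramified at $v \in V$ to unramified ones, reduces to a single factor $\mathrm{R}_{\ell/k}(\mathbb{G}_m)$, identifies ${}_nH^2(k,T_0)_V$ with the unramified Brauer group ${}_n\Br(\ell)_{V'}$, and cites its known finiteness. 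You instead observe that at a finite place of good reduction with residue characteristic prime to $p$, an unramified class of $H^2(k_v, T(p))$ is actually \emph{trivial} (because it comes, per the definition preceding the lemma, from the inflation image $H^2(k_v^{\mathrm{ur}}/k_v,\mathscr{M}')$ for a finite unramified $p$-primary $\mathscr{M}'$, and $\mathrm{cd}(\hat{\Z})=1$ kills that group); this turns $H^2(k,T(p))_{V'}$ into the $p$-primary part of a Tate--Shafarevich-type kernel, whose $p^m$-torsion you bound by an explicit support argument in $\prod_i \Br(\ell_i)$. This local vanishing step is correct and is the only point requiring care (it is exactly where the several notions of ``unramified'' must be reconciled, as you note); what it buys you is a self-contained ABHN computation in place of the paper's citation of the finiteness of ${}_n\Br(\ell)_{V'}$ --- which is itself proved by ABHN --- so the two routes are of essentially equal strength and length.
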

\begin{proof}
By shrinking $V$ if necessary, we may assume that the extensions $\ell_1, \ldots , \ell_d$ are unramified at all $v \in V$. Then it follows from the above discussion that the map $H^2(k , T(p)) \to H^2(k , T)$ takes unramified classes at $v \in V$ to unramified ones. Besides, according to Lemma \ref{L:XX007}, this map is injective. So, it is enough to show that for any $n \geqslant 1$, the group ${}_nH^2(k , T)_V$ is finite. We have an exact sequence of $k$-tori
\begin{equation}\label{E:XX016}
1 \to T \longrightarrow T_0 \stackrel{N}{\longrightarrow} \mathbb{G}_m \to 1 \ \ \text{where} \ \ T_0 = \prod_{i = 1}^d \mathrm{R}_{\ell_i/k}(\mathbb{G}_m).
\end{equation}
Since $H^1(k , \mathbb{G}_m) = 1$ by Hilbert's 90, the cohomological sequence associated with (\ref{E:XX016}) shows that the natural map $H^2(k , T) \to H^2(k , T_0)$ is injective. So, it is enough to establish the finiteness of ${}_nH^2(k , T_0)_V$, for which we only need to consider the case of $T_0 = \mathrm{R}_{\ell/k}(\mathbb{G}_m)$ for some finite (separable) extension $\ell/k$. As above, we may assume that every $v \in V$ is unramified in $\ell/k$. Let $V'$ be the set of all extensions to $\ell$ of places in $V$. Then the restriction of the isomorphism $H^2(k , T_0) \simeq H^2(\ell , \mathbb{G}_m)$ yields an isomorphism $H^2(k , T_0)_V \simeq H^2(\ell , \mathbb{G}_m)_{V'}$. So,
$$
{}_n H^2(k , T_0)_V \simeq {}_nH^2(\ell , \mathbb{G}_m)_{V'} = {}_n \Br(\ell)_{V'},
$$
the $V'$-unramified part of the $n$-torsion in the Brauer group of $\ell$, which is known to be finite (cf. \cite[3.5]{CRR3}).
\end{proof}

\vskip1mm

\noindent {\bf Remark 10.11.} Using the fact that the unramified Brauer group  ${}_n\Br(k)_V$ is finite for any finitely generated field $k$ of characteristic
prime to $n$ and a divisorial set of places $V$ (cf. \cite{CRR3}), one easily generalizes Lemma \ref{L:XX015} to any finitely generated field $k$ of characteristic $\neq p$.

\vskip2mm

\addtocounter{thm}{1}

\noindent {\it Proof of Proposition \ref{P:XX002}.} The exact sequence (\ref{E:XX012}) gives rise to the following exact sequence of cohomology groups
$$
H^1(k , T(p)) \stackrel{\alpha}{\longrightarrow} H^2(k , A(p)) \stackrel{\beta}{\longrightarrow} H^2(k , M(p)) \stackrel{\gamma}{\longrightarrow} H^2(k , T(p)).
$$
Clearly, $\gamma({}_{p^m} H^2(k , M(p))_V) \subset {}_{p^m} H^2(k , T(p))_V$, and the latter is finite by Lemma \ref{L:XX015}. So, it remains to show that the intersection  $$\mathrm{Im}\: \beta \cap {}_{p^m} H^2(k , M(p))$$ is finite. Pick $m_0 \geqslant 0$ so that $p^{m_0}$ annihilates ${}_{p^{\infty}} H^1(k , T)$ --- note that the latter has finite exponent dividing the degree $[\ell : k]$, where $\ell$ is the minimal splitting field of $T$. It follows from  the exact sequence in Lemma \ref{L:XX007}(2) that then the group $p^{m_0} \cdot H^1(k , T(p))$ is $p$-divisible.

Now, let $x \in H^2(k , A(p))$ be such that $p^m \cdot \beta(x) = 0$. Then $p^m x = \alpha(y)$ for some $y \in H^1(k , T(p))$, and consequently letting $d = m + m_0$ we have $p^d x = \alpha(p^{m_0} y)$. The divisibility of $p^{m_0} \cdot H^1(k , T(p))$ implies that one can find $z \in H^1(k , T(p))$ satisfying
$$
p^{m_0}y = p^d z.
$$
Then
$$
p^d(x - \alpha(z)) = 0 \ \ \text{and} \ \ \beta(x - \alpha(z)) = \beta(x).
$$
This proves the inclusion
$$
\beta({}_{p^d} H^2(k , A(p))) \supset \mathrm{Im}\: \beta \cap {}_{p^m} H^2(k , M(p)),
$$
and it remains to show that ${}_{p^d} H^2(k , A(p))$ is finite. But according to statement 1) on p. 127 of \cite{Jann}, the map
$$
H^2(k , A(p)) \longrightarrow \bigoplus_{v \in V^k} H^2(k_v , A(p))
$$
is an isomorphism. On the other hand, $H^2(k_v , A(p)) = 0$ for any $v \notin S' = S \cup V^k_{\infty}$, where $S$ is the set of points of bad reduction for $\widetilde{C}$ (\cite{Jann}, statement 5) on p. 131). Thus,
$$
{}_{p^d} H^2(k , A(p)) \simeq \bigoplus_{v \in S'} {}_{p^d} H^2(k_v , A(p)).
$$
But for any $v$, the group ${}_{p^d} H^2(k_v , A(p))$ is a quotient of $H^2(k_v , {}_{p^d} A(p))$, which is finite because ${}_{p^d} A(p)$ is finite and $k_v$ is a finite extension of $\Q_q$ for some $q$ (cf. \cite[Ch. II, \S 5, Proposition 14]{Serre-GC}),  completing the argument. \hfill $\Box$

\vskip1mm

(Note that while we actually prove the finiteness of $\mathrm{Im}\: \beta \cap {}_{p^m} H^2(k , M(p))$, for the proof of Proposition \ref{P:XX002}
it would be sufficient to prove the finiteness of $\mathrm{Im}\: \beta \cap {}_{p^m} H^2(k , M(p))_V$.)

\vskip2mm

\noindent {\bf 4. Proof of Theorem \ref{T:char0}.} First of all, it is enough to consider the case where $n = p^m$ with $p$ a prime and $m \geqslant 1$. Second, deleting a finite number of places from $V_1$ if necessary, we may assume that any $v \in V_1$ satisfies the assumptions made in the beginning of subsection 10.2 (recall that $\mathrm{char}\: k^{(v)}$ is prime to $p$ by the assumptions made in the statement of the theorem).
Consider the following commutative diagram
\begin{equation}\label{E:XX110}
\xymatrix{H^3(C, \mu_{p^m}^{\otimes 2})_V \ar[r]^{\alpha} \ar[d]_{\beta} & H^3(C , \Q_p/\Z_p(2)) \ar[d]^{\delta} \\ H^3(k(C) , \mu_{p^m}^{\otimes 2})_{V} \ar[r]^{\gamma} & H^3(k(C) , \Q_p/\Z_p(2))}
\end{equation}
Since $\gamma$ is injective (Lemma \ref{L:XX002}), it is enough to establish the finiteness of $\gamma(H^3(k(C) , \mu_{p^m}^{\otimes 2})_{V})$.
As we pointed out in subsection 4.2, the natural map $H^3(C , \mu_{p^m}^{\otimes 2}) \to H^3(k(C) , \mu_{p^m}^{\otimes 2})_{V_0}$ is surjective, implying that $\beta$ in (\ref{E:XX110}) is also surjective. Thus,
 $$
 \gamma(H^3(k(C) , \mu_{p^m}^{\otimes 2})_{{V}}) = \delta(\alpha(H^3(C , \mu_{p^m}^{\otimes 2})_V)),
 $$
 and it is enough to prove the following.
\begin{prop}\label{P:XX017}
The image of $\alpha \colon H^3(C , \mu_{p^m}^{\otimes 2})_V  \longrightarrow  H^3(C , \Q_p/\Z_p(2))$ is finite.
\end{prop}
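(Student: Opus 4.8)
The plan is to wedge the group $W := \operatorname{Im}\alpha \subseteq H^3(C,\Q_p/\Z_p(2))$ between two finite groups coming from the (direct limit of the) fundamental exact sequence (\ref{E:X0}) for $d=2$, $\ell=3$, $n=p^m$. After the reduction already made we may assume $n=p^m$ with $p$ prime; since $H^3(C,\mu_{p^m}^{\otimes 2})$ is killed by $p^m$, so is $W$, and $W\subseteq {}_{p^m}H^3(C,\Q_p/\Z_p(2))$. Writing $\omega:=\omega^{2,3}_k(p)\colon H^3(C,\Q_p/\Z_p(2))\to H^2(k,M(p))$, it suffices to prove that $\omega(W)$ and $W\cap\ker\omega$ are both finite.

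For the image, I would take $y\in H^3(C,\mu_{p^m}^{\otimes 2})_V$ and set $x=\alpha(y)$. For $v\in V_1$ the reduction $\uC^{(v)}$ is a smooth affine geometrically integral curve over the finite field $k^{(v)}$, so (as recorded just before Corollary \ref{C:XX001}) $\Pic(\uC^{(v)})$ is finite and hence $\Pic(\uC^{(v)})\otimes_{\Z}\Q_p/\Z_p=0$; moreover, after the shrinking of $V_1$ performed above, each $v\in V_1$ satisfies the running assumptions of \S\ref{S:Append}.2. Since residue maps are functorial in the coefficient module, $y\in H^3(K,\mu_{p^m}^{\otimes 2})$ unramified at $\tilde v$ forces $x$ to be unramified at $v$, so Proposition \ref{P:unramX1} gives $\partial^{M(p)}_v(\omega(x))=0$. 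As $v\in V_1$ was arbitrary and $\omega$ is a homomorphism, $\omega(W)\subseteq {}_{p^m}H^2(k,M(p))_{V_1}$. Viewed as a set of places of $k$, $V_1=V^k\setminus S$ is cofinite in $V^k_f$, contains no place above $p$ (since $S$ contains all divisors of $n$), and is a set of good reduction for $C$; hence Proposition \ref{P:XX002} applies and $\omega(W)$ is finite.

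For the kernel, by exactness of (\ref{E:X0}) at its middle term, passed to the limit over $m$, $\ker\omega$ equals the image of the natural (pullback) map $H^3(k,\Q_p/\Z_p(2))\to H^3(C,\Q_p/\Z_p(2))$, hence is a quotient of $H^3(k,\Q_p/\Z_p(2))$. Since $k$ is a number field, Poitou--Tate duality (cf. \cite[Ch.~II, 6.3]{Serre-GC}, as used in the proof of Proposition \ref{P:PT}) identifies $H^3(k,\mu_{p^j}^{\otimes 2})$ with $\bigoplus_{v\mid\infty}H^3(k_v,\mu_{p^j}^{\otimes 2})$; this vanishes for $p$ odd and has order $\leqslant 2^{r_1}$ for $p=2$, uniformly in $j$, so the direct limit $H^3(k,\Q_p/\Z_p(2))$ is finite. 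Therefore $\ker\omega$, and a fortiori $W\cap\ker\omega$, is finite. Combining the two steps, $W/(W\cap\ker\omega)\cong\omega(W)$ and $W\cap\ker\omega$ are finite, so $W$ is finite, which is Proposition \ref{P:XX017}.

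The two points that need care — and which I expect to be the only real content beyond bookkeeping — are: (i) that the unramifiedness of $y$ at the arithmetic places $V_1$ genuinely survives the passage $y\mapsto\alpha(y)\mapsto\omega(\alpha(y))$, which is exactly what Proposition \ref{P:unramX1} provides once one checks its hypothesis $\Pic(\uC^{(v)})\otimes\Q_p/\Z_p=0$ via the finiteness of the Picard group of a smooth affine curve over a finite field; and (ii) the finiteness of $\ker\omega$, which hinges on $H^3(k,\Q_p/\Z_p(2))$ being finite for a number field $k$. Everything else is assembly from Propositions \ref{P:unramX1} and \ref{P:XX002}.
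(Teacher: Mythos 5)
Your proof is correct and follows essentially the same strategy as the paper: squeeze $W = \operatorname{Im}\alpha$ via the fundamental exact sequence $H^3(k,\Q_p/\Z_p(2)) \to H^3(C,\Q_p/\Z_p(2)) \xrightarrow{\omega} H^2(k,M(p))$, getting finiteness of $\ker\omega$ from Poitou--Tate and finiteness of $\omega(W)$ from Propositions \ref{P:unramX1} and \ref{P:XX002} (after verifying the Picard hypothesis via Corollary \ref{C:XX001}). If anything, your bookkeeping is slightly more careful than the paper's: you explicitly separate the two finiteness claims ($\omega(W)$ and $W\cap\ker\omega$) and you make precise the switch from $V$ as a set of places of $K$ to $V_1 \cong V^k\setminus S$ as a cofinite set of places of $k$ when invoking Proposition \ref{P:XX002}, whereas the paper silently overloads the symbol $V$ at that point.
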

\begin{proof}
We will use the following exact sequence
$$
H^3(k , \Q_p/\Z_p(2)) \stackrel{\iota^{2,3}_k(p)}{\longrightarrow} H^3(C , \Q_p/\Z_p(2)) \stackrel{\omega^{2,3}_k(p)}{\longrightarrow} H^2(k , M(p))
$$
obtained by taking the direct limit of the sequences (\ref{E:X0}) from \S\ref{S:ProofTF1}.2.
By Poitou-Tate (cf. \cite[Ch. II, 6.3]{Serre-GC}), for any $n \geqslant 1$, the natural map
$$
H^3(k , \mu_n^{\otimes 2}) \longrightarrow \prod_{v \in V^k_{\mathrm{real}}} H^3(k_v , \mu_n^{\otimes 2}),
$$
where $V^k_{\mathrm{real}}$ is the set of all real places of $k$, is an isomorphism. Complex conjugation acts on $\mu_n$ by inversion, hence acts trivially on $\mu_n^{\otimes 2}$. It follows that $H^3(k_v , \mu_n^{\otimes 2})$ is trivial if $n$ is odd, and
$$
H^3(k_v , \mu_2^{\otimes 2}) = H^1(k_v , \mu_2^{\otimes 2}) = \{ \pm 1 \}
$$
for any $v \in V^k_{\mathrm{real}}$. Thus, $H^3(k , \Q_p/\Z_p(2))$ is trivial if $p > 2$ and isomorphic to $(\Z/2\Z)^r$ for $p =2$, where $r = \vert V^k_{\mathrm{real}} \vert$. In any case, it is finite. So, in order to prove that $\alpha$ has finite image, it is enough to show that
$$
\Delta := \omega^{2 , 3}_k(p)(\alpha(H^3(C , \mu_{p^m}^{\otimes 2})_V))
$$
is finite. Clearly, $\Delta \subset {}_{p^m} H^2(k ,
M(p))$. Now, since the residue field $k^{(v)}$ is finite and the curve $C$  is affine, we have
$\mathrm{Pic}\: \uC^{(v)} \otimes_{\Z} \Q_p/\Z_p = 0$ (Corollary \ref{C:XX001}). So, applying Proposition \ref{P:unramX1}, we obtain that $\Delta$ is contained in  $H^2(k , M(p))_V$, and therefore in fact  $\Delta \subset {}_{p^m} H^2(k , M(p))_V$. Since the latter is finite by Proposition \ref{P:XX002}, the required fact follows.
\end{proof}

\vskip3mm

\noindent {\small {\bf Acknowledgements.} The first author was supported by the Canada Research Chairs Program and by an NSERC research grant. The second author was partially supported by the Simons Foundation. During the preparation of the final version of the paper, he visited Princeton University and the Institute for Advanced Study, and would like to thankfully acknowledge the hospitality of both institutions. The third author was partially supported by an AMS-Simons Travel Grant. We are grateful to P.~Gille and M.~Rapoport for useful conversations. Last but not least, we would like to express our gratitude to all anonymous referees  whose comments, insights and suggestions helped to improve the original paper significantly.}

\vskip8mm

\bibliographystyle{amsplain}

\begin{thebibliography}{100}




\bibitem{ANT} {\it Algebraic Number Theory}, ed. by J.W.S.~Cassels and A.~Fr\"ohlich, 2nd edition, London Math. Soc., 2010.



\bibitem{SGA4} M.~Artin, P.~Deligne, A.~Grothendieck, J.-L.~Verdier, {\it SGA 4, tome 3}, Lecture Notes in Math {\bf 305}, Springer, Berlin-Heidelberg-
New York, 1973.

\bibitem{Berhuy} G.~Berhuy, {\it Finiteness of $R$-equivalence groups of some adjoint classical groups of type ${}^2\mathsf{D}_3$}, J. Algebra {\bf 309} (2007), no. 1, 360-366.

\bibitem{Ber} G.~Berhuy, {\it An Introduction to Galois Cohomology and its Applications,} London Math. Soc. Lecture Note Series {\bf 377},
Cambridge Univ. Press, 2010.








\bibitem{Bl-Og} S.~Bloch, A.~Ogus, {\it Gersten's conjecture and the homology of schemes}, Ann. Sci. Ecole Norm. Sup. (4) {\bf 7} (1974), 181-201.


\bibitem{Brussel} E.~Brussel, {\it On Saltman's $p$-adic curves papers}, in {Quadratic forms, linear algebraic groups, and cohomology}, 13-39, Dev. Math. {\bf 18}, Springer, New York, 2010.


\bibitem{CGP} V.I.~Chernousov, P.~Gille, A.~Pianzola, {\it Torsors over the punctured affine line}, Amer. J. Math. {\bf 134}(2012), 1541-1583.

\bibitem{CNPY} V.I.~Chernousov, E.~Neher, A.~Pianzola, U.~Yahorau, {\it On conjugacy of Cartan subalgebras in extended affine Lie algebras}, Adv. math. {\bf 290}(2016), 260-292.

\bibitem{CRR1} V.I.~Chernousov, A.S.~Rapinchuk, I.A.~Rapinchuk, {\it The genus of a division algebra and the unramified Brauer group}, Bull. Math. Sci.
{\bf 3}(2013), 211-240.

\bibitem{CRR2}  V.I.~Chernousov, A.S.~Rapinchuk, I.A.~Rapinchuk, {\it Division algebras with the same maximal subfields}, Russian Math. Surveys {\bf 70}(2015), no. 1, 91-122.


\bibitem{CRR3}  V.I.~Chernousov, A.S.~Rapinchuk, I.A.~Rapinchuk, {\it On the size of the genus of a division algebra,} Proc. Steklov Inst. of Math. {\bf 292}(2016), no. 1, 63-93.

\bibitem{CRR4} V.I.~Chernousov, A.S.~Rapinchuk, I.A.~Rapinchuk, {\it On some finiteness properties of algebraic groups over finitely generated fields}, C.R. Acad. Sci. Paris, Ser. I, {\bf 354}(2016), 869-873.

\bibitem{CRR5} V.I.~Chernousov, A.S.~Rapinchuk, I.A.~Rapinchuk, {\it The finiteness of the genus of a finite-dimensional division algebras, and some generalizations}, to appear in Israel. J. Math.


\bibitem{CRR6} V.I.~Chernousov, A.S.~Rapinchuk, I.A.~Rapinchuk, {\it Algebraic groups with the same tori, weakly commensurable Zariski-dense
subgroups, and good reduction}, in preparation.





\bibitem{CT-SB} J.-L.~Colliot-Th\'el\`ene, {\it Birational invariants, purity and the Gersten conjecture}, in K-theory and algebraic geometry: connections with quadratic forms and division algebras (Santa Barbara, CA, 1992), 1-64, Proc. Sympos. Pure Math. {\bf 58}, Part 1, Amer. Math. Soc., Providence, RI, 1995.

\bibitem{CTFinitude} J.-L.~Colliot-Th\'el\`ene, {\it Quelques r\'esultats de finitude pour le groupe $SK_1$ d'une alg\`ebre de biquaternions}, $K$-Theory {\bf 10} (1996), no. 1, 31-48.

\bibitem{CT-H-K} J.-L.~Colliot-Th\'el\`ene, R. Hoobler, B. Kahn, {\it The Bloch-Ogus-Gabber Theorem}, Algebraic K-theory (Toronto, ON, 1996), 31-94, Fields Inst. Commun. 16, Amer. Math. Soc., Providence, RI, 1997.




\bibitem{CTPS} J.-L.~Colliot-Th\'el\`ene, R.~Parimala, V.~Suresh, {\it  Patching and local-global principles for homogeneous spaces over function fields of $p$-adic curves}, Comment. Math. Helv. {\bf 87}(2012), no. 4, 1011-1033.

\bibitem{CTS} J.-L.~Colliot-Th\'el\`ene, J.-J.~Sansuc, {\it Fibr\'es quadratiques et composantes connexes r\'eelles}, Math. Ann. {\bf 244} (1979), no. 2, 105-134.

\bibitem{CTSS} J.-L.~Colliot-Th\'el\`ene, J.-J.~Sansuc, C.~Soul\'e, {\it Torsion dans le groupe de Chow de codimension deux}, Duke Math. J. {\bf 50}, no. 3 (1983), 763-801.

\bibitem{Conrad} B.~Conrad, {\it Non-split reductive groups over $\Z$}, Autour des sch\'emas en groupes. Vol. II, 193-253, Panor. Synth`eses, {\bf 46}, Soc. Math. France, Paris, 2015.


\bibitem{Cut} S.D.~Cutkosky, {\it Introduction to Algebraic Geometry}, GSM {\bf 188}, AMS, 2018.

\bibitem{Del} P.~Deligne, {\it Cohomologie \'etale}, SGA
$4\frac{1}{2}$, Lect. Notes Math. {\bf 569}, Springer, 1977.


\bibitem{EC-Stack} {\it \'Etale cohomology}, Stacks project.

\bibitem{Fal} G.~Faltings, {\it Endlichkeitss\"atze f\"ur abelsche Variet\"aten \"uber Zahlk\"orpern,} Invent. math. {\bf 73}(1983), 349-366.







\bibitem{GMS} S.~Garibaldi, A.~Merkurjev, J.-P.~Serre, {\it Cohomological Invariants in Galois cohomology},
University Lecture Series 28, AMS, 2003.

\bibitem{LJ-Ger} L.J.~Gerstein, {\it Basic Quadratic Forms,} GSM 90, AMS 2008.


\bibitem{Gille} P.~Gille, T.~Szamuely, {\it Central Simple Algebras and Galois Cohomology}, Cambridge Univ. Press, Cambridge, 2006.

\bibitem{Gross} B.H.~Gross, {\it Groups over $\Z$}, Invent. math. {\bf 124}(1996), no. 1-3, 263-279.

\bibitem{Harder} G.~Harder, {\it Halbeifache Gruppenschemata \"uber Dedekindringen}, Invent. math. {\bf 4}(1967), 165-191.

\bibitem{Hart} R.~Hartshorne, {\it Algebraic Geometry}, GTM {\bf 52}, Springer, 1977.



\bibitem{Jann} U.~Jannsen, {\it Principe de Hasse cohomologique}, S\'eminaire de Th\'eorie des Nombres, Paris 1989-90, 121-140.



\bibitem{Jann2} U.~Jannsen, {\it Hasse principles for higher-dimensional fields},  Ann. of Math. (2) {\bf 183} (2016), no. 1, 1-71.


\bibitem{JannSS} U.~Jannsen, S.~Sato, K.~Saito, {\it \'Etale duality for constructible sheaves on arithmetic schemes,} J. Reine Angew. Math. {\bf 688}(2014), 1-65.


\bibitem{JL} A.~Javanpeykar, D.~Loughran, {\it Good reduction of algebraic groups and flag varieties}, Arch. Math. {\bf 104} (2015), no. 2, 133-143.


\bibitem{Kahn} B.~Kahn, {\it Sur le groupe des clsses d'un sch\'ema arithm\'etiques}, Bull. Soc. Math. France {\bf 134}(2006), no. 3, 395-415.


\bibitem{Kato} K.~Kato, {\it A Hasse principle for two dimensional global fields, with an appendix by J.-L.~Colliot-Th\'el\`ene}, J. reine angew. Math.
{\bf 366}(1986), 142-183.






\bibitem{KMRT} M.-A.~Knus, A.~Merkurjev, M.~Rost, J.-P.~Tignol, {\it The Book of Involutions,} Colloquium Publ. {\bf 44}, AMS 1998.


\bibitem{Lam} T.Y.~Lam, {\it Introduction to Quadratic Forms over Fields}, GSM 67, AMS 2005.


\bibitem{MS} A.S.~Merkurjev, A.A.~Suslin, {\it $K$-cohomology of Severi-Brauer varieties and the norm residue homomorphism}, Izv. Akad. Nauk SSSR Ser. Mat. {\bf 46}(1982), no. 5, 1011-1046.


\bibitem{Milne} J.S.~Milne, {\it \'Etale cohomology}, Princeton Univ. Press, 1980.

\bibitem{MilneADT} J.S.~Milne, {\it Arithmetic duality theorems, Second Edition}, BookSurge, LLC, Charleston, SC, 2006.


\bibitem{Milnor} J.~Milnor, {\it Algebraic $K$-theory and quadratic forms}, Invent. math. {\bf 9}(1970), 318-344.


\bibitem{OVV} D.~Orlov, A.~Vishik, V.~Voevodsky, {\it An exact sequence for $K_*^M/2$ with applications to quadratic forms}, Ann. of Math. (2), {\bf 165}(2007), no. 1, 1-13.

\bibitem{Parim} R.~Parimala, {\it A Hasse principle for quadratic forms over function fields}, Bull. AMS {\bf 51}(2014), no. 3, 447-461.

\bibitem{Par-Sur} R.~Parimala, V.~Suresh, {\it Isotropy of quadratic forms over function fields of $p$-adic curves}, Publ. math. IHES {\bf 88}(1998), 129-150.

\bibitem{Par-Sur2} R.~Parimala, V.~Suresh, {\it The $u$-invariant of the function fields of $p$-adic curves}, Ann. of Math. (2), {\bf 172}(2010), no. 2, 1391-1405.

\bibitem{Pl-R} V.P.~Platonov, A.S.~Rapinchuk, {\it Algebraic Groups and Number Theory}, Academic Press, 1994.

\bibitem{Pr-Rap1} G.~Prasad, A.S.~Rapinchuk, {\it Existence of irreducible $\R$-regular elements in Zariski-dense subgroups,} Math. Res. Lett. {\bf 10}(2003), 21-32.

\bibitem{PrRap-WC} G.~Prasad, A.S.~Rapinchuk, {\it Weakly commensurable arithmetic
groups and isospectral locally symmetric spaces},
Publ. math. IHES {\bf 109}(2009), 113-184.

\bibitem{PrRap-Action} G.~Prasad, A.S.~Rapinchuk, {\it Weakly commensurable groups, with applications to differential geometry}, Handbook of Group Actions. Vol. I, 495-524. Adv. Lect. Math. (ALM) {\bf 32}, Int. Press, Somerville, MA, 2015.



\bibitem{Pr-Rap2} G.~Prasad, A.S.~Rapinchuk, {\it Generic elements of a Zariski-dense subgroup form an open subset}, Trans. Moscow Math. Soc. {\bf 78}(2017), 299-314.







\bibitem{RagRam} M.S.~Raghunathan, A.~Ramanathan, {\it Principal bundles on the affine line}, Proc. Indian Acad. Sci. (Math. Sci.) {\bf 93}(1984), nos. 2-3, 137-145.

\bibitem{R-ICM} A.S.~Rapinchuk, {\it Towards the eigenvalue rigidity of Zariski-dense subgroups}, Proc. ICM-2014 (Seoul), vol. II, 247-269.

\bibitem{RR} A.S.~Rapinchuk, I.A.~Rapinchuk, {\it On division algebras having the same maximal subfields,} Manuscr. math. {\bf 132}(2010), 273-293.


\bibitem{IRap} I.A.~Rapinchuk, {\it A generalization of Serre's condition $(\mathrm{F})$ with applications to the finiteness
of unramified cohomology}, Math. Z. (2018), https://doi.org/10.1007/s00209-018-2079-0

\bibitem{IRap-Residue} I.A.~Rapinchuk, {\it On residue maps for affine curves}, J. Pure Appl. Algebra (2018), https://doi.org/10.1016/j.jpaa.2018.05.009







\bibitem{Samuel} P.~Samuel, {\it A propos du th\'eor\`em des unit\'es}, Bull. Sci. Math. (2) {\bf 90}(1966), 89-96.

\bibitem{SGA1} {\it SGA 1: Rev\^{e}tements \'etales et groupe fondamentale}, Lect. Notes Math {\bf 224}, Springer 1971.




\bibitem{Serre-GC} J-P.~Serre, {\it Galois cohomology}, Springer, 1997.


\bibitem{V1} V.~Voevodsky, {\it Motivic cohomology with $\Z/2$-coefficients}, Publ. Math. IHES, No. {\bf 98} (2003), 59-104.

\bibitem{V2} V.~Voevodsky, {\it On motivic cohomology with $\Z/\ell$-coefficients}, Ann. of Math. (2) {\bf 174} (2011), no. 1, 401-438.


















\end{thebibliography}

\end{document}